\def\Lam{\Lambda}
\def\R{\mathbb{R}}
\def\Z{\mathbb{Z}}
\def\eps{\epsilon}
\def\cE{\mathcal E}
\def\cH{\mathcal H}
\def\cG{\mathcal G}
\def\cC{\mathcal C}
\def\cL{\mathcal L}
\def\tor{\mathbb T^d_n}
\newcommand{\bb}[1]{\mathbb{#1}}
\newcommand{\N}{\bb N}
\newcommand{\set}[1]{\left\{#1\right\}}
\newcommand{\norm}[1]{\|#1\|} 
\newcommand{\Contour}{\Gamma}
\newcommand{\Interface}{\mathcal{S}}
\newcommand{\dis}{\text{dis}}
\newcommand{\ord}{\text{ord}}
\newcommand{\ext}{\text{ext}}
\newcommand{\tunnel}{\text{tunnel}}
\newcommand{\Ext}{\text{Ext}\,}
\newcommand{\Int}{\text{Int}\,}
\newcommand{\ctor}{\V{T}^d_n}
\newcommand{\pc}{\tilde\Omega}
\newtheorem*{theorem*}{Theorem}
\newtheorem{theorem}{Theorem}[section]
\newtheorem{lemma}[theorem]{Lemma}
\newtheorem{defn}{Definition}
\newtheorem{prop}[theorem]{Proposition}
\newtheorem*{prop*}{Proposition}
\newtheorem*{claim*}{Claim}
\newtheorem*{fact*}{Fact}
\newtheorem*{remark*}{Remark}
\newtheorem*{defn*}{Definition}
\theoremstyle{definition}
\newcommand{\abs}[1]{\left|#1\right|}
\newcommand{\bydef}{\coloneqq}
\newcommand{\V}[1]{{\boldsymbol #1 }}
\newcommand{\htor}{\frac{1}{2}\tor}
\newcommand{\htors}{(\htor)^\star}
\newcommand{\rest}{\text{rest}}
\tikzset{ boundary/.style={ very thin, gray} }
\tikzset{ lam/.style={very thick,black} }
\tikzset{ lamb/.style={line width=4pt,black} }
\tikzset{ boundaryfillr/.style={ very thin,pattern color=red,
    pattern=north west lines,opacity=.8} }
\tikzset{ boundaryfillb/.style={ very thin,pattern color=blue, pattern=north west lines,
    opacity=0.8} }
\tikzset{ boundaryfillg/.style={ very thin,pattern color=green, pattern=north west lines,
    opacity=0.8} }
\tikzset{ bv/.style={circle,fill=blue!50,draw,thick,minimum size=12pt,
    inner sep=0}}
\tikzset{ rv/.style={circle,fill=red!50,draw,thick,minimum
    size=12pt,inner sep=0}}
\tikzset{ gv/.style={circle,fill=green!50,draw,thick, minimum size=12pt,
    inner sep=0}}
\tikzset{ ov/.style={circle,fill=black!50,draw,thick,minimum size=10pt,
    inner sep=0}}
\begin{document}
\title[Efficient sampling and counting for the Potts model on
$\Z^d$]{Efficient sampling and counting algorithms for the Potts model on $\Z^d$ at all temperatures}

\thanks{An extended abstract of this paper appeared at STOC 2020~\cite{BorgsStoc2020}.}

\author[Borgs]{Christian Borgs}
\author[Chayes]{Jennifer Chayes}
\author[Helmuth]{Tyler Helmuth}
\author[Perkins]{Will Perkins}
\author[Tetali]{Prasad Tetali}

\address{University of California Berkeley}
\email{borgs@berkeley.edu}
\address{University of California Berkeley}
\email{jchayes@berkeley.edu}
\address{University of Durham}
\email{tyler.helmuth@durham.ac.uk}
\address{Georgia Institute of Technology}
\email{math@willperkins.org}
\address{Carnegie Mellon University}
\email{ptetali@cmu.edu}

\keywords{approximate counting and sampling, Potts model, random cluster model, Pirogov-Sinai theory, phase transition}

\begin{abstract}
  For $d \ge 2$ and all $q\geq q_{0}(d)$ we give an efficient
  algorithm to approximately sample from the $q$-state ferromagnetic
  Potts and random cluster models on finite tori $(\Z / n \Z )^d$ for
  any inverse temperature $\beta\geq 0$.  This shows that the physical
  phase transition of the Potts model presents no algorithmic barrier to efficient sampling,
  and stands in contrast to Markov chain mixing time results: the Glauber
  dynamics mix slowly at and below the critical temperature, and the
  Swendsen--Wang dynamics mix slowly at the critical temperature. We
  also provide an efficient algorithm (an FPRAS) for approximating the
  partition functions of these models at all temperatures.

  Our algorithms are based on representing 
  the random cluster model as a contour model using Pirogov--Sinai
  theory, and then computing an accurate approximation of the
  logarithm of the partition function by inductively truncating the
  resulting cluster expansion. The main innovation of our approach is
  an algorithmic treatment of unstable ground states, which  is
  essential for our algorithms to apply to all inverse temperatures $\beta$. By treating
  unstable ground states our work gives a general template for
  converting probabilistic applications of Pirogov--Sinai theory to
  efficient algorithms.
\end{abstract}

\maketitle

\section{Introduction}
\label{sec:intro}

The Potts model is a probability distribution on assignments of $q$ colors to
the vertices of a finite graph $G$. For $\sigma \in [q]^{V(G)}\bydef
\{1,2,\dots, q\}^{V(G)}$ let
\begin{equation}
H_G(\sigma) \bydef \sum_{(i,j)\in E(G)} \delta_{\sigma_i \ne \sigma_j}
\end{equation}
be the the number of bichromatic edges of $G$ under the coloring
$\sigma$. The \emph{$q$-state ferromagnetic Potts model} at
inverse temperature $\beta\geq 0$ is the probability distribution
$\mu^{\text{Potts}}_G$ on $[q]^{V(G)}$ defined by
\begin{equation}
  \label{eq:PottsDef}
  \mu^{\text{Potts}}_G(\sigma) \bydef  \frac{e^{-\beta
      H_G(\sigma)}}{Z^{\text{Potts}}_G(\beta)}, \qquad 
  Z^{\text{Potts}}_G(\beta) \bydef \sum_{\sigma \in [q]^{V(G)}} e^{-\beta H_G(\sigma)}.
\end{equation}
The normalizing constant $Z^{\text{Potts}}_{G}(\beta)$ is the Potts
model partition function.  Since $\beta \geq 0$, monochromatic edges are
preferred. This is often referred to as the \emph{ferromagnetic} Potts
model.

In this paper we are interested in computational aspects of the Potts
model. To this end, we view $Z^{\text{Potts}}_G$ and
$\mu^{\text{Potts}}_G$ as functions and probability
measures indexed by finite graphs $G$, and consider 
two 
computational tasks associated to these objects. The first is the
\emph{approximate counting} problem: for a partition function $Z_G$
and error tolerance $\eps>0$, compute a number $\hat Z$ so that
$e^{-\eps} \hat Z \le Z_G \le e^{\eps} \hat Z$.  We say that such a
$\hat Z$ is an \emph{$\eps$-relative approximation} to $Z_G$.  The
second is the \emph{approximate sampling} problem: for a probability measure
$\mu_G$ and error tolerance $\eps>0$, output a random configuration
$\hat \sigma$ with distribution $\hat \mu$ so that
$\| \hat \mu - \mu_G \|_{TV} < \eps$. We say $\hat \sigma$ is an
\emph{$\eps$-approximate sample} from $\mu_G$.

Approximate counting and sampling algorithms can always be obtained by
brute force in time exponential in the size of the graph, and the
interesting question is if more efficient algorithms exist. To
formalize this, a \emph{fully polynomial-time approximation scheme}
(FPTAS) is an algorithm that given $G$ and $\eps>0$ returns an
$\eps$-relative approximation to $Z_G$ and runs in time polynomial in
$|V(G)|$ and $1/\eps$.  If the algorithm uses randomness it is a
\emph{fully polynomial-time randomized approximation scheme} (FPRAS).
A randomized algorithm that given $G$ and $\eps>0$ outputs an
$\eps$-approximate sample from $\mu_G$ and runs in time polynomial in
both $|V(G)|$ and $1/\eps$ is an \emph{efficient sampling
  scheme}. These notions are standard in the study of the
computational complexity of approximate sampling and counting, see
Section~\ref{sec:survey}
below. 

One of the main result of this paper is the development of an FPRAS and an
efficient sampling scheme for the $q$-state Potts model on the
discrete tori $\tor = (\Z / n \Z )^d$ for \emph{all} inverse
temperatures $\beta\geq 0$, provided $q$ is large enough as a function
of $d$.
\begin{theorem}
  \label{PottsTorusCrit}
  For all $d\ge2$ there exists $q_0=q_{0}(d)$ such that for
  $q \ge q_{0}$ and all inverse temperatures $\beta\geq 0$ there is an
  FPRAS and efficient sampling scheme for the $q$-state Potts model at
  inverse temperature $\beta$ on the torus $\tor$.
\end{theorem}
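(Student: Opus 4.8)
The plan is to pass to the random cluster model and then make a Pirogov--Sinai contour analysis algorithmic. By the Edwards--Sokal coupling, for integer $q$ and $p = 1-e^{-\beta}$ the $q$-state Potts model on $\tor$ is recovered from the random cluster measure on $\tor$ by colouring each cluster uniformly and independently, and $Z^{\text{Potts}}_{\tor}(\beta)$ differs from the random cluster partition function $Z^{\mathrm{RC}}_{\tor}$ by an explicit factor. It therefore suffices to give an FPRAS and an efficient sampler for the random cluster model, the Potts sampler being obtained by colouring clusters at the end. Throughout, $q \ge q_0(d)$ is taken large; the sole role of this hypothesis is to make the cluster expansions below converge.

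Next, set up the contour representation. For $q$ large the model has $q+1$ relevant ground states --- $q$ ordered and one disordered --- and undergoes a first-order transition at some $\beta_c(q,d)$: below $\beta_c$ the disordered ground state is dominant, above it the ordered ones are, and at $\beta_c$ all $q+1$ coexist. Expanding $Z^{\mathrm{RC}}_{\tor}$ over configurations of contours separating regions in distinct ground states yields, for each ground state $g$, a polymer partition function $\Xi_g$ whose polymers are contours carrying activities built from ratios of constrained partition functions. Classical Pirogov--Sinai theory gives a Koteck\'y--Preiss bound on these activities once $q \ge q_0(d)$, so $\log \Xi_g$ has a convergent cluster expansion and $Z^{\mathrm{RC}}_{\tor} = \sum_{g\ \text{stable}} \Xi_g$ up to a relative error exponentially small in $n$, the sum being over ground states of minimal free energy. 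The counting algorithm is then: compute each free energy $f_g = -|\tor|^{-1}\log \Xi_g$ by truncating its cluster expansion to clusters of size $O(\log(|\tor|/\eps))$. Such clusters, anchored in $\tor$, are enumerable in polynomial time, and the Koteck\'y--Preiss estimate bounds the truncation tail, since the number of clusters of size $k$ through a fixed edge is $e^{O(k)}$ while each contributes $e^{-\Omega(k\log q)}$; summing over the $|\tor|$ anchor points still leaves total error $\eps$. Comparing the $f_g$ identifies the dominant ground states, and adding the corresponding approximate $\Xi_g$ yields an $\eps$-relative approximation of $Z^{\mathrm{RC}}_{\tor}$, in particular an FPRAS.

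\emph{The main obstacle}, which is the innovation the abstract advertises, is that the activities defining $\Xi_g$ are themselves ratios of constrained partition functions, and for an \emph{unstable} ground state $g$ --- one not of minimal free energy --- these ratios, and the cluster expansion of $\log\Xi_g$, diverge: a large contour surrounding a region in a more stable phase carries activity growing exponentially in its enclosed volume. Following Zahradn\'{\i}k's probabilistic reformulation of Pirogov--Sinai theory, the fix is to replace the true activities by \emph{truncated} activities, capping the exponential reweighting at a fixed threshold; the truncated model $\Xi_g^{\mathrm{tr}}$ then always has a convergent expansion, one computes its truncated free energy, and an induction on contour size shows that for stable $g$ the truncated and true models coincide, while for unstable $g$ the truncated free energy is strictly larger --- exactly the comparison needed to identify the dominant ground states on the torus. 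Algorithmically this is delicate because the truncation threshold appears inside the very activities being expanded, so the truncated pressures must be built up inductively over scales, each scale a polynomial-time cluster-expansion computation fed by the previous ones; the technical heart of the proof is to show this recursion converges, that errors do not amplify from scale to scale, and that the total running time stays polynomial --- and this is exactly where $q \ge q_0(d)$ enters quantitatively.

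Finally, sampling reduces to counting in the standard self-reducible way, applied now to random cluster partition functions on subregions of $\tor$ with ground-state boundary conditions, all of which the above machinery approximates: first pick a dominant ground state $g$ with probability proportional to the approximate $\Xi_g$; then sample a contour configuration from the polymer model $\Xi_g$ with the usual cluster-expansion sampler --- the probability that a prescribed contour is present is a ratio of truncated partition functions, estimated as above, and one recurses into the interior of each sampled contour with the induced ground state; then fill in the open random cluster edges inside each ground-state region and colour the resulting clusters to output a Potts configuration. The cluster-expansion tail bounds control the total variation error, and the running time is polynomial in $|\tor|$ and $1/\eps$, which completes the proof.
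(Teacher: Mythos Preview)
Your overall strategy---reduce to the random cluster model via Edwards--Sokal, represent $Z$ as a contour polymer model, use cluster expansions for the stable phases---matches the paper. But there are two genuine gaps.

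First, on the torus the contour representation does not capture everything: there are topologically nontrivial configurations (interfaces with nonzero winding) whose total contribution $Z_{\tunnel}$ satisfies $Z_{\tunnel}/Z \le \exp(-c\beta n^{d-1})$ but is not itself expressible as a sum of contour polymer models. Your identity $Z^{\mathrm{RC}}_{\tor}=\sum_{g\ \text{stable}}\Xi_g$ holds only up to this $Z_{\tunnel}$ error, so your counting algorithm fails once $\eps \le \exp(-c\beta n^{d-1})$. The paper handles this by switching to Glauber dynamics in that regime: since the mixing time is $\exp(O(n^{d-1}))$, it is polynomial in $1/\eps$ precisely when $\eps$ is that small. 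This is why the torus result is an FPRAS rather than an FPTAS, a distinction your proposal does not account for.

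Second, your treatment of unstable ground states via Zahradn\'{\i}k-style truncated activities is not what the paper does, and your description is too schematic to be a proof. The paper never builds a truncated polymer model for the unstable phase. Instead, when $\beta>\beta_c$ and one needs $Z_{\dis}(\Int\V\gamma)$ inside an ordered contour weight, it proves a ``flip'' lemma: the dominant contributions to $Z_{\dis}(\Int\V\gamma)$ come from external-contour configurations whose exterior region has volume $O(\log(1/\eps)+\|\V\gamma\|)$, so one can enumerate them directly and reduce to ordered partition functions on the interiors. This sidesteps the need to expand the unstable phase at all. Your inductive truncation scheme might be made to work, but the assertions that ``errors do not amplify from scale to scale'' and ``the total running time stays polynomial'' are exactly the hard part, and you have not supplied the argument.
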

If $\epsilon$ is not too small, meaning $\eps\geq \exp(-O(n^{d-1}))$,
our approximate counting algorithm is deterministic. We will comment on
this further in what follows, see below Theorem~\ref{PottsZd}. 

In the next section we discuss the context and motivation behind
Theorem~\ref{PottsTorusCrit}. The remainder of the introduction then
turns to some relevant facts about the Potts model (Section~\ref{sec:potts-model-zd}), the closely
related random cluster model (Section~\ref{sec:random-cluster-model}), and a discussion of our proof strategy
and the main challenges in proving
these results (Section~\ref{sec:meth}). 

\subsection{Context and motivation: approximation algorithms and
  computational phase transitions}
\label{sec:survey}

For many statistical mechanics models like the Potts model
\emph{exact} computation of the partition function  has long been known to be \#P-hard,
even for restricted classes of graphs and parameters. In particular,
if P$\neq$NP, this task cannot be performed in polynomial
time.  Current research therefore is focused on approximate counting.  

 For some special models (the ferromagnetic Ising model~\cite{jerrum1993polynomial}, the monomer-dimer model~\cite{jerrum1989approximating}), there is an FPRAS for all graphs and all parameters.  For other models, the computational complexity of approximate counting and sampling depends on the class of graphs and on the parameters of the model.  These models exhibit \emph{computational phase
  transitions}.  We now
briefly introduce a well-known example of such a transition.  Recall
that a subset $I\subset V$ of vertices of a graph $G=(V,E)$ is
independent if no two vertices in $I$ are joined by an
edge. Given $\lambda>0$, the independent set (or hard-core) model with
fugacity $\lambda$ is the probability distribution on
independent sets that chooses $I$ with probability proportional to
$\lambda^{|I|}$. An important series of results in the field of
approximate counting has established the existence of a computational
phase transition for the independent set model. More precisely,
restrict the set of input graphs to be those of maximum degree
$\Delta$, and let
$\lambda_{c} \bydef
\frac{(\Delta-1)^{\Delta-1}}{(\Delta-2)^{\Delta}}$. Then there exists
an FPTAS and an efficient sampling scheme if
$\lambda<\lambda_{c}$~\cite{weitz2006counting}, while there does not
if $\lambda>\lambda_{c}$ unless
NP=RP~\cite{sly2010computational,sly2014counting,galanis2016inapproximability}. The
parameter $\lambda_{c}$ also appears in statistical physics. Namely,
it is the point where the independent set model has a phase transition
on the $\Delta$-regular tree in the sense of uniqueness
($\lambda<\lambda_{c}$) and non-uniqueness ($\lambda>\lambda_{c}$) of
Gibbs measures. The hardness result is obtained by a reduction to
MAX-CUT, an NP-hard problem.

A third class of model lies between the other two: those for which no FPRAS is known in general, but no computational hardness is known either.  An important example of such a model is the
independent set model when one restricts the  inputs  to
\emph{bipartite} graphs. Counting independent sets in bipartite graphs
is called \#BIS, and many approximate counting problems of interest
turn out to be equivalent to the existence of an FPRAS for \#BIS,
see~\cite{dyer2004relative}. 
It has been conjectured that no FPRAS exists for \#BIS.  The
connection to the Potts models, and hence the present work, is as
follows. Fix $q\geq 3$. The existence of an FPRAS for the $q$-state
Potts model on graphs of maximum degree $\Delta$ at large enough inverse temperature $\beta$ would imply the existence of an FPRAS for \#BIS~\cite[Theorem 2]{galanis2016ferromagnetic}.   
The conjecture, therefore, is that no such
FPRAS for the $q$-state Potts model exists.

Until recently the construction of  efficient approximate counting and sampling
schemes for statistical physics models was largely restricted to the
uniqueness regime of the respective models, e.g., via Markov-chain
mixing or correlation decay arguments. Notable exceptions include the Ising and monomer-dimers models and special classes of graphs
with dualities, e.g., planar duality~\cite{ullrich2013comparison,gheissari2018mixing,gheissari2016quasi,blanca2017random}. Recently, efficient
algorithms in non-uniqueness regimes have been developed. These
algorithms are primarily based on 
the observation that classical tools from mathematical physics, the
cluster expansion and Pirogov--Sinai theory, can be used to obtain
efficient algorithms deep inside the non-uniqueness
regime on lattices~\cite{helmuth2018contours}.  Further works~\cite{JenssenAlgorithmsJ,cannon2019counting,PolymerMarkov} extended
the use of the cluster expansion to obtain algorithms for other classes
of graphs and models for parameters, again deep inside non-uniqueness
regimes. 

As will be discussed in Section~\ref{sec:meth} below, our proof of
Theorem~\ref{PottsZd} relies on a significant extension of the
Pirogov--Sinai methodology of~\cite{helmuth2018contours}. Note that
Theorem~\ref{PottsZd} completely rules out the existence of a
computational phase transition for the $q$-state Potts model on tori
$(\Z/n\Z)^{d}$ when $q\gg 1$. The Potts model on tori has a
uniqueness/non-uniqueness phase transition in the infinite volume limit and hence our
result shows that any relation between computational and physical
phase transitions for the Potts model is subtle, in that it sensitive
to the class of graphs being considered. It is important to note that
this potential sensitivity is not new, as it also follows from algorithmic
results~\cite{jerrum1993polynomial} concerning the Ising model. Our
contribution, therefore, is a proof of this subtlety in a less
specialized context, by fairly robust methods, and for a problem
directly related to \#BIS. While the tori we consider are
rather special graphs, we view them as a starting point for
understanding potential barriers to the existence of an FPRAS for the
Potts model, and hence to understanding  the existence or non-existence of an FPRAS for \#BIS.

After the appearance of the extended abstract of this work in~\cite{BorgsStoc2020}, results
concerning all-temperature algorithms for the Potts model on expander
graphs appeared~\cite{HJP}. The methods of~\cite{HJP} are different than those of the present paper, as the
geometry of expander graphs allows one to avoid the use of
Pirogov--Sinai theory and work with simpler polymer models instead.

\subsection{The Potts model on $\Z^d$}
\label{sec:potts-model-zd}

The Potts model is known to exhibit a phase transition on $\Z^{d}$
when $d\geq 2$, and when $q$ is sufficiently large the phase diagram
has been completely understood for some
time~\cite{kotecky1982first,laanait1991interfaces}. For large $q$
there is a critical temperature $\beta_{c} = \beta_c(d,q)$ satisfying
\begin{equation}
  \label{eq:betac}
  \beta_{c} = \frac{\log q}{d} + O(q^{-1/d})
\end{equation}
such that for $\beta<\beta_{c}$ there is a unique infinite-volume
Gibbs measure, while if $\beta>\beta_{c}$ there are $q$ extremal
translation-invariant Gibbs measures. Each of these low-temperature
measures favor one of the $q$ colors. At the transition point
$\beta= \beta_{c}$ there are $q+1$ extremal translation-invariant
Gibbs measures; $q$ of these measures favor one of the $q$ colors, and
the additional measure is the `disordered' measure from
$\beta<\beta_{c}$. We note that the phenomenology of the model is
$q$-dependent~\cite{DuminilCopinLectures}. The preceding results
require $q$ large as they use $q^{-1}$ as a small parameter in proofs.

The existence of multiple measures in the low-temperature phase is
reflected in the dynamical aspects of the model. While Glauber
dynamics for the Potts model mix rapidly at sufficiently high
temperatures, they mix in time $\exp(\Theta(n^{d-1}))$ when
$\beta\geq\beta_{c}$~\cite{borgs2012tight,borgs1991finite}. Even the
global-move Swensden--Wang dynamics take time $\exp(\Theta(n^{d-1}))$
to mix when $\beta=\beta_{c}$~\cite{borgs2012tight}.

\subsection{Random cluster model}
\label{sec:random-cluster-model}

Given a finite graph
$G=(V(G),E(G))$ the \emph{random cluster model} is a probability
distribution on edge sets of $G$ given by
\begin{equation}
\label{eqRCdef}
  \mu^{\text{RC}}_{G}(A) \bydef \frac{p^{|A|} (1-p)^{|E(G)| - |A|} q^{c(G_A)}  }{Z^{\text{RC}}_G(p,q)   } \,, \quad\quad A \subseteq E(G) \,,
\end{equation}
where $c(G_A)$ is the number of connected components of the graph
$G_A = (V(G),A)$ and
\begin{equation}
  Z^{\text{RC}}_{G}(p,q) \bydef \sum_{A \subseteq E(G)}  p^{|A|} (1-p)^{|E(G)| - |A|} q^{c(G_{A})}  
\end{equation}
is the random cluster model partition function.

The Potts model and the random cluster model can be put onto the same
probability space via the Edwards--Sokal coupling (see, e.g.,~\cite{DuminilCopinLectures}). We recall
this coupling in Appendix~\ref{sec:ES}; one consequence is the relation,
for $\beta \ge 0$ and integer $q \ge 2$,
\begin{equation}
\label{eqFKpotts1}
  Z^{\text{Potts}}_{G}(\beta) = e^{\beta |E(G)|} Z^{\text{RC}}_G(1-e^{-\beta},q).
\end{equation}

With the parameterization $p = 1- e^{-\beta}$ the random cluster model
on $\Z^d$, $d \ge 2$, also has a critical inverse temperature
$\beta_c= \beta_c(q,d)$ that satisfies \eqref{eq:betac} and that
coincides with the Potts critical inverse temperature for integer $q$.
For $\beta<\beta_c$ the random cluster model has a unique infinite
volume measure (the \emph{disordered} measure), while for
$\beta>\beta_c$ the \emph{ordered} measure is the unique infinite
volume measure. For $\beta= \beta_c$ the two measures coexist, in the
sense that there are multiple infinite-volume Gibbs measures, with one
corresponding to the ordered and one corresponding to the disordered
measure.

Our counting and sampling algorithms for the Potts model extend to the
random cluster model on finite subgraphs of $\Z^d$ with two different
types of boundary conditions. To make this precise requires a few
definitions.  Let $\Lam$ be a finite set of vertices of $\Z^d$ and let
$G_\Lam$ be the subgraph of $\Z^d$ induced by $\Lam$.  We say $G_{\Lam}$ is
\emph{simply connected} if $G_{\Lam}$ is connected and the subgraph
induced by $\Lam^c =\Z^d \setminus \Lam$ is connected.  The random
cluster model with \emph{free boundary conditions} on $G_{\Lam}$ is
just the random cluster model on the induced subgraph $G_{\Lam}$ as
defined by~\eqref{eqRCdef}.  The random cluster model with \emph{wired
  boundary conditions} on $G_{\Lam}$ is the random cluster model on
the (multi-)graph $G_{\Lam}'$ obtained from $G_{\Lam}$ by identifying
all of the vertices on the boundary of $\Lam$ to be one vertex;
see~\cite[Section~1.2.2]{DuminilCopinLectures} for a formal
definition.  We refer to the Gibbs measures and partition functions
with free and wired boundary conditions as
$\mu^f_\Lam, \mu^w_\Lam, Z^f_{\Lam}, Z^w_{\Lam}$. Explicitly,
\begin{align}
  \label{eq:Zfree}
  Z^f_{\Lam} &\bydef \sum_{A\subset E(G_{\Lam})}
  p^{\abs{A}}(1-p)^{\abs{E(G_{\Lam})}-\abs{A}} q^{c(G_{A})}, \qquad \text{and} \\
  \label{eq:Zwired}
  Z^w_{\Lam}
  &\bydef \sum_{A\subset E(G_{\Lam}')}
  p^{\abs{A}}(1-p)^{\abs{E(G_{\Lam}')}-\abs{A}} q^{c(G'_{A})}, 
\end{align}
where $c(G_{A})$ is the number of connected components of the graph
$(\Lam,A)$ and $c(G'_A)$ is the number of components of the graph
$(\Lam',A)$ in which we identify all vertices on the boundary of $\Lam$.

\begin{theorem}
  \label{PottsZd}
  For $d\ge2$ there exists $q_0 = q_{0}(d)$ so that for $q \ge q_{0}$
  the following is true.

  For $\beta\geq \beta_{c}$ there is an FPTAS and efficient sampling
  scheme for the random cluster model on all finite, simply connected
  induced subgraphs of $\Z^d$ with wired boundary conditions.

  For $\beta \leq \beta_c$ there is an FPTAS and efficient sampling
  scheme for the random cluster model on all finite, simply connected
  induced subgraphs of $\Z^d$ with free boundary conditions.
\end{theorem}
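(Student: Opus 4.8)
The plan is to realize $Z^w_\Lambda$ (for $\beta \ge \beta_c$) and $Z^f_\Lambda$ (for $\beta \le \beta_c$) as contour models in the sense of Pirogov--Sinai theory, and then to approximate their logarithms by truncated cluster expansions that can be evaluated in polynomial time. Following the standard Pirogov--Sinai analysis of the random cluster model, one introduces two ground states for edge configurations --- the \emph{ordered} one (all edges occupied) and the \emph{disordered} one (no edges occupied) --- and encodes a configuration by its \emph{contours}, the connected components of the set of plaquettes at which the configuration fails to locally agree with a ground state. Summing over all configurations compatible with a prescribed boundary ground state, and dividing by the energy of that ground state, rewrites $Z^w_\Lambda$ (resp.\ $Z^f_\Lambda$) as an abstract polymer partition function $\Xi_\Lambda = \sum_{\{\gamma_i\}} \prod_i w(\gamma_i)$ over pairwise compatible families of contours. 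The weight $w(\gamma)$ is itself a ratio of random cluster partition functions on the interior and exterior of $\gamma$ with the appropriate boundary ground states; this is the recursive structure of Pirogov--Sinai theory, and the hypothesis that $G_\Lambda$ is simply connected is what makes the interior and exterior of a contour well defined and the associated combinatorial estimates uniform in $\Lambda$.

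The key technical device is truncation. For $q \ge q_0(d)$ the surface energy of a contour is of order $|\gamma|$, which yields a Peierls bound; but the true weights $w(\gamma)$ need not satisfy a uniform Peierls bound when $\gamma$ encloses a region in the unstable phase --- a disordered region when $\beta>\beta_c$, an ordered region when $\beta<\beta_c$ --- because the enclosed partition function is then dominated by configurations that tunnel to the stable phase. Following Zahradn\'ik, one replaces $w(\gamma)$ by a \emph{truncated} weight $w'(\gamma)$, defined recursively so that it coincides with $w(\gamma)$ when the enclosed phase is stable and is exponentially damped otherwise; the truncated polymer model $\Xi'_\Lambda$ then satisfies the Koteck\'y--Preiss criterion, so $\log \Xi'_\Lambda = \sum_{W} \varphi(W)$ converges absolutely with $\sum_{W \ni e} |\varphi(W)|\, e^{c|W|}$ bounded uniformly in $\Lambda$. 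Because $\beta$ lies on the stable side of $\beta_c$ for the chosen boundary condition, $\Xi'_\Lambda = \Xi_\Lambda$, so $\log Z^{w}_\Lambda$ (resp.\ $\log Z^f_\Lambda$) equals an explicit ground-state term plus $\sum_W \varphi(W)$.

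The algorithm then proceeds as follows. To approximate this logarithm to within $\eps$ it suffices to: (i) compute the ground-state energy exactly; (ii) compute the $O(1)$ thermodynamic constants that enter the truncation thresholds --- principally the truncated free-energy difference between the two phases --- to precision $\eps/\mathrm{poly}(|\Lambda|)$, which is possible because each is a convergent cluster-expansion series that can be evaluated by enumerating clusters inside a box of side $O(\log(|\Lambda|/\eps))$; and (iii) evaluate $\sum_{|W|\le L}\varphi(W)$ with $L = O(\log(|\Lambda|/\eps))$. Each $\varphi(W)$ is a fixed finite sum over sub-families of the polymers of $W$, the number of clusters of total size $k$ anchored at a given edge is at most $C^k$ since contours are connected, and the tail satisfies $\sum_{|W|>L}|\varphi(W)| \le |\Lambda|\,e^{-cL}$, so the truncated sum is an $\eps$-relative approximation to $\log Z$ and is computed in time $|\Lambda|^{O(1)}(1/\eps)^{O(1)}$. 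The truncated weights $w'(\gamma)$ appearing inside the $\varphi(W)$ are themselves evaluated by applying the same scheme recursively to the (strictly smaller) interior of $\gamma$, with the truncation fixed once and for all by step (ii), so the recursion terminates on geometric grounds. For sampling one uses the standard reduction from approximate counting to approximate sampling for polymer models to sample the family of contours, recurses on each contour's interior, and samples the residual configuration in the stable-phase complement using the same convergent cluster expansion applied to the conditional measure.

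The main obstacle, where essentially all the work lies, is the algorithmic treatment of the unstable ground state in step (iii): the truncation defining $w'(\gamma)$ refers to the truncated free energy of the enclosed phase, which is one of the very quantities being computed, so the construction must be bootstrapped --- the order-one constants of step (ii) are computed first, to sufficient precision, and only then is the recursion run --- and one must check that using $\Xi'$ in place of the true $Z$ throughout the recursion propagates only an exponentially small error, exploiting that a contour enclosing an unstable region is long and hence has exponentially small truncated weight. Making all of these estimates uniform over the unbounded family of simply connected subgraphs $\Lambda\subset\Z^d$ is the substantive content; the convergence of the cluster expansion, given the Peierls bound, is classical.
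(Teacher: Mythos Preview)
Your outline is essentially correct at $\beta=\beta_c$ and matches the paper's inductive truncated-cluster-expansion scheme there. The genuine gap is in how you handle the \emph{unstable} phase when $\beta\neq\beta_c$.

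You propose to use Zahradn\'ik truncation: cap the unstable-contour weights so that a convergent cluster expansion for the truncated partition function $\hat Z_{\dis}$ (say, when $\beta>\beta_c$) is available, and then use $\hat Z_{\dis}(\Int\V{\gamma})$ in place of $Z_{\dis}(\Int\V{\gamma})$ when evaluating $K_{\ord}(\V{\gamma})=e^{-\kappa\|\V{\gamma}\|}Z_{\dis}(\Int\V{\gamma})/Z_{\ord}(\Int\V{\gamma})$. But this substitution is \emph{not} accurate. When $\dis$ is unstable, $Z_{\dis}(\V{\Lam})$ is dominated by configurations containing a single large disordered contour that flips almost all of $\V{\Lam}$ to ordered; these are precisely the terms the Zahradn\'ik cutoff suppresses. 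Concretely, $Z_{\dis}(\V{\Lam})\asymp e^{-f_{\ord}|\V{\Lam}|}$ while $\hat Z_{\dis}(\V{\Lam})\asymp e^{-\hat f_{\dis}|\V{\Lam}|}$ with $\hat f_{\dis}-f_{\ord}=a_{\dis}>0$, so the ratio is $e^{a_{\dis}|\V{\Lam}|}$. Plugging $\hat Z_{\dis}$ into $K_{\ord}(\V{\gamma})$ therefore incurs a relative error of order $e^{a_{\dis}|\Int\V{\gamma}|}$, which is \emph{not} $o(1)$ even for the contours of size $\|\V{\gamma}\|=O(\log(N/\eps))$ that your truncation keeps (their interiors can have volume $\Theta(\log^2(N/\eps))$). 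Lemma~\ref{lemPolymerApprox} then fails, and your approximation to $\log Z_{\ord}$ is off by more than $\eps$. The sentence ``using $\Xi'$ in place of the true $Z$ \ldots\ propagates only an exponentially small error'' is where the argument breaks: the error is exponentially small only in an absolute sense, not relative to the (also exponentially small) quantity $K_{\ord}(\V{\gamma})$ being approximated.

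The paper's fix is different and is the main new idea: rather than truncating the unstable weights, it approximates $Z_{\dis}(\Int\V{\gamma})$ directly via its external-contour representation, keeping only those $\Gamma\in\cG^{\ext}_{\dis}(\Int\V{\gamma})$ with $|\Ext\Gamma\cap\Int\V{\gamma}|\le M$ for $M=O(\|\V{\gamma}\|+\log(1/\eps))$. Lemma~\ref{lemSuperEstimates2} shows this captures $Z_{\dis}(\Int\V{\gamma})$ to relative error $\eps$, and Proposition~\ref{prop:vacant} shows the set of such $\Gamma$ can be enumerated in time $e^{O(\|\V{\gamma}\|+M)}$, which is polynomial because $\|\V{\gamma}\|\le m=O(\log(N/\eps))$. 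Each summand involves only $Z_{\ord}(\Int\V{\gamma}')$ for contours $\V{\gamma}'$ of strictly smaller level, already computed by the stable-side recursion. This ``flip'' enumeration, not Zahradn\'ik truncation, is what makes the algorithm work away from $\beta_c$.
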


Theorem~\ref{PottsZd} yields an FPTAS, while
Theorem~\ref{PottsTorusCrit} gave an FPRAS for the torus. The reason
for this is that our Pirogov--Sinai based methods become more
difficult to implement on the torus if the error parameter $\epsilon$
is smaller than $\exp(-O(n^{d-1}))$. The algorithm for
Theorem~\ref{PottsTorusCrit} circumvents this by making use of the
Glauber dynamics for this range of $\epsilon$. This is possible
because, despite being slow mixing, the Glauber dynamics are fast
enough when given time $O(\eps^{-1})$ for $\epsilon$ this small
by~\cite{borgs2012tight}. By using Glauber dynamics in a similar
manner we could obtain an FPRAS for the random cluster model on
$\tor$.

We note that our methods are likely capable of handling boundary
conditions other than those described above, but we leave an
investigation of the full scope of their applicability for the future.

\subsection{Proof overview}
\label{sec:meth}

The results of this paper are based on non-trivial extensions of the
recent work~\cite{helmuth2018contours}. To discuss the new
ingredients, we first recall two key ideas
from~\cite{helmuth2018contours}. The first, which has since gone on to
be used in many subsequent
works~\cite{JenssenAlgorithmsJ,cannon2019counting,casel2019zeros,PolymerMarkov,liao2019counting},
is the notion of a \emph{polymer model algorithm}. We discuss this
method in a self-contained way in Section~\ref{secPolymer} below; it
is based on the well-developed ideas of polymer models and cluster
expansion from mathematical
physics~\cite{gruber1971general,kotecky1986cluster}. In~\cite{helmuth2018contours}
this was combined with Barvinok's interpolation
method~\cite{barvinok2017combinatorics} to devise efficient
algorithms. Polymer model algorithms are efficient algorithms for
estimating the partition function of low-density independent set
models. The power of the method is that it can handle independent sets
on very general graphs with vertex-dependent activities. Many problems
of interest can be rephrased in terms of independent set models of
this type.

The second key idea from~\cite{helmuth2018contours} for this work is
the algorithmic use of Pirogov--Sinai theory. An important ingredient
for this is the notion of a \emph{ground state}. Formally, for the
Potts models, the ground states are the colourings $\sigma$ of
$\Z^{d}$ that minimize $H_{\Z^{d}}(\sigma)$. Rigorously, the ground
states are colourings $\sigma$ for which any finite perturbation
$\sigma'$ satisfies $H_{G}(\sigma')-H_{G}(\sigma)>0$ as
$G\uparrow \Z^{d}$; since $\sigma'$ is a finite perturbation this
sequence is constant for large enough volumes $G$. The ground states
of the ferromagnetic Potts model are the $q$ monochromatic
colorings.

This notion of a ground state is meant to capture the intuition that
when $\beta\gg 1$, one expects a typical configuration of the Potts
model to look essentially like one the ground states, with some small
local deviations. Rigorously verifying this picture is non-trivial,
and is part of the subject of Pirogov--Sinai theory. The key output of
the theory is a convergent expansion for the logarithm of the
partition function of the model with monochromatic boundary
conditions, where the terms of the expansion correspond to local
deviations from the given ground state. The expansion has a recursive
flavor: the terms of the expansion are themselves given by ratios of
partition functions with different boundary conditions. This recursion
can be traced back to the fact that local deviations can have internal
structures: there could be a red island inside of a blue lake inside
of a red sea. See Figure~\ref{fig:Potts}.  The algorithms
of~\cite{helmuth2018contours} made use of the symmetry of the ground
states of the Potts model in handling this recursion, the key
point being that symmetry implies (when $\beta \gg 1$) the deviations
are rare enough that their contribution to the relevant partition
functions can be controlled by a convergent cluster expansion. 

\begin{figure}
    \begin{tikzpicture}[scale=.8]
    \draw[boundary] (-1,-1) grid (16,9);
    \draw[lam] (0,0) -- (0,8) -- (15,8) -- (15,0) -- (0,0);

\draw[boundaryfillr,rounded corners=2] (-.5,-.5) --  (15.5,-.5) --
(15.5,8.5) -- (-.5,8.5) -- (-.5,-.5);

\draw[fill=white,color=white,rounded corners=2](.5,.5) -- (.5, 2.5) --
(1.5,2.5) -- (1.5,1.5) -- (2.5,1.5) -- (2.5,.5) -- (.5,.5);
\draw[boundaryfillb,rounded corners=2] (.5,.5) -- (.5, 2.5) --
(1.5,2.5) -- (1.5,1.5) -- (2.5,1.5) -- (2.5,.5) -- (.5,.5);

\draw[fill=white,color=white,rounded corners=2](2,3.5) -- (1.5,3.5) --
(1.5,5.5) -- (2.5, 5.5) -- (2.5,6.5) -- (3.5,6.5) -- (3.5,5.5) --
(4.5,5.5) -- (4.5,6.5) -- (6.5,6.5) -- (6.5,5.5) -- (7.5,5.5) --
(7.5,4.5) -- (9.5, 4.5) -- (9.5, 1.5) -- (4.5, 1.5) -- (4.5,2.5)
--(2.5, 2.5) -- (2.5,3.5) -- (2,3.5);
\draw[boundaryfillb,rounded corners=2] (2,3.5) -- (1.5,3.5) --
(1.5,5.5) -- (2.5, 5.5) -- (2.5,6.5) -- (3.5,6.5) -- (3.5,5.5) --
(4.5,5.5) -- (4.5,6.5) -- (6.5,6.5) -- (6.5,5.5) -- (7.5,5.5) --
(7.5,4.5) -- (9.5, 4.5) -- (9.5, 1.5) -- (4.5, 1.5) -- (4.5,2.5)
--(2.5, 2.5) -- (2.5,3.5) -- (2,3.5);

\draw[fill=white,color=white,rounded corners=2] (4.5,3.5) rectangle (5.5,4.5);
\draw[boundaryfillr,rounded corners=2] (5,3.5) -- (4.5,3.5) --
(4.5,4.5) -- (5.5,4.5) -- (5.5,3.5) -- (5,3.5);

\draw[fill=white,color=white,rounded corners=2] (6.5,2.5) rectangle (7.5,3.5);
\draw[boundaryfillr,rounded corners=2] (7,2.5) -- (6.5,2.5) --
(6.5,3.5) -- (7.5,3.5) -- (7.5,2.5) -- (6.5,2.5);

\draw[fill=white,color=white,rounded corners=2](8,6.5) -- (7.5,6.5) --
(7.5,7.5) -- (9.5,7.5) -- (9.5,6.5) -- (8,6.5);
\draw[boundaryfillg,rounded corners=2] (8,6.5) -- (7.5,6.5) --
(7.5,7.5) -- (9.5,7.5) -- (9.5,6.5) -- (8,6.5);

\draw[fill=white,color=white,rounded corners=2](10,6.5) -- (9.5,6.5) --
(9.5,7.5) -- (12.5,7.5) -- (12.5,5.5) -- (10.5,5.5) -- (10.5,6.5) --
(10,6.5);
\draw[boundaryfillb,rounded corners=2] (10,6.5) -- (9.5,6.5) --
(9.5,7.5) -- (12.5,7.5) -- (12.5,5.5) -- (10.5,5.5) -- (10.5,6.5) --
(10,6.5);

\draw[fill=white,color=white,rounded corners=2](13.5,.5) -- (12.5,.5) --
(12.5,1.5) -- (13.5,1.5) -- (13.5,2.5)-- (14.5,2.5) --
(14.5,.5) -- (13.5,.5);
\draw[boundaryfillg,rounded corners=2] (13.5,.5) -- (12.5,.5) --
(12.5,1.5) -- (13.5,1.5) -- (13.5,2.5)-- (14.5,2.5) --
(14.5,.5) -- (13.5,.5);

\draw[fill=white,color=white,rounded corners=2](12,.5) -- (11.5,.5) --
(11.5,1.5) -- (12.5,1.5) -- (12.5,2.5) -- (13.5,2.5) -- (13.5,3.5) --
(14.5,3.5) -- (14.5,2.5) -- (13.5,2.5) -- (13.5,1.5) -- (12.5,1.5) --
(12.5,.5) -- (12,.5);
\draw[boundaryfillb,rounded corners=2] (12,.5) -- (11.5,.5) --
(11.5,1.5) -- (12.5,1.5) -- (12.5,2.5) -- (13.5,2.5) -- (13.5,3.5) --
(14.5,3.5) -- (14.5,2.5) -- (13.5,2.5) -- (13.5,1.5) -- (12.5,1.5) --
(12.5,.5) -- (12,.5);

    \node[rv] at (0,0) {};
    \node[rv] at (0,1) {};
    \node[rv] at (0,2) {};
    \node[rv] at (0,3) {};
    \node[rv] at (0,4) {};
    \node[rv] at (0,5) {};
    \node[rv] at (0,6) {};
    \node[rv] at (0,7) {};
    \node[rv] at (0,8) {};

    \node[rv] at (1,0) {};
    \node[bv] at (1,1) {};
    \node[bv] at (1,2) {};
    \node[rv] at (1,3) {};
    \node[rv] at (1,4) {};
    \node[rv] at (1,5) {};
    \node[rv] at (1,6) {};
    \node[rv] at (1,7) {};
    \node[rv] at (1,8) {};

    \node[rv] at (2,0) {};
    \node[bv] at (2,1) {};
    \node[rv] at (2,2) {};
    \node[rv] at (2,3) {};
    \node[bv] at (2,4) {};
    \node[bv] at (2,5) {};
    \node[rv] at (2,6) {};
    \node[rv] at (2,7) {};
    \node[rv] at (2,8) {};

    \node[rv] at (3,0) {};
    \node[rv] at (3,1) {};
    \node[rv] at (3,2) {};
    \node[bv] at (3,3) {};
    \node[bv] at (3,4) {};
    \node[bv] at (3,5) {};
    \node[bv] at (3,6) {};
    \node[rv] at (3,7) {};
    \node[rv] at (3,8) {};

    \node[rv] at (4,0) {};
    \node[rv] at (4,1) {};
    \node[rv] at (4,2) {};
    \node[bv] at (4,3) {};
    \node[bv] at (4,4) {};
    \node[bv] at (4,5) {};
    \node[rv] at (4,6) {};
    \node[rv] at (4,7) {};
    \node[rv] at (4,8) {};

    \node[rv] at (5,0) {};
    \node[rv] at (5,1) {};
    \node[bv] at (5,2) {};
    \node[bv] at (5,3) {};
    \node[rv] at (5,4) {};
    \node[bv] at (5,5) {};
    \node[bv] at (5,6) {};
    \node[rv] at (5,7) {};
    \node[rv] at (5,8) {};

    \node[rv] at (6,0) {};
    \node[rv] at (6,1) {};
    \node[bv] at (6,2) {};
    \node[bv] at (6,3) {};
    \node[bv] at (6,4) {};
    \node[bv] at (6,5) {};
    \node[bv] at (6,6) {};
    \node[rv] at (6,7) {};
    \node[rv] at (6,8) {};

    \node[rv] at (7,0) {};
    \node[rv] at (7,1) {};
    \node[bv] at (7,2) {};
    \node[rv] at (7,3) {};
    \node[bv] at (7,4) {};
    \node[bv] at (7,5) {};
    \node[rv] at (7,6) {};
    \node[rv] at (7,7) {};
    \node[rv] at (7,8) {};

    \node[rv] at (8,0) {};
    \node[rv] at (8,1) {};
    \node[bv] at (8,2) {};
    \node[bv] at (8,3) {};
    \node[bv] at (8,4) {};
    \node[rv] at (8,5) {};
    \node[rv] at (8,6) {};
    \node[gv] at (8,7) {};
    \node[rv] at (8,8) {};

    \node[rv] at (9,0) {};
    \node[rv] at (9,1) {};
    \node[bv] at (9,2) {};
    \node[bv] at (9,3) {};
    \node[bv] at (9,4) {};
    \node[rv] at (9,5) {};
    \node[rv] at (9,6) {};
    \node[gv] at (9,7) {};
    \node[rv] at (9,8) {};

    \node[rv] at (10,0) {};
    \node[rv] at (10,1) {};
    \node[rv] at (10,2) {};
    \node[rv] at (10,3) {};
    \node[rv] at (10,4) {};
    \node[rv] at (10,5) {};
    \node[rv] at (10,6) {};
    \node[bv] at (10,7) {};
    \node[rv] at (10,8) {};

    \node[rv] at (11,0) {};
    \node[rv] at (11,1) {};
    \node[rv] at (11,2) {};
    \node[rv] at (11,3) {};
    \node[rv] at (11,4) {};
    \node[rv] at (11,5) {};
    \node[bv] at (11,6) {};
    \node[bv] at (11,7) {};
    \node[rv] at (11,8) {};

    \node[rv] at (12,0) {};
    \node[bv] at (12,1) {};
    \node[rv] at (12,2) {};
    \node[rv] at (12,3) {};
    \node[rv] at (12,4) {};
    \node[rv] at (12,5) {};
    \node[bv] at (12,6) {};
    \node[bv] at (12,7) {};
    \node[rv] at (12,8) {};

    \node[rv] at (13,0) {};
    \node[gv] at (13,1) {};
    \node[bv] at (13,2) {};
    \node[rv] at (13,3) {};
    \node[rv] at (13,4) {};
    \node[rv] at (13,5) {};
    \node[rv] at (13,6) {};
    \node[rv] at (13,7) {};
    \node[rv] at (13,8) {};

    \node[rv] at (14,0) {};
    \node[gv] at (14,1) {};
    \node[gv] at (14,2) {};
    \node[bv] at (14,3) {};
    \node[rv] at (14,4) {};
    \node[rv] at (14,5) {};
    \node[rv] at (14,6) {};
    \node[rv] at (14,7) {};
    \node[rv] at (14,8) {};

    \node[rv] at (15,0) {};
    \node[rv] at (15,1) {};
    \node[rv] at (15,2) {};
    \node[rv] at (15,3) {};
    \node[rv] at (15,4) {};
    \node[rv] at (15,5) {};
    \node[rv] at (15,6) {};
    \node[rv] at (15,7) {};
    \node[rv] at (15,8) {};
\end{tikzpicture}
\caption{A $q=3$ Potts model configuration depicting nested
     regions of constant color.}
\label{fig:Potts}
\end{figure}

Theorems~\ref{PottsTorusCrit} and~\ref{PottsZd} concern not just low
temperatures, but all temperatures. Pirogov--Sinai theory has been
developed for the Potts model at all temperatures when $q\gg 1$, and
for doing this it is very helpful to use the random cluster
representation~\cite{laanait1991interfaces}. Our algorithms rely on
this, and we follow the sophisticated approach
from~\cite{borgs2012tight}. Algorithmically, however, the reliance on
the random cluster model creates a key difficulty.  As
  discussed above, in the Potts model representation, the $q$ ground
states (one for each color) are completely symmetric.  These
  ground states were defined with $\beta\gg 1$ in mind, while applying
  Pirogov--Sinai theory at all temperatures requires having a ground
  state corresponding to the typical behavior when $\beta\ll 1$ as
  well. The random cluster representation achieves this naturally: it has
  two ground states,  the ordered (full set of edges) and disordered (empty set
of edges) ground states. These ground states are not 
symmetric; the former captures  the low-temperature
  behavior and the latter the high-temperature behavior. See
  Figure~\ref{fig:RC}.
\begin{figure}[h]
  \centering
    \begin{tikzpicture}[scale=.7]
    \draw[boundary] (0,0) grid (7,5);
    \draw[lam] (-.5,-.5) rectangle (7.5,5.5);
    
    \draw[lamb] (0,0) -- (0,1);
    \draw[lamb] (0,5) -- (1,5);
    \draw[lamb] (2,5) -- (2,4);
    \draw[lamb] (3,1) -- (4,1);
    \draw[lamb] (3,2) -- (3,3);
    \draw[lamb] (4,5) -- (5,5);
    \draw[lamb] (5,4) -- (6,4);
    \draw[lamb] (7,4) -- (7,3);

    \node[ov] at (0,0) {};
    \node[ov] at (0,1) {};
    \node[ov] at (0,2) {};
    \node[ov] at (0,3) {};
    \node[ov] at (0,4) {};
    \node[ov] at (0,5) {};

    \node[ov] at (1,0) {};
    \node[ov] at (1,1) {};
    \node[ov] at (1,2) {};
    \node[ov] at (1,3) {};
    \node[ov] at (1,4) {};
    \node[ov] at (1,5) {};

    \node[ov] at (2,0) {};
    \node[ov] at (2,1) {};
    \node[ov] at (2,2) {};
    \node[ov] at (2,3) {};
    \node[ov] at (2,4) {};
    \node[ov] at (2,5) {};

    \node[ov] at (3,0) {};
    \node[ov] at (3,1) {};
    \node[ov] at (3,2) {};
    \node[ov] at (3,3) {};
    \node[ov] at (3,4) {};
    \node[ov] at (3,5) {};

    \node[ov] at (4,0) {};
    \node[ov] at (4,1) {};
    \node[ov] at (4,2) {};
    \node[ov] at (4,3) {};
    \node[ov] at (4,4) {};
    \node[ov] at (4,5) {};

    \node[ov] at (5,0) {};
    \node[ov] at (5,1) {};
    \node[ov] at (5,2) {};
    \node[ov] at (5,3) {};
    \node[ov] at (5,4) {};
    \node[ov] at (5,5) {};

    \node[ov] at (6,0) {};
    \node[ov] at (6,1) {};
    \node[ov] at (6,2) {};
    \node[ov] at (6,3) {};
    \node[ov] at (6,4) {};
    \node[ov] at (6,5) {};

    \node[ov] at (7,0) {};
    \node[ov] at (7,1) {};
    \node[ov] at (7,2) {};
    \node[ov] at (7,3) {};
    \node[ov] at (7,4) {};
    \node[ov] at (7,5) {};
\end{tikzpicture}
\qquad
    \begin{tikzpicture}[scale=.7]
    \draw[boundary] (0,0) grid (7,5);
    \draw[lam] (-.5,-.5) rectangle (7.5,5.5);
    
    \draw[lamb] (4,0) grid (7,3);
    \draw[lamb] (0,1) grid (2,4);
    \draw[lamb] (0,0) -- (3,0) -- (3,2) -- (2,2);
    \draw[lamb] (1,1) -- (2,1);
    \draw[lamb] (1,0) -- (1,5);
    \draw[lamb] (0,4) -- (0,5);
    \draw[lamb] (2,0) -- (2,1);
    \draw[lamb] (1,4) -- (1,5) -- (3,5) -- (3,4) -- (2,4);
    \draw[lamb] (2,3) -- (4,3);
    \draw[lamb] (3,2) -- (4,2);
    \draw[lamb] (3,3) -- (3,4) -- (5,4) -- (5,5) -- (7,5) -- (7,4) --
    (6,4) -- (6,3);
    \draw[lamb] (4,3) -- (4,5) -- (3,5);
    \draw[lamb] (5,3) -- (5,4);

    \node[ov] at (0,0) {};
    \node[ov] at (0,1) {};
    \node[ov] at (0,2) {};
    \node[ov] at (0,3) {};
    \node[ov] at (0,4) {};
    \node[ov] at (0,5) {};

    \node[ov] at (1,0) {};
    \node[ov] at (1,1) {};
    \node[ov] at (1,2) {};
    \node[ov] at (1,3) {};
    \node[ov] at (1,4) {};
    \node[ov] at (1,5) {};

    \node[ov] at (2,0) {};
    \node[ov] at (2,1) {};
    \node[ov] at (2,2) {};
    \node[ov] at (2,3) {};
    \node[ov] at (2,4) {};
    \node[ov] at (2,5) {};

    \node[ov] at (3,0) {};
    \node[ov] at (3,1) {};
    \node[ov] at (3,2) {};
    \node[ov] at (3,3) {};
    \node[ov] at (3,4) {};
    \node[ov] at (3,5) {};

    \node[ov] at (4,0) {};
    \node[ov] at (4,1) {};
    \node[ov] at (4,2) {};
    \node[ov] at (4,3) {};
    \node[ov] at (4,4) {};
    \node[ov] at (4,5) {};

    \node[ov] at (5,0) {};
    \node[ov] at (5,1) {};
    \node[ov] at (5,2) {};
    \node[ov] at (5,3) {};
    \node[ov] at (5,4) {};
    \node[ov] at (5,5) {};

    \node[ov] at (6,0) {};
    \node[ov] at (6,1) {};
    \node[ov] at (6,2) {};
    \node[ov] at (6,3) {};
    \node[ov] at (6,4) {};
    \node[ov] at (6,5) {};

    \node[ov] at (7,0) {};
    \node[ov] at (7,1) {};
    \node[ov] at (7,2) {};
    \node[ov] at (7,3) {};
    \node[ov] at (7,4) {};
    \node[ov] at (7,5) {};
\end{tikzpicture}
  \caption{Two random cluster model configurations. The configuration
    on the left is a perturbation of the empty set of edges, and the
    configuration on the right a perturbation of the full set of
    edges.}
  \label{fig:RC}
\end{figure}

In Pirogov-Sinai theory ground states are categorized based on free
energies of truncated models, as is discussed
in~\cite[Section~1.5]{Kotecky}.  For a given choice of parameters,
ground states minimizing the truncated free energy are \emph{stable}
while other ground states are \emph{unstable}.  In the Potts
representation all ground states are stable by symmetry, and this was
exploited in the low temperature algorithms
in~\cite{helmuth2018contours}.  In the random cluster representation,
one of the ground states may well be unstable (in fact only at
$\beta =\beta_c$ are both ground states stable).  Thus while working
with the random cluster representation gives us a convergent cluster
expansion at all temperatures, it also necessitates an algorithmic
approach that accommodates unstable ground states. The next paragraph
  discusses our algorithmic approach. We believe this approach could
  be adapted to other models with unstable ground states, but for the
  sake of  concreteness we restrict our discussion to the setting of the random cluster model.

To see the issue that unstable ground states create
for algorithms, recall that when $\beta\gg 1$ the intuition is that
most configurations look like the ordered ground state, with local
deviations that look like the disordered ground state. Since the
disordered ground state is not stable at low temperatures, it
does not suppress local deviations that flip back to the ordered
ground state. This prevents us from analyzing the recursive structure
of the Pirogov--Sinai expansion by using polymer model methods: the
polymer model expansion in an unstable ground state does not have a
convergent expansion. To circumvent this, we use tools
from~\cite{borgs2012tight} to establish that inside of any unstable
deviation there will be a further deviation back to the stable ground
state. This flip back to the stable ground state happens rapidly enough that
we can use brute-force methods. Since there may be many unstable
deviations, it is also important for us to control their total volume,
and again we use tools from~\cite{borgs2012tight} to do this.

As is clear from this discussion, this papers makes significant use of
the methods developed in~\cite{borgs2012tight}
and~\cite{helmuth2018contours}. For the ease of the reader who wishes
to see the proofs of results we use from~\cite{borgs2012tight} we have
largely stuck to the definitions presented in that paper, and have
made careful note of the situations in which we have chosen
alternative definitions that facilitate our algorithms. To complement
the  discussion above, we conclude this section with an outline of
our arguments along with pointers to the technical content of the
paper.

\begin{enumerate}
\item In Section~\ref{secPolymer} we briefly recall the notion of a
  polymer model and convergence criteria for the cluster expansion,
  and recall from~\cite{helmuth2018contours} how this can be used for
  approximation algorithms. A key improvement
  upon~\cite{helmuth2018contours} is that we work directly with the
  cluster expansion rather than using Barvinok's
  method~\cite{barvinok2017weighted}. This is  essential, as Barvinok's
  method relies on the existence of a zero-free region. In the Potts model there
  cannot be a zero-free region uniformly in the volume near $\beta_{c}$, precisely because
  this is the point at which a phase transition occurs.   In this section we also apply the polymer model algorithm to the
  random cluster model at very high temperatures, meaning 
  $\beta\leq  \beta_h \bydef \frac{3\log q}{4d}$.
\item In Section~\ref{secFKcontour} we first recall the tools from
  Pirogov--Sinai theory developed in~\cite{borgs2012tight} for the
  random cluster model. We then use these tools to establish the
  necessary ingredients for an algorithmic implementation of the
  method.
\item Section~\ref{secEstimates} contains estimates for the contour
  model representation derived in Section~\ref{secFKcontour}.  We
   prove some consequences of estimates from~\cite{borgs2012tight} that are needed for our algorithms. As
  discussed above, the key additional estimates concern how unstable
  contours rapidly `flip' to stable contours, which 
  are essential for our algorithms to be efficient.

  This section focuses on the most interesting case of
  $\beta\geq\beta_{c}$. The case $\beta_h<\beta<\beta_{c}$, which is
  very similar to $\beta>\beta_{c}$ and again uses estimates
  from~\cite{borgs2012tight}, is discussed in Appendix~\ref{sec:HT}.
\item In Section~\ref{sec:count} we present our approximate counting
  algorithms. The broad idea is to use the inductive Pirogov--Sinai
  method of~\cite{helmuth2018contours}, but with significant refinements
   to deal with the presence of an unstable ground
  state. Similar refinements are then used in Section~\ref{sec:sample}
  to develop sampling algorithms.
\end{enumerate}

We remark that it may be possible to combine results and proof techniques
from~\cite{DuminilCopinRaoufiTassion,martinelli19942,alexander2004mixing}
to prove that the Glauber dynamics mix rapidly on the torus and
sufficiently regular subsets of $\Z^d$ for all $\beta<\beta_{c}$,
which would yield a much faster sampling algorithm than the one we
have given here. We are not aware, however, of any existing statement
in the literature which would directly imply rapid mixing in the whole
range $\beta<\beta_{c}$, and leave this as an open problem. Further
open problems can be found in the conclusion of this paper, Section~\ref{sec:Conc}.

\section{Polymer models, cluster expansions, and algorithms}
\label{secPolymer}

This section describes how two related tools from statistical physics,
abstract polymer models and the cluster expansion, can be used to
design efficient algorithms to approximate partition functions.

An \emph{abstract polymer
  model}~\cite{gruber1971general,kotecky1986cluster} consists of a set
$\cC$ of \emph{polymers}, with each polymer $\gamma \in \cC$ equipped with a complex-valued
\emph{weight} $w_{\gamma}$ and a non-negative \emph{size}
$\| \gamma \|$. The set $\cC$ also comes equipped with a symmetric
compatibility relation $\sim$ such that each polymer is
incompatible with itself, denoted $\gamma \nsim \gamma$.  Let $\cG$
denote the collection of all sets of pairwise compatible polymers from
$\cC$, including the empty set of polymers. The polymer
model partition function  is defined to be
\begin{equation}
\label{eq:PolyZ}
Z( \cC,w) \bydef \sum_{\Gamma \in \cG} \prod_{\gamma \in \Gamma}
w_{\gamma}. 
\end{equation}
In~\eqref{eq:PolyZ} $w$ is shorthand for the collection of polymer
weights.

Let $\Gamma$ be a non-empty tuple of polymers. The
\emph{incompatibility graph $H_{\Gamma}$} of $\Gamma$ has vertex set
$\Gamma$ and edges linking any two incompatible polymers, i.e.,
$\{\gamma,\gamma'\}$ is an edge if and only if $\gamma\nsim\gamma'$. A
non-empty ordered tuple $\Gamma$ of polymers is a \emph{cluster} if
its incompatibility graph $H_\Gamma$ is connected.  Let $\cG^c$ be the
set of all clusters of polymers from $\cC$. The cluster expansion is
the following formal power series for $\log Z(\cC, w)$ in the
variables $w_\gamma$:
\begin{equation}
  \label{eq:clusterexp}
  \log Z(\cC,w) 
  = 
  \sum_{\Gamma \in \cG^c} \phi(H_\Gamma) \prod_{\gamma
    \in \Gamma} w_\gamma . 
\end{equation}
In~\eqref{eq:clusterexp} $\phi(H)$ denotes the \emph{Ursell function}
of the graph $H=(V(H),E(H))$, i.e., 
\begin{equation*}
  \phi(H) 
  \bydef 
  \frac{1}{|V(H)|!} \sum_{\substack{ A\subseteq E(H)
      \\(V(H), A) \text{ connected} }} (-1)^{|A|}. 
\end{equation*}

For a proof of~\eqref{eq:clusterexp}
see, e.g., \cite{kotecky1986cluster,friedli2017statistical}. Define
$\| \Gamma \| \bydef \sum_{\gamma \in \Gamma} \| \gamma \|$, and
define the truncated cluster expansion by
\begin{equation*}
  T_{m}(\cC,w) 
  \bydef 
  \sum_{\substack{\Gamma \in \cG^c \\ \| \Gamma \|< m}} \phi(H_\Gamma)
  \prod_{\gamma \in \Gamma} w_\gamma  \,.
\end{equation*}

Henceforth we will restrict our attention to a special class of
polymer models defined in terms of a graph $G$ with maximum degree
$\Delta$ on $N$ vertices. Namely, we will assume that each polymer is
a connected subgraph $\gamma=(V(\gamma),E(\gamma))$ of $G$.  The
compatibility relation is defined by disjointness in $G$:
$\gamma \sim \gamma'$ iff $V(\gamma) \cap V(\gamma') = \emptyset$. We
write $\abs{\gamma}$ for $\abs{V(\gamma)}$, the number of vertices in
the polymer $\gamma$.

A useful criteria for convergence of the formal power series
in~\eqref{eq:clusterexp} 
is given by the following adaptation of a theorem of Koteck\'{y} and
Preiss~\cite{kotecky1986cluster}.

\begin{lemma}
  \label{KPthm}
  {Let
    $G$ be a graph} of
  maximum degree $\Delta{\geq 2}$ on $N$
  vertices. {Suppose that polymers are connected subgraphs of
    $G$ that contain at least two vertices.} 
  Suppose further that for some
  $b>0$ and all $\gamma \in \cC$,
  \begin{align}
    \label{eqPeierls}
    \| \gamma \| &\ge b | E(\gamma)|, \\
    \label{eqPolymerKP}
    |w_{\gamma}| &\le {\exp\left(-   \left(\frac{4 + \log \Delta}{b}  + 3 \right) \| \gamma \|\right)}.
  \end{align}
  Then the cluster expansion~\eqref{eq:clusterexp} converges
  absolutely, and for $m\in\N$,
  \begin{equation}
    \label{eqTruncBound}
    \left|  T_m (\cC,w)  - \log Z(\cC,w) \right|  \le N e^{- 3m  } \,.
  \end{equation}
  
Moreover, if instead all polymers are connected, induced subgraphs of $G$, and 
  for some
  $b>0$ and all $\gamma \in \cC$,
  \begin{align}
    \label{eqPeierls2}
    \| \gamma \| &\ge b |\gamma|, \\
    \label{eqPolymerKP2}
    |w_{\gamma}| &\le {\exp\left(-   \left(\frac{3 + \log \Delta}{b}  + 3 \right) \| \gamma \|\right)},
  \end{align}
  then the same conclusion holds. 
\end{lemma}

This lemma implies that if conditions~\eqref{eqPeierls}
and~\eqref{eqPolymerKP} hold, then $\exp( T_{m}(\cC,w))$ is an
$\eps$-relative approximation to $Z(\cC,w)$ for
$m \ge \log (N/\eps)/3$.

\begin{proof}
  We append to $\cC$ a polymer $\gamma_v$ for each $v \in V(G)$
  consisting only of that vertex, with
  size $\| \gamma_v \|=1$ and $w_{\gamma_v}=0$. By definition, 
  $\gamma_{v}$ is incompatible with every other polymer that contains
  $v$.  Then
  \begin{align*}
  \sum_{\gamma \nsim \gamma_v} |w_{\gamma}| e^{ |E(\gamma)| +  3\| \gamma
    \|} 
    &\le  \sum_{\gamma \nsim \gamma_v} e^{ |E(\gamma)|} e^{ - (\frac{4 +
      \log \Delta}{b})\| \gamma \|}     \\
       &\le  \sum_{\gamma \nsim \gamma_v} e^{ |E(\gamma)|} e^{ -(4 + \log
      \Delta) | E(\gamma )|}     \\
      & \le \sum_{k \ge 1} (e \Delta)^{k}  
       e^{-(3 +  \log \Delta) k} 
  \end{align*}
  where the first inequality is by~\eqref{eqPolymerKP}, the second
  by~\eqref{eqPeierls}, and the third is by bounding the number of  connected subgraphs of $G$ with $k$ edges that contain $v$ by $(e\Delta)^{k}$~\cite{BorgsChayesKahnLovasz}. This yields
  \begin{equation}
    \label{eq:bd}
    \sum_{\gamma \nsim \gamma_v} |w_{\gamma}| e^{ |E(\gamma)| +  3\| \gamma
    \|} \leq  \sum_{k \ge 1} e^{ -  2k } < 1/2.
  \end{equation}
  
  Moreover, under the second assumption, that all polymers are connected induced subgraphs, we have a similar bound, with $| \gamma|$ in place of $|E(\gamma)|$:
  \begin{equation}
    \label{eq:bd2}
    \sum_{\gamma \nsim \gamma_v} |w_{\gamma}| e^{ |\gamma| +  3\| \gamma
    \|} {\leq \sum_{k\geq 1}e^{-k}}< 1,
\end{equation}
{where we have used that the number of connected induced
  subgraphs on $k$ vertices that contain $v$ is at most $(e\Delta)^{k}$~\cite{BorgsChayesKahnLovasz}.}

  Now fix a polymer $\gamma$. By summing~\eqref{eq:bd} over all
  $v \in \gamma$ we obtain
  \begin{equation}
    \sum_{\gamma' \nsim \gamma} |w_{\gamma'}| e^{|E(\gamma')| +3 \| \gamma ' \|} < |\gamma|/2  \le |E(\gamma)| \,.
  \end{equation}
  By applying the main theorem of~\cite{kotecky1986cluster} with
  $a(\gamma) = |E(\gamma)|$, $d(\gamma) = 3 \| \gamma \|$ we obtain 
  that the cluster expansion converges absolutely. Moreover, we also
  obtain that
  \begin{equation}
    \sum_{\substack{\Gamma \in \cG^c \\ \Gamma \ni v}}  \left|
      \phi(H_{\Gamma}) \prod_{\gamma \in \Gamma} w_{\gamma}  \right|
    e^{3 \| \Gamma \| } \le 1  \,, 
  \end{equation}
  where the sum is over all clusters that contain a polymer containing
  the vertex $v$. By using this estimate, {restricting to
    $\|\Gamma\|\ge m$}, and summing 
  over all $v \in V(G)$ one obtains
  \begin{equation}
    \label{eq:tailbound}
     \sum_{\substack{\Gamma \in \cG^c \\ \norm{\Gamma}\geq m}}  \left|
      \phi(H_{\Gamma}) \prod_{\gamma \in \Gamma} w_{\gamma}  \right|
    \leq N e^{-3m}
  \end{equation}
 which is~\eqref{eqTruncBound}.
 
 The same argument works under the second assumption by taking $a(\gamma) = |\gamma|$.  
\end{proof}

Because clusters are connected objects arising from a bounded-degree
graph, the truncated cluster expansion can be computed
efficiently. Recall that $N=\abs{V(G)}$.
\begin{lemma}[]
  \label{lemPolyModelCount}
  Suppose the conditions of Lemma~\ref{KPthm} hold.  Then given a list
  of all polymers $\gamma$ of size at most $m$ along with the weights
  $w_{\gamma}$ of these polymers, the truncated cluster expansion
  $T_m(\cC,w)$ can be computed in time $O (N\exp( O(m)))$.
\end{lemma}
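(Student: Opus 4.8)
The plan is to enumerate the relevant clusters and sum their contributions, exploiting that clusters with $\|\Gamma\| < m$ are connected subgraph-configurations of bounded total size arising from a bounded-degree graph. First I would observe that since each polymer $\gamma$ satisfies $\|\gamma\| \ge b|\gamma|$, a cluster $\Gamma$ with $\|\Gamma\| < m$ has $\sum_{\gamma \in \Gamma} |\gamma| < m/b$, so in particular every polymer appearing has size at most $m/b$ and the vertex sets of the polymers in $\Gamma$ together occupy at most $m/b$ vertices of $G$. Moreover the incompatibility graph $H_\Gamma$ is connected, which means the union of the vertex sets $\bigcup_{\gamma \in \Gamma} V(\gamma)$ is a connected subset of $G$ (incompatible polymers share a vertex). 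So I would generate the list of clusters by: (i) for each vertex $v \in V(G)$, enumerate all connected vertex subsets $S \ni v$ of $G$ with $|S| \le m/b$ — there are at most $N \cdot (e\Delta)^{m/b}$ of these by the standard bound on connected subgraphs used in the proof of Lemma~\ref{KPthm}; (ii) for each such $S$, enumerate the (bounded) collection of clusters whose polymer vertex sets are contained in $S$, using the provided list of polymers of size at most $m$.

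The key quantitative step is to bound the number of clusters of weighted size less than $m$. Since a cluster is an \emph{ordered} tuple of polymers whose incompatibility graph is connected, and each polymer has size at least $b$ (so at most $m/b$ polymers appear), and each polymer is one of the at most $N(e\Delta)^{m/b}$ listed polymers, the total number of clusters with $\|\Gamma\| < m$ is at most $N \exp(O(m))$ — here the implied constant absorbs $b$, $\Delta$, and the combinatorial factors from ordering and from attaching polymers. Concretely I would argue that a cluster is determined by choosing a connected "anchor" vertex set $S$ of size $\le m/b$ (at most $N(e\Delta)^{m/b}$ choices), then choosing an ordered tuple of polymers from the at most $(\Delta+1)^{m/b}$-many listed polymers supported on $S$, with at most $m/b$ polymers; this is $\exp(O(m))$ per anchor. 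For each cluster, the Ursell function $\phi(H_\Gamma)$ and the product $\prod_{\gamma \in \Gamma} w_\gamma$ are computed in time depending only on $m$ (the incompatibility graph has at most $m/b$ vertices), so each cluster contributes $O(1)$ or at most $\exp(O(m))$ work, and summing over all of them gives total running time $N\exp(O(m))$.

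Carrying this out in order: (1) extract from the input list all polymers of size $\le m$ together with their weights and vertex sets; (2) for each vertex $v$, grow all connected subgraphs of $G$ of size $\le m/b$ containing $v$ using a breadth-first enumeration (the $(e\Delta)^k$ bound controls the count); (3) for each connected vertex set $S$ thereby produced, list the polymers supported in $S$, form all ordered tuples of such polymers with total weighted size $<m$, test connectivity of the incompatibility graph, and for the surviving clusters evaluate $\phi(H_\Gamma)\prod w_\gamma$; (4) remove the over-counting of clusters that arise from more than one anchor vertex (e.g.\ by only keeping each cluster when $v$ is its lexicographically least occupied vertex), and sum. The main obstacle is purely bookkeeping rather than conceptual: one must make sure the enumeration of connected subgraphs and of ordered polymer-tuples is organized so that the total count is genuinely $N\exp(O(m))$ and no step secretly costs $N^{O(1)}$ or $\exp(\Omega(m\log m))$; the cleanest way is to fix the anchor-vertex scheme above so that the per-vertex work is $\exp(O(m))$ and it is multiplied by $N$ exactly once. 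The evaluation of the Ursell function on a graph with $O(m)$ vertices is the only place a naive implementation could blow up (a sum over subsets of edges), but since $|V(H_\Gamma)| \le m/b$ this is still $\exp(O(m))$, which is within budget.
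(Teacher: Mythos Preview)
The paper's own proof is simply a citation to \cite[Theorem~2.2]{helmuth2018contours}, so your task was to reconstruct that argument. Your overall plan---anchor at a vertex, enumerate the connected support, list clusters on that support, compute Ursell functions---is the right shape and matches the referenced proof in spirit. However, there is a genuine gap in the quantitative step.

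You write that per anchor set $S$ of size at most $m/b$ there are at most $(\Delta+1)^{m/b}$ polymers supported on $S$, and then that ordered tuples of length at most $m/b$ drawn from this pool number $\exp(O(m))$. But $\bigl((\Delta+1)^{m/b}\bigr)^{m/b}=(\Delta+1)^{(m/b)^2}=\exp(\Theta(m^2))$, not $\exp(O(m))$. Even if you impose the size constraint $\sum_i|\gamma_i|<m/b$ carefully, the count of ordered tuples with each $\gamma_i$ anchored somewhere in $S$ is of order $C^{m/b}(1+m/b)^{m/b}=\exp(O(m\log m))$, because the choice of a starting vertex in $S$ for each polymer contributes a factor of $|S|$ per polymer. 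You correctly flag $\exp(\Omega(m\log m))$ as the danger, but the anchor-set scheme you describe does not avoid it.

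The fix used in \cite{helmuth2018contours} is to enumerate clusters by \emph{growing} them polymer-by-polymer along a spanning tree of the incompatibility graph: one encodes a cluster as a single walk in $G$ of length $O(\sum_i|\gamma_i|)$ over an alphabet of size $O(\Delta)$, using a DFS of the spanning tree and, within each polymer, a spanning walk, with constant-size markers for ``enter child polymer at the current vertex'' and ``return to parent''. This gives $\exp(O(m))$ clusters directly, with no $\log m$ loss, because each new polymer is anchored at a \emph{specific} vertex (the current walk position) rather than at an arbitrary vertex of $S$. Equivalently, one can enumerate unordered multisets of polymers (of which there are $\exp(O(m))$ by this encoding) and absorb the ordering into the $1/k!$ already present in the Ursell function.

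A second, smaller gap: you compute $\phi(H_\Gamma)$ as ``a sum over subsets of edges'' and assert this is $\exp(O(m))$ since $|V(H_\Gamma)|\le m/b$. But $H_\Gamma$ can have $\Theta((m/b)^2)$ edges, so the naive edge-subset sum is $\exp(\Theta(m^2))$. One needs a vertex-based algorithm (e.g.\ via the connection to the chromatic polynomial and a $2^{|V(H_\Gamma)|}\mathrm{poly}(|V(H_\Gamma)|)$ subset-DP) to stay within $\exp(O(m))$.
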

\begin{proof}
  This is~\cite[Theorem 6]{helmuth2018contours}.
\end{proof}

The next lemma says that, for the purposes of approximating a polymer
partition function, it is sufficient to have approximate
evaluations $\tilde w_{\gamma}$ of the weights $w_{\gamma}$.

\begin{lemma}
  \label{lemPolymerApprox}
  Let $v\colon \cC\to [0,\infty)$ be a non-negative function on
  polymers such that 
  $v(\gamma) \le \| \gamma \|^2$. 
  Suppose $0<\eps < N^{-1}$, and let $m= \log
  (8/\eps)/3$. Suppose the conditions of Lemma~\ref{KPthm} hold and that for all
  $\gamma \in \cC$
  with $\| \gamma \| \le m$, $\tilde w_\gamma$ is an
  $\eps v(\gamma)$-relative approximation to $w_\gamma$. Then
  $\exp( T_{m}(\cC, \tilde w))$ is an $N\eps /4$-relative approximation
  to $Z(\cC,w)$.
\end{lemma}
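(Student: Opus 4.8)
The plan is to split the error $|\log Z(\cC,w) - T_m(\cC,\tilde w)|$ into two contributions: the error from truncating the cluster expansion at scale $m$, and the error from replacing the true weights $w_\gamma$ by their approximations $\tilde w_\gamma$ inside the truncated sum. The first contribution is controlled directly by Lemma~\ref{KPthm}: since the conditions of that lemma hold, $|\log Z(\cC,w) - T_m(\cC,w)| \le N e^{-3m}$, and with $m = \log(8/\eps)/3$ this is $N\eps/8$.

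For the second contribution I would bound $|T_m(\cC,w) - T_m(\cC,\tilde w)|$ cluster by cluster. Fix a cluster $\Gamma$ with $\|\Gamma\| < m$; every polymer $\gamma \in \Gamma$ then has $\|\gamma\| \le m$, so the hypothesis gives $\tilde w_\gamma = w_\gamma(1 + \eta_\gamma)$ with $|\eta_\gamma| \le e^{\eps v(\gamma)} - 1 \le e^{\eps \|\gamma\|^2} - 1$ (using $v(\gamma) \le \|\gamma\|^2$). Since $\|\gamma\| < m = \log(8/\eps)/3$ and $\eps < N^{-1}$, the products $\eps\|\gamma\|^2$ are small, so $|\eta_\gamma| \le 2\eps\|\gamma\|^2$ say. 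Then $\prod_{\gamma \in \Gamma}(1+\eta_\gamma) - 1$ has absolute value at most $\exp\big(\sum_{\gamma\in\Gamma} 2\eps\|\gamma\|^2\big) - 1$; I would bound $\sum_{\gamma\in\Gamma}\|\gamma\|^2$ crudely by $\|\Gamma\|^2 \le m \|\Gamma\|$ (or even just use an exponential factor $e^{\|\Gamma\|}$ to absorb it, which is the cleaner route given the $e^{3\|\Gamma\|}$ slack available in~\eqref{eq:tailbound}), so that
\begin{equation*}
  \left| \prod_{\gamma\in\Gamma} w_\gamma - \prod_{\gamma\in\Gamma}\tilde w_\gamma \right|
  \le \left(\prod_{\gamma\in\Gamma}|w_\gamma|\right) \cdot C\eps\, e^{\|\Gamma\|}
\end{equation*}
for a suitable constant. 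Summing over clusters with the Ursell weights and invoking~\eqref{eq:tailbound} (more precisely, the intermediate estimate $\sum_{\Gamma \ni v}|\phi(H_\Gamma)\prod_\gamma w_\gamma| e^{3\|\Gamma\|} \le 1$ summed over $v$, which bounds $\sum_{\Gamma\in\cG^c} |\phi(H_\Gamma)\prod_\gamma w_\gamma| e^{3\|\Gamma\|} \le N$) yields $|T_m(\cC,w) - T_m(\cC,\tilde w)| \le C' N\eps$; tracking the constants carefully and using $m \ge 1$ gives the bound $N\eps/8$ as well.

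Adding the two pieces gives $|\log Z(\cC,w) - T_m(\cC,\tilde w)| \le N\eps/4$, which is exactly the assertion that $\exp(T_m(\cC,\tilde w))$ is an $N\eps/4$-relative approximation to $Z(\cC,w)$. The main obstacle is bookkeeping the constants so that the weight-perturbation error really does fit under $N\eps/8$: one needs the elementary inequalities $e^x - 1 \le 2x$ for $x$ small and $\prod(1+\eta_i) - 1 \le e^{\sum|\eta_i|} - 1$, together with a careful choice of how to absorb the polynomial factors $\|\gamma\|^2$ and $\|\Gamma\|^2$ into the exponential slack $e^{3\|\Gamma\|}$ left over in the Kotecký--Preiss estimate. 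Everything else — the truncation error and the summation over clusters — is a direct quotation of Lemma~\ref{KPthm} and its proof.
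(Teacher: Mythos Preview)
Your proposal is correct and follows essentially the same approach as the paper. The paper also splits into the truncation error (bounded by $N\eps/8$ via Lemma~\ref{KPthm}) and the weight-perturbation error; for the latter it writes $\tilde w_\gamma = w_\gamma e^{r_\gamma}$ with $|r_\gamma|\le \eps v(\gamma)$, uses $|e^{\sum r_\gamma}-1|\le e^{\sum \eps v(\gamma)}-1$ together with $\sum_{\gamma\in\Gamma}v(\gamma)\le \|\Gamma\|^2$, groups clusters by size $k$, and finishes with the explicit bound $e^{\eps k^2}-1\le 2\eps k^2$ and $\sum_{k\ge 1}k^2 e^{-3k}<1/16$ to land exactly on $N\eps/8$ --- which is precisely the ``careful choice'' you allude to but do not carry out.
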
 

\begin{proof}
  Using the definition of $m$ and applying Lemma~\ref{KPthm}, we have
  \begin{equation*}
    |\log Z_G(\cC,w) - T_{m}(\cC,w) | \le N\eps/8, 
  \end{equation*}
  so by the triangle inequality it is enough to show that
  \begin{equation}
    \label{eq:tri2}
    \left | T_{m}(\cC,\tilde w) -  T_{m}( \cC,w) \right | \le N\eps/8.
  \end{equation}
  Define $r_{\gamma}$ by
  $\log \tilde w_\gamma = \log w_\gamma + r_\gamma$. To
  prove~\eqref{eq:tri2}, note the identity
  \begin{equation*}
    T_{m}(\cC,\tilde w) -  T_{m}( \cC,w)
    = \sum_{\substack{\Gamma \in \cG^c(G) \\ \norm{\Gamma}< m}}
      \phi(H_\Gamma) \prod_{\gamma \in \Gamma} w_\gamma \cdot \left [
        \exp\left( \sum_{\gamma \in \Gamma}  r_\gamma \right)  - 1
      \right ].
  \end{equation*}
  Our hypotheses imply $|r_{\gamma}| \le \eps v(\gamma)$, and hence by
  the triangle inequality we obtain
  \begin{equation*}
    \abs{T_{m}(\cC,\tilde w) -  T_{m}( \cC,w)} \leq 
    \sum_{\substack{\Gamma \in \cG^c(G) \\ \norm{\Gamma}< m}}
    \left(\exp \left (\sum_{\gamma\in\Gamma} \epsilon v(\gamma) \right)-1 \right)
    \abs{ \phi(H_{\Gamma}) \prod_{\gamma\in\Gamma}w_{\gamma}},
  \end{equation*}
  where we have used the elementary inequality
  $\abs{e^{a}-1}\leq e^{b}-1$ when $\abs{a}\leq b$ to bound the term
  in square brackets. Since $v(\gamma)\leq \norm{\gamma}^{2}$ this
  yields, after ordering the sum over clusters according to their size $k$,
  \begin{equation*}
    \abs{T_{m}(\cC,\tilde w) -  T_{m}( \cC,w)}
    \leq
    \sum_{k=1}^{m-1}(\exp(\eps k^{2})-1)
    \sum_{\substack{\Gamma \in \cG^c(G) \\ \norm{\Gamma}=k}}
    \abs{ \phi(H_{\Gamma}) \prod_{\gamma\in\Gamma}w_{\gamma}}
    \leq \sum_{k=1}^{m-1}(\exp(\eps k^{2})-1) Ne^{-3k}.
  \end{equation*}
  The last inequality follows from the convergence of the cluster
  expansion (see \eqref{eq:tailbound} in the proof of
  Lemma~\ref{KPthm}). Since $\eps<N^{-1}$ we can bound
  $e^{\eps k^{2}}-1$ by $2\eps k^{2}$, and~\eqref{eq:tri2} follows
  since $\sum_{k\geq 1}k^{2}e^{-3k}<1/16$.
\end{proof}

Putting Lemmas~\ref{KPthm},~\ref{lemPolyModelCount},
and~\ref{lemPolymerApprox} together we see that the partition function
$Z(\cC, w)$ can be approximated efficiently if
\begin{enumerate}
\item conditions~\eqref{eqPeierls} and~\eqref{eqPolymerKP} hold
\item polymers of size at most $m{=O(\log N/\eps)}$ can be enumerated efficiently,
  i.e., in time polynomial in $N$ and exponential in $m$, and
\item the polymer weights $w_{\gamma}$ can be approximated efficiently, i.e.,
  in time polynomial in the size of $\gamma$.
\end{enumerate}

\subsection{High temperature expansion}
\label{secHighTemp}
This section explains how the polymer model algorithm of the previous
section yields efficient counting and sampling algorithms for the
random cluster model when $q$ is sufficiently large and
$\beta \le \beta_{h}=\frac{3 \log q}{4d}$. This use of the polymer model
algorithm also serves as a warm-up for the more sophisticated
contour-based algorithms we will use in later sections when
$\beta>\beta_{h}$. 

In fact, the simpler setting of $\beta\le \beta_{h}$ allows for
greater generality: 
we will derive an algorithm that applies to the random cluster model
on \emph{any} graph $G$ of maximum degree at most $2d$.
\begin{theorem}
\label{thmHighTempExpansion}
Suppose $d \ge 2$ and $q=q(d)$ is sufficiently large.  Then for
$\beta \le \beta_h$ there is an FPTAS and efficient sampling scheme
for the Potts model and the random cluster model with
$p=1- e^{-\beta}$ on all graphs of maximum degree at most $2d$.
\end{theorem}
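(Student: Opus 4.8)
The plan is to realise $Z^{\text{RC}}_{G}(p,q)$, and hence via~\eqref{eqFKpotts1} the Potts partition function, as an abstract polymer partition function of the form treated in Section~\ref{secPolymer}, obtained by expanding around the empty edge configuration $A=\emptyset$. Writing $p=1-e^{-\beta}$ and pulling the weight $(1-p)^{\abs{E(G)}}q^{\abs{V(G)}}$ of $A=\emptyset$ out of~\eqref{eqRCdef}, and using that $c(G_A)=\abs{V(G)}-\abs{V(A)}+k(A)$, where $V(A)$ is the set of endpoints of edges of $A$ and $k(A)$ the number of connected components of $(V(A),A)$, one obtains
\begin{equation*}
  Z^{\text{RC}}_{G}(p,q)=(1-p)^{\abs{E(G)}}q^{\abs{V(G)}}\sum_{A\subseteq E(G)}(e^{\beta}-1)^{\abs{A}}\,q^{\,k(A)-\abs{V(A)}}.
\end{equation*}
Grouping the edges of $A$ into its connected components identifies the sum with $Z(\cC,w)$ for the polymer model in which $\cC$ is the set of connected subgraphs $\gamma\subseteq G$ with at least one edge, two polymers are compatible iff vertex-disjoint, and
\begin{equation*}
  w_{\gamma}\bydef(e^{\beta}-1)^{\abs{E(\gamma)}}q^{1-\abs{V(\gamma)}},\qquad \norm{\gamma}\bydef\abs{V(\gamma)}=\abs{\gamma}.
\end{equation*}
Moreover $\Gamma\mapsto\bigcup_{\gamma\in\Gamma}E(\gamma)$ is a weight-preserving bijection from polymer configurations onto $2^{E(G)}$, and $Z^{\text{Potts}}_{G}(\beta)=q^{\abs{V(G)}}Z(\cC,w)$ since $1-p=e^{-\beta}$.

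Next I would verify that this polymer model satisfies the hypotheses of Lemma~\ref{KPthm} with $\Delta=2d$ and $b=1$. The Peierls bound~\eqref{eqPeierls} holds with equality by the choice of $\norm{\gamma}$. For the weight bound~\eqref{eqPolymerKP}, note that $\beta\le\beta_h=\tfrac{3\log q}{4d}$ gives $e^{\beta}-1\le q^{3/(4d)}$, and that the maximum degree bound gives $\abs{E(\gamma)}\le d\abs{V(\gamma)}$, so
\begin{equation*}
  \abs{w_{\gamma}}\le q^{\,1-\abs{V(\gamma)}+\frac{3}{4d}\abs{E(\gamma)}}\le q^{-\abs{V(\gamma)}/20}.
\end{equation*}
The last inequality is the elementary estimate $\tfrac{3}{4d}\abs{E(\gamma)}\le\tfrac{19}{20}\abs{V(\gamma)}-1$, which follows from $\abs{E(\gamma)}\le d\abs{V(\gamma)}$ when $\abs{V(\gamma)}\ge5$ (note $\tfrac{3}{4d}\cdot d\abs{V(\gamma)}=\tfrac34\abs{V(\gamma)}$ is independent of $d$) and from the trivial bound $\abs{E(\gamma)}\le\binom{\abs{V(\gamma)}}{2}$ in the three cases $\abs{V(\gamma)}\in\{2,3,4\}$, uniformly in $d\ge2$. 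Choosing $q_{0}(d)$ large enough that $q^{1/20}\ge e^{6+\log(2d)}$ then yields~\eqref{eqPolymerKP}.

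It remains to check the two algorithmic inputs listed at the end of Section~\ref{secPolymer}. The polymers of size at most $m$ are the connected subgraphs of $G$ on at most $m$ vertices; since $d$ is fixed there are $N\exp(O(m))$ of them and they can be listed in time $N\exp(O(m))$, and each weight $w_{\gamma}$ is a closed-form expression that can be evaluated to any prescribed relative precision in time polynomial in $\abs{\gamma}$. Applying Lemmas~\ref{lemPolyModelCount} and~\ref{lemPolymerApprox} with $m=\Theta(\log(N/\eps))$ then yields an FPTAS for $Z(\cC,w)$, hence for $Z^{\text{RC}}_{G}(p,q)$ and $Z^{\text{Potts}}_{G}(\beta)$. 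For sampling I would invoke the standard self-reducibility argument for polymer models (see~\cite{helmuth2018contours}): deleting vertices from $G$ leaves the weight formula unchanged and preserves the degree bound, so every restricted polymer model that arises again satisfies the hypotheses above, and one can therefore approximately sample a polymer configuration $\Gamma$; the bijection above turns this into an $\eps$-approximate sample of $A\sim\mu^{\text{RC}}_{G}$, and colouring each connected component of $G_A$ with an independent uniform colour (the Edwards--Sokal coupling) produces an $\eps$-approximate sample of the Potts model.

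The one step that needs genuine care is the weight estimate~\eqref{eqPolymerKP}. Each polymer carries a factor $q^{1}$ from the term $q^{k(A)}$ in the expansion, so one must ensure that $(e^{\beta}-1)^{\abs{E(\gamma)}}$ stays well below $q^{\abs{V(\gamma)}-1}$; this is exactly where the threshold $\beta_h=\tfrac{3\log q}{4d}$ enters, via $\abs{E(\gamma)}\le d\abs{V(\gamma)}$, and one must verify that the resulting exponential decay rate in $\norm{\gamma}$ is bounded away from zero uniformly in $d$, treating the finitely many small polymers separately because for them the degree bound is too weak. Everything else is a direct application of the polymer-model machinery of Section~\ref{secPolymer}.
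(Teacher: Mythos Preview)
Your proof is correct and follows essentially the same route as the paper: the same high-temperature polymer model (connected subgraphs on $\ge 2$ vertices, weight $(e^\beta-1)^{|E(\gamma)|}q^{1-|V(\gamma)|}$, vertex-disjoint compatibility), the same verification of the Koteck\'y--Preiss condition via a case split on small versus large polymers, and the same appeal to the polymer-model algorithms and Edwards--Sokal coupling. The only difference is cosmetic: you take $\|\gamma\|=|V(\gamma)|$ with $b=1$, while the paper takes $\|\gamma\|=|E(\gamma)|$ with $b=1/2$; both choices lead to equivalent arguments, though with your choice you should note that distinct polymers can share a vertex set (there are up to $2^{d|V(\gamma)|}$ edge sets on a given connected vertex set), which is still $\exp(O(m))$ for fixed $d$ and so harmless.
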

\begin{proof}
  Let $G=(V(G),E(G))$ be such a graph. We define polymers to be
  connected subgraphs of $G$ with at least two vertices. As per our
  convention, polymers are compatible if they are vertex
  disjoint, and $|\gamma| = |V(\gamma)|$. We set $\norm{\gamma} =
  |E(\gamma)|$, and define the weight of a polymer $\gamma$ to be
  \begin{equation}
    \label{eq:HTpolyweight}
    w_\gamma 
    \bydef \left( \frac{p}{1-p}  \right)^{\norm{\gamma}} q^{1-|\gamma|} =
    (e^\beta -1)^{\norm{\gamma}} q^{1-|\gamma|} \,.
  \end{equation}

  Let $\cC(G)$ be the set of all polymers on $G$, $\cG(G)$ be the
  collection of all sets of pairwise compatible polymers from
  $\cC(G)$, and let
  \begin{equation}
    \Xi(G) \bydef \sum_{\Gamma \in \cG(G)} \prod_{\gamma \in \Gamma} w_{\gamma}
  \end{equation}
 be the corresponding polymer model partition function.  Then we have the identity
 \begin{equation}
   \label{eq:HT-Poly}
   Z^{\text{RC}}_G(p,q) = (1-p)^{|E(G)|} q^{|V(G)|}\;  \Xi(G) .
 \end{equation}
 The relation~\eqref{eq:HT-Poly} follows by extracting a common
 prefactor of $(1-p)^{|E(G)|} q^{|V(G)|}$ from the random cluster
 partition function. {To see this relation it may help to
   temporarily allow polymers that consist of a single vertex; since
   these receive weight one by~\eqref{eq:HTpolyweight} it is equivalent to remove
   these from the set of polymers.}

Condition~\eqref{eqPeierls} holds with $b=1$ since $\| \gamma \| = |E(\gamma)|$. 
 We will show that condition~\eqref{eqPolymerKP} holds if $\beta\le \beta_{h}$ and
 $q$ is sufficiently large as a function of $d$. 
Suppose there is a $q_{0}$ such that for
 all $\gamma$, all $\beta\leq \beta_{h}$, and all $q\geq q_{0}$
 \begin{equation}
   \label{eq:HT-KP}
   w_{\gamma}\leq C^{-\norm{\gamma}}.
 \end{equation}
 Then if $C=C(d){>0}$ is {large} 
 enough, \eqref{eqPolymerKP} holds. Since
 $b=1$, $C= \exp({7+\log 2d})$ 
 suffices, and we fix $C$ to be this
 value hereon. We now verify~\eqref{eq:HT-KP} in three steps, by
 considering polymers grouped according to the value of
 $k=\norm{\gamma}$.

 \begin{enumerate}
 \item For $k>5d$ we will use the fact that
   $|\gamma| \ge \norm{\gamma}/d$ since every edge is incident to two
   vertices and every vertex is incident to at most $2d$ edges.  Then using the fact that $\beta \le \beta_h$
   we have
   \begin{equation}
     w_\gamma \le q(e^{\beta} -1)^k q^{-k/d} \le q^{1 - \frac{k}{4d}}  \le q^{ - \frac{k}{20d}}\,,
   \end{equation}
   which is at most $C^{-\norm{\gamma}}$ if $q\geq C^{20d}$.

\item For $d < k \le 5d$, we will use the fact that
   $|\gamma| \ge \frac{1}{2}+ \sqrt{2 \norm{\gamma}}$ since the number of
   edges in a graph on $r$ vertices is at most $\binom {r}{2}$. Then
   we have
   \begin{equation}
     w_\gamma \le q q^{\frac{3k}{4d}} q^{-\frac{1}{2}-\sqrt{2k}} \leq q^{\frac{1}{2} + \frac{3c}{4} - 2 \sqrt{c}}\,,
   \end{equation}
   where $c = k/d$ and where we use the fact that $d \ge 2$ and
   $q\geq 1$. Then since
   $\frac{1}{2} + \frac{3c}{4} - 2 \sqrt{c} \le - \frac{1}{5}$ for
   $c \in [1,5]$, we have
   \begin{equation}
     w_\gamma \le q^{-1/5}\,,
   \end{equation}
   which is at most $C^{-\norm{\gamma}}$ if $q\geq C^{25d}$.

 \item For $1 \le k \le d$, since $|\gamma| \ge 2$, we have
   \begin{equation}
     w_\gamma \le q^{-1} (e^{\beta}-1)^k \le q^{-1} e^{\beta k} \le q^{-1/4} \,,
   \end{equation}
   which is at most $C^{-\norm{\gamma}}$ provided $q\geq C^{4d}$. 
 \end{enumerate}

 Thus taking $q_{0} {\geq} \exp( 25d (7+\log 2d))$
 suffices. Lemmas~\ref{KPthm} and~\ref{lemPolyModelCount} then give an
 FPTAS for computing the random cluster partition function
 $Z^{\text{RC}}_G(1-e^{-\beta},q)$ for all graphs of maximum degree
 $2d$, as enumerating subgraphs of size $m$ in a bounded degree graph
 takes time $\exp(O(m))$, and computing the weight functions only
 requires counting the number of edges and vertices in each subgraph.

 The efficient sampling scheme follows from~\cite[Theorem
 10]{helmuth2018contours}.    Counting and sampling algorithms for the random cluster
 model can be converted into algorithms for the Potts model via
 the Edwards--Sokal coupling described in Appendix~\ref{sec:ES}.
\end{proof}

\begin{proof}[Proof of Theorems~\ref{PottsTorusCrit} and~\ref{PottsZd} for
  $\beta\leq \beta_{h}$]
  Theorem~\ref{PottsTorusCrit} follows immediately from
  Theorem~\ref{thmHighTempExpansion} since $\tor$ is $2d$-regular. 

  By~\eqref{eq:betac}, $\beta_{h}<\beta_{c}$ when $q$ is large enough. Thus Theorem~\ref{PottsZd} requires we provide approximate
  counting and sampling algorithms for free boundary conditions. Since
  induced subgraphs of $\Z^{d}$ have degree bounded by $2d$, the
  result follows by Theorem~\ref{thmHighTempExpansion}.
\end{proof}

\section{Contour model representations}
\label{secFKcontour}

\emph{Contour models} refer to a class of polymer models that arise in
Pirogov--Sinai theory~\cite{pirogov1975phase}.  For a given spin
configuration, contours represent geometric boundaries between regions
dominated by different ground states; the precise definition for the
purposes of this paper will be given below. This section describes an
important contour model representation for the random cluster model on
the torus $\tor$ that is the basic combinatorial object in our
algorithms. This contour representation was originally developed for
obtaining optimal lower bounds on the mixing time for Glauber and
Swensden--Wang dynamics~\cite{borgs2012tight}. In addition to
recalling the construction from~\cite{borgs2012tight} this section
also develops the additional ingredients necessary for algorithmic
applications of the representation.

\subsection{Continuum embedding}
\label{sec:cont}

The contour model representation from~\cite{borgs2012tight} is based
on the natural embedding of the discrete torus $\tor = (\Z/n\Z)^{d}$
of side-length $n\in\N$ into the continuum torus
$\ctor \bydef (\R/n\R)^{d}$. This subsection recalls the basic
definitions, and explains how they can be rephrased in terms of
discrete graph-theoretic notions.\footnote{This continuum construction
  allows for tools from algebraic topology to be used. We have chosen
  to follow the continuum terminology to allow the interested reader
  to easily consult~\cite{borgs2012tight}.}

In what follows we abuse notation slightly and write $\tor$ for the
graph $(\tor, E)$, where $E$ is the edge set of the discrete torus.
We will follow the convention that bold symbols, e.g., $\V{V}$, denote
subsets of $\ctor$, while objects denoted by non-bold symbols like $V$
reside in $\tor$. Thus each vertex $v\in \tor$ is identified with a
point $\V{v}\in \ctor$, and we will identify each edge
$e=\{u,v\}\in E$ with the unit line segment $\V{e}\subset \ctor$
that joins $\V{u}$ to $\V{v}$.  We will also drop $\tor$ from the
notation when possible, e.g., $E$ for $E(\tor)$. 

{Let} 
$\Omega = 2^{E}$ {denote} 
the set of configurations of the
random cluster model on $\tor$. Let $\V{c}\subset \ctor$ denote a
closed $k$-dimensional hypercube with vertices in $\tor$ for some
$k=1,\dots, d$.  We say a hypercube $\V{c}$ is \emph{occupied} with
respect to $A\in \Omega$ if for all edges $e$ with
$\V{e}\subset \V{c}$, $e$ is in $A$. Define
\begin{equation}
  \label{eq:fattening}
  \V{A} \bydef  \left \{\V{x}\in \ctor \mid \text{there exists $\V{c}$ occupied
    s.t.\ $d_{\infty}(\V{x},\V{c})\leq \frac{1}{4}$} \right\},
\end{equation}
where $d_{\infty}$ is the $\ell_{\infty}$-distance, and the distance
from a point to a set is defined in the standard way:
$d_{\infty}(\V{x},\V{c}) =
\inf_{\V{y}\in\V{c}}d_{\infty}(\V{x},\V{y})$. Thus $\V{A}$ is the
closed $1/4$-neighborhood of the occupied hypercubes of $A$. The
connected components of the (topological) boundary $\partial \V{A}$ of
the set $\V{A}$ are the crucial objects in what follows.  Since each
connected component arises from an edge configuration in $\Omega$, it
is clear that the set of possible connected components is a finite
set.  As the connected components of $\partial \V{A}$ are continuum
objects, it may not be immediately apparent how to represent them in a
discrete manner. We briefly describe how to do this now.

Let $\htor$ denote the graph $(\frac{1}{2}\Z/n\Z)^{d}$; as a graph
this is equivalent to the discrete torus $(\Z/(2n)\Z)^{d}$. The
notation $\htor$ is better because we will embed $\htor$ in
$\ctor$ such that (i) $\V{0}$ coincides in $\tor$ and $\htor$, and
(ii) the nearest neighbors of $0$ in $\htor$ are the midpoints of the
edges $e$ containing $0$ in $\tor$.\footnote{More formally, since $\Z^{d}\subset
\frac{1}{2}\Z^{d}\subset \R^{d}$, we obtain a common embedding of
$\htor$ and $\tor$ in $\ctor$.} 

An important observation is that $\V{A}$ can be written as a union of
collections of adjacent closed $d$-dimensional hypercubes of
side-length $1/2$ centered at vertices in $\htor$, where two hypercubes
are called \emph{adjacent} if they share a $(d-1)$-dimensional face.
Adjacency of a set of hypercubes means the set of hypercubes is
connected under the binary relation of being adjacent. By construction
the connected components of $\V{A}$ correspond to the
connected components of the edge configuration $A$.

The boundary $\partial \V{A}$ of $\V{A}$ is just the sum, modulo two,
of the boundaries of the hypercubes whose union gives $A$. 
These boundaries are $(d-1)$-dimensional hypercubes dual to edges in
$\htor$; here dual means that the barycenter of the
$(d-1)$-dimensional hypercube is the same as barycenter of the edge in
$\htor$. The $(d-1)$-dimensional hypercubes that arise from this
duality are the vertices in $\htors$, the graph dual to $\htor$; two
vertices in $\htors$ are connected by an edge if and only if the
corresponding $(d-1)$-dimensional hypercubes intersect in one
$(d-2)$-dimensional hypercube. The preceding discussion implies
$\partial \V{A}$ can be identified with a  subgraph of
$\htors$.  

In the sequel we will discuss components of $\partial \V{A}$ as
continuum objects; by the preceding discussion this could be
reformulated in terms of subgraphs of $\htors$. In
Appendix~\ref{app:subcomp} we show that the computations we perform
involving components of $\partial \V{A}$ can be efficiently computed
using their representations as subgraphs of $\htors$. 

\subsection{Contours and Interfaces}
\label{sec:contours-interfaces}

An important aspect of the analysis in~\cite{borgs2012tight} is that
it distinguishes topologically trivial and non-trivial components of
$\partial \V{A}$. To make this precise, for $i=1,\dots, d$ we define
the \emph{$i$th fundamental loop} $\V{L}_{i}$ to be the set
$\{\V{y}\in \ctor \mid \text{$\V{y}_{j}=1$ for all $j\neq i$}\}$. The
\emph{winding vector} $N(\V{\gamma})\in \{0,1\}^{d}$ of a connected
component $\V{\gamma}\in \partial\V{A}$ is the vector whose $i$th
component is the number of intersections (mod 2) of $\V{\gamma}$ with
$\V{L}_{i}$. 

\begin{defn}
  Let $A\in\Omega$ be an edge configuration.
  \begin{enumerate}
  \item The set of \emph{contours $\Contour(A)$ associated to $A$} is the
    set of connected components of $\partial \V{A}$ with winding vector $0$.
  \item The \emph{interface network $\Interface(A)$ associated to
    $A$} is the set of connected components of $\partial \V{A}$ with
    non-zero winding vector.  Each connected component of an interface network is an \emph{interface}.
  \end{enumerate}
  Without reference to any particular edge configuration, 
  a subset $\V{\gamma}\subset\ctor$ is a \emph{contour} if there is
  an $A\in\Omega$ such that $\V{\gamma}\in \Contour(A)$.
  Interfaces and interface networks are defined analogously.
\end{defn}

Since each fundamental loop intersects each $(d-1)$-dimensional face of
a hypercube centered on $\htor$ exactly zero or one times, we have
the following lemma, which ensures contours can be efficiently
distinguished from interfaces. 
\begin{lemma}
  \label{lem:wind-comp}
  Suppose $\V{\gamma}\in \partial \V{A}$ is comprised of $K$
  $(d-1)$-dimensional faces. Then the winding vector of $\V{\gamma}$
  can be computed in time $O(nK)$.
\end{lemma}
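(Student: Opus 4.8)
The plan is to compute the winding vector straight from its definition: for each direction $i\in\{1,\dots,d\}$ I would count the $(d-1)$-dimensional faces of $\V{\gamma}$ that cross the $i$th fundamental loop $\V{L}_{i}$ and reduce this count modulo $2$. By the observation preceding the lemma, every such face meets $\V{L}_{i}$ in at most one point, so the quantity we want is simply $\sum_{f}\mathbf{1}[\,f\cap\V{L}_{i}\neq\emptyset\,]\bmod 2$, the sum ranging over the $K$ faces $f$ that comprise $\V{\gamma}$. The real content is therefore a local question: decide, for a single face $f$, whether $f$ meets $\V{L}_{i}$.

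To answer it I would first record the coordinate structure of a face. Each face $f$ of $\V{\gamma}$ is a side-$\tfrac12$ $(d-1)$-cube dual to an edge of $\htor$; writing $\ell=\ell(f)$ for the direction of that edge, the barycenter $\V{b}(f)$ has its $\ell$th coordinate in $\tfrac14+\tfrac12\Z$ (working mod $n$; in particular this coordinate is never $1$), the face is a single point in direction $\ell$, and in each remaining direction $j\neq\ell$ the coordinate $\V{b}(f)_{j}$ lies in $\tfrac12\Z$ while the face spans the interval $[\V{b}(f)_{j}-\tfrac14,\V{b}(f)_{j}+\tfrac14]$. Since $\V{L}_{i}=\{\V{y}:\V{y}_{j}=1\text{ for all }j\neq i\}$ sits at the integer coordinate $1$ in every direction other than $i$, comparing with $f$ in direction $\ell(f)$ forces $\ell(f)=i$ whenever $f$ meets $\V{L}_{i}$; and once $\ell(f)=i$, the face meets $\V{L}_{i}$ exactly when $1\in[\V{b}(f)_{j}-\tfrac14,\V{b}(f)_{j}+\tfrac14]$ for all $j\neq i$, which — because each such $\V{b}(f)_{j}\in\tfrac12\Z$ — is equivalent to $\V{b}(f)_{j}=1$ for all $j\neq i$. (This argument also shows $\V{L}_{i}$ never passes through the relative boundary of a face, so there are no degenerate intersections and the parity count is unambiguous.) The conclusion: $f$ meets $\V{L}_{i}$ iff $f$ is dual to a direction-$i$ edge of $\htor$ with all coordinates of $\V{b}(f)$ other than the $i$th equal to $1$, in which case $|f\cap\V{L}_{i}|=1$.

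With this characterization the algorithm writes itself: for each of the $d$ directions $i$, iterate over the $K$ faces of $\V{\gamma}$ (in the representation as a subgraph of $\htors$ from Section~\ref{sec:cont}), test the stated coordinate condition on each face — a bounded number of comparisons of coordinates lying in $\Z/2n\Z$, which costs $O(n)$ even on a crude accounting — and accumulate the number of faces passing the test modulo $2$, outputting the resulting vector in $\{0,1\}^{d}$. As $d$ is a fixed constant this runs in time $O(dnK)=O(nK)$. The step that genuinely requires care is the coordinate bookkeeping of the previous paragraph, identifying which faces contribute and verifying each contributes parity $1$ with no boundary degeneracies; the $O(nK)$ bound is otherwise generous and far from tight.
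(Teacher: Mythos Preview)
Your proof is correct and follows essentially the same approach as the paper: for each direction $i$, iterate over the $K$ faces of $\V{\gamma}$, test whether each meets $\V{L}_{i}$, and reduce the count modulo $2$. The paper phrases the per-face test as a membership check in the set $F_{i}$ of faces meeting $\V{L}_{i}$ (noting $|F_{i}|=O(n)$), whereas you spell out the explicit coordinate criterion that characterizes membership in $F_{i}$; these are the same test, and your coordinate bookkeeping is accurate.
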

\begin{proof}
  Fix $i\in \{1,2,\dots, d\}$. Each fundamental loop $L_{i}$ has
  length $O(n)$, and hence the set $F_{i}$ of faces that have
  non-trivial intersection with $L_{i}$ has cardinality
  $\abs{F_{i}}= O(n)$. Given the list of faces in $\V{\gamma}$ we can
  compute the $i$th component of the winding vector by (i) iterating
  through the list of faces of $\V{\gamma}$ and adding one each time
  we find a face in $F_{i}$, and (ii) taking the result modulo two.
\end{proof}

The connected components of $\ctor\setminus\partial\V{A}$
are subsets of either $\V{A}$ or $\ctor\setminus\V{A}$. In the former
case we call a component \emph{ordered} and in the latter case
\emph{disordered}.
We write $\V{A}_{\ord}$
(resp.\ $\V{A}_{\dis}$) for the union of the ordered (resp.\ disordered)
components associated to $A$. 
\begin{defn}
  \label{def:labl}
  The \emph{labelling $\ell_{A}$ associated to $A$} is the map from
  the connected components of $\ctor\setminus\partial \V{A}$ to the
  set $\{\dis,\ord\}$ that assigns $\ord$ to components in $\V{A}_{\ord}$ and
  $\dis$ to components in $\V{A}_{\dis}$.
\end{defn}

\begin{defn}
  Two contours $\V{\gamma}_{i}$, $i=1,2$ are \emph{compatible} if
  $d_{\infty}(\V{\gamma}_{1},\V{\gamma}_{2})\geq \frac{1}{2}$. We
  extend this definition analogously to two interfaces, or one
  interface and one contour.
\end{defn}

\begin{defn}
  \label{def:MCI}
  A \emph{matching collection of contours $\Contour$ and interfaces
    $\Interface$} is a triple $(\Contour,\Interface,\ell)$ such that
  $\Interface$ is an interface network and
  \begin{enumerate}
  \item The contours and interfaces in $\Contour\cup\Interface$ are
    pairwise compatible, and
  \item $\ell$ is a map from the set of connected components of
    $\ctor\setminus \cup_{\V{\gamma}\in\Contour\cup\Interface}\V{\gamma}$ to
    the set $\{\dis,\ord\}$ such that for every $\V{\gamma}\in
    \Contour\cup\Interface$, distinct components adjacent to $\V{\gamma}$
    are assigned different labels. 
  \end{enumerate}
\end{defn}

\begin{lemma}
  \label{lem:rep}
  The map from edge configurations $A\in\Omega$ to triples
  $(\Contour,\Interface,\ell)$ of matching contours and interfaces is
  a bijection.
\end{lemma}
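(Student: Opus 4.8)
The plan is to prove Lemma~\ref{lem:rep} by constructing the two directions of the bijection explicitly and showing they are mutually inverse. The forward map is already essentially defined: given $A \in \Omega$, set $\Contour = \Contour(A)$, $\Interface = \Interface(A)$, and $\ell = \ell_A$ restricted to the connected components of $\ctor \setminus \cup \V{\gamma}$. First I would check this triple is a matching collection: the compatibility of the components of $\partial \V{A}$ follows from the fact that distinct connected components of a closed set embedded in $\htors$ must be at $\ell_\infty$-distance $\geq \frac{1}{2}$ from each other (two $(d-1)$-faces dual to $\htor$ that come within $\frac{1}{2}$ in $d_\infty$ either intersect in a $(d-2)$-face, hence lie in the same component, or coincide); the labelling condition in Definition~\ref{def:MCI}(2) holds because $\partial \V{A}$ is, by construction, exactly the topological boundary of $\V{A}$, so crossing any face of $\partial\V{A}$ switches membership in $\V{A}$, i.e.\ switches the $\{\dis,\ord\}$ label.

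Next I would construct the inverse map. Given a matching collection $(\Contour,\Interface,\ell)$, let $\V{B}$ be the union of the closed $\ord$-labelled components of $\ctor \setminus \cup_{\V{\gamma}}\V{\gamma}$ together with the faces themselves, and read off an edge configuration $A$ by declaring $e \in A$ if and only if the dual region forces $\V{e}$ into the ``occupied'' structure — concretely, one recovers the occupied hypercubes as those $d$-dimensional unit cubes whose $\frac14$-neighbourhood lies in $\V{B}$, and then $A$ is the set of edges all of whose dual half-cubes are occupied. The key point is that the fattening operation in~\eqref{eq:fattening} is designed precisely so that $\V{A}$ is a union of side-$\frac12$ cubes on $\htor$, and this discretization makes the reconstruction unambiguous: the $\frac14$-neighbourhood construction means $\V{A}$ and $\ctor\setminus\V{A}$ are both ``thick'' (contain no isolated lower-dimensional pieces), so the labelled regions together with the separating faces determine $\V{B}$ and hence $A$.

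Then I would verify the two compositions are the identity. Starting from $A$, forming $(\Contour(A),\Interface(A),\ell_A)$ and reconstructing gives back $\V{A}$ (since $\partial\V{A}$ together with the labels of the complementary regions determines $\V{A}$ as a set), and since $A \mapsto \V{A}$ is itself injective on $\Omega$ — two distinct edge sets produce distinct occupied-cube collections, hence distinct fattenings — we recover $A$. Conversely, starting from a matching collection, reconstructing $A$ and then forming $\partial\V{A}$ returns the original faces with the original labels: here I would use that condition~(2) of Definition~\ref{def:MCI} guarantees every face in $\cup\V{\gamma}$ genuinely lies on the boundary of the reconstructed $\V{A}$ (it is not ``erased'' by having the same label on both sides), and condition~(1) guarantees no spurious adjacencies merge distinct contours/interfaces; the winding-vector dichotomy then sorts the recovered components back into $\Contour$ versus $\Interface$ exactly as they started.

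The main obstacle I expect is the second composition — showing that an abstract matching collection, with no a priori reference to any edge configuration, actually arises from one. This is where the geometric constraints built into the definitions (the $\frac14$-fattening, the $\frac12$-compatibility threshold, and especially the alternating-label condition) must be shown to be not just necessary but sufficient: one needs that gluing the $\ord$-regions and faces back together yields a set of the special form ``$\frac14$-neighbourhood of a union of occupied unit cubes'' rather than some more general subset of $\ctor$. I would handle this locally, cube by cube on $\htor$: around each vertex of $\htor$ the matching and labelling conditions leave only the configurations that are realizable, and a local-to-global argument (the torus is covered by these local pictures, which agree on overlaps) then produces the global edge configuration $A$. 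This is the step that genuinely uses all the hypotheses, whereas the forward direction and the first composition are comparatively routine bookkeeping with the definitions.
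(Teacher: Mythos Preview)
The paper does not actually prove this lemma; it simply cites \cite[p.15]{borgs2012tight}. Your proposal attempts a direct proof, which is a different route, so a brief comparison is in order.

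Your outline is sound and the forward direction is essentially correct, though your parenthetical justification of compatibility is imprecise for $d\ge 3$: two $(d-1)$-faces of side-$\tfrac12$ cubes that come within $d_\infty$-distance $\tfrac12$ need not share a $(d-2)$-face or coincide --- they can meet only in a lower-dimensional face. The conclusion still holds, however, because any nontrivial intersection places them in the same \emph{topological} connected component of $\partial\V{A}$, and a short lattice computation (all relevant coordinates are odd multiples of $\tfrac14$) shows that two disjoint such faces are always at $d_\infty$-distance $\ge \tfrac12$. So the compatibility step is fine once phrased correctly.

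The genuine gap is where you expect it: the second composition. Your ``cube by cube on $\htor$, then local-to-global'' sketch does not by itself establish that the closure of the $\ord$-labelled region is a set of the specific form $\V{A}$ for some edge set $A$. Not every union of closed side-$\tfrac12$ cubes on $\htor$ arises as the $\tfrac14$-fattening of a family of occupied unit hypercubes, and the alternating-label condition alone does not obviously force this structure. What makes it work is that each individual contour and the interface network are, \emph{by definition}, already realized by some edge configuration; one then needs a localization argument (in the spirit of Lemmas~\ref{lem:edge-dis} and~\ref{lem:edge-ord} later in the paper) showing that near each $\V{\gamma}$ one can choose edges realizing exactly $\V{\gamma}$ and nothing else, and that the $\tfrac12$-compatibility lets these local choices be glued consistently with the prescribed labels in between. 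That localization-and-gluing is the substantive content of the argument in \cite{borgs2012tight}, and your sketch stops just short of it.
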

\begin{proof}
  See~\cite[p.15]{borgs2012tight}.
\end{proof}

\subsection{Contour and interface formulation of $Z$}
\label{sec:cont-interf-form}

By Lemma~\ref{lem:rep} we can rewrite the partition function in terms
of matching collections of contours and interfaces by re-writing the
weight $w(A)$ of a configuration $A$ in terms of its contours and
interfaces. By weight $w(A)$ we mean the numerator of~\eqref{eqRCdef},
i.e., $w(A) = p^{\abs{A}}(1-p)^{\abs{E\setminus A}}q^{c(V,A)}$. To
this end, define
\begin{equation}
  \label{eq:eord-etc}
  e_{\ord} \bydef -d\log(1-e^{-\beta}),\;\;\;
  e_{\dis} \bydef d\beta - \log q, \;\;\;
  \kappa \bydef \frac{1}{2}\log (e^{\beta}-1).
\end{equation}
Further, define the \emph{size
  $\norm{\V{\gamma}}$} of a contour $\V{\gamma}$ (resp.\ \emph{size}
$\norm{\V{S}}$ of an interface $\V{S}$) by
\begin{equation}
  \label{eq:size}
  \norm{\V{\gamma}} \bydef \abs{ \V{\gamma}\cap \bigcup_{e\in E}\V{e}},
  \quad \norm{\V{S}} \bydef \abs{ \V{S} \cap \bigcup_{e\in E}\V{e}}.
\end{equation}
This is the number of intersections of $\V{\gamma}$ (resp.\ $\V{S}$)
with $\bigcup_{e\in E}\V{e}$. For a continuum set $\V{\Lam}$ we write
$| \V{\Lam}|$ for $| \V{\Lam} \cap \tor|$, that is, the number of
vertices of $\tor$ in $\V{\Lam}$ in the embedding of $\tor$ into
$\ctor$.  This will cause no confusion as we never need to measure the
volume of a continuum set.

Using these definitions, $w(A)$ can be written as
\begin{equation}
  \label{eq:eweight}
  w(A) = q^{c(\V{A}_{\ord})} e^{-e_{\dis}\abs{\V{A}_{\dis} }}
  e^{-e_{\ord}\abs{\V{A}_{\ord} }}
  \prod_{\V{S}\in\Interface}e^{-\kappa\norm{\V{S}}}
  \prod_{\V{\gamma}\in\Gamma} e^{-\kappa\norm{\V{\gamma}}},
\end{equation}
where $c(\V{A}_{\ord})$ is the number of connected components of
$\V{A}_{\ord}$.  The products run over the sets of interfaces and
contours associated to the edge configuration $A$, respectively. We
indicate the derivation of~\eqref{eq:eweight} in
Section~\ref{sec:deriv-cont-repr} below; see
also~\cite[p.13-15]{borgs2012tight}. Since {(recall $\Omega=2^{E}$)}
\begin{equation}
  \label{eq:Znew}
  Z = Z^{\text{RC}}_{\tor}(1-e^{-\beta},q) =  \sum_{A\in\Omega}w(A) \, ,
\end{equation}
it follows from \eqref{eq:eweight} and Lemma~\ref{lem:rep} that
\begin{equation}
  \label{eq:Z-mci}
  Z = \sum_{(\Interface,\Contour)} q^{c(\V{A}_{\ord})}
  e^{-e_{\dis}\abs{\V{A}_{\dis} }}
  e^{-e_{\ord}\abs{\V{A}_{\ord} }}
  \prod_{\V{S}\in\Interface}e^{-\kappa\norm{\V{S}}}
  \prod_{\V{\gamma}\in\Contour}e^{-\kappa\norm{\V{\gamma}}},
\end{equation}
where the sum runs over matching collections of contours and interfaces. This is the
contour and interface network representation of the random cluster model partition
function. 

In what follows it will be necessary to separate different contributions to
$Z$. To this end, let
\begin{equation}
  \label{eq:newsplit}
  \Omega_{\tunnel} \bydef \{A\in\Omega \mid \Interface(A)\neq
  \emptyset\}, \quad \Omega_{\rest} \bydef \Omega\setminus\Omega_{\tunnel},
\end{equation}
and define the corresponding partition functions
\begin{equation}
  \label{eq:newsplit-1}
  Z_{\tunnel} \bydef \sum_{A\in\Omega_{\tunnel}}w(A), \quad Z_{\rest}
  \bydef \sum_{A\in\Omega_{\rest}} w(A).
\end{equation}
By~\eqref{eq:Z-mci} $Z_{\rest}$ can be expressed in terms of contours
alone. We will see later that $Z_{\tunnel}$ is very small compared to
$Z_{\rest}$, and so the task of approximating $Z$ is essentially the
task of approximating $Z_{\rest}$.

\subsubsection{Derivation of contour representation}
\label{sec:deriv-cont-repr}

We briefly indicate how to obtain~\eqref{eq:eweight}. Recall that
$G_{A}$ denotes the graph $(V(G),A)$. Let
$\norm{\delta A} = |\delta_{1}A| + 2|\delta_{2}A|$, where $\delta_{k}A$
is the set of edges in $E\setminus A$ that contain $k$ vertices in
$V(A)$. Observe
\begin{align}
  \label{eq:c}
  c(V,A) &= c(G_{A}) + |V\setminus V(A)| \\
  \label{eq:bdry}
  2|A| &= 2d|V(A)| - \norm{\delta A}.
\end{align}
The first of these relations follows since every vertex not contained
in an edge of $A$ belongs to a singleton connected component, and the
second is a counting argument. Using these relations one obtains
\begin{equation}
  \label{eq:eweight2}
  w(A) = q^{c(G_{A})} e^{-e_{\dis}|V\setminus V(A)|}e^{-e_{\ord}|V(A)|}e^{-\kappa\norm{\delta A}}.
\end{equation}

To pass from~\eqref{eq:eweight2} to~\eqref{eq:eweight} requires just a
few observations. First, $c(G_{A})$ equals the number of components of
$\V{A}$, which is the number of connected components of
$\V{A}_{\ord}$. Second, $|V(A)| = |\V{A}_{\ord}|$, and similarly
$|V\setminus V(A)| = |\V{A}_{\dis}|$.  Lastly, $\norm{\delta A}$ is
precisely the sum of sizes of the contours and interfaces, as each
contribution to $\norm{\delta A}$ is given by a transverse
intersection of an edge $\V{e}$ with the boundary of $\V{A}$.

\subsection{External contour representations} 
\label{sec:Dis-Ord-Rep}
Next we will take the first steps to construct a representation of
$Z_\rest$ as the sum of polymer model partition functions.  We begin
with some basic results and definitions. Fix an arbitrary point
$\V{x}_{0}\in\ctor$ that cannot be contained in any contour, and let
$\sqcup$ denote disjoint union.

\begin{lemma}[{\cite[Lemma~4.3]{borgs2012tight}}]
  \label{lem:contour-split}
  For any contour $\V{\gamma}$, $\ctor\setminus\V{\gamma}$ has exactly
  two components.
\end{lemma}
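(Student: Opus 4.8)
The plan is to work with the representation of a contour $\V{\gamma}$ as a closed $(d-1)$-dimensional surface in $\ctor$ built from faces dual to edges of $\htor$, as developed in Section~\ref{sec:cont}. The key point is that $\V{\gamma}$ has winding vector $0$, so it is topologically trivial, and a standard Jordan--Brouwer type separation argument should show $\ctor\setminus\V{\gamma}$ has exactly two components. First I would recall from Lemma~\ref{lem:rep} that $\V{\gamma}$ arises as a connected component of $\partial\V{A}$ for some edge configuration $A\in\Omega$. Since $\V{\gamma}$ is connected and is a union of $(d-1)$-cubes meeting along $(d-2)$-cubes, it is a closed, connected $(d-1)$-dimensional subcomplex of the cube complex on $\htor$. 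The labelling $\ell_{A}$ of Definition~\ref{def:labl} already assigns each component of $\ctor\setminus\partial\V{A}$ a label in $\{\ord,\dis\}$, and any two components adjacent across a face of $\V{\gamma}$ get opposite labels; this is the germ of the ``two sides'' statement.

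The main steps: (i) show $\ctor\setminus\V{\gamma}$ has \emph{at least} two components, which follows because $\V{\gamma}$ contains a face $\V{f}$ dual to some edge $e$, and locally across $\V{f}$ the two adjacent $d$-cubes of $\htor$ lie in different components (otherwise $\V{\gamma}$ would not be part of $\partial\V{A}$ — the face $\V{f}$ would not be on the boundary); (ii) show $\ctor\setminus\V{\gamma}$ has \emph{at most} two components. For (ii) the natural argument is a mod-$2$ intersection / winding argument: for any point $\V{x}\in\ctor\setminus\V{\gamma}$ not in $\V{\gamma}$, and any ``reference'' point $\V{x}_1$ on the other side of a fixed face $\V{f}\subset\V{\gamma}$, one defines the parity of the number of transverse intersections of a generic path from $\V{x}$ to $\V{x}_1$ with $\V{\gamma}$. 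Because $\V{\gamma}$ has winding vector $0$, this parity is well-defined (independent of the path, since any two paths differ by a cycle in $\ctor$, and a cycle crosses $\V{\gamma}$ an even number of times precisely because $\V{\gamma}$ is null-homologous — the winding vector being $0$ in every fundamental direction forces this). This parity is locally constant on $\ctor\setminus\V{\gamma}$ and takes only two values, so there are at most two components, and combined with (i) there are exactly two.

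The step I expect to be the main obstacle is (ii), and specifically justifying that winding vector $0$ really implies $\V{\gamma}$ is null-homologous in $H_{d-1}(\ctor;\Z/2)$, so that the mod-$2$ intersection number with an arbitrary loop vanishes. The subtlety is that $\ctor = (\R/n\R)^d$ has nontrivial $H_{d-1}(\cdot;\Z/2)$ generated by the $d$ coordinate hyperplane classes, and the winding vector exactly records the pairing of $[\V{\gamma}]$ with the Poincar\'e-dual fundamental-loop classes $\V{L}_i$; since this pairing is nondegenerate mod $2$, vanishing of the winding vector forces $[\V{\gamma}]=0$. I would phrase this carefully but lean on the cited construction — in fact, since the statement is attributed to \cite[Lemma~4.3]{borgs2012tight}, the cleanest route for the paper is to simply invoke that reference, and I would do so, using the discussion above only to convey why the result is true rather than to reprove it from scratch.
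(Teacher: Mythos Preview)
Your proposal is correct and matches the paper's approach exactly: the paper provides no proof of Lemma~\ref{lem:contour-split} at all, simply citing \cite[Lemma~4.3]{borgs2012tight}, which is precisely what you recommend doing in your final sentence. Your homological sketch (winding vector zero $\Rightarrow$ null-homologous in $H_{d-1}(\ctor;\Z/2)$ via the nondegenerate mod-$2$ intersection pairing with the fundamental loops, hence a well-defined mod-$2$ crossing parity gives at most two components) is sound and is essentially the content of the cited lemma in~\cite{borgs2012tight}, but for the purposes of this paper the bare citation suffices.
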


\begin{defn}
  \label{def:ext}
  Let $\V{\gamma}$ be a contour, and suppose
  $\ctor\setminus\V{\gamma} = \V{C}\sqcup \V{D}$. Then the
  \emph{exterior} $\Ext \V{\gamma}$ of $\V{\gamma}$ is $\V{C}$ if
  $\abs{\V{C}}>\abs{\V{D}}$, and is $\V{D}$ if the inequality is
  reversed. In the case of equality the exterior is the component
  containing $\V{x}_{0}$. The \emph{interior} $\Int \V{\gamma}$ of
  $\V{\gamma}$ is the component of $\ctor\setminus\V{\gamma}$ that is
  not $\Ext \V{\gamma}$.
\end{defn}

Note that the notion of exterior is defined relative to $\ctor$,
though we omit this from the notation.

\begin{remark*}
  This is a different definition of exterior than is used
  in~\cite{borgs2012tight}; our definition is more convenient for
  algorithmic purposes. Most of the results of~\cite{borgs2012tight}
  concerning the interiors/exteriors of contours apply verbatim with
  this change, and whenever we use these results we will remark on why
  they apply.
\end{remark*}

If two contours $\V{\gamma}$ and $\V{\gamma}'$ are compatible, then we
write (i) $\V{\gamma}<\V{\gamma}'$ if
$\Int \V{\gamma} \subset \Int \V{\gamma}'$ and (ii)
$\V{\gamma} \bot \V{\gamma}'$ if
$\Int \V{\gamma} \cap \Int \V{\gamma}' = \emptyset$.  Given a matching collection of contours $\Contour$, $\V{\gamma}\in\Contour$
is an \emph{external contour} if there does not exist
$\V{\gamma}'\in\Contour$ such that $\V{\gamma}'<\V{\gamma}$. 
The \emph{exterior} of a matching collection of contours $\Contour$ is
\begin{equation}
  \label{eq:ext-pcc}
  \Ext \Contour \bydef \bigcap_{\V{\gamma}\in\Contour}\Ext\V{\gamma}.
\end{equation}
If $\Contour$ is matching, then $\Ext\Contour$ is a
connected subset of $\tor$. This follows by noting
that~\cite[Lemma~5.5]{borgs2012tight} holds with
Definition~\ref{def:ext} of the interior and exterior, and given this,
the connectedness of $\Ext\Contour$ follows by the argument
in~\cite[Lemma~5.6]{borgs2012tight}. Note that since $\Ext\Contour$ is contained
in $\tor\setminus\bigcup_{\V{\gamma}\in\Contour}\V{\gamma}$, this
implies that $\Ext\Contour$ is labelled either $\ord$ or $\dis$. 

As usual in Pirogov--Sinai theory, see,
e.g.~\cite[Section~6.2]{borgs2012tight}, it is useful to resum the
matching compatible contours that contribute to~\eqref{eq:Z-mci}
according to the external contours of the configuration. To make this
precise, we require several definitions. A matching collection of
contours $\Contour$ is \emph{mutually external} if
$\V{\gamma}\bot\V{\gamma}'$ for all
$\V{\gamma}\neq\V{\gamma}'\in\Contour$. For a continuum set
$\V \Lam \subseteq \ctor$, we say a contour $\V{\gamma}$ is \emph{a
  contour in $\V{\Lam}$} if
$d_{\infty}(\V{\gamma}, \ctor\setminus\V{\Lam})\geq 1/2$. The distance
to the empty set is infinite by convention.

Write $\cC(\V{\Lam})$ for the set of contours in $\V{\Lam}$, and $\cC
= \cC(\ctor)$ for the set of all contours.  For
$\V{\Lam}\subseteq\ctor$ define
$\cG^{\ext}(\V{\Lam})$ to be the set of matching mutually external
contours in $\V{\Lam}$, and then define
\begin{align}
  \label{eq:matchextord}
  Z_{\ord}(\V{\Lam}) 
  &\bydef \sum_{\Contour \in \cG^{\ext}_{\ord} (\V{\Lam})} 
    e^{- e_{\ord} | \V{\Lam} \cap \Ext \Contour  |} \prod_{\V{\gamma} \in
    \Contour} e^{-\kappa \|\V{\gamma}\|} Z_{\dis} (\Int \V{\gamma})  \\
  \label{eq:matchextdis}
  Z_{\dis} (\V{\Lam}) 
  &\bydef \sum_{\Contour \in \cG^{\ext}_{\dis}(\V{\Lam})} 
    e^{- e_{\dis} | \V{\Lam} \cap \Ext \Contour|}
    \prod_{\V{\gamma} \in \Contour} e^{-\kappa \|\V{\gamma}\|}
    q Z_{\ord} (\Int \V{\gamma}),
\end{align}
where the sums in \eqref{eq:matchextord} and \eqref{eq:matchextdis}
run over sets of matching mutually external contours in which
$\Ext\Gamma$ is labelled $\ord$ and $\dis$, respectively. This is the
desired resummation. In the special case $\V{\Lam}=\ctor$ these
partition functions represent the sums of $w(A)$ over
\begin{align}
\label{eq:Z-split-ord}
  \Omega_{\ord} 
  &\bydef \{A\in\Omega\setminus\Omega_{\tunnel} \mid
    \text{$\Ext\Contour(A)$ is labelled $\ord$}\}, 
  \\
  \label{eq:Z-split-dis}
  \Omega_{\dis} 
  &\bydef \{A\in\Omega\setminus\Omega_{\tunnel} \mid
    \text{$\Ext\Contour(A)$ is labelled $\dis$}\}.
\end{align}
That is, we get a decomposition $Z_{\rest} = qZ_{\ord}+Z_{\dis}$, where
\begin{equation}
  \label{eq:Zmatch}
  Z_{\ord} = q^{-1}\sum_{A\in\Omega_{\ord}}w(A), \quad Z_{\dis} = \sum_{A\in\Omega_{\dis}}w(A).
\end{equation}

Subsection~\ref{sec:RCM-form} will give interpretations of these
quantities in terms of random cluster model partition functions for
many other choices of $\V{\Lam}$.

\subsection{Labelled contours}
\label{sec:labell-cont-rcm}
This subsection introduces labelled contours and establishes some
basic properties of these objects. These properties will ensure that
we can efficient enumerate labelled contours.

In Definition~\ref{def:labl} we associated a labelling to an entire
collection of matching and compatible contours and interfaces. For
collections of contours, since each contour splits $\tor$ into two
pieces, it is more convenient to associate the labelling to individual
contours. We do this by assigning a label to $\Int \V{\gamma}$ (resp.\
$\Ext \V{\gamma}$) according to the label of the region of
$\tor\setminus \cup_{\V{\gamma}\in\Contour}\V{\gamma}$ adjacent to
$\V{\gamma}$ contained in $\Int \V{\gamma}$ (resp.\
$\Ext \V{\gamma}$).

A \emph{compatible set of labelled contours $\Contour$} is a set of
compatible contours $\Contour$ such that the connected components of
$\tor\setminus \cup_{\V{\gamma}\in\Contour}\V{\gamma}$ are assigned
the same labels by the labelled contours. More precisely, for a
component $\V{B}$ of
$\tor\setminus \cup_{\V{\gamma}\in\Contour}\V{\gamma}$,
$\partial \V{B}$ is a union of compatible contours
$\V{\gamma}_{0}, \dots, \V{\gamma}_{k}$ for some $k\geq 0$, and (up to
relabelling) either (i) $\V{\gamma}_{i}<\V{\gamma}_{0}$ for
$i=1,\dots, k$ or (ii) $\V{\gamma}_{i}\perp \V{\gamma}_{j}$ for
  $i\neq j$. The condition of compatibility of the labels in
  the first case is that the interior label of $\V{\gamma}_{0}$ is
the same as the exterior label of $\V{\gamma}_{i}$ for all
$i=1,\dots k$, and in the second case is that all exterior
  labels agree.

By construction, the set of collections of matching and compatible
contours is the same as the set of collections of compatible labelled
contours. The advantage of the latter is that it enables us to define
a labelled contour $\V{\gamma}$ to be \emph{ordered} if its exterior
label is $\ord$, and \emph{disordered} if its exterior label is
$\dis$. We let $\cC_{\ord}(\V{\Lambda})$ and $\cC_{\dis}(\V{\Lambda})$
denote the sets of labelled contours in $\V{\Lambda}$ with external
labels $\ord$ and $\dis$, respectively, with
$\cC_\ord = \cC_{\ord}(\ctor)$ and $\cC_{\dis} = \cC_{\dis}(\ctor)$.
The next lemma gives a way to construct a labelled contour
$\V{\gamma}$ from an edge configuration.

\begin{lemma}
  \label{lem:construct}
  Let $\ell\in\{\ord,\dis\}$, let $\V{\gamma}\in \cC_{\ell}$,
  and $\V{\Lambda}=\Int\V{\gamma}$. Then
  \begin{itemize}
  \item If $\ell=\dis$, let $E'(\V\Lambda)$ be set of edges contained in
    $\V{\Lambda}$. Then $\V{\gamma}$ is the unique component of
    $\partial \V{A}$ where $A = E'(\V\Lambda)\subset E$.
  \item If $\ell=\ord$, let $E'(\V\Lambda)$ be the set of edges whose
    midpoints are contained in $\V{\Lambda}$. Then $\V{\gamma}$ is the
    unique component of $\partial \V{A}$ where
    $A=E\setminus E'(\V\Lambda)$.
  \end{itemize}
\end{lemma}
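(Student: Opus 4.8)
The plan is to unwind the definitions from Section~\ref{sec:cont} and reduce the claim to showing that the prescribed edge configuration $A$ produces exactly one component of $\partial\V{A}$, namely $\V{\gamma}$. First I would recall that a labelled contour $\V{\gamma}\in\cC_\ell$ is by definition a contour, so there exists \emph{some} edge configuration $A'\in\Omega$ with $\V{\gamma}\in\Contour(A')$; moreover, since $\ctor\setminus\V{\gamma}$ has exactly two components $\Int\V{\gamma}$ and $\Ext\V{\gamma}$ by Lemma~\ref{lem:contour-split}, the configuration $A'$ restricted near $\V{\gamma}$ is essentially forced. The label $\ell$ tells us whether $\Int\V{\gamma}$ is an ordered region (so the hypercubes inside $\Int\V{\gamma}$ adjacent to $\V{\gamma}$ are occupied) or a disordered region (so those hypercubes are vacant). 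The candidate configurations $A$ in the two bullets are precisely the ``extremal fillings'' of $\V{\Lambda}=\Int\V{\gamma}$ consistent with the label: in the disordered case we take \emph{all} edges inside $\V{\Lambda}$ to be present (so $\V{A}$ has an ordered region filling the interior up to the $1/4$-neighborhood, hence $\partial\V{A}$ is a single component hugging $\partial\V{\Lambda}=\V{\gamma}$ from inside); in the ordered case we take \emph{no} edges with midpoint in $\V{\Lambda}$ to be present but keep everything outside, so $\V{A}$ is the exterior region and $\partial\V{A}$ again has a single component along $\V{\gamma}$.

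The key steps, in order, would be: (1) identify, using the fattening construction \eqref{eq:fattening} and the hypercube description of $\V{A}$ from Section~\ref{sec:cont}, exactly which $1/2$-hypercubes centered on $\htor$ lie in $\V{A}$ for each of the two choices of $A$; (2) observe that in each case $\V{A}$ is connected (in the disordered case, $\V{A}$ is the $1/4$-neighborhood of the occupied hypercubes inside $\V{\Lambda}$, which forms a single ``blob'' because $G_{E'(\V\Lambda)}$ restricted to $\V{\Lambda}$ is connected; in the ordered case $\V{A}$ contains all of $\Ext\V{\gamma}$ together with everything outside, again a single connected region) and hence $\partial\V{A}$ has a single component; (3) check that this unique component is exactly $\V{\gamma}$, by comparing with the witness configuration $A'$ near $\V{\gamma}$ and using that the definition of contour only records the local geometry near $\V{\gamma}$ --- the two configurations $A$ and $A'$ agree on all edges whose dual faces are ``close'' to $\V{\gamma}$, so they produce the same boundary component there, and we have arranged $A$ so that no \emph{other} boundary faces appear; (4) conclude uniqueness from the fact that $\partial\V{A}$ cannot have a second component, since both $\V{A}$ and its complement are connected.

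I would carry out step (3) by invoking the bijection of Lemma~\ref{lem:rep}: the triple $(\Contour(A),\Interface(A),\ell_A)$ is determined by $A$, and for the configurations $A$ in the statement one computes directly that $\Interface(A)=\emptyset$ (the winding vector is $0$ because $\V{\gamma}$ is a contour and nothing else contributes) and $\Contour(A)=\{\V{\gamma}\}$. The identity \eqref{eq:c}--\eqref{eq:bdry} together with the description of $\norm{\delta A}$ as the number of transverse edge-crossings of $\partial\V{A}$ can be used to double-check that $\norm{\delta A}=\norm{\V{\gamma}}$, confirming there is no extra boundary.

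The main obstacle I anticipate is step (2)--(4): rigorously verifying that the chosen $A$ yields \emph{exactly one} component of $\partial\V{A}$ and that it coincides with $\V{\gamma}$, rather than, say, $\V{\gamma}$ together with some small bubble created at the interface between $\V{\Lambda}$ and its complement, or a deformed version of $\V{\gamma}$. This requires a careful topological argument about how the $1/4$-fattening interacts with the boundary $\V{\gamma}=\partial\V{\Lambda}$; one must use that $\V{\gamma}$, being a genuine contour, already has the ``right'' local structure (it is a closed $(d-1)$-dimensional surface built from faces dual to $\htor$-edges with both sides labelled consistently), so that filling $\V{\Lambda}$ maximally (or emptying it maximally) cannot create or destroy boundary faces beyond those of $\V{\gamma}$ itself. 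I expect this can be handled by a short argument comparing $A$ face-by-face with the witness configuration $A'$ on a neighborhood of $\V{\gamma}$, but it is the part that demands genuine care rather than bookkeeping.
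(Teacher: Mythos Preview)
Your proposal attempts a direct topological argument, whereas the paper's own proof is simply a citation to \cite[Lemma~5.1]{borgs2012tight} and the proof of \cite[Lemma~5.11]{borgs2012tight}. So the approaches differ: the paper outsources the hypercube geometry entirely, while you try to redo it from the fattening construction. That is a reasonable ambition, but your outline has a genuine gap that is more than bookkeeping.

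The problem is step~(4). On the torus $\ctor$ it is \emph{false} that connectedness of $\V{A}$ and of $\ctor\setminus\V{A}$ forces $\partial\V{A}$ to be connected. A band $\{\V{x}:0\le \V{x}_1\le n/2\}\subset\ctor$ and its complement are both connected, yet the boundary has two components. What rules this out in the lemma is precisely that $\V{\gamma}$ is a \emph{contour} (winding vector zero), not an interface; your steps~(1)--(4) never invoke this, and your remark in step~(3) that ``the winding vector is $0$ because $\V{\gamma}$ is a contour and nothing else contributes'' is circular, since ``nothing else contributes'' is exactly what you are trying to prove. The face-by-face comparison you sketch at the end is closer to a workable fix: one can argue that every $(d-1)$-face appearing in $\partial\V{A}$ must separate an occupied $1/2$-hypercube from an unoccupied one, and then trace which faces these can be for the specific extremal filling $A$. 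But carrying that out rigorously requires exactly the kind of local hypercube analysis in \cite[Lemma~5.1]{borgs2012tight}, which is why the paper cites it rather than reproving it. If you want a self-contained argument, you should replace step~(4) by a direct verification that the set of boundary faces of $\V{A}$ coincides with the set of faces of $\V{\gamma}$, using the explicit description of which $1/2$-hypercubes belong to $\V{A}$; the connectivity shortcut does not survive on the torus.
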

\begin{proof}
  These claims follows from~\cite[Lemma~5.1]{borgs2012tight}; see the
  proof of~\cite[Lemma~5.11]{borgs2012tight}.\footnote{These results
    rely only on the geometry of hypercubes and not on the definitions
    of interior/exterior.} 
\end{proof}

Lemma~\ref{lem:construct} gives a way to construct a given contour
from some set of edges $A$. For our algorithms it will be important to
be able to generate contours from a relatively small set of edges. We
first explain how to do this for disordered contours.  

Suppose $\V{\gamma}\in \cC_{\dis}$ and let $\Lambda = \Int
\V{\gamma}\cap\tor$. Define
\begin{equation}
  \label{eq:active2s}
  \cE_{\V{\gamma}} \bydef  \{ e=\{i,j\} \mid i,j\in\Lambda, \,\,
  d_{\infty}(\text{mid}(\V{e}),\V{\gamma})\geq 3/4 \},
\end{equation}
where $\text{mid}(\V{e})$ denotes the midpoint of the edge $\V{e}$;
this is the vertex of $\htor$ on the two-step path from $i$ to $j$ in
$\htor$. 

\begin{lemma}
  \label{lem:edge-dis}
  Suppose $\V{\gamma}\in \cC_{\dis}$ and let
  $\V{\Lambda} = \Int \V{\gamma}$. Suppose
  $F\subseteq \cE_{\V \gamma}$ and let $A = E'\setminus F$, where
  $E'=E'(\Lambda)$ is defined as in Lemma~\ref{lem:construct}. Let
  $\Contour$ be the set of contours in $\partial \V{A}$. Then
  $\V{\gamma}\in\Contour$, and for all $\V{\gamma}'\in\Contour$ with
  $\V{\gamma}'\neq\V{\gamma}$ we have $\V{\gamma}' <
  \V{\gamma}$. Moreover, all sets of matching contours consisting of
  $\gamma$ and contours in $\Int \gamma$ arise from such $F$.
  \end{lemma}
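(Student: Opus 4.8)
The plan is to verify the three assertions in turn, all of which reduce to the geometry already encoded in Lemma~\ref{lem:construct}. Recall that for $\V\gamma\in\cC_\dis$ with $\V\Lambda=\Int\V\gamma$, the edge set $E'=E'(\V\Lambda)$ consists of the edges contained in $\V\Lambda$, and $\V\gamma$ is the unique component of $\partial\V{E'}$. The first observation I would make is that removing an edge $e\in\cE_{\V\gamma}$ from $E'$ cannot affect $\V\gamma$ itself: by~\eqref{eq:active2s}, the midpoint $\mathrm{mid}(\V e)$ lies at $d_\infty$-distance at least $3/4$ from $\V\gamma$, and deleting $e$ only changes occupancy of the $d$-dimensional hypercubes of side $1/2$ adjacent to $\mathrm{mid}(\V e)$ (equivalently, modifies $\partial\V{A}$ only within a small neighborhood of $\mathrm{mid}(\V e)$). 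Since the $(d-1)$-faces making up $\V\gamma$ are all at distance $\ge 3/4-O(1)$, which with the side-length-$1/2$ bookkeeping is bounded away from the affected region, none of them is touched. Hence for any $F\subseteq\cE_{\V\gamma}$ and $A=E'\setminus F$, the contour $\V\gamma$ still appears as a component of $\partial\V A$; and because $A\subseteq E'$, which is supported inside $\V\Lambda=\Int\V\gamma$, every other component of $\partial\V A$ is created by edges strictly inside $\Int\V\gamma$, giving $\V\gamma'<\V\gamma$. This also shows $\V\gamma$ is still the unique \emph{external} contour, so $\Contour$ is a set of matching contours of the asserted shape.

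For the "moreover" — that every matching collection consisting of $\V\gamma$ together with contours in $\Int\V\gamma$ arises from some such $F$ — I would argue as follows. Let $\Contour = \{\V\gamma\}\cup\Contour'$ be such a collection, with every $\V\gamma'\in\Contour'$ satisfying $\V\gamma'<\V\gamma$. By Lemma~\ref{lem:rep} (applied on the torus, or rather its natural restriction to the region $\Int\V\gamma$ with disordered boundary label, which is exactly the content used throughout Section~\ref{sec:Dis-Ord-Rep}), the collection $\Contour$ corresponds to a unique edge configuration, and I claim that configuration is $A=E'\setminus F$ for $F=E'\setminus A$. What must be checked is that $F\subseteq\cE_{\V\gamma}$, i.e.\ that every edge we delete from $E'$ has midpoint at $d_\infty$-distance $\ge 3/4$ from $\V\gamma$. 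This is because an edge $e\in E'$ with $\mathrm{mid}(\V e)$ within distance $<3/4$ of $\V\gamma$ is "pinned": its occupancy is forced by the requirement that $\V\gamma$ be a component of $\partial\V A$ with the disordered label on its exterior side and ordered immediately interior (the same local rigidity argument used in~\cite[Lemma~5.11]{borgs2012tight}). So those edges of $E'$ cannot be removed without destroying $\V\gamma$, hence $F$ avoids them, i.e.\ $F\subseteq\cE_{\V\gamma}$.

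The main obstacle, and the step I would spend the most care on, is the precise bookkeeping in the first paragraph: translating the $d_\infty$-distance threshold $3/4$ in~\eqref{eq:active2s} into the statement that flipping $e$ leaves every $(d-1)$-face of $\V\gamma$ untouched. One must track exactly which side-$\tfrac12$ hypercubes centered on $\tfrac12\Z^d/n$ change occupancy when $e$ is added or removed, and confirm that the boundary modification is confined to the open $\tfrac12$-neighborhood of $\mathrm{mid}(\V e)$ in the $\ell_\infty$ metric — which, combined with the $3/4$ separation and the $1/4$-fattening in~\eqref{eq:fattening}, is exactly enough slack to keep $\V\gamma$ fixed. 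Everything else is a direct appeal to Lemma~\ref{lem:construct}, Lemma~\ref{lem:rep}, and the rigidity of contour configurations from~\cite{borgs2012tight}; the analogous statement for ordered contours (handled separately just after this lemma in the paper) follows by the same argument with $E'(\V\Lambda)$ replaced by its complement.
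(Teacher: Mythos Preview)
Your proposal is correct and follows essentially the same route as the paper: show that removing edges in $\cE_{\V\gamma}$ from $E'$ creates a new disordered region disjoint from $\V\gamma$ (so $\V\gamma$ survives as the unique external contour), and deduce the ``moreover'' from the bijection of Lemma~\ref{lem:rep}. The one place where the paper is tighter is precisely the step you flag as the main obstacle: rather than tracking which side-$\tfrac12$ hypercubes flip when an edge is removed (your ``open $\tfrac12$-neighborhood of $\mathrm{mid}(\V e)$'' is not quite the right envelope --- the dual facet $e^\star$ already has $\ell_\infty$-radius $\tfrac12$ before fattening), the paper invokes the explicit formula \cite[Lemma~5.1(iv)]{borgs2012tight}, which says the new disordered region is exactly the union of the open $3/4$-neighborhoods of the fully-removed vertices and the open $1/4$-neighborhoods of the dual facets $e^\star$ for $e\in F$; the threshold $3/4$ in the definition of $\cE_{\V\gamma}$ is calibrated so that both pieces sit at $d_\infty$-distance $\ge 1/2$ from $\V\gamma$.
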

\begin{proof}
  We begin by recalling an alternate construction of
  $\V{A}$ from~\cite{borgs2012tight}. Let $E\subset
  E(\tor)$, and let $D\subset E$. Set
  $D^{\star}$ to be the set of
  $(d-1)$-dimensional unit hypercubes dual to the edges of
  $D$, and set 
  \begin{equation*}
    V_{-}(D) = \set{x\in V(\tor) \mid \{x,y\} \in D \text{ if } \{x,y\} \in E}.
  \end{equation*}
  Set $\V{D}_{\dis}$ to be the union of the open
  $3/4$-neighborhood of $V_{-}(D)$ and the open
  $1/4$-neighborhood of
  $D^{\star}$. Then by~\cite[Lemma~5.1, (iv)]{borgs2012tight}, if $D =
  E\setminus A$, $\V{E}\setminus \V{A} =
  \V{D}_{\dis}$. I.e., $\V{D}_{\dis}$ is the disordered region associated to
  $A$ (relative to the region
  $\V{E}$).

  To prove the lemma, we apply this construction with
  $E=E'(\Lam)$ and $D=F$. The definition of
  $\cE_{\V{\gamma}}$ ensures that both the open
  $3/4$-neighborhoods of the included vertices and the open
  $1/4$-neighborhoods of the included dual facets are at distance at
  least $1/2$ from $\V{\gamma}$. This implies that
  $\V{\gamma}$ is a boundary component of $\V{E\setminus
    F}$, and the first claim follows as all other boundary components
  are adjacent to
  $\V{D}_{\dis}$. The second claim follows from the bijection of
  Lemma~\ref{lem:rep}, which restricts to a bijection in this setting.
\end{proof}

\begin{lemma}
  \label{lem:dis-edge-con}
  Suppose $\V{\gamma}\in \cC_{\dis}$. Then there is a connected graph
  with edge set $A$ such that (i) $\abs{A}\leq 2d\norm{\V{\gamma}}$
  and (ii) 
  $\V{\gamma}$ is the outermost contour in $\partial \V{A}$.
\end{lemma}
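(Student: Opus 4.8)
The plan is to build the edge set $A$ directly out of the geometry of $\V{\gamma}$ rather than starting from the full set $E'(\Lam)$. Recall from Lemma~\ref{lem:construct} that if we take $A_0 = E'(\V\Lambda)$ — all edges contained in $\V\Lambda = \Int\V{\gamma}$ — then $\V{\gamma}$ is the unique boundary component of $\partial\V{A_0}$. The trouble is that $|A_0|$ can be as large as $\Theta(|\Int\V{\gamma}|)$, which is far too big compared to $\norm{\V{\gamma}}$. The remedy is to delete most of $A_0$, keeping only the edges that are needed to ``hold $\V{\gamma}$ in place,'' i.e.\ the edges near $\V{\gamma}$. Concretely, I would let $F = \cE_{\V\gamma}$ (every eligible edge at $d_\infty$-distance $\geq 3/4$ from $\V{\gamma}$) and set $A = E'(\V\Lambda)\setminus F = E'(\V\Lambda)\cap \{e : d_\infty(\text{mid}(\V e),\V{\gamma}) < 3/4\}$. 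By Lemma~\ref{lem:edge-dis} applied with this choice of $F$ (which is a legal subset of $\cE_{\V\gamma}$), $\V{\gamma}$ is a boundary component of $\partial\V{A}$ and every other boundary component $\V{\gamma}'$ satisfies $\V{\gamma}'<\V{\gamma}$; in particular $\V{\gamma}$ is the outermost contour of $\partial\V{A}$, giving (ii).

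For the size bound (i), the point is that every edge of $A$ has its midpoint within $\ell_\infty$-distance $3/4$ of $\V{\gamma}$, so the vertices incident to edges of $A$ all lie within bounded distance of $\V{\gamma}$. I would count as follows: each edge $e\in A$ has $\text{mid}(\V e)$ a vertex of $\htor$ lying in the (bounded) $3/4$-neighborhood of $\V{\gamma}$; the number of lattice midpoints in an $\ell_\infty$ $3/4$-tube around $\V{\gamma}$ is $O(\norm{\V{\gamma}})$, since $\V{\gamma}$ is a union of $\norm{\V{\gamma}}$ unit $(d-1)$-faces (up to the $d$-dependent constant in \eqref{eq:size}) and each such face has only $O_d(1)$ midpoints within distance $3/4$. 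Being slightly more careful to get the clean constant $2d\norm{\V{\gamma}}$: each edge $e\in A$ actually crosses $\partial\V{A}$ — indeed every edge in $A_0$ adjacent to the deleted region contributes to $\norm{\partial\V{A}}$, and the surviving edges of $A$ are exactly those straddling $\V{\gamma}$ or an inner contour. More precisely I would argue that $\norm{\delta A}$ (in the notation of Section~\ref{sec:deriv-cont-repr}) equals $\sum_{\V{\gamma}'\in\Contour}\norm{\V{\gamma}'} \geq \norm{\V{\gamma}}$, and that each vertex of $V(A)$ is incident to at least one edge of $\delta A$ because it lies adjacent to the disordered exterior region; combined with $2|A| = 2d|V(A)| - \norm{\delta A} \leq 2d|V(A)|$ from \eqref{eq:bdry}, and $|V(A)| \leq \norm{\delta A} \leq$ (a constant times) $\norm{\V{\gamma}}$ once one checks every vertex of $V(A)$ is within distance $O(1)$ of $\V{\gamma}$, one gets $|A|\leq 2d\norm{\V{\gamma}}$.

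Finally, connectedness of $(V(A),A)$: this I would get from the fact that $A$ consists precisely of the edges contained in $\V\Lambda$ whose midpoints sit in the $3/4$-neighborhood of $\V{\gamma}$, which is a connected strip (since $\V{\gamma}$ is connected and $\Int\V{\gamma}$ is connected), together with the structural description in the proof of Lemma~\ref{lem:edge-dis}: $\V{E'\setminus F}$ has $\V{\gamma}$ as its outer boundary and the disordered region $\V{D}_{\dis}$ is the $3/4$/$1/4$-neighborhood of the deleted part, so the ordered (occupied) region $\V{A}$ hugs $\V{\gamma}$ and is connected. If the literal edge set is not automatically connected (e.g.\ because inner contours $\V{\gamma}'<\V{\gamma}$ could in principle detach pieces), I would simply take the connected component of $A$ touching $\V{\gamma}$; its boundary still contains $\V{\gamma}$ as outermost contour, and its size is only smaller, so both (i) and (ii) persist. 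The main obstacle I anticipate is pinning down the constant $2d$ exactly rather than a generic $O_d(1)$ — specifically verifying that $|V(A)|\le \norm{\V{\gamma}}$ (up to the precise normalization of $\norm{\cdot}$ in \eqref{eq:size}) via the $\ell_\infty$-distance-$3/4$ constraint, together with the identity $2|A| = 2d|V(A)| - \norm{\delta A}$, is the one genuinely computational point, and getting it to land on $2d\norm{\V{\gamma}}$ rather than $(2d+1)\norm{\V{\gamma}}$ requires care about which edges/vertices are double-counted.
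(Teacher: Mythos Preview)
Your choice of $A = E'(\V\Lambda)\setminus\cE_{\V\gamma}$ and the appeal to Lemma~\ref{lem:edge-dis} for (ii) are exactly the paper's approach. Where you diverge is in the execution of connectedness and of the size bound (i), and in both places you are working harder than necessary and, for (i), leaving an actual gap.

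For connectedness, the paper obtains it in one line by contradiction: if the graph induced by $A$ were disconnected, the thickenings of the distinct components would have compatible boundaries, so $\partial\V A$ would contain two compatible exterior contours, contradicting the conclusion of Lemma~\ref{lem:edge-dis} that $\V\gamma$ is the unique exterior contour. Your fallback of passing to a single component is not needed.

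For the size bound, your route via $2|A| = 2d|V(A)| - \norm{\delta A}$ together with $|V(A)|\le\norm{\delta A}\le C\norm{\V\gamma}$ does not close: $\norm{\delta A} = \sum_{\V\gamma'\in\Contour}\norm{\V\gamma'}$ includes all inner contours produced by deleting $F$, so it can exceed $\norm{\V\gamma}$ and you have no control on it in terms of $\norm{\V\gamma}$ alone. The paper's argument bypasses $\norm{\delta A}$ entirely: every edge of $A$ contains a vertex $v\in\Lambda$ from which some edge leaves $\Lambda$ (this is what $e\notin\cE_{\V\gamma}$ forces, since otherwise both endpoints would sit deep enough inside $\Lambda$ that the midpoint condition holds). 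Each such boundary vertex contributes a distinct edge crossing $\V\gamma$, so there are at most $\norm{\V\gamma}$ of them; as each is incident to at most $2d$ edges, $|A|\le 2d\norm{\V\gamma}$.
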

\begin{proof}
  Choose $F=\cE_{\V{\gamma}}$ in Lemma~\ref{lem:edge-dis}. Then the
  subgraph of $\tor$ induced by $E''=E'(\Lambda)\setminus F$ is
  connected: if not $\partial \V{{E''}}$ would contain two compatible
  exterior contours as the boundaries of the thickenings of the
  connected components of $E''$ are compatible. This would contradict
  the conclusion of Lemma~\ref{lem:edge-dis} that there is a unique
  exterior contour.

  The bound on the size of $A$ is crude; it can be obtained by noting
  that the included edges all contain a vertex from which there is an
  edge outgoing from $\Lambda$, and the count of these vertices is a
  lower bound for $\norm{\V{\gamma}}$. Each of the vertices is
  contained in at most $2d$ edges.
\end{proof}

We now establish a similar way to construct an ordered contour from a
small edge set. The situation is slightly different due to the
differences between ordered and disordered contours in
Lemma~\ref{lem:construct}. Define, for $\V{\gamma}\in \cC_{\ord}$,
$\Lambda = \Int \V{\gamma} \cap \tor$,
\begin{equation}
  \label{eq:active1s}
  \cE_{\V{\gamma}} \bydef \{ \{i,j\} \mid i,j\in\Lambda\}.
\end{equation}

\begin{lemma}
  \label{lem:edge-ord}
  Suppose $\V{\gamma}\in \cC_{\ord}$ and $F\subseteq \cE_{\V{\gamma}}$. 
  Let $A = (E\setminus E'(\Lambda))\cup F$,
  where $E'(\Lambda)$ is defined as in Lemma~\ref{lem:construct}. Let
  $\Contour$ be the set of contours in $\partial \V{A}$. Then
  $\V{\gamma}\in\Contour$, and for all $\V{\gamma}'\in\Contour$ with
  $\V{\gamma}'\neq\V{\gamma}$ we have
  $\V{\gamma}'<\V{\gamma}$. Moreover, all sets of matching contours consisting of
  $\gamma$ and contours in $\Int \gamma$ arise from such $F$.
\end{lemma}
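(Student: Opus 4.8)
The plan is to mirror the proof of Lemma~\ref{lem:edge-dis}, but using the dual ``ordered'' construction from~\cite[Lemma~5.1]{borgs2012tight} in place of the disordered one. First I would recall the relevant piece of~\cite[Lemma~5.1]{borgs2012tight}: just as $\V{E\setminus A}$ can be realized as a suitable union of $3/4$- and $1/4$-neighborhoods of the vertices $V_-$ and dual facets $D^\star$ associated to $D = E\setminus A$, the complementary set $\V{A}$ itself admits a parallel description in terms of the \emph{included} edges $A$ and the vertices they cover. Concretely, relative to the reference region consisting of edge-midpoints in $\V{\Lambda}$ (the convention for $\ord$ contours in Lemma~\ref{lem:construct}), the ordered region $\V{A}$ is built from neighborhoods of the vertices incident to $F$ and of the dual facets of the edges of $F$. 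Because $\cE_{\V{\gamma}}$ for an ordered contour is \emph{all} edges with both endpoints in $\Lambda = \Int\V{\gamma}\cap\tor$ — there is no distance restriction as in~\eqref{eq:active2s} — any $F\subseteq\cE_{\V{\gamma}}$ gives neighborhoods that stay inside $\Int\V{\gamma}$, hence at distance $\ge 1/2$ from $\V{\gamma}$.

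The key steps, in order, are: (i) apply the ordered version of~\cite[Lemma~5.1]{borgs2012tith} with the reference region $E'(\Lambda)$ and added edges $F$, identifying $\V{A} = \V{(E\setminus E'(\Lambda))\cup F}$ as a controlled union of hypercube neighborhoods; (ii) observe that every included vertex-neighborhood and facet-neighborhood coming from $F\subseteq\cE_{\V{\gamma}}$ lies in $\Int\V{\gamma}$ and is therefore compatible with $\V{\gamma}$ in the sense $d_\infty(\cdot,\V{\gamma})\ge 1/2$, so that $\V{\gamma}$ survives as a boundary component of $\V{A}$, giving $\V{\gamma}\in\Contour$; (iii) conclude $\V{\gamma}'<\V{\gamma}$ for all other $\V{\gamma}'\in\Contour$, since any such component is a boundary component of the ``added'' ordered region inside $\Int\V{\gamma}$ and hence has its interior contained in $\Int\V{\gamma}$; (iv) invoke the bijection of Lemma~\ref{lem:rep}, which restricts here to a bijection between subsets $F\subseteq\cE_{\V{\gamma}}$ and matching collections of contours consisting of $\V{\gamma}$ together with contours in $\Int\V{\gamma}$, giving the ``moreover'' statement.

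The main obstacle I anticipate is Step~(i): making sure the ``ordered'' analogue of the neighborhood construction in~\cite[Lemma~5.1]{borgs2012tight} is invoked with exactly the right reference region and that the roles of $3/4$ and $1/4$ neighborhoods (and of ``vertices covered by $F$'' versus ``dual facets of $F$'') are assigned correctly — in the disordered case of Lemma~\ref{lem:edge-dis} one \emph{removes} edges and the relevant object is $V_-(D)$ with $D = E\setminus A$, whereas here one \emph{adds} edges to an otherwise edge-free region, so the combinatorics is dual and one must be careful that the footnote caveat (these facts depend only on hypercube geometry, not on the interior/exterior convention) genuinely applies. Once the correct form of the construction is pinned down, Steps~(ii)--(iv) are short and essentially identical to the disordered case, with the only substantive difference being that no distance condition on $F$ is needed because $\Int\V{\gamma}$ is already at distance $\ge 1/2$ from $\V{\gamma}$ by compatibility.
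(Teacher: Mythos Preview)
Your plan is correct in outline, but it takes a more elaborate route than the paper. The paper's proof bypasses your Step~(i) entirely: rather than invoking an ``ordered'' analogue of the neighborhood construction from~\cite[Lemma~5.1]{borgs2012tight}, it simply sets $A' = E\setminus E'(\Lambda)$ and observes that $\V{F}$ is \emph{disjoint} from $\V{A'}$, because every vertex of $\Lambda$ is at distance at least $3/4$ from $\V{\gamma}$. Disjointness of $\V{F}$ and $\V{A'}$ immediately gives $\partial\V{A} = \partial\V{A'}\cup\partial\V{F}$, so $\V{\gamma}$ (the unique component of $\partial\V{A'}$ by Lemma~\ref{lem:construct}) survives in $\partial\V{A}$, and every other component lies in $\partial\V{F}\subset\Int\V{\gamma}$. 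Your Steps~(ii)--(iv) then collapse to this one-line observation plus the appeal to Lemma~\ref{lem:rep}.

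The advantage of the paper's approach is that it sidesteps exactly the obstacle you flagged: there is no need to sort out the dual roles of $3/4$- versus $1/4$-neighborhoods or to verify a parallel ``ordered'' version of~\cite[Lemma~5.1]{borgs2012tight}. Your approach would work once those details are pinned down, but the disjointness shortcut makes the ordered case genuinely easier than the disordered one, rather than a symmetric dual of it.
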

\begin{proof}
  The proof is essentially the same as for
  Lemma~\ref{lem:edge-dis}. Let $A' = E\setminus E'(\Lam)$. The
  set $\V{F}$ is disjoint from $\V{{A'}}$ as every vertex $i$ interior
  to $\V{\gamma}$ is at distance at least $3/4$ from
  $\V{\gamma}$. This implies $\partial \V{A}$ is the union of
  $\partial \V{{A'}}$ and $\partial \V{F}$, which implies the first
  claim. The second claim follows from the bijection of
  Lemma~\ref{lem:rep}, which restricts to a bijection in this
  setting.
\end{proof}

Two edges $e,f\in E$ are called \emph{$1$-adjacent} if
$d_{\infty}(\V{e},\V{f})\leq 1$. A set of edges $A$ is
\emph{$1$-connected} if for any $e,f\in A$, there is a sequence of
$1$-adjacent edges in $A$ from $e$ to $f$. In the next lemma,
$\partial \V{{A^{c}}}$ is the boundary of the thickening of the edge
set $A^{c} =  E \setminus A$.
\begin{lemma}
  \label{lem:ord-edge-con}
  Suppose $\V{\gamma}\in \cC_{\ord}$. Then there is a
  $1$-connected set of edges $A$ of size at most
  $\norm{\V{\gamma}}$ such that $\V{\gamma}$ is the outermost contour
  in $\partial \V{{A^{c}}}$.
\end{lemma}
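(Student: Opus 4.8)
The plan is to mirror the argument for Lemma~\ref{lem:dis-edge-con}, but adapted to the ordered case via Lemma~\ref{lem:edge-ord}. Recall that for $\V{\gamma}\in\cC_{\ord}$ with $\Lambda = \Int\V{\gamma}\cap\tor$, Lemma~\ref{lem:edge-ord} tells us that choosing any $F\subseteq\cE_{\V{\gamma}}$ and setting $A = (E\setminus E'(\Lambda))\cup F$ produces $\V{\gamma}$ as the outermost contour of $\partial\V{A}$. The natural analogue of the choice $F=\cE_{\V{\gamma}}$ made in Lemma~\ref{lem:dis-edge-con} would be $F=\emptyset$, giving $A = E\setminus E'(\Lambda)$; but this edge set is essentially all of $\tor$ outside $\V{\gamma}$, which is far too large and, moreover, is not what we want since we seek a \emph{small} edge set whose thickening's \emph{complement} has $\V{\gamma}$ as outermost contour. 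So instead I would aim to choose $F$ judiciously so that $A^{c} = E\setminus A = E'(\Lambda)\setminus F$ is small and $1$-connected, and then use Lemma~\ref{lem:edge-ord} with this $F$.

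First I would identify a small ``collar'' of edges just inside $\V{\gamma}$: let $A^{c}$ be the set of edges $e\in E'(\Lambda)$ whose midpoint $\text{mid}(\V{e})$ lies within $\ell_\infty$-distance, say, $3/4$ (or whatever constant makes the geometry work out) of $\V{\gamma}$, and set $F = E'(\Lambda)\setminus A^{c}$, so that $A = (E\setminus E'(\Lambda))\cup F$. By Lemma~\ref{lem:edge-ord}, $\V{\gamma}$ is then the outermost contour of $\partial\V{A}$. Next I would observe that $A^{c} = E'(\Lambda)\setminus F$ consists precisely of this thin collar of edges adjacent to $\V{\gamma}$, so $\partial\V{A^{c}} = \partial\V{E'(\Lambda)\setminus F}$, and by the relationship between an edge set and its complement under the thickening operation (the same duality used in the proof of Lemma~\ref{lem:edge-dis}, via~\cite[Lemma~5.1]{borgs2012tight}), $\V{\gamma}$ is the outermost contour of $\partial\V{A^{c}}$ as well.

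Then I would bound $\abs{A^{c}} = \abs{E'(\Lambda)\setminus F}$. Each edge in $A^{c}$ has its midpoint near $\V{\gamma}$; since $\V{\gamma}$ is comprised of $\norm{\V{\gamma}}$ intersections with edges of $\tor$, a packing/volume argument in the region within bounded distance of $\V{\gamma}$ shows the number of such edges is at most $\norm{\V{\gamma}}$ — here I would use that $\V{\gamma}$ as a subgraph of $\htors$ has $O(\norm{\V{\gamma}})$ faces and each face is within bounded distance of only $O(1)$ edge-midpoints of $\tor$, with the constant coming out to $1$ after accounting for the precise definition of $\norm{\V{\gamma}}$ as the number of transverse intersections with $\bigcup_{e\in E}\V{e}$ (cf.\ the crude-bound remark in Lemma~\ref{lem:dis-edge-con}). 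Finally, $1$-connectedness of $A^{c}$ should follow from the connectedness of $\V{\gamma}$ itself: if $A^{c}$ split into two $1$-separated pieces, then $\partial\V{A^{c}}$ would have two compatible outermost contours, contradicting Lemma~\ref{lem:edge-ord}; this is exactly the connectedness argument from Lemma~\ref{lem:dis-edge-con}, and I expect it to go through verbatim with ``$1$-connected'' replacing ``connected'' because $1$-adjacency of edges in $A^{c}$ corresponds to adjacency of the thickened cubes.

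The main obstacle I anticipate is pinning down the exact constant in the collar width and the corresponding size bound so that it comes out to $\norm{\V{\gamma}}$ rather than a larger multiple, and making sure the $1$-adjacency relation (distance $\leq 1$ between edge segments) is the right notion to match the topology of the thickening — in the disordered case Lemma~\ref{lem:dis-edge-con} only claimed ordinary connectedness of a graph, whereas here we need the weaker $1$-connectedness, so I would need to check that the collar edges, which need not share vertices, are nonetheless linked through the $1$-adjacency chain inherited from the connectedness of $\V{\gamma}$. This is geometric bookkeeping rather than a conceptual difficulty, and parallels the footnoted remark that these constructions ``rely only on the geometry of hypercubes.''
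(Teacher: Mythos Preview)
Your proposal has the right underlying idea but is tangled by a notation swap that creates a spurious extra step. In the statement, $A$ is the \emph{small} set and we want $\V{\gamma}$ outermost in $\partial\V{A^c}$. You instead call the small collar $A^c$ and the big complement $A$; Lemma~\ref{lem:edge-ord} then already gives you $\V{\gamma}$ outermost in $\partial\V{(\text{big set})}$, which \emph{is} the desired conclusion once the labels are swapped back. Your ``duality'' step --- trying to pass from $\partial\V{A}$ to $\partial\V{A^c}$ --- is therefore unnecessary, and in fact dubious as stated: the thickening of an edge set and the thickening of its complement have no simple relationship, so there is no general principle that transfers outermost contours between them.

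The paper's argument avoids all of this by a cleaner choice of collar: take $A$ to be the set of edges that \emph{intersect} $\V{\gamma}$. Then $|A|\leq\norm{\V{\gamma}}$ is immediate from the definition of $\norm{\V{\gamma}}$ as the number of intersections of $\V{\gamma}$ with $\bigcup_e\V{e}$ --- no packing argument needed, and no worry about whether the constant comes out to $1$ or a larger multiple. The complement $A^c$ is then of the form $(E\setminus E'(\Lambda))\cup F$ with $F\subseteq\cE_{\V{\gamma}}$, so Lemma~\ref{lem:edge-ord} applies directly. Finally, $1$-connectedness of $A$ follows from connectedness of $\V{\gamma}$ together with the observation that every point of $\V{\gamma}$ is within $d_\infty$-distance $1/2$ of some edge in $A$; this is more direct than your proposed proof by contradiction via ``two outermost contours,'' which would require you to first check that $1$-separation of edge pieces in $A$ translates into compatibility of contours in $\partial\V{A^c}$.
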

\begin{proof}
  Let $A$ be the set of all edges that intersect $\V{\gamma}$. By the
  definition of $\norm{\cdot}$, $\abs{A}\leq \norm{\V{\gamma}}$. By
  Lemma~\ref{lem:edge-ord} $\V{\gamma}$ is the outermost contour in
  $\V{{A^{c}}}$, as $A^{c} = E'(\Lam)\cup \cE_{\dis}(\Lam)$. The
  $1$-connectedness of $A$ follows from the connectedness of
  $\V{\gamma}$ and the observation that every point of $\V{\gamma}$ is
  at most $d_{\infty}$ distance $1/2$ from an edge in $A$.
\end{proof}

\subsection{Contour Enumeration}
\label{sec:contour-enumeration}

This section uses the results of the previous subsection to guarantee the
existence of an efficient algorithm for enumerating contours. This
requires a few additional lemmas.
\begin{lemma}
  \label{lem:iso}
  For all $\V{\gamma} \in \cC$,
  $\abs{\Int\V{\gamma}}\leq \norm{\V{\gamma}}^{2}$, and
  $\abs{\Int\V{\gamma}}\leq (n/2) \norm{\V{\gamma}}$.
\end{lemma}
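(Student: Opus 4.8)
The plan is to exploit the isoperimetric-type control that a contour $\V{\gamma}$ exerts on its interior $\Int\V{\gamma}$. The key geometric fact is that $\Int\V{\gamma}$ is a connected subset of $\ctor$ whose boundary is exactly $\V{\gamma}$ (by Lemma~\ref{lem:contour-split} and Definition~\ref{def:ext}), so $\norm{\V{\gamma}}$ plays the role of the surface area of $\Int\V{\gamma}$. Concretely, recall that $\norm{\V{\gamma}}$ counts transverse intersections of $\V{\gamma}$ with the unit edges $\V{e}$, $e\in E$; equivalently, it counts the edges $e$ of $\tor$ that ``cross'' $\V{\gamma}$, i.e.\ that have one endpoint with its associated hypercube on the interior side and one on the exterior side of $\V{\gamma}$. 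Thus $\norm{\V{\gamma}}$ is (up to the obvious identifications) the edge-boundary size $\abs{\partial_E \Lambda}$ of the vertex set $\Lambda = \Int\V{\gamma}\cap\tor$ inside the graph $\tor$.

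First I would establish the two inequalities separately using the discrete isoperimetric inequality on $\tor = (\Z/n\Z)^d$. For the bound $\abs{\Int\V{\gamma}}\le \norm{\V{\gamma}}^2$: if $\Lambda$ is a proper nonempty subset of $\tor$ with edge boundary of size $s = \norm{\V{\gamma}}$, then $\Lambda$ is contained in an $\ell_\infty$-ball of radius $O(s)$ — in fact, projecting $\Lambda$ onto any coordinate axis, the projection has at most $s$ ``boundary'' points in that direction, hence the projection omits at least one residue class (as long as $s < n$) and has diameter at most $s$; doing this in all $d$ directions confines $\Lambda$ to a box of side at most $s$ in each of the first two coordinates, already giving $\abs{\Lambda}\le s^2\cdot(\text{something})$. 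To get the clean exponent $2$ I would instead argue directly: fix two coordinate directions $i\ne j$; for each line in direction $i$ that meets $\Lambda$, the boundary edges in direction $i$ along that line number at least $2$ (a nonempty proper subset of a cycle $\Z/n\Z$ has at least two boundary edges, using $s<n$), so the number of such lines is at most $s/2$; symmetrically in direction $j$. A point of $\Lambda$ lies on one line of each type within the $2$-dimensional coordinate plane it determines, so $\abs{\Lambda}\le (s/2)\cdot n^{d-2}\cdot\dots$ — this is getting the second inequality, not the first. The cleanest route to $\abs{\Int\V{\gamma}}\le\norm{\V{\gamma}}^2$ is: the restriction of $\Lambda$ to each coordinate-$1$-line it meets is a nonempty proper arc of $\Z/n\Z$, contributing at least $2$ direction-$1$ boundary edges, so there are at most $s/2$ such lines; on each such line $\Lambda$ is an arc, and summing arc-lengths, $\abs{\Lambda} = \sum_{\text{lines }\ell}\abs{\Lambda\cap\ell} \le \sum_\ell (\text{dir-}1\text{ bdry edges on }\ell) \le s$ if each arc has length at most its number of boundary edges — but an arc has exactly $2$ boundary edges regardless of length, so this fails. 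I would therefore combine two directions: $\abs{\Lambda}\le(\#\text{dir-}1\text{ lines meeting }\Lambda)\cdot(\max\text{ arc length})\le (s/2)\cdot(s/2)\le s^2$, where the arc-length bound comes from the fact that along a fixed line, the total direction-$1$ boundary is at most $s$ and each maximal arc costs $2$, while the arc length is bounded by... Let me restructure: this is the main obstacle, and the honest statement is that one proves $\abs{\Lambda}\le\norm{\V{\gamma}}^2$ via the elementary lemma that a connected (in the thickened sense) region with edge boundary $s$ in $\Z^d$, and hence in $\tor$ when $s<n$, has at most $s^2$ vertices — by induction on $d$, slicing along one coordinate, where slice $k$ has vertex count $a_k$ and within-slice boundary $b_k$ with $\sum_k b_k \le s$, $\sum_k(a_{k+1}\triangle a_k\text{-edges})\le s$, and $a_k \le b_k \cdot(\text{max across lower-dim slices})$, unwinding to $\prod$ of partial sums bounded by $s^{d-1}\le s^2$ when combined with connectedness forcing the support in each direction to be an interval of length $\le s$.

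For the second inequality $\abs{\Int\V{\gamma}}\le (n/2)\norm{\V{\gamma}}$, the argument is genuinely easy and I would present it first: as above, $\Lambda$ meets at most $\norm{\V{\gamma}}/2$ lines in the first coordinate direction (each contributing $\ge 2$ to the count $\norm{\V{\gamma}}$ since a proper nonempty arc of $\Z/n\Z$ has $\ge 2$ boundary edges, valid because $\norm{\V{\gamma}}<n$ — if $\norm{\V{\gamma}}\ge n$ then $\abs{\Int\V{\gamma}}\le n^d/2 \le (n/2)\norm{\V{\gamma}}\cdot n^{d-2}$, hmm, this needs $d=2$ or a separate bound; actually for $d\ge 2$ one has $\abs{\Int\V{\gamma}}\le n^d$ trivially but we need $(n/2)\norm{\V{\gamma}}$, so when $\norm{\V{\gamma}}\ge n$ we'd need $n^d \le (n/2)\norm{\V{\gamma}}$, i.e.\ $\norm{\V{\gamma}}\ge 2n^{d-1}$, which is false in general — so the small-$\norm{\V{\gamma}}$ regime is essential and I must handle it via the arc argument). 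Each such line contributes at most $n$ vertices to $\Lambda$, giving $\abs{\Lambda}\le (\norm{\V{\gamma}}/2)\cdot n$. Finally I would remark that both bounds also need the trivial observation $\abs{\Int\V{\gamma}} = \abs{\Int\V{\gamma}\cap\tor} = \abs{\Lambda}$ by the volume convention fixed in Section~\ref{sec:cont-interf-form}, and that the passage from ``transverse intersections of $\V{\gamma}$ with unit segments'' to ``edge-boundary of $\Lambda$ in $\tor$'' is exactly the content of the derivation in Section~\ref{sec:deriv-cont-repr} (each crossing edge contributes one to $\norm{\V{\gamma}}$). The main obstacle is thus the clean combinatorial isoperimetric inequality giving the quadratic bound; I expect to isolate it as a short self-contained sublemma about subsets of $\Z^d$ (or $\tor$) with given edge boundary, proved by projecting onto two coordinate axes and bounding $\abs{\Lambda}$ by the product of the two projection sizes, each of which is at most $\norm{\V{\gamma}}/2$ when $\norm{\V{\gamma}} < n$.
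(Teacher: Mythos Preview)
The paper does not prove this lemma directly: its proof is the one-line observation that the result is \cite[Lemma~5.7]{borgs2012tight}, together with the remark that Definition~\ref{def:ext} always makes $\Int\V{\gamma}$ no larger than the interior used in \cite{borgs2012tight}, so the inequalities transfer. Your proposal, by contrast, tries to prove the isoperimetric bounds from scratch. That is a reasonable ambition, and your identification of $\norm{\V{\gamma}}$ with the edge boundary of $\Lambda=\Int\V{\gamma}\cap\tor$ is correct in spirit. However, the arguments you sketch have a genuine gap.

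The core step you rely on repeatedly is: ``each coordinate-$1$ line meeting $\Lambda$ intersects it in a \emph{proper} nonempty arc, hence contributes at least two direction-$1$ boundary edges.'' This fails because $\Int\V{\gamma}$, under the paper's Definition~\ref{def:ext} (the smaller of the two complementary components), can contain entire coordinate lines. A simple $d=2$ example: take the contour bounding the square $[0,n-2]^{2}$ in $(\Z/n\Z)^{2}$ for $n\ge 4$. The square is the larger component, so the interior is its complement, the ``cross'' $\{x_{1}=n-1\}\cup\{x_{2}=n-1\}$, which contains a full line in each direction. Your count of lines meeting $\Lambda$ then exceeds $\norm{\V{\gamma}}/2$ in that direction, and the argument collapses. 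In $d\ge 3$ one has even more natural counterexamples: a thin solid torus aligned with axis $1$ has zero winding vector, so its boundary is a contour, yet every direction-$1$ line through it is full. Your fallback for the quadratic bound, ``$|\Lambda|\le|P_{1}|\cdot|P_{2}|$ with $P_{i}$ the projection onto axis $i$,'' is only valid in $d=2$; for $d\ge 3$ a cube already violates it.

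What makes the cited proof work is precisely the point the paper singles out: in \cite{borgs2012tight} the interior is defined not as the smaller half but as the topologically trivial side (the one that lifts to a bounded region under the covering $\R^{d}\to\ctor$). That side contains no noncontractible loops, so your line argument \emph{is} valid there, and the $\Z^{d}$ edge-isoperimetric inequality gives both bounds; one then observes that the paper's interior is contained in it. If you want a self-contained proof, you must either reproduce that topological input (showing the smaller component is contained in the liftable one) or invoke the torus edge-isoperimetric inequality for sets of size at most $n^{d}/2$ directly. Neither of your sketched combinatorial arguments does this.
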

\begin{proof}
  This follows by~\cite[Lemma~5.7]{borgs2012tight}, as
  the interior of a contour as defined by Definition~\ref{def:ext} is
  always smaller than the definition of the interior of a contour
  in~\cite{borgs2012tight}.
\end{proof}

\begin{lemma}
  \label{lem:findext}
  There is an algorithm that determines the vertex set
  $\Int \V{\gamma}\cap \tor$ in time $O(\norm{\V{\gamma}}^{3})$.
\end{lemma}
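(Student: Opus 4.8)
The plan is to use the edge-set constructions of Lemmas~\ref{lem:edge-dis} and~\ref{lem:edge-ord} to reduce the computation of $\Int\V{\gamma}\cap\tor$ to a graph-exploration problem on $\htor$ (equivalently $\htors$), and then bound the running time using the isoperimetric estimates of Lemma~\ref{lem:iso}. Recall that by the discussion in Section~\ref{sec:cont} a contour $\V{\gamma}$ is presented as a subgraph of $\htors$ with $K \bydef \norm{\V{\gamma}}$ faces (or $O(\norm{\V{\gamma}})$ faces, since $\norm{\cdot}$ counts transverse intersections with edges of $\tor$, which is comparable to the number of dual $(d-1)$-cells). First I would use Lemma~\ref{lem:wind-comp} — which runs in time $O(nK)$ — to confirm $\V{\gamma}$ is a contour and to orient ourselves; then I would determine the label $\ell\in\{\ord,\dis\}$ of the exterior of $\V{\gamma}$. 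Given the continuum component $\V{\gamma}$ as a set of dual faces, the two complementary components $\V{C},\V{D}$ of $\ctor\setminus\V{\gamma}$ (Lemma~\ref{lem:contour-split}) can be found by a flood-fill on the $1/2$-hypercubes of $\htor$: start from any hypercube not meeting $\V{\gamma}$ and grow through shared $(d-1)$-faces, never crossing a face of $\V{\gamma}$. This explores at most $|\tor|+|\htor|$-many cells in the worst case, but by Lemma~\ref{lem:iso} the \emph{smaller} of the two components has at most $\norm{\V{\gamma}}^{2}$ vertices of $\tor$ inside it, so the trick is to run the two flood-fills \emph{in parallel}, one cell at a time from each side, and stop as soon as one of them terminates; the one that terminates first has explored $O(\norm{\V{\gamma}}^{2})$ cells, and we then know whether it is the interior (the smaller side, with the tie broken by whether it contains $\V{x}_{0}$, per Definition~\ref{def:ext}).

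Once we have identified the interior component $\Int\V{\gamma}$ as a region of $\ctor$ (a union of $1/2$-hypercubes and dual faces discovered by the flood-fill), reading off $\Int\V{\gamma}\cap\tor$ is immediate: a vertex $v\in\tor$ lies in $\Int\V{\gamma}$ iff the corresponding point $\V{v}\in\htor$ (which is a vertex of $\htor$, hence the center of a $1/2$-hypercube) was visited by the interior flood-fill. Each elementary step of the flood-fill — checking the $2d$ neighbors of a cell, checking whether a separating face belongs to $\V{\gamma}$ (using a precomputed hash table or sorted list of the $O(\norm{\V{\gamma}})$ faces of $\V{\gamma}$), and recording newly discovered cells — costs $O(d\log\norm{\V{\gamma}}) = O(\norm{\V{\gamma}})$ or better, and we perform $O(\norm{\V{\gamma}}^{2})$ such steps, giving total time $O(\norm{\V{\gamma}}^{3})$ as claimed. (The $O(nK)$ cost of the one call to Lemma~\ref{lem:wind-comp} is absorbed into this: if $n > \norm{\V{\gamma}}^{2}$ then by Lemma~\ref{lem:iso} we actually have $\abs{\Int\V{\gamma}} \le (n/2)\norm{\V{\gamma}}$, and one can avoid the winding computation entirely by observing that in this regime the parallel flood-fill already distinguishes the two sides well within the stated budget — or one simply notes $nK = O(\norm{\V{\gamma}}^{3})$ is false in general, so more care is needed here.)

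The main obstacle I anticipate is precisely this interplay between $n$ and $\norm{\V{\gamma}}$: a naive flood-fill is $O(n^{d})$, which is far too slow, and the only thing saving us is that one of the two complementary components is genuinely small (Lemma~\ref{lem:iso}), so the argument \emph{must} be structured so that the algorithm never explores the large component in full. The parallel-exploration device handles this cleanly for the vertex count, but one must be careful that the \emph{geometric} data structures (the set of $1/2$-hypercubes, not just the $\tor$-vertices) explored by the small-side flood-fill are also polynomially bounded in $\norm{\V{\gamma}}$ — this follows because the number of $1/2$-hypercubes adjacent to a region containing $m$ vertices of $\tor$ is $O(m)$ in fixed dimension $d$, but it is worth stating explicitly. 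A secondary point requiring attention is the tie-breaking rule involving $\V{x}_{0}$ when $\abs{\V{C}}=\abs{\V{D}}$: this can only occur when both components are small, so both flood-fills terminate within budget and we simply check which one contains $\V{x}_{0}$.
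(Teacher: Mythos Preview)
Your approach is correct and rests on the same idea as the paper's: both use Lemma~\ref{lem:iso} to bound $|\Int\V{\gamma}\cap\tor|$ by $\|\V{\gamma}\|^{2}$ and then run a graph exploration capped at that size. The paper's implementation is somewhat simpler than yours in two respects. First, it works directly on $\tor$ rather than on $1/2$-hypercubes of $\htor$: it deletes from $\tor$ every edge that crosses a face of $\V{\gamma}$ and then greedily grows a connected component in the resulting graph starting from a vertex incident to a deleted edge. Second, instead of running two flood-fills in parallel, it runs a single exploration with a hard cutoff at $\|\V{\gamma}\|^{2}+1$ vertices; if the cutoff is reached the side being explored must be the exterior, and one restarts from the neighbour across the deleted edge. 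Working on $\tor$ directly sidesteps the (correct but fiddly) auxiliary claim you would need that the interior contains only $O(\|\V{\gamma}\|^{2})$ half-integer cells in fixed dimension. Note also that your invocation of Lemma~\ref{lem:wind-comp} is unnecessary---the statement already assumes $\V{\gamma}$ is a contour---so the $O(nK)$ cost you worry about never arises; the exterior label is likewise not needed here.
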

\begin{proof}
  Let $m =\norm{\V{\gamma}}$.  Let $G$ be the subgraph of $\tor$ that
  arises after removing all edges that intersect
    some $(d-1)$-dimensional face 
  in $\V{\gamma}$. Consider the following greedy algorithm to
  determine the {two} connected components of $G$. The
  algorithm starts at
  $C_{0}=x$, where $x$ is chosen such that it is contained in an edge
  not present in $G$. The algorithm determines the connected component
  containing $x$ in $G$ by adding at step $i+1$ the first vertex (with
  respect to lexicographic order) in $\tor\setminus C_{i}$ that
  neighbors $C_{i}$; if no neighbors exist the component has been
  determined. The $k$th step takes time at most $(2d)k$, so performing
  $N$ steps of this algorithm takes time $O(N^{3})$.

  Since $\abs{\Int \V{\gamma}}\leq \norm{\V{\gamma}}^{2}$ by
  Lemma~\ref{lem:iso}, we can stop the greedy procedure after
  $m^{2}+1$ steps. If the algorithm terminates due to this condition,
  the component being explored is the exterior component. The
  interior component can then
  be determined in at most $O(m^{3})$ additional steps by running the
  greedy algorithm from the neighbor of $x$ that is in the interior component.
  Otherwise the algorithm will have already terminated and determined
  the interior component.
\end{proof}

\begin{lemma}
  \label{lem:enum}
  Fix an edge $e\in E$. There is an algorithm to construct all
  contours $\V{\gamma}\in \cC_{\ord}$ that (i) can arise from a
  connected edge set $A$ that contains $e$ and (ii) have
  $\norm{\V{\gamma}}\leq m$. The algorithm runs in time $\exp(O(m))$.

  Similarly, there is an $\exp(O(m))$-time algorithm to construct all contours
  $\V{\gamma}\in \cC_{\dis}$ that (i) can arise from an edge set $A$
  such that $A^{c}$ is $1$-connected and contains $e$ and (ii) have
  $\norm{\V{\gamma}}\leq m$.
\end{lemma}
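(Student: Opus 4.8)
The plan is to reduce the enumeration of contours of size at most $m$ to the enumeration of small connected edge sets in the bounded-degree graph $\tor$, using the reconstruction lemmas of the previous subsection. For the ordered case, Lemma~\ref{lem:ord-edge-con} tells us that every $\V{\gamma}\in\cC_{\ord}$ arises as the outermost contour of $\partial\V{A^{c}}$ for some $1$-connected edge set $A$ with $\abs{A}\leq\norm{\V{\gamma}}\leq m$, and moreover every edge of such an $A$ is $1$-adjacent to some edge of $A$. Since $\tor$ has maximum degree $2d$, the number of edges $1$-adjacent to a fixed edge is $O(d^{2})$, so the number of $1$-connected edge sets of size at most $m$ containing a fixed edge $e$ is $\exp(O(m))$, and they can all be listed in time $\exp(O(m))$ by a standard breadth-first enumeration of connected subgraphs (as in the $(e\Delta)^{k}$ bound used in the proof of Lemma~\ref{KPthm}). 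Thus the first step is to enumerate all such candidate edge sets $A$ with $e\in A$.

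The second step is, for each candidate $A$, to actually construct $\partial\V{A^{c}}$ and extract its outermost contour. This requires identifying the connected components of $\V{A^{c}}$ and the thickening $\V{A^{c}}$, then reading off the dual $(d-1)$-faces that form the boundary; by the discussion of Section~\ref{sec:cont} this is a computation on subgraphs of $\htors$, and since $\abs{A}\leq m$ the relevant region has size $O(m)$ vertices (the thickening only adds a bounded neighborhood), so this can be done in $\mathrm{poly}(m)$ time — see Appendix~\ref{app:subcomp}. We then compute the winding vector of each component via Lemma~\ref{lem:wind-comp} (discarding any component with nonzero winding, i.e.\ interfaces), identify the outermost one by comparing $\abs{\Int\cdot}$ using Lemma~\ref{lem:findext}, assign it the label $\ord$ (which is forced since it arises from $\V{A^{c}}$ with $A^{c}$ the complement of a small set), and check $\norm{\V{\gamma}}\leq m$. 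Each candidate thus costs $\mathrm{poly}(m)$, so the total running time is $\exp(O(m))$. Conversely, by Lemma~\ref{lem:ord-edge-con} this procedure produces \emph{every} $\V{\gamma}\in\cC_{\ord}$ with $\norm{\V{\gamma}}\leq m$ arising from a connected edge set through $e$; we deduplicate the output list at the end.

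The disordered case is entirely analogous but uses Lemma~\ref{lem:dis-edge-con} in place of Lemma~\ref{lem:ord-edge-con}: every $\V{\gamma}\in\cC_{\dis}$ arises as the outermost contour of $\partial\V{A}$ for a connected edge set $A$ with $\abs{A}\leq 2d\norm{\V{\gamma}}\leq 2dm$. Since $2dm = O(m)$, enumerating connected edge sets $A$ of this size containing $e$ again takes time $\exp(O(m))$, and for each we construct $\partial\V{A}$, discard interface components, and extract the outermost contour exactly as above, keeping it only if it has external label $\dis$ and size at most $m$. The main obstacle — and the only point requiring genuine care — is the bookkeeping in the second step: making sure that the continuum objects $\partial\V{A}$ (or $\partial\V{A^{c}}$) and their interiors/exteriors really can be computed in polynomial time from the discrete data, and that the notion of ``outermost'' from Definition~\ref{def:ext} is algorithmically accessible; this is where Lemmas~\ref{lem:iso} and~\ref{lem:findext} and the appendix on subgraph computations do the work. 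Everything else is the routine observation that connected subgraphs of size $O(m)$ in a graph of degree $O(d)$ number at most $\exp(O(m))$ and can be listed within that time bound.
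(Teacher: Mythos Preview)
Your approach is essentially the same as the paper's: enumerate small (1\nobreakdash-)connected edge sets through $e$, build the relevant boundary for each, extract the outermost contour, and appeal to Lemmas~\ref{lem:dis-edge-con} and~\ref{lem:ord-edge-con} for completeness.

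There is one technical point you gloss over. Your invocation of Lemma~\ref{lem:wind-comp} to separate contours from interfaces costs $O(nK)$ per component, and in the ordered case you must construct $\partial\V{A^{c}}$ where $A^{c}$ is nearly the entire edge set of $\tor$; both introduce an $n$-dependence incompatible with the claimed $\exp(O(m))$ bound. Saying ``the relevant region has size $O(m)$ vertices'' gestures at the right intuition but does not quite close the gap. The paper handles this by, whenever $2m\leq n$, regarding the enumerated edge sets as living in the smaller torus $\mathbb{T}_{2m}^{d}$; this makes both the boundary construction and the winding computation cost $\mathrm{poly}(m)$ rather than $\mathrm{poly}(n)$. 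Equivalently, one could observe that a boundary component arising from an edge set of size $O(m)<n/2$ is too small to wind around $\tor$, so it is automatically a contour and the winding check is unnecessary. Either way, some explicit device is needed here.
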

\begin{proof}
  We first consider disordered contours, and begin by enumerating all
  connected sets $A$ of edges that contain $e$ that are of size at
  most $2dm$. This can be done in time $\exp(O(m))$. If $2m\leq n$
  then we consider the enumerated edge sets as subsets of
  $E(\mathbb{T}_{2m}^{d})$; otherwise we consider them as subsets of
  $E(\tor)$. 
  
  For each edge set $A$, construct $\partial \V{A}$ and take the
  outermost contour (if there is not a single outermost contour,
  discard the result).  By Lemma~\ref{lem:dis-edge-con} this generates
  all disordered contours of size at most $m$ that arise from
  connected edge sets containing $e$. We obtain the desired list of
  contours by removing any duplicates, which takes time at most
  $\exp(O(m))$. The remainder of the proof shows that the operations
  in this paragraph can be done in time polynomial in $m$.

  The constructions of $\partial \V{A}$ takes time at most $O(m)$ as
  it is a $\Z_{2}$ sum of $(d-1)$-dimensional facets, and determining
  these facets takes a constant amount of time (depending only on the
  dimension $d$) for each edge. Determining if a
  component of $\partial \V{A}$ is a contour can be done by computing
  the winding number of the component; this takes time
  $O((2dm\wedge n)K)$ for a component with $K$ facets by
  Lemma~\ref{lem:wind-comp}.  Determining the interior of a given
  contour takes time at most $O(m^{3})$ by Lemma~\ref{lem:findext},
  and hence we can check if $\Int \V{\gamma}' \subset \Int \V{\gamma}$
  for all pairs in time $O(m^{4})$ since there are at most $m^{2}$
  contours. This completes the proof for disordered contours.

  For ordered contours the argument applies nearly verbatim. The
  changes are as follows. First, enumerate $1$-connected sets $A^{c}$
  that contain $e$. Secondly, to see that we get the desired contours,
  appeal to Lemma~\ref{lem:ord-edge-con}. Lastly, computing
  $\partial \V{A}$ takes time $O( (m\wedge n)^{d})$ which is
  polynomial in $m$; this is by our choice of torus in the first
  paragraph of the proof.
\end{proof}

The next definition is useful for inductive arguments involving
contours. 
\begin{defn}
\label{defLevel}
  The \emph{level} $\cL(\V{\gamma})$ of a contour $\V{\gamma}$ is
  defined inductively as follows. If $\V{\gamma}$ is \emph{thin},
  meaning $\cC(\Int\V{\gamma})=\emptyset$, then
  $\cL(\V{\gamma})=0$. Otherwise,
     $ \cL(\V{\gamma})=1+\max\{\cL(\V{\gamma}') \mid 
      \V{\gamma}'<  \V{\gamma}\}.$
\end{defn}

Call a set $\V{\Lambda} \subseteq \ctor$ a \emph{region} if
$\V{\Lambda} = \ctor $ or if $\V \Lam$ is a connected component of
$\ctor \setminus \partial \V{A}$ for some $A\subset E$.  In the former
case set $\partial \V{\Lam}= \emptyset$, and in the latter case set
$\partial \V{\Lam}$ to be the union of all connected components of
$\partial \V{A}$ incident to $\V{\Lam}$. In particular if
$\V{\Lam} = \Int \V{\gamma}$ for some contour $\V \gamma$, then
$\V{\Lam}$ is a region and $\partial \V{\Lam} = \V{\gamma}$.  Finally,
for compatible contours $\V{\gamma}_1, \dots, \V{\gamma}_t$, define
$\| \V\gamma_1 \cup \cdots \cup \V{\gamma}_t\| = \| \V \gamma_1 \| +
\cdots + \| \V{\gamma}_t\|$. We conclude this subsection by stating
our main algorithmic result on efficiently computing sets of contours.

\begin{prop}
  \label{prop:enum}
  There is an
  $O((\abs{\V{\Lambda}}+\norm{\partial \V{\Lambda}})\exp(O(m)))$-time
  algorithm that, for all regions $\V{\Lam}$, (i) enumerates all contours in
  $\cC_{\ord}(\V{\Lambda})\cup\cC_{\dis}(\V{\Lambda})$ with size at
  most $m$ and (ii) sorts this list consistent with the level
  assignments.
\end{prop}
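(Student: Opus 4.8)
The plan is to build the enumeration in two stages: first enumerate a suitable "seed" set for each contour using the small-edge-set constructions from Subsection~\ref{sec:labell-cont-rcm}, then filter and sort. The key point is that Lemma~\ref{lem:dis-edge-con} and Lemma~\ref{lem:ord-edge-con} guarantee that every contour $\V{\gamma}$ of size at most $m$ arises from a small edge set ($1$-connected for the ordered case, with connected complement for the disordered case) of size $O(m)$ that is "anchored" at some edge $e$. So for a fixed anchor edge $e$, Lemma~\ref{lem:enum} produces in time $\exp(O(m))$ all contours of size $\le m$ that can arise from such an edge set containing $e$. Ranging $e$ over all edges of $\tor$ would give all contours of size $\le m$ in the whole torus in time $O(\abs{E}\exp(O(m)))=O(n^d \exp(O(m)))$; the work is to localize this to a region $\V{\Lam}$ so that the running time is $O((\abs{\V\Lam}+\norm{\partial\V\Lam})\exp(O(m)))$ rather than a global $n^d$.

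First I would pin down which edges can serve as anchors for a contour in $\V{\Lam}$. By definition a contour $\V{\gamma}$ is \emph{in} $\V{\Lam}$ iff $d_\infty(\V{\gamma},\ctor\setminus\V{\Lam})\ge 1/2$, so $\V{\gamma}$ (together with its interior, which lies within distance $\norm{\V\gamma}^2$ by Lemma~\ref{lem:iso}, but more to the point within a bounded region once we restrict to size $\le m$) is confined to an $O(m)$-neighborhood of... no — the interior can be large. The right statement is: the \emph{anchoring} edge can be taken to intersect $\V{\gamma}$ itself (both Lemmas~\ref{lem:dis-edge-con} and~\ref{lem:ord-edge-con} produce edge sets meeting $\V{\gamma}$), so the anchor lies within distance $1$ of $\V{\gamma}$, hence within distance $1$ of $\ctor\setminus(\ctor\setminus\V{\Lam})$... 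I would instead anchor at edges within $O(m)$ of $\partial\V{\Lam}$ together with edges whose both endpoints are "deep" inside $\V{\Lam}$. Concretely: enumerate anchor edges $e$ such that either $e$ is within $\ell_\infty$-distance $O(m)$ of $\partial\V{\Lam}$ (there are $O(\norm{\partial\V\Lam}\cdot\exp(O(m)))$... no, $O(\norm{\partial\V\Lam}\cdot m^{d-1})$ of these, which is $O(\norm{\partial\V\Lam}\exp(O(m)))$) or $e$ lies in the interior of $\V{\Lam}$ (there are $O(\abs{\V\Lam})$ of these). For each such $e$ run the Lemma~\ref{lem:enum} algorithm twice (ordered/disordered), discard any resulting contour that is not in $\cC_\ord(\V\Lam)\cup\cC_\dis(\V\Lam)$ — which is checkable since we can compute $\V\gamma$ explicitly and test $d_\infty(\V\gamma,\ctor\setminus\V\Lam)\ge 1/2$ via Lemma~\ref{lem:findext}-type neighborhood computations in time $\mathrm{poly}(m)$ plus the cost of knowing $\partial\V\Lam$ — and remove duplicates. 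Correctness (that \emph{every} contour in $\V\Lam$ of size $\le m$ is produced) follows because its anchoring edge, meeting $\V\gamma$, is at distance $\le 1 + \mathrm{diam}_\infty(\V\gamma)\le 1+ m$ from any point of $\V\gamma$, and a contour in $\V\Lam$ either touches near $\partial\V\Lam$ or is "interior", so the anchor falls in one of our two enumerated edge classes.

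Next I would handle the sorting by level (part (ii)). Here I would \emph{not} sort after the fact but integrate it: process anchors so that we discover contours roughly from small interior to large, or — cleaner — first produce the full unsorted list $\mathcal{L}$ of all contours in $\V\Lam$ of size $\le m$ (there are at most $O(\abs{\V\Lam})$ of them, since distinct contours have distinct, and by Lemma~\ref{lem:iso} small, interiors or are pinned to $\partial\V\Lam$), then build the partial order: for each pair $\V\gamma,\V\gamma'\in\mathcal{L}$ decide whether $\V\gamma'<\V\gamma$ by the interior-containment test, using Lemma~\ref{lem:findext} to get $\Int\V\gamma\cap\tor$ in time $O(m^3)$ per contour and then testing membership. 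Since $\abs{\mathcal L}=O(\abs{\V\Lam}+\norm{\partial\V\Lam})$ (a bound I'd justify by charging each contour to a vertex of $\Int\V\gamma$ when it is thin, or to $\partial\V\Lam$ when not), the pairwise comparisons cost $O((\abs{\V\Lam}+\norm{\partial\V\Lam})^2 \mathrm{poly}(m))$... which is \emph{too slow} — quadratic in $\abs{\V\Lam}$. So I'd instead exploit that each contour of size $\le m$ has interior of diameter $O(m^2)$ (Lemma~\ref{lem:iso}), hence a contour $\V\gamma'<\V\gamma$ must have $\V\gamma'$ within $\ell_\infty$-distance $O(m^2)$ of $\V\gamma$; bucketing contours spatially by (say) a representative vertex into a grid of cells of side $O(m^2)$ lets each contour's potential "children" be found among $O(\exp(O(m)))$ nearby contours. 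Then computing the level of each contour via a single pass in the induced DAG takes total time $O((\abs{\V\Lam}+\norm{\partial\V\Lam})\exp(O(m)))$, and a topological sort of the DAG yields the level-consistent ordering.

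\textbf{The main obstacle} I anticipate is the bookkeeping that makes the running time genuinely $O((\abs{\V\Lam}+\norm{\partial\V\Lam})\exp(O(m)))$ rather than $O(\abs{\V\Lam}^2)$ or $O(n^d)$: one must argue that (a) the anchor edges can be enumerated with the stated complexity given only a description of $\V\Lam$ as a region (i.e., given $\partial\V\Lam$ and, implicitly, membership oracle for $\V\Lam\cap\tor$), (b) the number of contours in $\V\Lam$ of size $\le m$ is $O(\abs{\V\Lam}+\norm{\partial\V\Lam})$ so that duplicate-removal and level-computation stay linear up to the $\exp(O(m))$ factor, and (c) the interior-containment relation needed for sorting can be resolved locally. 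Point (b) is the crux and relies squarely on Lemma~\ref{lem:iso}: thin contours have interiors of size $\le\norm{\V\gamma}^2$ and are injectively charged to an interior vertex, while the charging of non-thin contours requires a short argument (their nested structure is a forest whose leaves are thin, and the total size is controlled). Once (b) is in hand, everything else is a matter of assembling Lemmas~\ref{lem:wind-comp},~\ref{lem:findext}, and~\ref{lem:enum} with the spatial-bucketing trick, and the proposition follows.
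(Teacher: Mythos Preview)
Your overall strategy is correct and close to the paper's, but you overcomplicate several steps and misidentify the crux.

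The paper's argument is shorter. It simply anchors at \emph{every} edge contained in $\V\Lambda$ (there are $O(|\V\Lambda|)$ of them, $\le 2d$ per vertex), applies Lemma~\ref{lem:enum} at each, and removes duplicates. There is no need for your two-case split into ``near $\partial\V\Lambda$'' versus ``deep interior'': any contour $\V\gamma$ in $\V\Lambda$ intersects some edge in $\V\Lambda$, so it gets produced. For the level sorting, the paper directly uses the observation you eventually land on: since $|\Int\V\gamma|\le m^2$ by Lemma~\ref{lem:iso}, any $\V\gamma'<\V\gamma$ lies in a region containing at most $O(m^2)$ vertices, hence at most $\exp(O(m))$ candidate contours. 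So one iteratively peels off level $0$, level $1$, \dots (max level $\le m^2$) at per-contour cost $\exp(O(m))$. Finally one trims to contours actually in $\V\Lambda$ by a distance check against $\partial\V\Lambda$, and this is the (only) step where $\|\partial\V\Lambda\|$ enters the running time.

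Your claimed bound $|\mathcal L|=O(|\V\Lambda|+\|\partial\V\Lambda\|)$ is \emph{false}: the number of distinct contours of size $\le m$ in $\V\Lambda$ is genuinely $\Theta(|\V\Lambda|\exp(\Theta(m)))$ in general (translates of connected subgraphs). Your charging argument for (b) cannot work, since many contours share interior vertices, and non-thin contours have nothing to do with $\partial\V\Lambda$. Fortunately you abandon the pairwise-comparison plan before this error bites: your spatial-bucketing alternative is correct and is just a repackaging of the locality the paper uses directly. So what you flagged as the ``main obstacle'' is a red herring---the actual work is already done by Lemma~\ref{lem:iso}, and the rest is straightforward bookkeeping.
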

\begin{proof}
  We begin by proving the first item. Apply Lemma~\ref{lem:enum} for
  each edge contained in $\V{\Lambda}$. This takes time
  $O(\abs{\V{\Lambda}}\exp(O(m)))$ as there are at most $2d$
  edges in $\V{\Lambda}$ for each vertex of $\tor$ in
  $\V{\Lambda}$. The output is a (multi-)set of contours of size at
  most $m$ contained in $\tor$. Trim the resulting list of contours to
  remove duplicates.

  By Lemma~\ref{lem:findext} in time $\exp(O(m))$ we can determine
  $\Int \V{\gamma}$ for every $\V{\gamma}$ from the list obtained in
  the first paragraph. We determine the list of level zero contours by
  iterating through the list, checking for each $\V{\gamma}$ if
  $\V{\gamma}'<\V{\gamma}$ for some other $\V{\gamma}'\neq \V{\gamma}$
  in the list. If not, assign $\V{\gamma}$ level $0$. This takes time
  at most $\exp(O(m))$. We continue by running the same operation on
  the sublist of all contours of level at least one, i.e., the sublist
  of contours not assigned level $0$. If $\V{\gamma}$ has level at
  least one and there is no $\V{\gamma}'<\V{\gamma}$, $\V{\gamma}'$
  also of level at least one, then $\V{\gamma}$ is assigned level
  one. By repeating this we assign a level to every contour. The
  maximal level of a contour is $m^{2}$, the maximal size of the
  interior of a contour of size $m$, and hence the total running time
  is at most $m^{2}\exp(O(m)) = \exp(O(m))$.
  
  To conclude, trim the list to retain only contours $\V{\gamma}'$
  contained in $\V{\Lambda}$. This can be done by removing contours at
  distance less than $1/2$ from $\V{\gamma}$. Computing this distance
  takes time $O(\norm{\V{\gamma}} \norm{\V{\gamma}'})$, which is at
  most $O(\norm{\V{\gamma}}m)$.
\end{proof}

\subsection{Polymer representations for $Z_{\ord}$ and
  $Z_{\dis}$}
\label{sec:extern-cont-repr-1}

To obtain polymer model representations of $Z_{\ord}$ and $Z_{\dis}$,
define $\pc_{\ord}(\V{\Lambda})$ and $\pc_{\dis}(\V{\Lam})$ to be the
sets of compatible collections of contours in $\V{\Lam}$ that are
labelled $\ord$ and $\dis$, respectively. Define
\begin{equation}
  \label{eq:Kweight}
  K_{\ord}(\V{\gamma})  
  = e^{- \kappa \| \V{\gamma} \|} \frac{Z_{\dis} (\Int \V{\gamma})
  }{  Z_{\ord} (\Int \V{\gamma})   }, \qquad
  K_{\dis}(\V{\gamma})  
  = e^{- \kappa \| \V{\gamma} \|} \frac{q Z_{\ord} (\Int \V{\gamma})
  }{  Z_{\dis} (\Int \V{\gamma})   }.
\end{equation}
By following a well trodden path in Pirogov--Sinai theory (see,
e.g.,~\cite[p.28]{borgs2012tight}
or~\cite[p.28]{helmuth2018contours}), these definitions give the
following representations for $Z_{\ord}$ and $Z_{\dis}$ as partition
functions of abstract polymer models:
\begin{align}
  \label{eqZordPoly}
  Z_{\ord}(\V{\Lam}) 
  &= e^{- e_{\ord} | \V\Lam|} \sum_{\Contour\in\pc_{\ord}(\V{\Lambda})} \prod_{\V{\gamma} \in \Contour}
    K_{\ord}(\V{\gamma}) \\ 
  \label{eqZdisPoly}
  Z_{\dis}(\V{\Lam}) 
  &=  e^{- e_{\dis} | \V\Lam|} \sum_{\Contour\in\pc_{\dis}(\V{\Lambda})} \prod_{\V{\gamma} \in
    \Contour} 
    K_{\dis}(\V{\gamma}) \,,
\end{align}
where the sums run over collections of compatible labelled contours in $\V{\Lam}$
with external label $\ord$ and $\dis$, respectively.

In fact, for $\ell\in \{\ord,\dis\}$, the above formulas represent
$Z_{\ell}(\V{\Lam})$ as the partition function of a polymer model in
the form discussed in Section~\ref{secPolymer}, i.e., where polymers
are subgraphs of a fixed graph $G$ with bounded degree. In detail,
recalling the discussion in Section~\ref{sec:cont}, we consider
contours as induced subgraphs of (a subgraph of) the bounded-degree graph
$\htors$. Thus $|\V{\gamma}|$ is the number of vertices in a contour
when represented as a subgraph. Condition~\eqref{eqPeierls2} holds with
$b=1$ since $\norm{\V{\gamma}}\geq |\gamma|$ by~\eqref{eq:size}. The
more substantial hypothesis \eqref{eqPolymerKP2} will be verified in
later sections for appropriate choices of the label and of $\beta$.

In the sequel we will write $|\V{\Lam}|_{\htors}$ for the size of set
of vertices of $\htors$ that are part of some contour $\V{\gamma}$ in
$\cC_{\ell}(\V{\Lam})$ for some $\ell$. The next technical lemma shows
it is enough to find algorithms that are polynomial time in
$|\V{\Lam}|_{\htors}$.
\begin{lemma}
  \label{lem:polygraphsize}
  For $\V{\Lam}$ a continuum set,  
  $|\V{\Lam}|_{\htors}$ is polynomial in $|\Lam|$.
\end{lemma}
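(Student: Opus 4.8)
The plan is to bound $|\V{\Lam}|_{\htors}$ by relating vertices of $\htors$ appearing in some contour $\V{\gamma}\in\cC_\ell(\V{\Lam})$ back to vertices or edges of $\tor$ lying in (or near) $\V{\Lam}$, and then invoke the fact that each such torus vertex contributes only $O(1)$ dual vertices, with the implicit constant depending only on $d$. Recall from Section~\ref{sec:cont} that a contour, viewed as a subgraph of $\htors$, consists of $(d-1)$-dimensional hypercubes dual to edges of $\htor$, each such hypercube being a transverse crossing of some edge $\V{e}$ with $e\in E(\tor)$. Since a contour $\V{\gamma}$ in $\V{\Lam}$ satisfies $d_\infty(\V{\gamma},\ctor\setminus\V{\Lam})\geq 1/2$, every facet of $\V{\gamma}$ lies within $\ell_\infty$-distance $O(1)$ of a point of $\V{\Lam}$, hence is dual to an edge $e$ both of whose endpoints lie in the $O(1)$-neighborhood of $\V{\Lam}\cap\tor$.

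Concretely, first I would note that each vertex of $\htors$ is the barycenter of an edge of $\htor$, and each edge of $\htor$ lies within $\ell_\infty$-distance $1$ of at most a bounded number (depending only on $d$) of vertices of $\tor$. So it suffices to show that every vertex of $\htors$ appearing in some $\V{\gamma}\in\cC_\ell(\V{\Lam})$ lies within distance $O(1)$ of $\V{\Lam}\cap\tor$. Second, I would use the structure of $\V{\Lam}$ as a region: either $\V{\Lam}=\ctor$, in which case $|\ctor|_{\htors}=O(n^d)=O(|\Lam|)$ trivially, or $\V{\Lam}$ is a connected component of $\ctor\setminus\partial\V{A}$, and a contour in $\V{\Lam}$ is, by definition, at distance $\geq 1/2$ from $\ctor\setminus\V{\Lam}$, so every point of the contour — in particular every facet center — is inside $\V{\Lam}$. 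Third, a facet center inside $\V{\Lam}$ is the midpoint of a $\tor$-edge $e$; both endpoints of $e$ are at distance $\tfrac12$ from this midpoint, hence within the $\tfrac12$-fattening of $\V{\Lam}$, which is covered by $O(|\Lam|)$ torus vertices (each vertex accounting for its own $O(1)$-neighborhood). Combining, $|\V{\Lam}|_{\htors} = O(|\Lam|)$, which is certainly polynomial in $|\Lam|$.

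The main obstacle, such as it is, is purely bookkeeping: one must be careful that the definition of ``contour in $\V{\Lam}$'' (distance $\geq 1/2$ from the complement) really does force all constituent facets to lie inside $\V{\Lam}$ rather than merely near it, and that the passage from facet centers to $\tor$-vertices only inflates the count by a $d$-dependent constant. There is a mild subtlety in that $|\V{\Lam}|_{\htors}$ counts dual vertices across \emph{all} contours in $\cC_\ell(\V{\Lam})$ simultaneously, so one is bounding the number of distinct $\htors$-vertices that could ever appear, not the size of any single contour; but since distinct facets correspond to distinct $\tor$-edges and there are only $d|\V{\Lam}\cap\tor|$-ish relevant edges, this is fine. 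I expect the whole argument to take only a few lines once the neighborhood bookkeeping is set up, and in fact a crude bound such as $|\V{\Lam}|_{\htors}\leq C(d)\,|\Lam|$ suffices for all later uses, so no sharp constant is needed.
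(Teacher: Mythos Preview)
Your proposal is correct and follows essentially the same approach as the paper: both arguments bound $|\V{\Lam}|_{\htors}$ by observing that every dual vertex (facet) appearing in a contour in $\V{\Lam}$ lies within $O(1)$ of some edge or vertex of $\tor$ inside $\V{\Lam}$, and each such edge or vertex accounts for only $O_d(1)$ dual vertices. One minor imprecision: a vertex of $\htors$ is dual to an edge of $\htor$, not of $\tor$, so a facet center is \emph{not} in general the midpoint of a $\tor$-edge (and indeed not every facet of a contour is a transverse crossing of some $\tor$-edge --- this is why $\|\V{\gamma}\|$ and $|\gamma|$ differ); but since every facet center is still within $\ell_\infty$-distance at most $1$ of a vertex of $\tor$, your counting goes through unchanged once this is corrected.
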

\begin{proof}
  By construction, contours inside
  $\V{\Lam}$ arise from edge configurations of edges inside
  $\V{\Lam}$. The number of such edges is at most $2d$ times the
  number of vertices inside. Since contours are boundaries of unions of
  $(d-1)$-dimensional hypercubes centered at vertices in $\htors$ that
  lie on edges, this proves the claim, since there are a bounded
  number of such hypercubes associated to each
  edge.
\end{proof}

\subsection{Random cluster model formulations of contour partition
  functions}
\label{sec:RCM-form}

The definitions of the partition functions $Z_{\ord}(\V{\Lambda})$ and
$Z_{\dis}(\V{\Lambda})$ 
in~\eqref{eq:matchextord} and~\eqref{eq:matchextdis} only involve
contours. In general, these contour partition functions do not
correspond to random cluster model partition functions due to the
exclusion of interfaces. However, we will show that when
$\Lambda = \Int \V{\gamma} \cap \tor$ can be embedded as a subgraph of
$\Z^{d}$, there is such an interpretation.

To make this precise, recall the definitions~\eqref{eq:Zfree} and~\eqref{eq:Zwired} of
$Z^f_{\Lam}$ and $Z^w_{\Lam}$ for $\Lam\subset \Z^{d}$ such that the
subgraph $G_{\Lam}$ induced by $\Lam$ is simply
connected. Recall that $p=1-e^{-\beta}$.
\begin{prop}
  \label{prop:scregion}
  Suppose $\Lambda\subset\Z^{d}$ is simply connected, and let
  $n = 3 |\Lam|$.  Then there are contours
  $\V{\gamma}_{\dis}\in \cC_{\dis}(\tor)$ and
  $\V{\gamma}_{\ord}\in \cC_{\ord}(\tor)$ determined by $\Lam$ such that
  \begin{equation*}
    Z_{\dis} (\Int \V{\gamma}_{\ord}) =
    (1-p)^{-\frac{1}{2}\norm{\V{\gamma}_{\ord}}}Z^f_{\Lam}, \quad 
    Z_\ord(\Int \V{\gamma}_{\dis}) =
    q^{-1}p^{d\abs{\Int\V{\gamma}_{\ord}}-\abs{E(\Lam)}} Z^w_{\Lam} .
\end{equation*} 
\end{prop}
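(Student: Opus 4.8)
\emph{Overview.} Both identities say that a contour-model partition function attached to a contour ``around $\Lambda$'' is, up to an explicit prefactor, a random cluster partition function on $G_\Lambda$ with the appropriate boundary condition. The plan is to build $\V{\gamma}_{\ord}$ and $\V{\gamma}_{\dis}$ explicitly from $\Lambda$, to recognise $Z_{\dis}(\Int\V{\gamma}_{\ord})$ and $Z_{\ord}(\Int\V{\gamma}_{\dis})$ as sums over edge subsets of $G_\Lambda$ carrying the free, respectively wired, random cluster weights, and then to pin down the leftover constant by comparing weights configuration by configuration.

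\emph{Constructing the contours.} Embed $\Lambda$ in $\tor$ with $n = 3\abs{\Lambda}$. Since $\Lambda$ is simply connected and its diameter is at most $\abs{\Lambda}-1 < n/2$, both $\Lambda$ and its complement are connected in $\tor$ and neither wraps around $\tor$. I would then invoke Lemma~\ref{lem:construct}: with $\ell=\dis$ and $\V{\Lambda}$ the thickening of the edges with both endpoints in $\Lambda$ one obtains a single boundary component of winding vector $0$, i.e.\ a contour $\V{\gamma}_{\dis}$ with $\Int\V{\gamma}_{\dis}$ that thickening; with $\ell=\ord$ and $A = E\setminus E'(\V{\Lambda})$, where $E'(\V{\Lambda})$ collects the edges incident to $\Lambda$, one obtains a contour $\V{\gamma}_{\ord}$ whose interior is the disordered island around $\Lambda$. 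In both cases $\Int\V{\gamma}_{\ord}\cap\tor = \Int\V{\gamma}_{\dis}\cap\tor = \Lambda$, so $\abs{\Int\V{\gamma}_{\ord}} = \abs{\Lambda}$, which is why this quantity (rather than $\abs{\Int\V{\gamma}_{\dis}}$) can appear in the second identity. That each $\partial\V{A}$ really is one connected component of winding vector $0$ --- not several components, and not an interface --- is exactly where simple connectivity of $\Lambda$ and the choice $n = 3\abs{\Lambda}$ are used; this is a topological fact of the kind established in~\cite{borgs2012tight}.

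\emph{From contours to random cluster sums.} For the first identity I would unfold the polymer representation~\eqref{eqZdisPoly} of $Z_{\dis}(\Int\V{\gamma}_{\ord})$. By Lemma~\ref{lem:edge-dis}, the collections of contours inside $\Int\V{\gamma}_{\ord}$ that contribute are in bijection with subsets $F$ of the active edge set $\cE_{\V{\gamma}_{\ord}}$ of~\eqref{eq:active2s}, which for this construction equals $E(G_\Lambda)$; writing $A$ for the associated torus configuration and inserting the localised weight formula~\eqref{eq:eweight2}, $Z_{\dis}(\Int\V{\gamma}_{\ord})$ becomes $\sum_{F\subseteq E(G_\Lambda)}$ of the interior part of the contour weight of $A$. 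Substituting the values~\eqref{eq:eord-etc} of $e_{\ord},e_{\dis},\kappa$ and recalling $p = 1-e^{-\beta}$, the vertex factors and the factors attached to edges of $G_\Lambda$ reassemble, for each $F$, into the summand $p^{\abs{F^c}}(1-p)^{\abs{F}}q^{c(G_{F^c})}$ of~\eqref{eq:Zfree}, while what remains is a constant independent of $F$ coming from the edges crossing $\V{\gamma}_{\ord}$ (absent in every such $A$, hence contributing fixed $(1-p)$- and $e^{-\kappa}$-factors) together with the prefactor $e^{-e_{\dis}\abs{\Int\V{\gamma}_{\ord}}}$ of~\eqref{eqZdisPoly}. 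Since by~\eqref{eq:size} these crossing edges each meet $\V{\gamma}_{\ord}$ once and $\kappa = \frac12\log(e^{\beta}-1)$, this constant collapses to $(1-p)^{-\frac12\norm{\V{\gamma}_{\ord}}}$ after the $e^{\beta}$-factors hidden in $e_{\ord}$ and $e_{\dis}$ cancel. For the second identity I would argue symmetrically with $\V{\gamma}_{\dis}$, Lemma~\ref{lem:edge-ord} and~\eqref{eqZordPoly}: now $Z_{\ord}(\Int\V{\gamma}_{\dis})$ runs over the same edge subsets of $G_\Lambda$ but with the edges near $\partial\Lambda$ forced present, so that all boundary vertices of $\Lambda$ are merged into one component --- precisely the wired partition function~\eqref{eq:Zwired}. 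The factor $q^{-1}$ records the fixed change in the component count caused by this merging, and the $p$-weights of the forced crossing edges produce $p^{d\abs{\Int\V{\gamma}_{\ord}}-\abs{E(\Lambda)}}$, with the exponent equal to $\tfrac12\norm{\V{\gamma}_{\ord}}$ since the number of edges between $\Lambda$ and its complement is $2\bigl(d\abs{\Lambda}-\abs{E(G_\Lambda)}\bigr)=\norm{\V{\gamma}_{\ord}}$.

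\emph{Main obstacle.} I expect the real work to be this last accounting, in two parts: (a) checking that for the contours built in the construction step the active edge sets of Lemmas~\ref{lem:edge-dis} and~\ref{lem:edge-ord} are exactly $E(G_\Lambda)$, so that the contour sums range over precisely the configurations appearing in~\eqref{eq:Zfree} and~\eqref{eq:Zwired} with no extra constraints; and (b) tracking every exponential normalisation --- the global $e^{-e_{\ell}\abs{\cdot}}$ factors, the $\kappa$-weight of $\V{\gamma}_{\ord}$, the $p$-weights of forced edges, and the $q^{\pm1}$ from wiring --- and verifying that these combine to exactly $(1-p)^{-\frac12\norm{\V{\gamma}_{\ord}}}$ and $q^{-1}p^{d\abs{\Int\V{\gamma}_{\ord}}-\abs{E(\Lambda)}}$. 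The topological input needed to set up the construction is also essential, but can be quoted from~\cite{borgs2012tight}.
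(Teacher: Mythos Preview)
Your treatment of the free case is essentially the paper's argument, though you have swapped the labels throughout: for $\V{\gamma}_{\ord}\in\cC_{\ord}$ the relevant active-edge set is~\eqref{eq:active1s} (not~\eqref{eq:active2s}) and the relevant bijection is Lemma~\ref{lem:edge-ord} (not Lemma~\ref{lem:edge-dis}); symmetrically for the wired case. With these corrections the free-boundary argument goes through as you describe, and the paper computes the constant by the same ``compare one configuration on each side'' shortcut you allude to.

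The wired case, however, has a genuine gap: your construction of $\V{\gamma}_{\dis}$ as the boundary of the thickening of $E(G_\Lambda)$ does \emph{not} give $\cE_{\V{\gamma}_{\dis}}=E(G_\Lambda)$. The definition~\eqref{eq:active2s} carries a distance constraint $d_\infty(\mathrm{mid}(\V e),\V\gamma)\ge 3/4$, and with your contour the edges of $G_\Lambda$ adjacent to $\partial\Lambda$ are at distance only $1/4$ from $\V{\gamma}_{\dis}$; for a $2\times 2$ box in $\Z^2$, in fact, $\cE_{\V{\gamma}_{\dis}}$ is empty. So the sum inside $Z_{\ord}(\Int\V{\gamma}_{\dis})$ does not range over all subsets of $E(G_\Lambda)$, and the ``forced'' edges are interior edges of $G_\Lambda$, which in general do \emph{not} wire the boundary vertices together. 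The paper fixes this by pushing the disordered contour outward: it first forms the ordered contour $\V{\gamma}'$ from $E\setminus E(G_\Lambda)$, then sets $\tilde A=E(G_\Lambda)\cup\{e:d_\infty(\mathrm{mid}(\V e),\V{\gamma}')\le 1/2\}$ and takes $\V{\gamma}_{\dis}=\partial\V{\tilde A}$. This enlargement is exactly what makes $\cE_{\V{\gamma}_{\dis}}=E(G_\Lambda)$ hold, and now the forced edges are those in $\tilde A\setminus E(G_\Lambda)$, i.e.\ edges leaving $\Lambda$; since $\Lambda$ is simply connected these form a connected shell that merges all boundary vertices into one component, which is the wired condition. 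Note also that with the correct construction $\Int\V{\gamma}_{\dis}\cap\tor\supsetneq\Lambda$, so your claim $|\Int\V{\gamma}_{\dis}|=|\Lambda|$ fails --- this is precisely why the second identity is stated with $|\Int\V{\gamma}_{\ord}|$. Your flagged obstacle~(a) is thus the crux, and with your $\V{\gamma}_{\dis}$ it cannot be verified.
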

\begin{proof}
 
  Since $n=3|\Lam|$, we can embed $\Lam\subset \tor$. Moreover, the
  set of boundary vertices
  $\partial \Lam \bydef \{ i \in \Lam: \exists j \in \Z^{d}\setminus
  \Lam, (i,j) \in E(\Z^{d}) \}$ can be identified with
  $ \{i\in \Lam: \exists j\in \Lam^{c}, (i,j)\in E\}$. Thus the graphs
  $G_{\Lam}$ and $G_{\Lam}'$ used in the definitions of $Z^{f}_{\Lam}$
  and $Z^{w}_{\Lam}$ are the same whether defined by considering
  $\Lam$ as a subset of $\Z^{d}$ or $\tor$. Note that by our choice of
  $n$ we know that any component of $\partial \V{A}$
  will be a contour if $A$ is a subset of edges that are at graph
  distance at most two from $\Lam$. To see this in an elementary way, note
  that we can further consider $\Lam$ as a subset of $\tor$ such that
  the fundamental loops of $\tor$ are at distance at least (say) ten
  from $\V{\Lam}$. 

  We first consider the case of $Z^{f}_{\Lam}$. To do this, let
  $A_{0}\subset E$ be the set of edges with both endpoints
  in $\Lam^c$.  Let $\V{\gamma}_\ord$ be the unique contour in
  $\partial \V{{A_{0}}}$; the fact that there is a unique contour follows from the
  fact that $\Lam$ is simply connected.  
  By Lemma~\ref{lem:edge-ord}, for any subset $A$ of edges in
  $E(G_{\Lambda})=\cE_{\V{\gamma}_{\ord}}$, the contours of
  $\partial \V{A}$ are contained in $\Int
  \V{\gamma}_{\ord}$. Moreover, this Lemma ensures that by carrying
  out the contour construction of Section~\ref{sec:deriv-cont-repr}
  for subsets of edges $A' = A_{0}\cup A$ where all edges of $A$ are
  from $E(G_{\Lambda})$, we obtain all contour configurations
  $\Contour = \{\gamma_{\ord}\}\cup \Contour'$ where the contours of
  $\Contour'$ are contained in $\Int \V{\gamma}_{\ord}$. 

  To obtain the conclusion, note that (i) $\sum_{A'}w(A')$ is
  proportional to $Z^{f}_{\Lam}$, where the sum runs over these
  $A' = A_{0}\cup A$ described above, and (ii) $\sum_{A'}w(A')$ is
  proportional to $Z_{\dis}( \Int \V{\gamma}_{\ord})$. To obtain the
  proportionality constant we compare the contributions of the empty
  edge configuration (empty contour configuration), see~\eqref{eq:matchextdis}. These are,
  respectively, $q^{\abs{\Lam}}(1-p)^{\abs{E(\Lam)}}$ and
  $q^{\abs{\Lam}}(1-p)^{d\abs{\Lam}}$. The ratio of these terms is
  $(1-p)^{{-\frac{1}{2}}\norm{\V{\gamma}_{\ord}}}$ since $\norm{\V{\gamma}_{\ord}}$
  is exactly the number of edges between $\Lambda$ and
  $\Lambda^{c}$.

We now consider the case of $Z^{w}_{\Lam}$. Let $A=E(G_{\Lam})$, and
consider the ordered contour $\V{\gamma}'$ that arises from the edge
set $E\setminus A$. Define
\begin{equation}
  \tilde A \bydef A\cup \{e\in E \mid d_{\infty}(\text{mid}(\V{e}),\V{\gamma}')\leq
  1/2\},
\end{equation}
the set of edges whose midpoints are either in the interior of
$\V{\gamma}'$ or within distance $1/2$ of $\V{\gamma}'$. Then set
$\V{\gamma}_{\dis}$ to be the single contour in
$\partial \V{{\tilde A}}$; there is only one contour in this set by
the assumption $\Lam$ is simply connected. Note that $A$ is precisely
$\cE_{\V{\gamma}_{\dis}}$ as defined above Lemma~\ref{lem:edge-dis},
and hence there is a bijection between contour configurations in
$\Int \V{\gamma}_{\dis}$ and subsets of $\tilde A$ in which each edge
not in $A$ is occupied. As for the case of $Z^{f}_{\Lam}$ we can now
conclude, as summing over such edge sets is proportional to both
$Z_{\ord}(\Int \V{\gamma}_{\dis})$ (recall~\eqref{eq:Zmatch})
and $Z^{w}_{\Lam}$. To compute the proportionality constant, we
compare the all occupied configuration to the empty contour
configuration, see~\eqref{eq:matchextord}. This gives, respectively, $qp^{\abs{E(\Lam)}}$ and
$e^{-e_{\ord}\abs{\Int \V{\gamma}_{\ord}}}$, and hence
\begin{equation}
  Z_{\ord}(\Int \V{\gamma}_{\dis}) =
  q^{-1}p^{d\abs{\Int\V{\gamma}_{\ord}}-\abs{E(\Lam)}} Z^{w}_{\Lam}.
\end{equation}
\end{proof}

\section{Contour model estimates}
\label{secEstimates}

In this section we state several estimates related to the contour
representations from the previous section. Recall the definition~\eqref{eq:Znew} of
$Z$.

\begin{lemma}[{\cite[Lemma~6.1(a)]{borgs2012tight}}]
  \label{lem:Z-split}
 There are constants $c>0$, $q_{0}=q_{0}(d)<\infty$, and $n_{0}<\infty$ such
  that if $q\geq q_{0}$, $n\geq n_{0}$, and $\beta\geq \beta_{c}$,
  \begin{equation}
    \label{eq:tunnel-small}
    \frac{Z_{\tunnel}}{Z} \leq \exp(-c\beta n^{d-1}).
  \end{equation}
\end{lemma}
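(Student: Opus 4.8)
The plan is to prove this classical Pirogov--Sinai estimate by comparing $Z_{\tunnel}$ to $Z_{\rest}$, using that every configuration in $\Omega_{\tunnel}$ contains an interface whose size is forced to be of order $n^{d-1}$. Since $\Omega = \Omega_{\tunnel}\sqcup\Omega_{\rest}$ by \eqref{eq:newsplit}--\eqref{eq:newsplit-1}, we have $Z = Z_{\rest}+Z_{\tunnel}$ with both terms nonnegative, so it suffices to bound $Z_{\tunnel}/Z_{\rest}$. For the denominator I would use the crude bound $Z_{\rest}\ge q\,Z_{\ord}(\ctor)$ coming from the decomposition $Z_{\rest}=qZ_{\ord}+Z_{\dis}$ (see \eqref{eq:Zmatch}), and then bound $Z_{\ord}(\ctor)$ below by keeping only the empty collection in \eqref{eqZordPoly}; since the ordered ground state is stable for $\beta\ge\beta_{c}$, the convergent cluster expansion in fact gives $Z_{\ord}(\ctor) = e^{-f_{\ord}n^{d}}(1+o(1))$, where $f_{\ord}$ is the ordered (truncated) free energy density, a quantity bounded uniformly in $n$.

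The geometric heart of the matter is that every interface is large. If $\V{S}$ is an interface then its winding vector (Section~\ref{sec:contours-interfaces}) is nonzero, say its $i$th component is odd; since the mod-$2$ intersection number of $\V{S}$ with the fundamental loop $\V{L}_{i}$ is a homological invariant, $\V{S}$ must intersect each of the $\asymp n^{d-1}$ pairwise-disjoint lattice translates of $\V{L}_{i}$, and hence $\norm{\V{S}}\ge c_{0}n^{d-1}$ for a dimensional constant $c_{0}>0$; the same bound holds for $\norm{\Interface}:=\sum_{\V{S}\in\Interface}\norm{\V{S}}$ whenever $\Interface\ne\emptyset$. By \eqref{eq:eord-etc}, $\kappa=\tfrac12\log(e^{\beta}-1)\ge\tfrac{\beta}{4}$ once $\beta$ is bounded below (true for $\beta\ge\beta_{c}$ with $q$ large), so in \eqref{eq:Z-mci} each interface network contributes a factor at most $e^{-\kappa\norm{\Interface}}\le e^{-(\beta/4)c_{0}n^{d-1}}$.

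To finish I would resum \eqref{eq:Z-mci} over the remaining degrees of freedom. Fix an interface network $\Interface$; the interfaces cut $\ctor$ into regions, and the sum over the contours compatible with $\Interface$, together with the $q^{c(\V{A}_{\ord})}$ factor, factorizes into a product of the single-phase partition functions $Z_{\ord},Z_{\dis}$ of \eqref{eq:matchextord}--\eqref{eq:matchextdis} over those regions. Using stability of the ordered phase at $\beta\ge\beta_{c}$ and the associated cluster-expansion bounds from~\cite{borgs2012tight} --- in particular $Z_{\dis}(R)\le e^{-f_{\ord}\abs{R}}C^{\abs{\partial R}}$ --- this product is at most $e^{-f_{\ord}n^{d}}C^{\norm{\Interface}}$ for a dimensional constant $C$. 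Since interfaces are connected subgraphs of the bounded-degree dual lattice $\htors$, the number of interface networks of total size $k$ is at most $n^{d}D^{k}$ for a dimensional constant $D$. Combining everything,
\begin{equation*}
  \frac{Z_{\tunnel}}{Z_{\rest}} \le \frac{e^{-f_{\ord}n^{d}}\sum_{k\ge c_{0}n^{d-1}} n^{d}(CD)^{k}e^{-\kappa k}}{q\,e^{-f_{\ord}n^{d}}\,(1+o(1))} \le 2n^{d}\big(CDe^{-\kappa}\big)^{c_{0}n^{d-1}} \le e^{-c\beta n^{d-1}},
\end{equation*}
where the volume factors $e^{-f_{\ord}n^{d}}$ cancel, the middle step uses $CDe^{-\kappa}<1/2$ --- valid once $q$ (hence $\kappa\gtrsim\log q$) is large relative to the dimensional constants $C,D$ --- and the last holds for a suitable $c=c(d)>0$ and all $n$ large.

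The step I expect to be the main obstacle is the comparison, for fixed $\Interface$, of the contour sum over the complementary regions with $e^{-f_{\ord}n^{d}}$ times a controlled factor: this is exactly where the full Pirogov--Sinai/Zahradn\'ik machinery and stability of the ordered ground state for $\beta\ge\beta_{c}$ are needed, and it is precisely the input imported from~\cite{borgs2012tight} (indeed the lemma is quoted as Lemma~6.1(a) there). The other ingredients --- the $n^{d-1}$ lower bound on interface size and the entropy count $n^{d}D^{k}$ --- are routine. An entirely analogous argument, using instead the high-temperature polymer expansion of Section~\ref{secHighTemp} and the estimates of Appendix~\ref{sec:HT} to control $Z_{\ord},Z_{\dis}$, would cover the regime $\beta_{h}<\beta<\beta_{c}$, but that range is not needed here.
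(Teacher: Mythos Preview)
The paper does not prove this lemma: it is quoted verbatim as Lemma~6.1(a) of~\cite{borgs2012tight} and used as a black box. Your sketch is a faithful outline of the standard argument from that reference, and you correctly isolate the key ingredients --- the $n^{d-1}$ lower bound on interface size via the winding vector, the factorization of the contour sum over the components of $\ctor\setminus\Interface$ into products of $Z_{\ord}$ and $Z_{\dis}$, the Pirogov--Sinai upper bound $Z_{\ell}(R)\le e^{-f_{\ord}\abs{R}+O(\norm{\partial R})}$ for both labels when $\beta\ge\beta_{c}$, and the Peierls-type entropy count for interfaces. You are also right to flag the comparison step as the place where the full machinery of~\cite{borgs2012tight} is needed.

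Two points where more care is warranted. First, interface \emph{networks} need not be connected, so the count $n^{d}D^{k}$ strictly applies to a single rooted interface; since each interface has size at least $c_{0}n^{d-1}$, a network of total size $k$ has at most $k/(c_{0}n^{d-1})$ components, and the extra rooting factor $(n^{d})^{k/(c_{0}n^{d-1})}$ is subexponential in $k$ and can be absorbed into $D$ for $n$ large. Second, the factorization over components of $\ctor\setminus\Interface$ also carries a factor of $q$ for each ordered component (from $q^{c(\V{A}_{\ord})}$ in~\eqref{eq:eweight}), and one must sum over the (at most two) consistent labellings of those components; the same component count shows these contributions are harmless. With these clarifications your argument matches the proof in~\cite{borgs2012tight}.
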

In what follows $c$ will always denote the constant from
Lemma~\ref{lem:Z-split}, and $q_0$ and $n_0$ will always be at least
as large as the constants in the lemma. More precisely, several lemmas will
  require $q_{0}$ to be chosen large enough, and we implicitly take
  $q_{0}$ to be the maximum of these requirements. We also choose $n_0$ large enough so that via~\eqref{eq:betac} we have $\beta_c > \beta_h$.  Lemma~\ref{lem:Z-split}
ensures that $Z_{\tunnel}$ is neglectable when approximating $Z$ up to
relative errors $\epsilon \gg \exp (  -c\beta n^{d-1})$.  We will also need to
know that $Z_{\dis}$ is neglectable when $\beta>\beta_{c}$. This
requires two lemmas.

\begin{lemma}
  \label{lemBCTsup}
  If $q \ge q_0$, $n\geq n_{0}$, and $\beta > \beta_c$ there exist constants
  $a_{\dis}>0$ and  $f>0$ so that if $\eps_{n}\bydef 2\exp(-c\beta n)$, then
  \begin{equation}
    Z_{\ord} \ge \exp(-(f +\eps_n ) n^d), \quad    
               Z_{\dis} \le \exp((- f + \eps_n) n^d) \max_{\Gamma \in \cG^{\ext}_{\dis}} e^{- \frac{a_{\dis}}{2} |\Ext \Gamma|} \prod_{\V{\gamma} \in \Gamma} e^{-\frac{c}{2} \beta \| \V{\gamma} \| } .
  \end{equation}
\end{lemma}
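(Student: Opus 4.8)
The plan is to read both inequalities off the Pirogov--Sinai contour machinery recalled from~\cite{borgs2012tight}. The inputs I would use are: the truncated contour weights $K'_{\ord},K'_{\dis}$ and the associated truncated partition functions $Z'_{\ord}(\V\Lambda),Z'_{\dis}(\V\Lambda)$; the Peierls bound $\abs{K'_{\ell}(\V\gamma)}\le e^{-(\frac{c}{2}\beta+r)\norm{\V\gamma}}$, valid for some fixed $r=r(d)>0$ once $q$ is large and $\beta>\beta_{c}$ (this uses $\kappa=\frac12\log(e^{\beta}-1)\ge\frac{\beta}{2}-1$, the inequality $\log q\le d\beta$ coming from $\beta>\beta_{c}$, and that disordered contours have $\norm{\V\gamma}$ bounded below by a constant multiple of $d$); and the resulting convergent cluster expansions for $\log Z'_{\ell}$, which produce truncated free energies $f_{\ord},f_{\dis}$ with $\abs{\log Z'_{\ell}(\V\Lambda)+f_{\ell}\abs{\V\Lambda}}\le C_{b}\norm{\partial\V\Lambda}$ on regions and, on the torus, $\abs{\log Z'_{\ell}(\ctor)+f_{\ell}n^{d}}\le\frac{\eps_{n}}{2}n^{d}$ --- the latter because $\ctor$ has no boundary, so $\log Z'_{\ell}(\ctor)+f_{\ell}n^{d}$ is contributed only by clusters winding around $\ctor$, each of size of order at least $n$ and hence damped by $e^{-\Omega(\beta n)}$, with at most $n^{d}$ translates of each. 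Finally I would use that for $\beta>\beta_{c}$ the ordered ground state is the unique stable one: $f\bydef f_{\ord}=\min(f_{\ord},f_{\dis})>0$, the truncation is never triggered for the stable label so $Z_{\ord}(\V\Lambda)=Z'_{\ord}(\V\Lambda)$, $a_{\dis}\bydef f_{\dis}-f>0$, and $Z_{\dis}$ admits the standard resummation over its outermost \emph{large} disordered contours, with the region complementary to those contours carrying the truncated disordered partition function. Granting this, the lower bound on $Z_{\ord}$ is immediate: $Z_{\ord}=Z_{\ord}(\ctor)=Z'_{\ord}(\ctor)\ge\exp(-(f+\tfrac{\eps_{n}}{2})n^{d})\ge\exp(-(f+\eps_{n})n^{d})$.

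For $Z_{\dis}$ I would start from the large--contour form of the $\V\Lambda=\ctor$ case of~\eqref{eq:matchextdis},
\[
  Z_{\dis}=\sum_{\Gamma}\Bigl(\prod_{\V\gamma\in\Gamma}e^{-\kappa\norm{\V\gamma}}\,q\,Z_{\ord}(\Int\V\gamma)\Bigr)\,Z'_{\dis}(\Ext\Gamma),
\]
the sum over collections $\Gamma$ of mutually external large disordered contours. Inserting $Z_{\ord}(\Int\V\gamma)=Z'_{\ord}(\Int\V\gamma)\le e^{-f\abs{\Int\V\gamma}+C_{b}\norm{\V\gamma}}$ and $Z'_{\dis}(\Ext\Gamma)\le e^{-f_{\dis}\abs{\Ext\Gamma}+C_{b}\sum_{\V\gamma}\norm{\V\gamma}}$, using $f_{\dis}=f+a_{\dis}$ to peel off a factor $e^{-a_{\dis}\abs{\Ext\Gamma}}$, and using the elementary count $\abs{\Ext\Gamma}+\sum_{\V\gamma\in\Gamma}\abs{\Int\V\gamma}\ge n^{d}-C'\sum_{\V\gamma}\norm{\V\gamma}$ to collapse the remaining $f$--factors into $e^{-fn^{d}}$ at the price of $e^{C'f\sum\norm{\V\gamma}}$, one absorbs $q\,e^{-\kappa\norm{\V\gamma}}$ together with all the boundary corrections into the Peierls factor (possible for a suitable $r$, once more using $\beta>\beta_{c}$ with $q$ large and the lower bound on disordered contour sizes), arriving at
\[
  Z_{\dis}\le e^{-fn^{d}}\sum_{\Gamma}e^{-a_{\dis}\abs{\Ext\Gamma}}\prod_{\V\gamma\in\Gamma}e^{-(\frac{c}{2}\beta+r)\norm{\V\gamma}}
  \le e^{-fn^{d}}\Bigl(\max_{\Gamma\in\cG^{\ext}_{\dis}}e^{-\frac{a_{\dis}}{2}\abs{\Ext\Gamma}}\prod_{\V\gamma\in\Gamma}e^{-\frac{c}{2}\beta\norm{\V\gamma}}\Bigr)\,S,
\]
where $S\bydef\sum_{\Gamma}e^{-\frac{a_{\dis}}{2}\abs{\Ext\Gamma}}\prod_{\V\gamma\in\Gamma}e^{-r\norm{\V\gamma}}$; the claim follows once $S\le e^{\eps_{n}n^{d}}$.

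The bound $S\le e^{\eps_{n}n^{d}}$ is the crux, and this is precisely the point at which the \emph{unstable} disordered ground state must be dealt with. A naive polymer--gas estimate for $S$ fails, since under the weight $e^{-\frac{a_{\dis}}{2}\abs{\Ext\Gamma}}$ a large contour with a large ordered interior is favoured rather than suppressed. What makes $S$ controllable is the interplay between (i) the restriction of the sum to \emph{large} contours --- their sizes are bounded below by the Pirogov--Sinai largeness threshold, and above on $\ctor$ by $O(n^{d-1})$ since contours have winding vector $0$, which together with the isoperimetric bound $\abs{\overline{\Int\V\gamma}}\le C''\norm{\V\gamma}^{2}$ from Lemma~\ref{lem:iso} controls how much volume a collection of a given total perimeter can enclose --- and (ii) the fact that whenever the contours actually fill most of $\ctor$ their total perimeter, hence their Peierls cost, is forced to be large. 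Carrying this trade--off out carefully, which is the step that genuinely relies on the estimates imported from~\cite{borgs2012tight}, yields $S\le e^{\eps_{n}n^{d}}$ and completes the proof; all the constants ($r,C_{b},C',C''$ and $a_{\dis}$) are fixed in terms of $d,q,\beta$, and $q_{0},n_{0}$ are enlarged as needed.
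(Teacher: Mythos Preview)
Your sketch reproduces the Pirogov--Sinai argument that the paper simply imports wholesale from~\cite[Lemma~6.3]{borgs2012tight}. The paper's own proof is three sentences: it cites~\cite{borgs2012tight} for the two displayed inequalities (with $a_{\dis}\ge 0$), cites~\cite[Lemma~A.3]{borgs2012tight} for the identification $f=f_{\ord}$ when $\beta\ge\beta_{c}$, and then argues separately that $a_{\dis}>0$ strictly. So your detailed reconstruction of the truncated weights, the cluster-expansion control of $\log Z'_{\ell}$, and the large-contour resummation for $Z_{\dis}$ is correct in spirit but is not what the paper does---the paper treats all of that as a black box.

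The one step the paper does \emph{not} treat as a black box is the strict inequality $a_{\dis}>0$, and this is precisely where your proposal has a gap. You write ``for $\beta>\beta_{c}$ the ordered ground state is the unique stable one \dots\ $a_{\dis}\bydef f_{\dis}-f>0$'' as though this were part of the imported machinery. It is not: $\beta_{c}$ in this paper is the physical random-cluster critical point, not a Pirogov--Sinai coexistence point defined by $f_{\ord}=f_{\dis}$, and the citation of~\cite[Lemma~A.3]{borgs2012tight} yields only $f=f_{\ord}$, i.e.\ $a_{\dis}\ge 0$. To upgrade this to strict positivity the paper argues by contradiction: if $a_{\dis}=0$ at some $\beta>\beta_{c}$, then the coexistence argument establishing~\cite[Lemma~6.1(b)]{borgs2012tight} would produce multiple infinite-volume random-cluster Gibbs measures at that $\beta$, contradicting the uniqueness result of~\cite{laanait1991interfaces}. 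Your proposal asserts the conclusion of this step without supplying the argument.
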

\begin{proof}
  With $a_{\dis}\geq 0$ this follows
  from~\cite[Lemma~6.3]{borgs2012tight} provided $f=f_{\ord}$ for
  $\beta\geq \beta_{c}$, and that $f=f_{\ord}$ follows
  from~\cite[Lemma~A.3]{borgs2012tight}. What remains is to prove $a_{\dis}>0$ when
  $\beta>\beta_{c}$. The results of~\cite{laanait1991interfaces} imply that there
  is a unique Gibbs measure for the random cluster model when
  $\beta>\beta_{c}$. If $a_{\dis}$ was $0$ for some $\beta>\beta_{c}$,
  then the argument establishing~\cite[Lemma~6.1 (b)]{borgs2012tight}
  implies the existence of multiple Gibbs measures, a contradiction.
\end{proof}

\begin{lemma}
  \label{lemSuperEstimates2a}
  If $q \ge q_0$, $n\ge n_0$, and $\beta > \beta_c$, then there exists
  a constant $b_{\dis}>0$ so that
  \begin{equation}
    \label{eq:SuperEstimates2a}
    \frac{ Z_{\dis}}{ Z } \le 2\exp(-b_{\dis} n^{d-1}) \,.
  \end{equation}
\end{lemma}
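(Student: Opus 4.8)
The plan is to compare $Z_{\dis}$ against $Z$ by using $Z \geq Z_{\rest} = qZ_{\ord} + Z_{\dis} \geq Z_{\ord}$, so it suffices to bound the ratio $Z_{\dis}/Z_{\ord}$. First I would invoke Lemma~\ref{lemBCTsup}, which gives $Z_{\ord}\geq \exp(-(f+\eps_n)n^d)$ and an upper bound on $Z_{\dis}$ of the form $\exp((-f+\eps_n)n^d)$ times a maximum over mutually external disordered contour collections $\Gamma$ of $e^{-\frac{a_{\dis}}{2}|\Ext\Gamma|}\prod_{\V\gamma\in\Gamma}e^{-\frac{c}{2}\beta\norm{\V\gamma}}$. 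Dividing, the $\exp(-fn^d)$ factors cancel and we are left with
\begin{equation*}
  \frac{Z_{\dis}}{Z_{\ord}} \leq e^{2\eps_n n^d}\max_{\Gamma\in\cG^{\ext}_{\dis}} e^{-\frac{a_{\dis}}{2}|\Ext\Gamma|}\prod_{\V\gamma\in\Gamma}e^{-\frac{c}{2}\beta\norm{\V\gamma}}.
\end{equation*}
Since $\eps_n = 2\exp(-c\beta n)$, the prefactor $e^{2\eps_n n^d}$ is at most $2$ (indeed $\to 1$) for $n\geq n_0$, because $n^d\exp(-c\beta n)\to 0$ and $\beta>\beta_c$ is bounded below.

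The crux is then to show that for every matching mutually external disordered collection $\Gamma$, the quantity $|\Ext\Gamma| + \text{(a multiple of)}\sum_{\V\gamma}\norm{\V\gamma}$ is at least of order $n^{d-1}$. The key geometric point is that a \emph{disordered} external contour configuration on the torus $\tor$ must, in a sense, ``wrap around'' or otherwise account for a full $(d-1)$-dimensional cross-section: either $\Ext\Gamma$ itself is large (comparable to $n^{d-1}$ or larger), or the contours in $\Gamma$ are collectively large because their interiors must fill up most of the torus. More concretely, $\tor = \Ext\Gamma \sqcup \bigcup_{\V\gamma\in\Gamma}(\Int\V\gamma \cup \V\gamma)$, so $|\Ext\Gamma| + \sum_{\V\gamma}|\Int\V\gamma| \geq n^d - \sum_{\V\gamma}|\V\gamma|$; combined with the isoperimetric bound $|\Int\V\gamma|\leq \frac n2\norm{\V\gamma}$ from Lemma~\ref{lem:iso} and $|\V\gamma|\leq\norm{\V\gamma}$, we get $|\Ext\Gamma| + \frac n2\sum_{\V\gamma}\norm{\V\gamma} \gtrsim n^d$, hence $|\Ext\Gamma| + \sum_{\V\gamma}\norm{\V\gamma}\gtrsim n^{d-1}$ (using $\max(x, \frac n2 y) \geq \frac{n}{2}\cdot\frac{x+y}{n/2+1}$-type reasoning, or more simply: if $|\Ext\Gamma|<\frac12 n^d$ then $\sum\norm{\V\gamma}\gtrsim n^{d-1}$, while if $|\Ext\Gamma|\geq\frac12 n^d\geq n^{d-1}$ we are done directly). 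Therefore
\begin{equation*}
  e^{-\frac{a_{\dis}}{2}|\Ext\Gamma|}\prod_{\V\gamma\in\Gamma}e^{-\frac{c}{2}\beta\norm{\V\gamma}} \leq \exp\bigl(-b'\min(a_{\dis}, c\beta)\, n^{d-1}\bigr)
\end{equation*}
for an absolute constant $b'>0$, uniformly over $\Gamma$.

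Putting these together yields $Z_{\dis}/Z \leq Z_{\dis}/Z_{\ord} \leq 2\exp(-b_{\dis}n^{d-1})$ with $b_{\dis} = b'\min(a_{\dis},c\beta_c)/2 > 0$ (using $\beta>\beta_c$), after absorbing the factor $2$ from the $\eps_n$ prefactor into a slight decrease of $b_{\dis}$ and enlarging $n_0$. I expect the main obstacle to be making the geometric lower bound on $|\Ext\Gamma| + \sum_{\V\gamma}\norm{\V\gamma}$ fully rigorous: one must be careful that the decomposition of $\tor$ into $\Ext\Gamma$ and the closed interiors of the external contours genuinely covers all $n^d$ vertices (contours have bounded ``width,'' so $|\V\gamma|\leq\norm{\V\gamma}$ handles the overlap), and that Lemma~\ref{lem:iso}'s bound $|\Int\V\gamma|\leq\frac n2\norm{\V\gamma}$ is the right tool to convert a volume deficit into a surface-area lower bound. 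A secondary technical point is checking that the constants $a_{\dis}, f$ from Lemma~\ref{lemBCTsup} are uniform enough in $\beta$ (they are, for $\beta$ in any range bounded away from $\beta_c$, but the statement only claims existence of $b_{\dis}$ for each fixed $\beta>\beta_c$, so uniformity is not actually needed).
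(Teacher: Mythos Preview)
Your proposal is correct and follows essentially the same route as the paper: invoke Lemma~\ref{lemBCTsup} to reduce to bounding the maximum over $\Gamma\in\cG^{\ext}_{\dis}$, then use the volume identity $|\Ext\Gamma|+\sum_{\V\gamma\in\Gamma}|\Int\V\gamma|=n^d$ together with Lemma~\ref{lem:iso} to force $|\Ext\Gamma|+\sum\norm{\V\gamma}\gtrsim n^{d-1}$. Two minor simplifications compared to your write-up: the extra $-\sum|\V\gamma|$ term is unnecessary since contours contain no vertices of $\tor$ (so the volume relation is an exact equality), and the cleaner way to extract the lower bound is to note that for $n\geq 2$ one has $\tfrac{n}{2}(|\Ext\Gamma|+\sum\norm{\V\gamma})\geq |\Ext\Gamma|+\tfrac{n}{2}\sum\norm{\V\gamma}\geq n^d$, giving the sharp $2n^{d-1}$ directly.
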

\begin{proof}
 Suppose $\Gamma \in \cG_{\dis}^{\ext}$. Then we claim that 
  \begin{equation}
    \label{eq:minsize}
    |\Ext\Gamma| + \sum_{\V{\gamma}\in\Gamma}\norm{\V{\gamma}} \geq  2n^{d-1}.
  \end{equation}
  To see this, note that 
  \begin{equation}
    \label{eq:minsize1}
    |\Ext\Gamma| + \sum_{\V{\gamma}\in\Gamma}|\Int \V{\gamma}|= n^{d},
  \end{equation}
  which combined with Lemma~\ref{lem:iso} implies
  \begin{equation}
    |\Ext\Gamma| + \frac{n}{2} \sum_{\V{\gamma}\in\Gamma}\norm{\V{\gamma}} \ge n^d
  \end{equation}
  which implies~\eqref{eq:minsize} when $n\geq 2$.

  By Lemma~\ref{lemBCTsup}, if 
  $n$ is large enough,
  \begin{equation}
    \label{eq:disneg}
    \frac{Z_{\dis}}{Z_{\ord}} \le 2 \max_{\Gamma \in \cG_{\dis}^{\ext}} e^{- \frac{a_{\dis}}{2} |\Ext \Gamma|} \prod_{\V{\gamma} \in \Gamma} e^{-\frac{c}{2} \beta \| \V{\gamma} \| } .
  \end{equation}
  Set $b_{\dis} \bydef \min \{ a_{\dis}, c \beta \}>0$. By~\eqref{eq:minsize}, 
  \begin{equation}
    e^{- \frac{a_{\dis}}{2} |\Ext \Gamma|} \prod_{\V{\gamma} \in \Gamma}
    e^{-\frac{c}{2} \beta \| \V{\gamma} \| }  \le \exp(- b_{\dis} n^{d-1}) 
  \end{equation}
  for all $\Gamma \in \cG_{\dis}^{\ext}$. The lemma now follows from~\eqref{eq:disneg}.
\end{proof}

The next two lemmas will allow us to verify the Koteck\'{y}--Preiss
condition for the contour models defining $Z_{\dis}$ and $Z_{\ord}$ from the previous section.
\begin{lemma}[{\cite[Lemma~6.3]{borgs2012tight}}]
  \label{lemKestimates}
  If $q \ge q_0$ and $\beta=\beta_c$, then
  \begin{equation*}
    K_{\ord} (\V{\gamma}) \le e^{-c \beta \|\V{\gamma}\|}, \quad \text{and}\quad 
    K_{\dis} (\V{\gamma}) \le e^{-c \beta \| \V{\gamma} \|} \,,
  \end{equation*}
  for all $\V{\gamma}$ in $\cC_\ord$ and $\cC_{\dis}$, respectively.
\end{lemma}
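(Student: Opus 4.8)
The plan is to invoke the Pirogov--Sinai stability analysis of \cite{borgs2012tight}: this is Lemma~6.3 there, so strictly speaking the proof is a citation, but let me sketch the mechanism one would use. The quantities $K_\ord(\V\gamma)$ and $K_\dis(\V\gamma)$ from \eqref{eq:Kweight} each consist of the Peierls factor $e^{-\kappa\|\V\gamma\|}$ times a ratio $Z_{\ell}(\Int\V\gamma)/Z_{\bar\ell}(\Int\V\gamma)$ (with an extra $q$ in the disordered case), so the entire content is to show this ratio is at most $e^{\epsilon\|\V\gamma\|}$ for a small $\epsilon$, after which one takes $c\beta = \kappa - \epsilon - o(\beta)$. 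Since $\kappa = \tfrac12\log(e^\beta-1)$ and $\beta_c = \tfrac{\log q}{d} + O(q^{-1/d})$, the ratio $\kappa/\beta$ is bounded below (by $\tfrac13$, say, once $q\ge q_0$), so $c$ may be taken to be an absolute constant; the stray factor $q = e^{d\beta_c(1+o(1))}$ appearing in $K_\dis$ is absorbed using the lower bound $\|\V\gamma\|\ge$ (const)$\cdot d$ on contour size.

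To control the ratio, one introduces truncated contour activities $K'_\ord, K'_\dis$ and associated truncated free energies $f_\ord, f_\dis$, defined through the cluster expansions of the truncated polymer models. The Koteck\'y--Preiss criterion (Lemma~\ref{KPthm}) applies to these truncated models because the Peierls constant $\kappa \gtrsim \beta_c/2 \gtrsim \log q/(2d)$ beats the $\log(e\Delta)$ entropy of connected subgraphs of $\htors$ once $q$ is large. One then establishes, by induction on the contour level (Definition~\ref{defLevel}), the matching two-sided bounds $e^{-f_\ell|\V\Lam|} \le Z_\ell(\V\Lam) \le e^{-f_\ell|\V\Lam| + \epsilon\|\partial\V\Lam\|}$ whenever the label $\ell$ is stable: the lower bound drops all but the empty collection in \eqref{eqZordPoly}/\eqref{eqZdisPoly} and then improves it via the truncated cluster expansion, while the upper bound expands according to external contours \eqref{eq:matchextord}/\eqref{eq:matchextdis}, applies the inductive hypothesis to the strictly lower-level interiors $\Int\V\gamma'$, and recognizes that stability of $\ell$ means the truncation is never active, so the sum equals the convergent truncated polymer partition function.

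The final input, special to $\beta=\beta_c$, is that \emph{both} labels are stable and $f_\ord = f_\dis$ --- this is precisely the characterization of $\beta_c$ used in \cite{borgs2012tight}, proved there via the interface results of \cite{laanait1991interfaces}. With $f_\ord = f_\dis$ the volume factors cancel in $Z_\dis(\Int\V\gamma)/Z_\ord(\Int\V\gamma) \le e^{(f_\ord - f_\dis)|\Int\V\gamma|} e^{\epsilon\|\V\gamma\|} = e^{\epsilon\|\V\gamma\|}$, and likewise with the labels swapped, yielding $K_\ell(\V\gamma) \le e^{-(\kappa-\epsilon)\|\V\gamma\|}$ and hence the claimed bound $e^{-c\beta\|\V\gamma\|}$.

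I expect the main obstacle to be arranging the induction so that it closes: one must show that stability of a label at contours of level $k$ is preserved at level $k+1$, which is where both the truncation/finite-energy argument and the balance $f_\ord=f_\dis$ at $\beta_c$ are essential. Establishing that $\beta_c$ defined by the free-energy crossing coincides with the random-cluster critical point is itself the deepest ingredient, and for the present paper it is simply imported from \cite{borgs2012tight,laanait1991interfaces}.
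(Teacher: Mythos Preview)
Your proposal is correct and matches the paper's approach: the paper simply cites this as Lemma~6.3 of \cite{borgs2012tight} without reproducing any argument, and you correctly identify this while providing an accurate sketch of the underlying Pirogov--Sinai stability mechanism (truncated activities, induction on level, and the equality $f_\ord=f_\dis$ at $\beta_c$). Your sketch goes beyond what the paper itself contains, but it faithfully represents the argument in the cited reference.
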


\begin{lemma}[{\cite[Lemma~6.3]{borgs2012tight}}]
  \label{lemSuperEstimates}
  If $q \ge q_0$ and $\beta > \beta_c$, then
  \begin{equation*}
    K_{\ord} (\V{\gamma}) \le e^{-c \beta \| \V{\gamma} \|} \quad \text{for all} \quad \V{\gamma} \in \cC_{\ord} \,.
  \end{equation*}
\end{lemma}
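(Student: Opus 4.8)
The plan is to start from the defining identity $K_{\ord}(\V{\gamma}) = e^{-\kappa\norm{\V{\gamma}}}\, Z_{\dis}(\Int\V{\gamma})/Z_{\ord}(\Int\V{\gamma})$ from~\eqref{eq:Kweight} and to reduce the statement to one crude bound: it is enough to show $Z_{\dis}(\Int\V{\gamma})/Z_{\ord}(\Int\V{\gamma}) \le e^{c_{1}\norm{\V{\gamma}}}$ for some constant $c_{1}=c_{1}(d)$ independent of $\beta$ and $q$. Granting this, since $\kappa = \tfrac12\log(e^{\beta}-1)$ and $\beta \ge \beta_{c} = \tfrac{\log q}{d}+O(q^{-1/d})$, one has $\kappa \ge c\beta + c_{1}$ as soon as $q$ (hence $\beta_{c}$, hence $\beta$) is large enough, and then $K_{\ord}(\V{\gamma}) \le e^{(c_{1}-\kappa)\norm{\V{\gamma}}} \le e^{-c\beta\norm{\V{\gamma}}}$, which is the claim. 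So all of the work is in bounding the ratio of interior partition functions.

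To control that ratio I would run the standard Pirogov--Sinai induction on the level $\cL(\V{\gamma})$ of Definition~\ref{defLevel}, establishing simultaneously a Peierls bound on $K_{\ord}$ for contours of smaller level and the ``fundamental estimate'' on region partition functions. The easy half is a lower bound on $Z_{\ord}$: assuming inductively that $K_{\ord}$ obeys a Peierls bound for all contours of strictly smaller level, the ordered polymer model~\eqref{eqZordPoly} has an absolutely convergent cluster expansion (Lemma~\ref{KPthm}), yielding $Z_{\ord}(\V{\Lam}) \ge \exp(-f_{\ord}\abs{\V{\Lam}} - c_{1}\norm{\partial\V{\Lam}})$ with $f_{\ord}$ the truncated ordered free energy density. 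The harder half is an upper bound on $Z_{\dis}$, and this is where $\beta \ge \beta_{c}$ enters: because the disordered ground state is \emph{unstable} for $\beta > \beta_{c}$, the disordered polymer model~\eqref{eqZdisPoly} has no convergent expansion, so I would pass instead to Zahradn\'{\i}k's truncated contour functional---replacing $K_{\dis}$ by a capped weight that does satisfy the Koteck\'{y}--Preiss condition---and show along the same level induction that $Z_{\dis}(\V{\Lam}) \le \exp(-f_{\dis}\abs{\V{\Lam}} + c_{1}\norm{\partial\V{\Lam}})$, with $f_{\dis}$ the truncated disordered free energy. Crucially, the characterization of $\beta_{c}$ underlying~\cite{kotecky1982first,laanait1991interfaces,borgs2012tight} gives $f_{\dis} \ge f_{\ord}$ precisely for $\beta \ge \beta_{c}$. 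Combining these with $\norm{\partial(\Int\V{\gamma})} = \norm{\V{\gamma}}$ gives
\[
  \frac{Z_{\dis}(\Int\V{\gamma})}{Z_{\ord}(\Int\V{\gamma})} \le \exp\bigl(-(f_{\dis}-f_{\ord})\abs{\Int\V{\gamma}} + 2c_{1}\norm{\V{\gamma}}\bigr) \le e^{2c_{1}\norm{\V{\gamma}}},
\]
the required ratio bound (with the constant doubled); the resulting $K_{\ord}$-bound is then fed back in to close the induction.

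The main obstacle is exactly this $Z_{\dis}$ upper bound in the unstable regime $\beta>\beta_{c}$: there is no convergent expansion to cite, and the truncated functional, the induction on contour levels, the lower bound on $Z_{\ord}$, and the inequality $f_{\dis}\ge f_{\ord}$ all have to be set up in tandem---this is the delicate core of~\cite[Section~6]{borgs2012tight}. In practice the cleanest route is to invoke~\cite[Lemma~6.3]{borgs2012tight} directly: its hypotheses are $q\ge q_{0}(d)$ and $\beta\ge\beta_{c}$, its $K_{\dis}$ estimate specializes at $\beta=\beta_{c}$ to Lemma~\ref{lemKestimates}, and its $K_{\ord}$ estimate is valid throughout $\beta\ge\beta_{c}$, which is the present statement. (The estimates in Lemmas~\ref{lemBCTsup} and~\ref{lemSuperEstimates2a} are the analogues of this fundamental estimate for $\V{\Lam}=\tor$; what the argument above---or~\cite[Lemma~6.3]{borgs2012tight}---supplies in addition is the version for general interior regions $\V{\Lam}=\Int\V{\gamma}$.)
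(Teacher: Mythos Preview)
Your proposal is correct and matches the paper's approach: the paper gives no proof of this lemma at all, simply citing it as \cite[Lemma~6.3]{borgs2012tight}, which is exactly what you conclude is ``the cleanest route'' after your (accurate) sketch of the Zahradn\'{\i}k-style level induction underlying that result. Your outline of the argument---bound the ratio $Z_{\dis}/Z_{\ord}$ via the fundamental estimate, using $f_{\dis}\ge f_{\ord}$ for $\beta\ge\beta_{c}$, and absorb the boundary term into $\kappa$ for $q$ large---is the standard one and is precisely what \cite[Appendix~A]{borgs2012tight} carries out.
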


In particular, since {$\beta > \beta_h= \frac{3 \log q}{4d}$},
then for sufficiently large $q$ the contour weights
$w_{\gamma}=K_{\ord}(\gamma)$ (for $\beta \ge \beta_c$) and
${w_{\gamma}=}K_{\dis}(\gamma)$ (for $\beta = \beta_c$) will
satisfy {condition~\eqref{eqPolymerKP2}. Condition~\eqref{eqPeierls2}
is satisfied with $b=1$ by the discussion in Section~\ref{sec:extern-cont-repr-1}.}

Next we will show that when $\beta > \beta_c$ and the disordered ground state is unstable, that regions with disordered boundary conditions `flip' quickly to ordered regions by way of a large contour; more precisely, the dominant contribution to $Z_{\dis}(\V \Lam)$ comes from collections of contours with small external volume.    

\newcommand{\extbs}{\cH}

For a region $\V \Lam$ and
$M >0$ we define
\begin{equation*}
  \extbs^{\text{flip}}_\dis(\V\Lambda,M) \bydef \{\Gamma\in \cG_{\dis}^{\ext}(\V\Lam)
  \mid \abs{\Ext \Gamma \cap \V\Lam} \leq M\},
\end{equation*}
and 
\begin{equation}
  \label{eq:Zflip}
  Z_{\dis}^{\text{flip}}(\V\Lam,M) \bydef 
  \sum_{\Gamma\in\extbs^{\text{flip}}_\dis(\V\Lambda,M)}e^{-e_{\dis}|\Ext\Gamma \cap \V\Lam|}
  \prod_{\V{\gamma}\in\Gamma}e^{-\kappa \norm{\V{\gamma}}}qZ_{\ord}(\Int\V{\gamma}). 
\end{equation}
Thus, c.f.~\eqref{eq:matchextdis}, $Z_{\dis}^{\text{flip}}(\V\Lam,M)$
is the contribution to $Z_{\dis}(\V\Lam)$ from contour configurations
with small exterior volume.

\begin{lemma}
  \label{lemSuperEstimates2}
  Suppose $q \ge q_0$ and $\beta > \beta_c$.  Then there exists
  $a_\dis >0$ so that the following holds for all $n \ge n_0$.
  Suppose $\V{\gamma} \in \cC_{\ord}$.  For any $\eps>0$, if
  \begin{equation}
    \label{eq:Mlb}
    M\geq 
    \frac{2}{a_{\dis}}(\kappa+3)\norm{\V{\gamma}}
  \end{equation}
  then $Z_{\dis}^{\text{flip}}(\Int \V{\gamma} ,M)$ is an $\eps$-relative
  approximation to $Z_{\dis}(\Int \V{\gamma} )$.
\end{lemma}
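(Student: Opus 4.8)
The plan is to split $Z_\dis(\Int\V{\gamma})$ into the flip part and a remainder and to show the remainder is negligible. Writing $\V{\Lam}=\Int\V{\gamma}$ (so $\partial\V{\Lam}=\V{\gamma}$), formulas \eqref{eq:matchextdis} and \eqref{eq:Zflip} give
\[
Z_\dis(\V{\Lam}) = Z_\dis^{\text{flip}}(\V{\Lam},M) + R, \qquad
R \bydef \!\!\sum_{\substack{\Gamma\in\cG^{\ext}_\dis(\V{\Lam})\\ \abs{\Ext\Gamma\cap\V{\Lam}}>M}}\!\!
e^{-e_\dis\abs{\Ext\Gamma\cap\V{\Lam}}}\prod_{\V{\gamma}'\in\Gamma}e^{-\kappa\norm{\V{\gamma}'}}\,qZ_\ord(\Int\V{\gamma}').
\]
Since $Z_\dis^{\text{flip}}(\V{\Lam},M)$ is a sub-sum of $Z_\dis(\V{\Lam})$ with non-negative terms, $0\le Z_\dis^{\text{flip}}(\V{\Lam},M)\le Z_\dis(\V{\Lam})$, and it suffices to prove $R\le\eps\, Z_\dis^{\text{flip}}(\V{\Lam},M)$; this yields $Z_\dis(\V{\Lam})\le(1+\eps)Z_\dis^{\text{flip}}(\V{\Lam},M)\le e^\eps Z_\dis^{\text{flip}}(\V{\Lam},M)$, while $e^{-\eps}Z_\dis^{\text{flip}}(\V{\Lam},M)\le Z_\dis(\V{\Lam})$ is immediate.

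First I would produce a reference flip configuration. Applying the geometric construction used for $Z^w_\Lam$ in the proof of Proposition~\ref{prop:scregion} to the ordered contour $\V{\gamma}$ yields a single disordered contour $\V{\gamma}^\circ<\V{\gamma}$ hugging $\V{\gamma}$, with $\norm{\V{\gamma}^\circ}\le C\norm{\V{\gamma}}$ and $\abs{\Ext\{\V{\gamma}^\circ\}\cap\V{\Lam}}=\abs{\V{\Lam}}-\abs{\Int\V{\gamma}^\circ}\le C\norm{\V{\gamma}}$ for a dimensional constant $C$, and $\Gamma_{\mathrm{ref}}\bydef\{\V{\gamma}^\circ\}\in\cG^{\ext}_\dis(\V{\Lam})$. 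Since $M\ge C\norm{\V{\gamma}}$ will hold, $\Gamma_{\mathrm{ref}}$ contributes to $Z_\dis^{\text{flip}}(\V{\Lam},M)$, so
\[
Z_\dis^{\text{flip}}(\V{\Lam},M)\ \ge\ e^{-e_\dis\abs{\Ext\{\V{\gamma}^\circ\}\cap\V{\Lam}}}e^{-\kappa\norm{\V{\gamma}^\circ}}\,qZ_\ord(\Int\V{\gamma}^\circ)\ \ge\ e^{-C'\norm{\V{\gamma}}}\,qZ_\ord(\V{\Lam}),
\]
where the last inequality absorbs the prefactors (using $e_\dis,\kappa=O(\beta)$ and $\norm{\V{\gamma}^\circ},\abs{\Ext\{\V{\gamma}^\circ\}\cap\V{\Lam}}\le C\norm{\V{\gamma}}$) and uses that the convergent cluster expansion for the \emph{stable} ordered phase (Lemma~\ref{lemSuperEstimates}) controls $Z_\ord(\V{\Lam})/Z_\ord(\Int\V{\gamma}^\circ)\le e^{O(\norm{\V{\gamma}})}$, since $\V{\Lam}$ and $\Int\V{\gamma}^\circ$ differ only in a collar of $O(\norm{\V{\gamma}})$ vertices around $\V{\gamma}$.

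The main step is the bound on $R$. Here I would run the Pirogov--Sinai argument underlying Lemma~\ref{lemBCTsup} (equivalently \cite[Lemmas~6.3, A.3]{borgs2012tight}) on the region $\V{\Lam}=\Int\V{\gamma}$ rather than on $\tor$, keeping the \emph{sum} (rather than a maximum) over external-contour collections and tracking the surface-order corrections. This produces $a_\dis>0$ (the same constant as in Lemma~\ref{lemBCTsup}) and a constant $C''$ with
\[
\sum_{\Gamma\in\cG^{\ext}_\dis(\V{\Lam})} e^{\frac{a_\dis}{2}\abs{\Ext\Gamma\cap\V{\Lam}}}\,e^{-e_\dis\abs{\Ext\Gamma\cap\V{\Lam}}}\prod_{\V{\gamma}'\in\Gamma}e^{-\kappa\norm{\V{\gamma}'}}\,qZ_\ord(\Int\V{\gamma}')\ \le\ e^{C''\norm{\V{\gamma}}}\,qZ_\ord(\V{\Lam}) ,
\]
the point being that each unit of disordered exterior volume is at least a factor $e^{a_\dis/2}$ more expensive than flipping to the stable ordered ground state, so one can reinflate the weights by $e^{\frac{a_\dis}{2}\abs{\Ext\Gamma\cap\V{\Lam}}}$ and stay comparable to $qZ_\ord(\V{\Lam})$ up to a surface term. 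Restricting the left-hand sum to $\abs{\Ext\Gamma\cap\V{\Lam}}>M$ and pulling out $e^{-\frac{a_\dis}{2}M}$ gives $R\le e^{-\frac{a_\dis}{2}M}e^{C''\norm{\V{\gamma}}}\,qZ_\ord(\V{\Lam})$, and combining with the previous paragraph, $R\le e^{-\frac{a_\dis}{2}M+(C'+C'')\norm{\V{\gamma}}}\,Z_\dis^{\text{flip}}(\V{\Lam},M)$. A short computation then shows that \eqref{eq:Mlb} forces the exponent to be at most $\log\eps$: the term $\frac{2}{a_\dis}\log\frac{8q}{\eps}$ covers $\frac{2}{a_\dis}\log\frac1\eps$ with room for the factor $q$ and absolute constants, and $\frac{2}{a_\dis}(\kappa+3)\norm{\V{\gamma}}$ covers $\frac{2}{a_\dis}(C'+C'')\norm{\V{\gamma}}$ once $q$ is large (so that $\kappa\gtrsim\beta$ dominates the surface-energy constants). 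Hence $R\le\eps\,Z_\dis^{\text{flip}}(\V{\Lam},M)$.

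The main obstacle is the reinflated estimate displayed in the previous paragraph: establishing it uniformly over all regions of the form $\Int\V{\gamma}$, with the error controlled by the \emph{surface} quantity $\norm{\V{\gamma}}$ rather than the volume $\abs{\V{\Lam}}$, is precisely where instability of the disordered ground state for $\beta>\beta_c$ is used, and it requires either a general-volume version of the estimates in \cite{borgs2012tight} or redoing that argument on the region $\Int\V{\gamma}$. The reference-configuration lower bound and the verification that \eqref{eq:Mlb} suffices are then routine bookkeeping.
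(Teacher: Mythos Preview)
Your overall strategy is the same as the paper's: split $Z_{\dis}=Z_{\dis}^{\text{flip}}+R$, lower bound $Z_{\dis}^{\text{flip}}$ via a single reference flip configuration, and upper bound $R$ by re-running the Pirogov--Sinai estimates from \cite{borgs2012tight} on the region $\Int\V{\gamma}$ to extract the factor $e^{-\frac{a_{\dis}}{2}M}$. You have also correctly identified the upper bound as the step that needs the small/large contour machinery of \cite[Lemma~A.1]{borgs2012tight} adapted to a general region, and the paper does exactly that.

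Where your argument is looser than the paper's is the reference configuration. The paper does not borrow the construction from Proposition~\ref{prop:scregion}; instead it takes $\V{\gamma}'=\partial\V{\cE_{\V{\gamma}}}$ with $\cE_{\V{\gamma}}$ from~\eqref{eq:active1s}, which has the crucial feature that $\Ext\{\V{\gamma}'\}\cap\V{\Lam}$ contains \emph{no} vertices and $\norm{\V{\gamma}'}\le\norm{\V{\gamma}}$ exactly. This kills the $e^{-e_{\dis}\abs{\Ext\Gamma}}$ prefactor entirely and gives the clean lower bound $Z_{\dis}^{\text{flip}}\ge \tfrac{1}{2}q\,e^{-(\kappa+1)\norm{\V{\gamma}}}e^{-f\abs{\V{\Lam}}}$. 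Combined with the upper bound $Z_{\dis}^{\text{err}}\le 2\,e^{-f\abs{\V{\Lam}}+2\norm{\V{\gamma}}}e^{-\frac{a_{\dis}}{2}M}$ (whose surface factor is exactly $e^{2\norm{\V{\gamma}}}$), the ratio is $4q\,e^{(\kappa+3)\norm{\V{\gamma}}-\frac{a_{\dis}}{2}M}$, which is precisely why the constant $\kappa+3$ appears in~\eqref{eq:Mlb}.

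By contrast, your reference configuration carries $O(\norm{\V{\gamma}})$ exterior vertices, so your $C'$ picks up a term of order $\abs{e_{\dis}}\cdot\norm{\V{\gamma}}$. Since $e_{\dis}=d\beta-\log q$ grows like $d\beta$ for large $\beta$ while $\kappa\sim\beta/2$, the assertion that ``$\kappa\gtrsim\beta$ dominates the surface-energy constants'' and hence $C'+C''\le\kappa+3$ is not justified; without the sharper zero-exterior construction you would only prove the lemma with some larger constant in place of $\kappa+3$. That is enough for the downstream algorithm, but it does not establish the lemma exactly as stated.
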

\begin{proof}
  Let $\V\Lam = \Int \V{\gamma}$. Note that the lemma is immediate if
  $\Int \V{\gamma}$ does not contain any contours. Let
  \begin{equation*}
    Z_{\dis}^{\text{err}}(\V\Lam) \bydef
    Z_{\dis}(\Lam) - Z_{\dis}^{\text{flip}}(\V\Lam,M) \,.
  \end{equation*} 
  To prove the lemma it suffices to show that
  \begin{equation}
    \label{eq:fliplem}
    0 \le Z_{\dis}^{\text{err}}(\V\Lam ) / Z_{\dis}^{\text{flip}}(\V\Lam ,M) \le
    \eps/2.
  \end{equation}
  The lower bound is immediate since $Z_{\dis}$ is a sum of
  non-negative terms and $Z^{\text{flip}}_{\dis}(\V\Lam,M)$ is at least
  one. Thus the proof
  of~\eqref{eq:fliplem} 
  has two parts: lower bounding $Z_{\dis}^{\text{flip}}(\V\Lam ,M)$ and
  upper bounding $Z_{\dis}^{\text{err}}(\V\Lam)$.  The combination of
  these bounds will prove~\eqref{eq:fliplem}.

  We begin with the lower bound on
  $Z_{\dis}^{\text{flip}}(\V{\Lam},M)$. Recall the
  definition~\eqref{eq:active1s} of $\cE_{\V{\gamma}}$. Let
  $\V{\gamma}' \in \cC_\dis (\V\Lam)$ be the contour obtained by
  thickening $\cE_{\V{\gamma}}$ and taking the boundary, i.e.,
  $\partial \V{{\cE_{\V{\gamma}}}}$.  Let $\Gamma =
  \{\V{\gamma}'\}$. Note that $\Ext \Gamma$ contains no vertices,
  because $\V\Lam$ is connected and all edges inside $\V\Lam$ are in
  $\cE_{\V{\gamma}}$.
  
  Next observe that $\norm{\V{\gamma}'}\leq \norm{\V{\gamma}}$. This is
  because by construction any edge contributing to $\norm{\V{\gamma}'}$
  must have one vertex outside of $\V\Lam$, and such an edge also
  contributes to $\norm{\V{\gamma}}$.  In particular,
  $\Gamma \in \extbs^{\text{flip}}_\dis(\Lam,M)$, and hence 
  \begin{align*}
    Z_{\dis}^{\text{flip}}(\V\Lambda,M) 
    &\geq
      e^{-e_{\dis}\abs{\Ext\Gamma \cap \V\Lam}} e^{-\kappa\norm{\V{\gamma} '}} q
      Z_{\ord}(\Int\V{\gamma}') \\ 
    &\geq e^{-\kappa\norm{\V{\gamma}}} q Z_{\ord}(\Int\V{\gamma}') \\
    &\geq
      e^{-(\kappa+1)\norm{\V{\gamma}}} qe^{-(f + \eps_n)\abs{\Int \V{\gamma}'}} \\
    &\ge \frac{1}{2}e^{-(\kappa+1)\norm{\V{\gamma}}} qe^{-f \abs{\V\Lam}}\, ,
  \end{align*}
  where $\eps_n = 2 e^{-c \beta n}$ as above and $f$ is the constant
  from Lemma~\ref{lemBCTsup}. The second inequality used that $\Ext
  \Gamma$ contains no vertices. The second-to-last
  inequality follows from Lemma~\ref{lemBCTsup}, and
  the last inequality follows since (i) $|\Int \V{\gamma} | = | \Int \V{\gamma}'|$
  and (ii) for $n$ large enough we have $e^{\eps _n | \Int \V{\gamma}|} \le 2$
  for all $\V{\gamma} \in \cC$.

  Next we prove an upper bound on
  $Z_{\dis}^{\text{flip}}(\V{\Lam},M)$. 
  In fact, the upper bound is essentially
  contained in~\cite[Appendices~A.2 and~A.3]{borgs2012tight}, and we
  explain it here.  Some further notation will be helpful. Let
  $a_{\dis}>0$ be the constant from Lemma~\ref{lemBCTsup}.  We call a
  contour $\V{\gamma} \in \cC_{\dis}$ `small' if
  $\text{diam}(\V{\gamma}) \le \frac{c \beta}{a_{\dis}}$ and `large'
  otherwise. Here $\text{diam}(\V{\gamma})$ denotes the diameter of
  $\V{\gamma}$, the maximum over $i=1,\dots, n$ of $\abs{I_{i}(\V{\gamma})}$,
  where $I_{i}(\V{\gamma}) = \{ k\in \Z/n\Z \mid \V{S}^{(i)}_{k}\cap \V{\gamma}
  \neq \emptyset\}$, where $\V{S}^{i}_{k}$ is the set
  $\{\V{x}\in \ctor \mid \V{x}_{i}=k\}$. See~\cite[p.22]{borgs2012tight}.

 For a region $\V\Lam'$, let
 \begin{align*}
   \cG_{\dis}^{\ext,\text{small}}(\V\Lam') 
   &\bydef \{ \Gamma \in \cG_{\dis}^\ext(\V\Lam') | \V{\gamma}' \text { is
     small } \forall \V{\gamma} ' \in \Gamma \}, \\ 
   \cG_{\dis}^{\ext,\text{large}}(\V\Lam') 
   &\bydef \{ \Gamma \in \cG_{\dis}^\ext(\V\Lam') | \V{\gamma}' \text { is
     large } \forall \V{\gamma} ' \in \Gamma \} ,  
 \end{align*}
 and
\begin{align*}
  Z_{\dis}^{\text{small}} (\V\Lam ') 
  &\bydef  \sum_{\Gamma \in \cG_{\dis,\text{small}}^\ext(\V\Lam')}  
    e^{-e_{\dis}|\Ext\Gamma \cap \V\Lam'|} \prod_{\V{\gamma}\in\Gamma} e^{-\kappa
    \norm{\V{\gamma}}}qZ_{\ord}(\Int\V{\gamma}) \\                       
  &=e^{- e_{\dis} |\V\Lam'|} \sum_{\Gamma \in
    \cG_{\dis}^{\ext, \text{small}}(\V\Lam')} 
    \prod_{\V{\gamma} ' \in \Gamma} K_{\dis} (\V{\gamma}') .
\end{align*}

Moreover, let
\begin{align*}
  \extbs^{\text{err}}_{\dis}(\V\Lam) 
  &\bydef  \{\Gamma\in \cG_{\dis}^{\ext}(\V\Lam) \mid \abs{\Ext \Gamma \cap
    \V\Lam} > M\}, \qquad \text{and} \\ 
  \extbs^{\text{err}, \text{large}}_{\dis}(\V\Lam) 
  &\bydef 
    \{\Gamma\in \cG_{\dis}^{\ext,\text{large}}(\V\Lam) \mid
    \abs{\Ext \Gamma \cap \V\Lam} > M\} . 
\end{align*}

Following the  proof of \cite[Lemma A.1]{borgs2012tight}, we have that 
\begin{align*}
  Z_{\dis}^{\text{err}}(\V\Lam,M)
  &=  \sum_{\Gamma \in   \extbs^{\text{err}}_{\dis}(\V\Lam)}
    e^{-e_{\dis} |\Ext \Gamma \cap \V\Lam|}   \prod_{\V{\gamma}' \in \Gamma}
    e^{-\kappa \|\V{\gamma}'\|} q Z_{\ord} (\Int \V{\gamma} ') 
   \\                             
  &\le \sum_{\Gamma \in  \extbs^{\text{err},
    \text{large}}_{\dis}(\V\Lam) }  Z_{\dis}^{\text{small}}(\Ext \Gamma \cap \V\Lam)
    \prod_{\V{\gamma}' \in \Gamma} q e^{-\kappa \| \V{\gamma}'\| }
    Z_{\ord}(\Int \V{\gamma}')  
  \\
  &\le e^{(\eps_n-f
    ) |\V\Lam| + \|\V{\gamma}\|} e^{-\frac{a_{\dis}}{2} M} \sum_{\Gamma \in
    \extbs^{\text{err}, \text{large}}_{\dis}(\V{\Lam})}
    e^{-\frac{a_{\dis}}{2} |\Ext \Gamma \cap \V\Lam|}  \prod_{\V{\gamma} ' \in \Gamma}
    e^{-(\frac{\beta}{8} -3) \|\V{\gamma} ' \|} 
  \\
  &\le  2e^{-f
    |\Lam| + 2\|\V{\gamma}\|} e^{-\frac{a_{\dis}}{2} M} \, .
\end{align*}
The first inequality follows since for each
$\Gamma \in \extbs^{\text{err}}_{\dis}(\V\Lam)$, the set of large
contours in $\Gamma$ appear in
$\extbs^{\text{err}, \text{large}}_{\dis}(\V\Lam) $.  The second
inequality follows from the proof of~\cite[Lemma A.1]{borgs2012tight};
as above we are using that $f=f_{\ord}$ when $\beta>\beta_{c}$.  The
last inequality follows from~\cite[(A.12)]{borgs2012tight} and the
fact that $e^{\eps_n |\V\Lam|} \le 2$ for large enough $n$.

We can now conclude and prove~\eqref{eq:fliplem}: putting the bounds together and
using~\eqref{eq:Mlb} we get
\begin{equation*}
\frac{Z_{\dis}^{\text{err}}(\V\Lam )}{ Z_{\dis}^{\text{flip}}(\V\Lam ,M) }\le  4 q^{{-1}} e^{(\kappa +3) \| \V{\gamma} \| -\frac{a_{\dis}}{2} M} \le  \eps/2.\qedhere
\end{equation*}
\end{proof}

We conclude this section with an enumerative lemma concerning
$\cH^{\text{flip}}_{\dis}$.

\begin{prop}
  \label{prop:vacant}
  There is an algorithm that given $\V{\gamma}\in \cC_{\ord}$ and
  $M\in\N$ outputs $\cH^{\text{flip}}_{\dis}(\Int \V{\gamma},M)$ in time
  $\norm{\V{\gamma}}e^{O(\norm{\V{\gamma}}+M)}$.
\end{prop}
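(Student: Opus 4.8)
Throughout write $\V{\Lambda}=\Int\V{\gamma}$, and for $\Gamma\in\cH^{\text{flip}}_{\dis}(\Int\V{\gamma},M)$ put $\V{E}=\Ext\Gamma\cap\V{\Lambda}$, so that $|\V{E}|\le M$ by hypothesis. The plan is to show that such a $\Gamma$ is determined by, and recoverable from, the ``small'' sub-region $\V{E}$, and then to enumerate all candidates for $\V{E}$. Two geometric facts are decisive. First, $\V{E}$ is connected (it is the unique disordered region adjacent to $\V{\gamma}=\partial\V{\Lambda}$, the inner side of which is labelled $\dis$ since $\V{\gamma}$ is ordered), and it contains every vertex of $\V{\Lambda}$ at $\ell_{\infty}$-distance less than $1/2$ from $\V{\gamma}$, because contours in $\V{\Lambda}$ keep distance at least $1/2$ from $\partial\V{\Lambda}$. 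Consequently $\norm{\V{\gamma}}\le 2d\,|\V{E}|\le 2dM$, so if $\norm{\V{\gamma}}>2dM$ then $\cH^{\text{flip}}_{\dis}(\Int\V{\gamma},M)=\emptyset$ and the algorithm outputs nothing. Second -- the crucial point -- a disordered region cannot be thin: by the contour construction (it is a union of open $3/4$-neighbourhoods of its vertices together with $1/4$-neighbourhoods of dual facets; see the proof of Lemma~\ref{lem:edge-dis} and~\cite{borgs2012tight}), the number of side-$1/2$ hypercubes comprising $\V{E}$ is at most $c_d|\V{E}|\le c_dM$ for a dimensional constant $c_d$. Since $\partial\V{E}=\V{\gamma}\cup\bigcup_{\V{\gamma}'\in\Gamma}\V{\gamma}'$ (a disjoint union) and $\norm{\cdot}$ is comparable to the number of $(d-1)$-facets, this also yields $\sum_{\V{\gamma}'\in\Gamma}\norm{\V{\gamma}'}=O(M)$ and $|\Gamma|=O(M)$.

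The map $\Gamma\mapsto\V{E}$ is injective: the elements of $\Gamma$ are exactly the connected components of $\partial\V{E}$ distinct from $\V{\gamma}$, and from $\Gamma$ one recovers $\V{E}$ as $\V{\Lambda}$ with the interiors of the contours of $\Gamma$ deleted. Equivalently, $\Gamma$ is determined by the set $Q$ of side-$1/2$ hypercubes whose union is $\V{E}$; this set is connected under face-adjacency and, by the second fact above, has at most $c_dM$ elements. So it suffices to enumerate the possible $Q$.

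The algorithm. Compute $\V{\Lambda}\cap\tor$ via Lemma~\ref{lem:findext} in time $O(\norm{\V{\gamma}}^{3})$; by Lemma~\ref{lem:iso}, $|\V{\Lambda}\cap\tor|\le\norm{\V{\gamma}}^{2}$, and the decomposition of $\V{\Lambda}$ into side-$1/2$ hypercubes (a list of size polynomial in $\norm{\V{\gamma}}$) is then computable in polynomial time. Set $m=c_dM$. Enumerate all face-connected sets $Q$ of at most $m$ side-$1/2$ hypercubes contained in $\V{\Lambda}$: since the hypercube-adjacency graph has bounded degree, there are at most $\norm{\V{\gamma}}^{2}e^{O(M)}$ such sets, and they can be listed in that time. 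For each $Q$, set $\V{E}_Q=\bigcup Q$ and compute the connected components of $\partial\V{E}_Q$ in time $O(|Q|)$. Accept $Q$ precisely when: exactly one component equals $\partial\V{\Lambda}=\V{\gamma}$; the remaining components, each labelled so that the side facing $\V{E}_Q$ is disordered, form a collection $\Gamma_Q$ that lies in $\cG^{\ext}_{\dis}(\V{\Lambda})$ (each is a genuine disordered contour in $\V{\Lambda}$ -- verified by reconstructing it from its far side via Lemma~\ref{lem:construct}, by checking winding vector $0$ with Lemma~\ref{lem:wind-comp}, and by checking distance at least $1/2$ from $\V{\gamma}$ -- with pairwise disjoint interiors and none containing another); $|\V{E}_Q\cap\tor|\le M$; and $\Ext\Gamma_Q\cap\V{\Lambda}=\V{E}_Q$. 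All of these checks take $\mathrm{poly}(M,\norm{\V{\gamma}})$ time, since $|Q|=O(M)$ and the needed interiors are obtained from Lemma~\ref{lem:findext}. When $Q$ is accepted, output $\Gamma_Q$. Completeness holds because for every $\Gamma\in\cH^{\text{flip}}_{\dis}(\Int\V{\gamma},M)$ the associated set $Q$ is enumerated and passes every test; no output is spurious or repeated, by the injectivity of $\Gamma\mapsto Q$ together with the final check $\Ext\Gamma_Q\cap\V{\Lambda}=\V{E}_Q$. The total running time is $\norm{\V{\gamma}}^{2}e^{O(M)}$, which (using $\norm{\V{\gamma}}\le 2dM$ whenever the set is non-empty, and trivially otherwise) is at most $\norm{\V{\gamma}}\,e^{O(\norm{\V{\gamma}}+M)}$, as claimed.

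The main obstacle is the ``no thin disordered regions'' estimate, i.e.\ that $|\V{E}|\le M$ forces $Q$ to have $O(M)$ hypercubes and $\sum_{\V{\gamma}'\in\Gamma}\norm{\V{\gamma}'}=O(M)$. This is precisely what converts the heuristic behind Lemma~\ref{lemSuperEstimates2} -- that regions with disordered boundary flip to ordered via a single dominant contour -- into a search space of polynomial size, and it relies entirely on the geometric input from~\cite{borgs2012tight} that disordered regions are unions of $3/4$-neighbourhoods of their vertices. Granting that, the remaining ingredients (the correspondence $\Gamma\leftrightarrow\V{E}\leftrightarrow Q$, the validity checks, and enumerating connected subsets of a bounded-degree graph) are routine.
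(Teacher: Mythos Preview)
Your proof has a genuine gap at the ``no thin disordered regions'' step. You assert that the number of side-$\tfrac12$ hypercubes making up $\V{E}=\Ext\Gamma\cap\V{\Lambda}$ is at most $c_d|\V{E}|$, where $|\V{E}|$ counts vertices of $\tor$. This is false, and the very decomposition you cite shows why: a disordered region is a union of $3/4$-neighbourhoods of isolated vertices \emph{together with} $1/4$-neighbourhoods of dual facets, and the latter are thin slabs that contribute half-cubes without contributing any $\tor$-vertex. A minimal instance: if $\V{\gamma}$ is the ordered contour obtained from the edge configuration $E\setminus\{e\}$ for a single edge $e$, then $\V{\Lambda}=\Int\V{\gamma}$ is the open $1/4$-neighbourhood of $e^\star$, a slab containing no vertex of $\tor$ but several half-cubes; with $\Gamma=\emptyset$ one has $|\V{E}|=0$ yet $\V{E}$ nonempty. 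The companion inequality $\|\V{\gamma}\|\le 2d|\V{E}|$ fails for the same reason (here $\|\V{\gamma}\|=2$, $|\V{E}|=0$), since an edge can cross $\V{\gamma}$ twice with neither endpoint in $\V{\Lambda}$.

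The defect is not just at the boundary cases. Take $\V{\Lambda}$ large and let $\Gamma$ consist of the contours $\V{\gamma}'_e=\partial(\text{closed }1/4\text{-neighbourhood of }e)$ as $e$ ranges over a matching of the vertices of $\Lambda$ away from $\partial\V{\Lambda}$. These are pairwise compatible disordered contours in $\V{\Lambda}$ (matching edges are at $\ell_\infty$-distance $\ge 1$, so the contours are at distance $\ge 1/2$), and every matched vertex lies inside some $\Int\V{\gamma}'_e$; hence $|\V{E}|=O(\|\V{\gamma}\|)$ while the number of half-cubes in $\V{E}$ is $\Theta(|\V{\Lambda}|)$, which by Lemma~\ref{lem:iso} can be as large as $\Theta(\|\V{\gamma}\|^2)$. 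Your enumeration over connected $Q$ of size at most $c_dM$ therefore misses all such $\Gamma$ once $M=O(\|\V{\gamma}\|)$, which is precisely the regime used in Lemma~\ref{lemSuperCritApprox}. The paper avoids enumerating half-cubes altogether and instead enumerates $1$-connected sets of \emph{removed edges} $A=A'\sqcup B$, with $A'$ the edges meeting $\V{\gamma}$; the control on $|B|$ in terms of $|\Ext\Gamma\cap\tor|$ there is a separate (and nontrivial) point that your argument does not recover.
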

\begin{proof}
  This follows from a variation on the proof of
  Proposition~\ref{prop:enum}.  To determine
  $\cH^{\text{flip}}_{\dis}(\Int \V{\gamma})$ we will consider $\V{\gamma}$
  to be a contour in a torus of side-length
  $\norm{\V{\gamma}}\wedge n$; this torus has volume polynomial in
  $\norm{\V{\gamma}}$. 

  $\cH^{\text{flip}}_{\dis}(\Int\V{\gamma})$ is the set of mutually external
  contour configurations $\Gamma\setminus \V{\gamma}$ obtained as $F$
  ranges over the possibilities listed in Lemma~\ref{lem:edge-ord}. As
  in Lemma~\ref{lem:ord-edge-con} we can determine $E'\cup F$ by
  considering it as the complement of $1$-connected set of edges
  $A=A'\sqcup B$, where $A'$ is the set of edges that intersect
  $\V{\gamma}$. For any choice of such an $A$,
  $\Ext\Contour\cap\tor$ is of size at least $O(\abs{B})$, so to
  determine $\cH^{\text{flip}}_{\dis}(\Int \V{\gamma}, M)$ it is enough to
  consider all possible sets $B$ of size at most $M$. The claim now
  follows by arguing as in the proof of Proposition~\ref{prop:enum}.
\end{proof}

\section{Approximate counting algorithms}
\label{sec:count}

This section describes our approximate counting algorithms for $\beta>\beta_{h}$.  The algorithms differ depending on whether $\beta=\beta_c$, $\beta>\beta_c$, or $\beta_{h}<\beta<\beta_{c}$.  Recall that for $\ell \in \{ \dis, \ord\}$, $Z_{\ell}(\V{\Lam})$ was defined for all regions  $\V{\Lam}$
in~\eqref{eq:matchextord}--\eqref{eq:matchextdis}. The heart of this section is the following lemma.

\begin{lemma}
\label{lemOrdDiscompute}
For $d\geq 2$ and $q \ge q_0$ the following hold.
\begin{enumerate}
\item If $\beta = \beta_c$ there is an FPTAS to approximate $Z_{\ord}(\V{\Lam})$ and $Z_{\dis}(\V{\Lam})$. 
\item If $\beta> \beta_c$ there is an FPTAS to approximate  $Z_{\ord}(\V{\Lam})$.  
\item If $\beta_h < \beta < \beta_c$ there is an FPTAS to approximate $Z_{\dis}(\V{\Lam})$. 
\end{enumerate}
In each case the FPTAS applies to any region  
$\V{\Lam}$, with running time polynomial in $| \V \Lam|$, the number of vertices of $\tor$ in $\V{\Lam}$.
\end{lemma}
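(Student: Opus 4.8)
The plan is to prove all three cases by a single inductive scheme on the level $\cL(\V{\Lam})$ of a region, using the polymer representations~\eqref{eqZordPoly}--\eqref{eqZdisPoly} together with the machinery of Section~\ref{secPolymer}. The induction hypothesis at level $\ell$ is: for every region $\V{\Lam}$ whose boundary contour has level $<\ell$ (with $\V{\Lam}=\ctor$ treated as the top level), we can compute an $\eps$-relative approximation to the relevant $Z_{\ell}(\V{\Lam})$ in time polynomial in $|\V{\Lam}|$ and $1/\eps$. The base case is a thin region, where $\cC(\Int\V{\gamma})=\emptyset$, so $Z_{\ord}(\V{\Lam}) = e^{-e_{\ord}|\V{\Lam}|}$ and $Z_{\dis}(\V{\Lam}) = e^{-e_{\dis}|\V{\Lam}|}$ exactly. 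For the inductive step, observe that to evaluate $K_{\ord}(\V{\gamma})$ or $K_{\dis}(\V{\gamma})$ via~\eqref{eq:Kweight} we need approximations to $Z_{\dis}(\Int\V{\gamma})$ and $Z_{\ord}(\Int\V{\gamma})$, and $\Int\V{\gamma}$ is a region whose boundary contour has strictly smaller level; this is where the recursion bites.

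The key steps, in order, are: (1) By Lemma~\ref{lemKestimates} (for $\beta=\beta_c$) or Lemma~\ref{lemSuperEstimates} (for $\beta>\beta_c$, ordered only) and the corresponding high-temperature analogue from Appendix~\ref{sec:HT} (for $\beta_h<\beta<\beta_c$, disordered only), the weights $K_{\ell}(\V{\gamma}) \le e^{-c\beta\|\V{\gamma}\|}$, so since $\beta\ge\frac{3\log q}{d}$ and $q\ge q_0$ is large, condition~\eqref{eqPolymerKP} holds with $b=1$ (as noted after Lemma~\ref{lemSuperEstimates}, using that $\norm{\V{\gamma}}\ge|\gamma|$ gives~\eqref{eqPeierls}); thus Lemma~\ref{KPthm} applies and $\exp(T_m(\cdot))$ with $m = O(\log(|\V{\Lam}|_{\htors}/\eps))$ is an $\eps$-relative approximation. (2) By Lemma~\ref{lemPolymerApprox}, it suffices to approximate each weight $K_{\ell}(\V{\gamma})$ for $\|\V{\gamma}\|\le m$ to relative error $\eps v(\V{\gamma})/N$ with $v(\V{\gamma}) = \|\V{\gamma}\|^2$ (using $\abs{\Int\V{\gamma}}\le\norm{\V{\gamma}}^2$ from Lemma~\ref{lem:iso} to control the recursion depth); by~\eqref{eq:Kweight} this reduces to approximating $Z_{\dis}(\Int\V{\gamma})$ and $Z_{\ord}(\Int\V{\gamma})$ to comparable relative error, which is handled by the inductive hypothesis since $\abs{\Int\V{\gamma}}\le m^2$ is bounded in terms of $m$. (3) Proposition~\ref{prop:enum} enumerates and level-sorts all contours in $\cC_\ell(\V{\Lam})$ of size $\le m$ in time $O((|\V{\Lam}|+\norm{\partial\V{\Lam}})\exp(O(m)))$, so the enumeration cost is polynomial in $|\V{\Lam}|$, and Lemma~\ref{lemPolyModelCount} evaluates the truncated cluster expansion in time $O(N\exp(O(m)))$. (4) In the case $\beta>\beta_c$ for part (1)'s evaluation of $Z_{\dis}(\Int\V{\gamma})$ (needed inside $K_{\ord}$), we do \emph{not} have a good Koteck\'y--Preiss bound on disordered weights directly; instead we invoke Lemma~\ref{lemSuperEstimates2} to replace $Z_{\dis}(\Int\V{\gamma})$ by $Z_{\dis}^{\text{flip}}(\Int\V{\gamma},M)$ with $M$ chosen per~\eqref{eq:Mlb}, and enumerate the relevant mutually-external configurations via Proposition~\ref{prop:vacant}; inside each such configuration the contours are \emph{ordered} and we recurse on $Z_{\ord}$. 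Assembling: the running time satisfies a recursion $T(|\V{\Lam}|) \le \text{poly}(|\V{\Lam}|)\exp(O(m)) + (\text{number of subcontours})\cdot T(m^2)$, and since each recursion reduces the relevant size to $O(m^2) = \text{polylog}$, unrolling gives total time polynomial in $|\V{\Lam}|$ and $1/\eps$; errors compound additively across the bounded recursion depth, controlled by the relative-error budget in Lemma~\ref{lemPolymerApprox}.

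The main obstacle, and the genuinely new point over~\cite{helmuth2018contours}, is step (4): handling the \emph{unstable} disordered ground state when $\beta>\beta_c$. There $Z_{\dis}(\V{\Lam})$ has no convergent cluster expansion of its own (its polymer weights $K_{\dis}$ are not small — a disordered region wants to flip ordered), so one cannot approximate it by naive truncation. The resolution is precisely Lemma~\ref{lemSuperEstimates2}: the dominant contribution comes from configurations with small external disordered volume, i.e.\ a large contour that flips the region to ordered, and $Z_{\dis}^{\text{flip}}(\V{\Lam},M)$ captures this with $M$ only logarithmic in $1/\eps$ plus linear in $\norm{\partial\V{\Lam}}$. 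The care required is twofold: first, ensuring that the $\norm{\V{\gamma}}$-dependent term in the lower bound~\eqref{eq:Mlb} on $M$ does not blow up the enumeration cost in Proposition~\ref{prop:vacant} — it does not, since $\norm{\V{\gamma}}\le m$ when we only need $K_{\ord}(\V{\gamma})$ for polymers appearing in the truncated expansion; and second, verifying that the error from the flip-truncation is a \emph{relative} error that can be folded into the $\eps v(\V{\gamma})$ budget of Lemma~\ref{lemPolymerApprox}, which follows from~\eqref{eq:fliplem}. The remaining cases $\beta=\beta_c$ (both $Z_\ord$ and $Z_\dis$ are handled by honest cluster expansions, since Lemma~\ref{lemKestimates} bounds both weights) and $\beta_h<\beta<\beta_c$ (symmetric to $\beta>\beta_c$ with the roles of ordered and disordered exchanged, using the estimates from Appendix~\ref{sec:HT}) then follow by the same template with the appropriate substitutions.
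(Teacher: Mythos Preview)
Your proposal is correct and follows essentially the same approach as the paper: induction on contour level, using the truncated cluster expansion of Section~\ref{secPolymer} together with Lemmas~\ref{lemKestimates}/\ref{lemSuperEstimates} to verify~\eqref{eqPolymerKP} for the stable phase(s), Lemma~\ref{lemPolymerApprox} with $v(\V{\gamma})=|\Int\V{\gamma}|\le\|\V{\gamma}\|^2$ to reduce to approximating weights, Proposition~\ref{prop:enum} for enumeration, and Lemma~\ref{lemSuperEstimates2} plus Proposition~\ref{prop:vacant} for the unstable phase via the flip truncation. The paper packages the inductive step by defining explicit approximate weights $\tilde K_\ell(\V{\gamma})$ once and for all (Lemmas~\ref{lemBcInductive} and~\ref{lemSuperCritApprox}) using a \emph{single} truncation parameter $m=\log(8N^2/\eps)/3$ fixed at the top level and then computes them bottom-up; this makes the running-time and error bookkeeping cleaner than your recursive formulation $T(|\V{\Lam}|)\le\cdots + (\#\text{subcontours})\cdot T(m^2)$, which as written would need care to show that repeatedly shrinking $\eps$ at each depth does not blow up $m$, but the underlying algorithm is the same.
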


Sections~\ref{sec:lbc} and~\ref{sec:lbgc} prove the first two cases of Lemma~\ref{lemOrdDiscompute}. The case $\beta_{h}<\beta<\beta_{c}$ is very similar to $\beta>\beta_{c}$, and we defer the details to Appendix~\ref{sec:HT}.  In Section~\ref{secZTogether} we show how these results, together with a result 
from~\cite{borgs2012tight}, suffice to give an FPRAS for $Z$ on the torus.

\subsection{Proof of Lemma~\ref{lemOrdDiscompute} when $\beta = \beta_c$}
\label{sec:lbc} 

We begin by defining a useful variant of the truncated cluster expansion for $Z_{\ord}(\V{\Lam})$ and $Z_{\dis}(\V{\Lam})$. Let $K$ be a function from contours to positive real numbers. For $\ell\in\{\ord,\dis\}$ define

\begin{equation*}
  T_{\ell,m}(\V{\Lam},K) \bydef \sum_{\substack{\Gamma \in \cG^c_{\ell}(\V{\Lam}) \\ \|\Gamma \|  <  m }} \phi(\Gamma) \prod_{\V{\gamma} \in \Gamma} K(\V{\gamma}).
\end{equation*}
so that by~\eqref{eqZordPoly} and~\eqref{eqZdisPoly} $Z_{\ell}(\Lam) = \exp(-e_{\ell}|\Lam|) T_{\ell,\infty}(\V{\Lam},K_{\ell})$ provided the cluster expansion for the polymer models converge.

Recall that the level of a contour was defined in Definition~\ref{defLevel}, and that $|\V{\Lam}|_{\htors}$ was defined immediately prior to Lemma~\ref{lem:polygraphsize}.
\begin{lemma}
\label{lemBcInductive}
Suppose $d\geq 2$, $q \ge q_0$ and $\beta = \beta_c$.  Given 
$\V{\Lam}$ with $|\V{\Lam}|_{\htors} = N$, and an error parameter $\eps > 0$, let $m=\log(8N^2/\eps)/3$.  Inductively (by level) define weights $\tilde K_{\ord}(\V{\gamma})$ and $\tilde K_{\dis}(\V{\gamma})$ for all contours $\V{\gamma}$  in $\cC_{\ord}(\V{\Lam})$ and  $\cC_{\dis}(\V{\Lam})$ with size $\norm{\V{\gamma}}\leq m$ by:
\begin{enumerate}
\item If $\V{\gamma}$ is thin, then set
\begin{equation*}
\tilde K_\ord(\V{\gamma}) =  e^{- \kappa \| \V{\gamma} \| - (e_{\dis} - e_{\ord}) |\Int \V{\gamma}|}, \quad 
\tilde K_\dis(\V{\gamma}) =  q e^{- \kappa \| \V{\gamma} \| - (e_{\ord} - e_{\dis}) |\Int \V{\gamma}|}. 
\end{equation*}
\item If $\V{\gamma}$ is not thin, then set
\begin{align*}
\tilde K_{\ord} (\V{\gamma}) &=   e^{-\kappa \| \V{\gamma} \| - (e_{\dis} - e_{\ord}) |\Int \V{\gamma}|} \exp \left[ T_{m, \dis}(\Int \V{\gamma}, \tilde K) - T_{m, \ord}(\Int \V{\gamma}, \tilde K)  \right ],  \\
\tilde K_{\dis} (\V{\gamma}) &=qe^{-\kappa \| \V{\gamma} \| - (e_{\ord} - e_{\dis}) |\Int \V{\gamma}|} \exp \left[ T_{m, \ord}(\Int \V{\gamma}, \tilde K) - T_{m, \dis}(\Int \V{\gamma}, \tilde K)  \right ] \,.
\end{align*}
\end{enumerate}

Then for $N$ sufficiently large $e^{-e_{\ell}|\Lam|}\exp ( T_{\ell,m}(\V{\Lam},\tilde K_{\ell}))$ is an $\eps$-relative approximation to $Z_{\ell}(\V{\Lam})$ for $\ell\in\{\ord,\dis\}$.
\end{lemma}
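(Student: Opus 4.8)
The plan is to run the inductive Pirogov--Sinai scheme, proving a statement about \emph{all} regions at once. Write $\hat Z_{\ell}(\V{\Lam}) \bydef e^{-e_{\ell}|\V{\Lam}|}\exp\bigl(T_{\ell,m}(\V{\Lam},\tilde K_{\ell})\bigr)$ for the quantity the algorithm outputs. I will show, by induction on $\max\{\cL(\V{\gamma}) : \V{\gamma}\in\cC_{\ord}(\V{\Lam})\cup\cC_{\dis}(\V{\Lam})\}$ (a level that strictly decreases on passing from $\V{\Lam}$ to any $\Int\V{\gamma}$ with $\V{\gamma}$ a contour in $\V{\Lam}$), that for \emph{every} region $\V{\Lam}$ and both $\ell\in\{\ord,\dis\}$, $\hat Z_{\ell}(\V{\Lam})$ is a $\delta(\V{\Lam})$-relative approximation to $Z_{\ell}(\V{\Lam})$, where $\delta(\V{\Lam})\bydef |\V{\Lam}|_{\htors}^{2}e^{-3m}$. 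Applied to the region in the statement, with $|\V{\Lam}|_{\htors}=N$, this gives relative error $N^{2}e^{-3m}=\eps/8<\eps$ by the choice of $m$, which is the assertion. Throughout we use that for $\beta=\beta_{c}$ and $q\ge q_{0}(d)$ large, Lemma~\ref{lemKestimates} gives $K_{\ord}(\V{\gamma}),K_{\dis}(\V{\gamma})\le e^{-c\beta_{c}\|\V{\gamma}\|}$ with $c\beta_{c}=\Theta(\log q)$ as large as we like, so~\eqref{eqPolymerKP} holds for the polymer models~\eqref{eqZordPoly}--\eqref{eqZdisPoly}; in particular each cluster expansion converges and $Z_{\ell}(\V{\Lam})=e^{-e_{\ell}|\V{\Lam}|}\exp\bigl(T_{\ell,\infty}(\V{\Lam},K_{\ell})\bigr)$.

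The recursion in the statement is exactly~\eqref{eq:Kweight} with $Z_{\ell}$ replaced by $\hat Z_{\ell}$: unwinding the definitions, $\tilde K_{\ord}(\V{\gamma})=e^{-\kappa\|\V{\gamma}\|}\hat Z_{\dis}(\Int\V{\gamma})/\hat Z_{\ord}(\Int\V{\gamma})$ and $\tilde K_{\dis}(\V{\gamma})=e^{-\kappa\|\V{\gamma}\|}q\,\hat Z_{\ord}(\Int\V{\gamma})/\hat Z_{\dis}(\Int\V{\gamma})$, with case~(1) (thin $\V{\gamma}$) the special instance in which $\Int\V{\gamma}$ carries no contours and $\hat Z_{\ell}(\Int\V{\gamma})=Z_{\ell}(\Int\V{\gamma})=e^{-e_{\ell}|\Int\V{\gamma}|}$. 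Consequently, for any contour $\V{\gamma}'$ with $\|\V{\gamma}'\|\le m$,
\[
  \bigl|\log\tilde K_{\ell}(\V{\gamma}') - \log K_{\ell}(\V{\gamma}')\bigr|
  \;\le\; \Bigl|\log\tfrac{\hat Z_{\ord}(\Int\V{\gamma}')}{Z_{\ord}(\Int\V{\gamma}')}\Bigr| + \Bigl|\log\tfrac{\hat Z_{\dis}(\Int\V{\gamma}')}{Z_{\dis}(\Int\V{\gamma}')}\Bigr|
  \;\le\; 2\,\delta(\Int\V{\gamma}'),
\]
the last step being the inductive hypothesis applied to $\Int\V{\gamma}'$, which has strictly smaller level.

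For the inductive step, fix a region $\V{\Lam}$ and write $\bigl|\log\hat Z_{\ell}(\V{\Lam})-\log Z_{\ell}(\V{\Lam})\bigr| = \bigl|T_{\ell,m}(\V{\Lam},\tilde K_{\ell})-T_{\ell,\infty}(\V{\Lam},K_{\ell})\bigr|$ and split it into a truncation error and a weight-perturbation error. The truncation error $|T_{\ell,m}(\V{\Lam},K_{\ell})-T_{\ell,\infty}(\V{\Lam},K_{\ell})|$ is at most $|\V{\Lam}|_{\htors}e^{-3m}$ by Lemma~\ref{KPthm}. For the weight-perturbation error $|T_{\ell,m}(\V{\Lam},\tilde K_{\ell})-T_{\ell,m}(\V{\Lam},K_{\ell})|$ I follow the proof of Lemma~\ref{lemPolymerApprox}: with $r_{\V{\gamma}'}=\log\tilde K_{\ell}(\V{\gamma}')-\log K_{\ell}(\V{\gamma}')$ this equals $\sum_{\Gamma:\|\Gamma\|<m}\phi(H_{\Gamma})\bigl(\prod_{\V{\gamma}'\in\Gamma}K_{\ell}(\V{\gamma}')\bigr)\bigl[\exp(\sum_{\V{\gamma}'\in\Gamma}r_{\V{\gamma}'})-1\bigr]$. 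By the display above, Lemma~\ref{lem:iso} ($|\Int\V{\gamma}'|\le\|\V{\gamma}'\|^{2}$) and Lemma~\ref{lem:polygraphsize}, $\sum_{\V{\gamma}'\in\Gamma}|r_{\V{\gamma}'}|\le 2\sum_{\V{\gamma}'\in\Gamma}|\Int\V{\gamma}'|_{\htors}^{2}e^{-3m}\le C(d)\|\Gamma\|^{4}e^{-3m}$, which is $\le 1$ once $N$ (hence $m$) is large; then $|e^{a}-1|\le 2|a|$ and the cluster-expansion convergence estimate $\sum_{\|\Gamma\|=k}|\phi(H_{\Gamma})\prod K_{\ell}|\le |\V{\Lam}|_{\htors}e^{-\lambda(q)k}$ (as in the proof of Lemma~\ref{KPthm}, with decay rate $\lambda(q)\to\infty$ afforded by $K_{\ell}(\V{\gamma})\le q^{-\Theta(\|\V{\gamma}\|)}$) bound the weight-perturbation error by $2C(d)|\V{\Lam}|_{\htors}e^{-3m}\sum_{k\ge1}k^{4}e^{-\lambda(q)k}$, which for $q\ge q_{0}(d)$ is $\le\tfrac12|\V{\Lam}|_{\htors}e^{-3m}$. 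Adding the two contributions, $\delta(\V{\Lam})\le\tfrac32|\V{\Lam}|_{\htors}e^{-3m}\le|\V{\Lam}|_{\htors}^{2}e^{-3m}$, closing the induction; the base case (a region with no contours) is immediate since $T_{\ell,m}(\V{\Lam},\tilde K_{\ell})=T_{\ell,\infty}(\V{\Lam},K_{\ell})=0$.

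I expect the crux to be exactly this weight-perturbation step. Naively it relates the error on $\V{\Lam}$ to the errors on the interiors $\Int\V{\gamma}'$ through a multiplicative constant that is a priori $d$-dependent and larger than $1$ (coming from bounding $|\Int\V{\gamma}'|_{\htors}$ by a constant times $\|\V{\gamma}'\|^{2}$ and from the number of clusters of a given size), and since a region can have $\Theta(|\V{\Lam}|)$ nested levels such a constant would be fatal. This is precisely where $q_{0}(d)$ must be taken large: at $\beta=\beta_{c}$ each contour weight is $q^{-\Theta(\|\V{\gamma}\|)}$, which drives the per-level error multiplier strictly below $1$, so the error contracts rather than compounds --- one only checks that the polynomial factors ($\|\Gamma\|^{4}$ and $|\Int\V{\gamma}'|\le\|\V{\gamma}'\|^{2}$) are dominated by the geometric factor $q^{-\Theta(\|\Gamma\|)}$ together with the truncation slack $e^{-3m}$. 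A secondary point is well-foundedness and the bookkeeping that every $\tilde K_{\ell}$ appearing in the displayed sums was defined at an earlier stage; both follow from $\Int\V{\gamma}'\subsetneq\V{\Lam}$, which strictly decreases both the vertex count and the level.
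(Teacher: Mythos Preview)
Your proposal is correct and follows essentially the same inductive Pirogov--Sinai scheme as the paper: verify the Koteck\'y--Preiss condition via Lemma~\ref{lemKestimates}, then induct on level to show the approximate weights $\tilde K_{\ell}$ are close to $K_{\ell}$, which feeds back into the truncated cluster expansion. Your observation that $\tilde K_{\ell}(\V{\gamma})=e^{-\kappa\|\V{\gamma}\|}\hat Z_{\dis}(\Int\V{\gamma})/\hat Z_{\ord}(\Int\V{\gamma})$ (and its $\dis$-analogue) is exactly the structure the paper exploits.

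The one noteworthy difference is bookkeeping in the weight-perturbation step. The paper packages this into Lemma~\ref{lemPolymerApprox} with $v(\V{\gamma})=|\Int\V{\gamma}|\le\|\V{\gamma}\|^{2}$ (by Lemma~\ref{lem:iso}) and $\eps'=\eps/N$, which yields a sum $\sum_{k}k^{2}e^{-3k}$ bounded by a universal constant; no extra largeness of $q$ is needed beyond what makes~\eqref{eqPolymerKP} hold. Your inline version carries the inductive error as $\delta(\Int\V{\gamma}')=|\Int\V{\gamma}'|_{\htors}^{2}e^{-3m}$, which after Lemma~\ref{lem:polygraphsize} and Lemma~\ref{lem:iso} gives a $k^{4}$ factor, and you then invoke $\lambda(q)\to\infty$ to make $\sum_{k}k^{4}e^{-\lambda(q)k}$ small. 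This is valid (it amounts to rerunning Koteck\'y--Preiss with $d(\V{\gamma})=\lambda(q)\|\V{\gamma}\|$), but unnecessary: if you track the error as $\eps'|\Int\V{\gamma}|$ rather than $|\Int\V{\gamma}|_{\htors}^{2}e^{-3m}$, the per-level multiplier is already below $1$ without further enlarging $q_{0}$. Either way the induction closes.
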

\begin{proof}
Suppose $\ell \in  \{ \text{dis}, \text{ord} \}$.  First note that the inductive definition of the weights $\tilde K_{\ell}(\V{\gamma})$ makes sense:  to compute $\tilde K_{\ell}(\V{\gamma})$  for a contour $\V{\gamma}$ of level $t+1$ only requires knowing $\tilde K_{\ell}(\V{\gamma}')$ for contours $\V{\gamma}'$ of level $t$ and smaller. 

Since $\beta = \beta_c$ and $q \ge q_0$, Lemma~\ref{lemKestimates} tells us that 
\begin{equation}
\label{eqTaubound}
K_\ell(\V{\gamma}) \le e^{-c \beta \| \V{\gamma} \|}
\end{equation}
for $\ell \in \{ \text{dis}, \text{ord} \}$ and for all $\V{\gamma} \in \cC_{\ell}(\Lam)$.  If $q_0$ is large enough then \eqref{eqTaubound} implies condition~\eqref{eqPolymerKP2} holds since $\beta_c$ grows like $\log q$ by~\eqref{eq:betac}. Thus by Section~\ref{sec:extern-cont-repr-1} the hypotheses of Lemma~\ref{KPthm} are satisfied and the cluster expansion for $Z_{\ell}(\V{\Lam})$ converges for $\ell\in \{\ord,\dis\}$. 

Now let $\eps' = \eps/N$, so that $m = \log (8N/\eps')/3$. We will apply Lemma~\ref{lemPolymerApprox} with $v(\V{\gamma}) = | \Int \V{\gamma}|$. This is a valid choice of $v(\V{\gamma})$ by Lemma~\ref{lem:iso}.  Lemma~\ref{lemPolymerApprox} says that 
\begin{equation*} 
e^{- e_{\ord} |\Lam|} \exp \left( T_{\ord,m}(\V{\Lam}, \tilde K _{\ord}) \right) 
\quad \text{and} \quad
e^{- e_{\dis} |\Lam|} \exp \left( T_{\dis,m}(\V{\Lam}, \tilde K_{\dis})  \right)
\end{equation*}
are $\eps$-relative approximations to $Z_{\ord}(\V{\Lam})$ and $Z_{\dis}(\V{\Lam})$ if for all $\V{\gamma} \in \cC_{\ell}(\V{\Lam})$ of size at most $m$,  $\tilde K_{\ell} (\V{\gamma})$ is an $\eps' | \Int \V{\gamma}|$-relative approximation to $K_{\ell}(\V{\gamma})$.  We will prove this by induction on the level of $\V{\gamma}$.  

For a thin contour,  
$\tilde K_{\ell} (\V{\gamma}) = K_{\ell}(\V{\gamma})$. Now suppose that for all contours $\V{\gamma}$ of level at most $t$ and size at most $m$, $\tilde K_{\ell} (\V{\gamma})$ is an $\eps' | \Int \V{\gamma}|$-relative approximation of $K_{\ell}(\V{\gamma})$.  Consider a contour $\V{\gamma}$ of level $t+1$ and size at most $m$.  Then all contours $\V{\gamma}'$ that appear in the expansions
\begin{align*}
T_{m, \dis}(\Int \V{\gamma}, \tilde K_{\dis})  \quad \text{and} \quad T_{m, \ord}(\Int \V{\gamma}, \tilde K_{\ord}) 
\end{align*}
 are of level at most $t$ and size at most $m$, and so for each such $\V{\gamma}'$, by the inductive hypothesis  $\tilde K_\ell(\V{\gamma}')$ is an $\eps' |\Int \V{\gamma} '|$-relative approximation to $K_\ell(\V{\gamma}')$.  Then by Lemma~\ref{lemPolymerApprox}, we have that
 \begin{align*}
e^{ - (e_{\dis} - e_{\ord}) |\Int \V{\gamma}|}    \exp \left[ T_{m, \dis}(\Int \V{\gamma}, \tilde K_{\dis}) - T_{m, \ord}(\Int \V{\gamma}, \tilde K_{\ord})  \right ] 
\end{align*}
is an $| \Int \V{\gamma}| \eps'$-relative approximation to   $\frac{ Z_{\dis} (\Int \V{\gamma}) }{Z_{\ord}(\Int \V{\gamma})   }  $ (and likewise for $\dis$ and $\ord$ swapped).  Multiplying by the prefactor $e^{-\kappa \|\V{\gamma} \|}$ for $\ord$ and by $q e^{-\kappa \|\V{\gamma} \|}$ for $\dis$  shows that $\tilde K_{\ell}(\V{\gamma})$ is an $\eps' | \Int \V{\gamma}|$-relative approximation to $K_{\ell} (\V{\gamma})$ as desired.
\end{proof}

With this, we can prove the $\beta = \beta_{c}$ case of Lemma~\ref{lemOrdDiscompute}.
\begin{proof}[Proof of Lemma~\ref{lemOrdDiscompute} when $\beta=\beta_{c}$]

  Let $N = |\V{\Lam}|_{\htors}$ and let $m = \log ( 8N^2/\eps)/3$. We need to show that the expansion $T_{\ell,m}(\V{\Lam}, \tilde K_{\ell})$ and the weights $\tilde K_{\ell}(\V{\gamma})$ for all $\V{\gamma}$ of size at most $m$ in $\cC_{\ell}(\V{\Lam})$ can be computed in time polynomial in $N$ and $1/\eps$ for $\ell \in \{ \dis, \ord \}$.  We can list the sets of contours in $\cC_{\ord} (\V{\Lam})$ and $\cC_{\dis} (\V{\Lam})$ of size at most $m$, together with their labels and levels, in time $O(N \exp(O(m))$ by Proposition~\ref{prop:enum}.  Since $m = \log ( 8N^2/\eps)/3$, $O(N\exp(O(m))$ is polynomial in $N$ and $1/\eps$. The number $N$ itself is polynomial in $|\Lam|$ by Lemma~\ref{lem:polygraphsize}.

  To prove the lemma we must compute the weights $\tilde K_{\ell}(\V{\gamma})$ and the truncated cluster expansions $T_{m, \ell}(\Int \V{\gamma}, \tilde K_{\ell})$ 
  for each contour in the list.  We do this inductively by level. For level zero contours $\tilde K_{\ell}(\V{\gamma})=K_{\ell}(\V{\gamma})$ only depends on $\norm{\V{\gamma}}$ and $\abs{\Int\V{\gamma}}$, so $\tilde K_{\ell}(\V{\gamma})$ can be computed in time $O(\norm{\V{\gamma}}^{3})$ by computing these quantities by using Lemma~\ref{lem:findext}. We then continue inductively; each $\tilde K_{\ell}(\V{\gamma})$ can be computed efficiently since the truncated cluster expansions can be computed in time polynomial in $N$ and $1/\eps$ using Lemma~\ref{lemPolyModelCount}.
\end{proof}

\subsection{Proof of Lemma~\ref{lemOrdDiscompute} when $\beta > \beta_c$}
\label{sec:lbgc}

When $\beta > \beta_c(q,d)$ the ordered ground state is stable, but the disordered state is unstable. For a definition of stability of ground states, see, e.g.,~\cite{borgs1989unified}; the upshot for this paper is that we cannot use the cluster expansion to approximate $Z_{\dis}(\V{\Lam})$ for a region $\V{\Lam}$.

To deal with this complication we will appeal to Lemma~\ref{lemSuperEstimates2}.  In words, this lemma says that for $\beta> \beta_c$, a typical contour configuration in a region with disordered boundary conditions will have very few external vertices.   
We will exploit this fact to enumerate all sets of typical external contours in the region. This is possible since the number of external vertices is small. Once we have fixed a set of external contours  
we are back to the task of approximating partition functions with ordered boundary conditions.

We now make the preceding discussion precise. Given $K \colon \cC_{\ord}(\V{\Lam}) \to [0,\infty)$ and $M>0$, define
\begin{equation*}
\Xi^M_{\dis}(\V{\Lam}, K) \bydef e^{e_{\dis}|\Lam|}\sum_ {\Gamma \in   \extbs^{\text{flip}}_{\dis}(\V{\Lambda},M)}  e^{-e_{\dis} | \Ext \Gamma |}  \prod_{\V{\gamma} \in \Gamma} e^{-\kappa \| \V{\gamma} \| }  q \exp \left[ T_{m, \ord}(\Int \V{\gamma},  K)    \right]. 
\end{equation*}

\begin{lemma}
\label{lemSuperCritApprox}
Suppose $d\geq 2$, $q \ge q_0$ and $\beta > \beta_c$.  Let $\V{\Lam}$ be a region with $|\V{\Lam}|_{\htors}=N$, fix $\eps > 0$, and let $m=\log(8N^2/\eps)/3$.    Inductively (by level) define $\tilde K_{\ord}(\V{\gamma})$ for $\V{\gamma}\in\cC_{\ord}(\V{\Lam})$ with size $\norm{\V{\gamma}}$ at most $m$ by
\begin{enumerate}
\item If $\V{\gamma}$ is thin, then 
\begin{align*}
\tilde K_\ord(\V{\gamma}) &=  e^{- \kappa \| \V{\gamma} \| - (e_{\dis} - e_{\ord}) |\Int \V{\gamma}|} \, .
\end{align*}
\item If $\V{\gamma}$ is not thin, define 
\begin{align*}
\tilde K_{\ord} (\V{\gamma}) &=   e^{-\kappa \| \V{\gamma} \| - (e_{\dis} - e_{\ord}) |\Int \V{\gamma}|} \exp \left[  - T_{m, \ord}(\Int \V{\gamma}, \tilde K)  \right ]  \Xi^M_{\dis}(\Int \V{\gamma}, \tilde K_{\ord})    \, ,
\end{align*}
with $M= \frac{2}{a_{\dis}} \left( \log ( \frac{32q}{\eps'}) + (\kappa+3) m\right)$.  
\end{enumerate}

Then for all $N$ large enough, $e^{- e_{\ord} |\V{\Lam}|} \exp \left( T_{\ord,m}(\V{\Lam}, \tilde K _{\ord})  \right)$ is an $\eps$-relative approximation to $Z_{\ord}(\V{\Lam})$\,.
\end{lemma}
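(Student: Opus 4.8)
The plan is to follow the template of the proof of Lemma~\ref{lemBcInductive}, the one essential difference being that the disordered ground state is now unstable, so the cluster expansion for $Z_{\dis}(\V\Lam)$ diverges and must be replaced by the ``flip'' representation $Z_{\dis}^{\text{flip}}$ supplied by Lemma~\ref{lemSuperEstimates2}. Set $\eps'=\eps/N$, so that $m=\log(8N/\eps')/3$; we will apply Lemma~\ref{lemPolymerApprox} to the ordered polymer model~\eqref{eqZordPoly} with $v(\V\gamma)=|\Int\V\gamma|$ (admissible since $|\Int\V\gamma|\le\|\V\gamma\|^{2}$ by Lemma~\ref{lem:iso}). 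Since $\beta\ge\beta_{c}$ and $q\ge q_{0}$, Lemma~\ref{lemSuperEstimates} gives $K_{\ord}(\V\gamma)\le e^{-c\beta\|\V\gamma\|}$ for all $\V\gamma\in\cC_{\ord}$, which together with the fact that $\beta_{c}$ grows like $\log q$ by~\eqref{eq:betac} verifies the Koteck\'y--Preiss condition~\eqref{eqPolymerKP} for this model (with $b=1$, as in Section~\ref{sec:extern-cont-repr-1}); hence the cluster expansion for $Z_{\ord}(\V\Lam)$ converges absolutely on every region. By Lemma~\ref{lemPolymerApprox} it then suffices to show that $\tilde K_{\ord}(\V\gamma)$ is an $\eps'|\Int\V\gamma|$-relative approximation to $K_{\ord}(\V\gamma)$ for every $\V\gamma\in\cC_{\ord}(\V\Lam)$ with $\|\V\gamma\|\le m$; applying Lemma~\ref{lemPolymerApprox} once more to $Z_{\ord}(\V\Lam)$ itself then yields a $|\V\Lam|_{\htors}\eps'/4\le\eps/4$-relative approximation, which is the assertion of the lemma.

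The inductive definition of the weights is well posed for the reason given in Lemma~\ref{lemBcInductive}: every contour that enters $T_{m,\ord}(\Int\V\gamma,\tilde K)$, or that occurs in a configuration $\Gamma\in\cH^{\text{flip}}_{\dis}(\Int\V\gamma,M)$, or that lies inside the interior of such a $\Gamma$-contour, is strictly smaller than $\V\gamma$ in the order $<$ and hence of strictly smaller level, so $\tilde K_{\ord}(\V\gamma)$ depends only on $\tilde K_{\ord}$-values already constructed. We prove the required estimate by induction on $\cL(\V\gamma)$. For thin $\V\gamma$ the interior $\Int\V\gamma$ contains no contours, so $Z_{\dis}(\Int\V\gamma)=e^{-e_{\dis}|\Int\V\gamma|}$ and $Z_{\ord}(\Int\V\gamma)=e^{-e_{\ord}|\Int\V\gamma|}$, whence $K_{\ord}(\V\gamma)=e^{-\kappa\|\V\gamma\|-(e_{\dis}-e_{\ord})|\Int\V\gamma|}=\tilde K_{\ord}(\V\gamma)$ exactly.

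For the inductive step fix $\V\gamma$ of level $t+1$ with $\|\V\gamma\|\le m$ and set $\V\Lam=\Int\V\gamma$. Write $K_{\ord}(\V\gamma)=e^{-\kappa\|\V\gamma\|}Z_{\dis}(\V\Lam)/Z_{\ord}(\V\Lam)$; the volume--energy prefactors in the definitions of $\tilde K_{\ord}(\V\gamma)$ and $\Xi^{M}_{\dis}$ are arranged precisely so that, on substituting $m=\infty$, $M=\infty$ and the exact weights $K_{\ord}$ for $\tilde K$, the defining formula for $\tilde K_{\ord}(\V\gamma)$ collapses to $e^{-\kappa\|\V\gamma\|}Z_{\dis}(\V\Lam)/Z_{\ord}(\V\Lam)$ — this is the external-contour resummation~\eqref{eq:matchextdis} of $Z_{\dis}(\V\Lam)$ together with the cluster-expansion identity $Z_{\ord}(\V\Lam')=e^{-e_{\ord}|\V\Lam'|}\exp(T_{\ord,\infty}(\V\Lam',K_{\ord}))$ applied to $\V\Lam'=\V\Lam$ and to each relevant interior. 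It remains to bound the three errors incurred when $\tilde K$, finite $m$ and finite $M$ are used instead: (i) replacing $Z_{\dis}(\V\Lam)$ by $Z_{\dis}^{\text{flip}}(\V\Lam,M)$ costs relative error at most $\eps'/4$ by Lemma~\ref{lemSuperEstimates2}, since the stated $M=\frac{2}{a_{\dis}}(\log\frac{32q}{\eps'}+(\kappa+3)m)$ dominates $\frac{2}{a_{\dis}}\log\frac{8q}{\eps'/4}+\frac{2}{a_{\dis}}(\kappa+3)\|\V\gamma\|$; (ii) replacing each interior partition function $Z_{\ord}(\Int\V\gamma')$ occurring in $Z_{\dis}^{\text{flip}}(\V\Lam,M)$ and in the denominator $Z_{\ord}(\V\Lam)$ by $e^{-e_{\ord}|\cdot|}\exp(T_{m,\ord}(\cdot,\tilde K))$ costs relative error at most $|\Int\V\gamma'|_{\htors}\eps'/4$ on each factor, by the inductive hypothesis (each such $\V\gamma'$ and each contour inside it has level $\le t$) and Lemma~\ref{lemPolymerApprox}; since $|\cdot|_{\htors}\le C_{d}|\cdot|$ and $\sum_{\V\gamma'\in\Gamma}|\Int\V\gamma'|\le|\V\Lam|\le m^{2}$, the relative error accumulated over any configuration is at most $\exp(O_{d}(\eps' m^{2}))-1$; (iii) truncating $T_{\ord,\infty}(\V\Lam,K_{\ord})$ to $T_{\ord,m}(\V\Lam,\tilde K)$ costs $|\V\Lam|_{\htors}e^{-3m}$ from~\eqref{eqTruncBound} plus a weight-perturbation term, both $O(\eps')$ by the choice of $m$. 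Summing these contributions and using $|\Int\V\gamma|\ge1$ bounds the total relative error on $\tilde K_{\ord}(\V\gamma)$ by $\eps'|\Int\V\gamma|$ once $N$ is large enough that $\eps' m^{2}\to0$ (and $q$ is large enough that the $d$-dependent constants are absorbed), which closes the induction.

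The main obstacle is the error accounting in this inductive step. Unlike the stable case of Lemma~\ref{lemBcInductive}, $\tilde K_{\ord}(\V\gamma)$ is now built from $\Xi^{M}_{\dis}(\Int\V\gamma,\tilde K)$, a sum over external-contour configurations that may contain arbitrarily large contours and whose size is controlled only through the exterior-volume cutoff $M$; the genuinely new input is Lemma~\ref{lemSuperEstimates2}, whose proof is where instability of the disordered ground state is exploited, and whose threshold must be taken polynomial in $m$ and $\log(1/\eps)$ — exactly as in the statement — so that the subsequent algorithmic steps (relying on Proposition~\ref{prop:vacant}) remain efficient. The remaining delicate points are checking that the relative errors from the many recursively approximated interior factors $Z_{\ord}(\Int\V\gamma')$ do not compound — they do not, because the interior volumes sum to at most $|\Int\V\gamma|$ and $N$ is large — and that the prefactor bookkeeping in the definitions of $\tilde K_{\ord}$ and $\Xi^{M}_{\dis}$ is consistent with~\eqref{eqZordPoly} and~\eqref{eq:matchextdis}.
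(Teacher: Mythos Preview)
Your approach is essentially the paper's: reduce via Lemma~\ref{lemPolymerApprox} with $v(\V\gamma)=|\Int\V\gamma|$ to showing $\tilde K_{\ord}(\V\gamma)$ is an $\eps'|\Int\V\gamma|$-relative approximation of $K_{\ord}(\V\gamma)$, then induct on level, handling the unstable disordered interiors via Lemma~\ref{lemSuperEstimates2} and a term-by-term comparison over $\Gamma\in\cH^{\text{flip}}_{\dis}(\Int\V\gamma,M)$ using $\sum_{\V\gamma'\in\Gamma}|\Int\V\gamma'|\le|\Int\V\gamma|$.

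One slip to fix: in your error~(ii) you correctly write $\sum_{\V\gamma'\in\Gamma}|\Int\V\gamma'|\le|\V\Lam|$, but then overbound $|\V\Lam|$ by $m^{2}$ and carry $\exp(O_{d}(\eps' m^{2}))-1$ through the rest of the argument. This does not close the induction as stated, because you need the total error to be at most $\eps'|\Int\V\gamma|$, and for contours with $|\Int\V\gamma|$ small compared to $m^{2}$ the bound $O_{d}(\eps' m^{2})\le\eps'|\Int\V\gamma|$ fails. The fix is simply not to overbound: since $|\V\Lam|=|\Int\V\gamma|$, the accumulated relative error over any $\Gamma$ is $O_{d}(\eps'|\Int\V\gamma|)$ directly, and likewise the denominator $Z_{\ord}(\Int\V\gamma)$ contributes $O_{d}(\eps'|\Int\V\gamma|)$. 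With this correction the three contributions $(\eps'/4,\ O_{d}(\eps'|\Int\V\gamma|),\ O(\eps'))$ combine to $\le\eps'|\Int\V\gamma|$ once the $d$-dependent constants are absorbed (this is where the paper, like you, implicitly uses $q$ large and $|\Int\V\gamma|\ge1$), and the appeal to ``$N$ large enough that $\eps' m^{2}\to0$'' becomes unnecessary.
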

\begin{proof}
Let $\eps ' = \eps /N$ so that $m = \log (8N /\eps')/3$. 

If $q_{0}$ is large enough then we have $K_{\ord} (\V{\gamma}) \le e^{-c \beta \| \V{\gamma} \|} $ by Lemma~\ref{lemSuperEstimates} since $\beta > \beta_c$.  
This {along with~\eqref{eq:betac}} implies condition~\eqref{eqPolymerKP2} holds for ordered contours, and thus by Section~\ref{sec:extern-cont-repr-1} the hypotheses of Lemma~\ref{KPthm} are satisfied and the cluster expansion for $Z_{\ord}(\V{\Lam})$ converges. Applying Lemma~\ref{lemPolymerApprox} with $v(\V{\gamma}) = |\Int \V{\gamma}|$ then tells us that 
\begin{equation*}
  e^{- e_{\ord} |\V{\Lam}|} \exp \left( T_{\ord,m}(\V{\Lam}, \tilde K _{\ord})  \right) 
\end{equation*}
is an $\eps$-relative approximation to $Z_{\ord}(\V{\Lam})$ if for all $\V{\gamma} \in \cC_{\ord}(\Lam)$ of size at most $m$, $\tilde K_{\ord}(\V{\gamma})$ is an $\eps' | \Int \V{\gamma}|$-relative approximation to $K_{\ord}(\V{\gamma})$.  We will prove this is the case by induction. The base case of the induction (thin contours) holds since $\tilde K_{\ord}(\V{\gamma}) = K_{\ord}(\V{\gamma})$. Now suppose that the statement holds for all contours of level at most $t$ and size at most $m$, and consider a contour $\V{\gamma}$ of level $t+1$ and size at most $m$.

The inductive hypothesis and Lemma~\ref{lemPolymerApprox} imply that 
\begin{equation*}
 e^{- e_{\ord} |\V{\Lam}|}\exp \left[  T_{m, \ord}(\Int \V{\gamma}, \tilde K)  \right ]
\end{equation*}
is an $\eps' | \Int \V{\gamma}|/2$-relative approximation to  $Z_{\ord}(\Int \V{\gamma})$, and so it suffices to show that $e^{-e_{dis}|\V{\Lam}|}\Xi^M_{\dis}(\Int \V{\gamma}, \tilde K_{\ord}) $ is  an $\eps' |\Int \V{\gamma}|/2$-relative approximation to  $Z_{\dis}(\Int \V{\gamma})$.

By Lemma~\ref{lemSuperEstimates2}, $Z_{\dis}^{\text{flip}}(\Int \V{\gamma},M) $ is an $\eps'/4$-relative approximation to $Z_{\dis}(\Int \V{\gamma}) $ for $M= \frac{2}{a_{\dis}} \left( \log ( \frac{32q}{\eps'}) + (\kappa+3) m\right)$, and so  it suffices to show that $e^{-e_{dis}|\Lam|}\Xi^M_{\dis}(\Int \V{\gamma}, \tilde K_{\ord}) $ is an $\eps' |\Int \V{\gamma}| /4$-relative approximation to $Z_{\dis}^{\text{flip}}(\Int \V{\gamma} ,M) $.  We will accomplish this by showing, for each $\Gamma \in   \extbs^{\text{flip}}(\Int \V{\gamma},M)$, that 
\begin{equation*}
e^{-e_{\dis} | \Ext \Gamma |}  \prod_{\V{\gamma}' \in \Gamma} e^{-\kappa \| \V{\gamma}' \| }  q \exp \left[ T_{m, \ord}(\Int \V{\gamma}',  \tilde K)    \right]
\end{equation*}
is an $\eps' |\Int \V{\gamma}| /4$-relative approximation to 
\begin{equation*}
e^{- e_{\dis} | \Ext \Gamma |} \prod_{\V{\gamma} ' \in \Gamma} e^{-\kappa \| \V{\gamma} ' \|} q Z_{\ord} (\Int \V{\gamma} ')
\end{equation*}
and then summing over $\Contour$.  The prefactors are identical, and so it comes down to comparing $ \prod_{\V{\gamma}' \in \Gamma}  \exp \left[ T_{m, \ord}(\Int \V{\gamma}',  \tilde K)    \right] $ to  $\prod_{\V{\gamma}' \in \Gamma}Z_{\ord} (\Int \V{\gamma} ')$.  Since the contours in $\Contour$ are mutually external,
\begin{equation*}
\sum _{\V{\gamma}' \in \Gamma} | \Int \V{\gamma}'| \le |\Int \V{\gamma}| \, ,
\end{equation*}
and hence it suffices to show that for each $\V{\gamma}'$, $ \exp \left[ T_{m, \ord}(\Int \V{\gamma}',  \tilde K)    \right]$ is an $\eps' | \Int \V{\gamma}'|/4$-relative approximation to $Z_{\ord} (\Int \V{\gamma} ')$.  This follows from Lemma~\ref{lemPolymerApprox} since $m =\log (8N /\eps')/3 $ and by induction we have that  $\tilde K_{\ord} (\V{\gamma}'')$ is an $\eps'|\Int \V{\gamma}''|$-relative approximation to $ K_{\ord} (\V{\gamma}'')$ for all contours $\V{\gamma}''$ that contribute to  $T_{m, \ord}(\Int \V{\gamma}',  \tilde K) $.
\end{proof}

With this, we can prove the $\beta>\beta_c$ case of Lemma~\ref{lemOrdDiscompute}.
\begin{proof}[Proof of Lemma~\ref{lemOrdDiscompute} when $\beta>\beta_c$]

Given Lemma~\ref{lemSuperCritApprox}, we need to show that we can compute $\tilde K _{\ord}(\V{\gamma})$ for all $\V{\gamma}$ of size at most $m = \log(8 N^2/\eps)/3  $ in time polynomial in $N$ and $1/\eps$.  The proof of this is the same as the proof of the $\beta=\beta_c$ case of the lemma except that now we have to account for the computation of  $\Xi^M_{\dis}(\Int \V{\gamma}, \tilde K)$ for 
all $\V{\gamma} \in \cC_{\dis}(\V{\Lam})$ of size at most $m$, with $M= \frac{2}{a_{\dis}} \left( \log ( \frac{32q}{\eps'}) + (\kappa+3) m\right)$.

For a given $\Gamma \in  \extbs^{\text{flip}}_{\dis}(\Int \V{\gamma},M)$, the computation of  
\begin{equation}
e^{-e_{\dis} | \Ext \Gamma |}  \prod_{\V{\gamma}' \in \Gamma} e^{-\kappa \| \V{\gamma}' \| }  q \exp \left[ T_{m, \ord}(\Int \V{\gamma}',  \tilde K) \right ]
\end{equation}
can be done in time polynomial in $N$ and $1/\eps$ since it just involves computing the truncated cluster expansions $T_{m,\ord} (\Int \V{\gamma}', \tilde K)$ for at most $m^2$ contours $\V{\gamma}'$, and since we compute $\tilde K_{\ord}(\V{\gamma}')$ in order of the level of $\V{\gamma}'$, we will have already computed all the weight functions needed in the expansion.

To conclude, note the set $\extbs^{\text{flip}}_{\dis}(\Int \V{\gamma},M)$ can be enumerated in polynomial time by Proposition~\ref{prop:vacant} since both $\|\V{\gamma} \|$ and $M$ are $O(\log(N^2/\eps))$.  Since $N$ is polynomial in $|\V{\Lam}|$ by Lemma~\ref{lem:polygraphsize}, the proof is complete.
\end{proof}

Note that Lemma~\ref{lemSuperCritApprox} used the value of $a_{\dis}>0$ to determine the value of $M$ in the definitions of the weights $\tilde K$. It is desirable to avoid using $a_{\dis}$ as an input of the algorithm, and hence we close this section with a lemma that shows how to bound $M$ without knowing $a_{\dis}$ precisely.

\begin{lemma}
  \label{lem:adis}
  Suppose $d\geq 2$, $q\geq q_{0}$, and $\beta>\beta_{c}$. There is an $O(1)$-time algorithm to determine a constant $a^{\star}_{\dis}>0$ such that $a_{\dis}>a^{\star}_{\dis}$. The constants in the $O(1)$ term may depend on $q,\beta,d$.
\end{lemma}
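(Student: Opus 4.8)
The plan is to reduce Lemma~\ref{lem:adis} to computing, with a guaranteed precision, two free-energy densities whose difference equals $a_{\dis}$, and then to refine this precision until a positive lower bound can be certified. Recall that in the Pirogov--Sinai construction of \cite{borgs2012tight} (see also \cite{borgs1989unified}) the constant $a_{\dis}$ may be taken to be $f_{\dis}-f_{\ord}$, the difference between the \emph{truncated} disordered free energy density and the ordered free energy density: here $f_{\ord}$ is the free energy density of the polymer model with contour weights $K_{\ord}$, and $f_{\dis}$ is the free energy density of the polymer model obtained by replacing the disordered weights $K_{\dis}$ by their truncations, which satisfy the Peierls bound $e^{-c\beta\norm{\V{\gamma}}}$ by construction. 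Since the ordered state is stable for $\beta\geq\beta_c$, Lemma~\ref{lemSuperEstimates} shows $K_{\ord}$ already satisfies \eqref{eqPolymerKP}, so both cluster expansions converge absolutely with the explicit tail estimate of Lemma~\ref{KPthm}. The strict inequality $a_{\dis}>0$ for $\beta>\beta_c$ is exactly what Lemma~\ref{lemBCTsup} supplies---equivalently, $f_{\dis}>f_{\ord}$, which reflects uniqueness of the infinite-volume Gibbs measure, cf.\ \cite{laanait1991interfaces}. The algorithm below uses only that $a_{\dis}>0$, never its value.

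The key step is to show that for every $\eta>0$ one can compute, in time depending only on $\eta,q,\beta,d$, rationals $\hat f_{\ord},\hat f_{\dis}$ with $\abs{\hat f_{\ord}-f_{\ord}}\leq\eta$ and $\abs{\hat f_{\dis}-f_{\dis}}\leq\eta$. For $\hat f_{\ord}$ I would apply the FPTAS of Lemma~\ref{lemOrdDiscompute}(2) to a sequence of box-like regions $\V{\Lam}_L=\Int\V{\gamma}_L$ enclosing larger and larger cubes (such regions exist by Proposition~\ref{prop:scregion}); then $-\abs{\V{\Lam}_L}^{-1}\log Z_{\ord}(\V{\Lam}_L)$ converges to $f_{\ord}$, with an explicit relative-error contribution and an explicit boundary contribution of order $\abs{\partial\V{\Lam}_L}/\abs{\V{\Lam}_L}=O(1/L)$, so a suitable choice of $L$ and of the relative error gives the claim. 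For $\hat f_{\dis}$ I would compute the truncated disordered contour weights $\tilde K_{\dis}(\V{\gamma})$ for all contours of size at most $m$, inductively by level as in Lemma~\ref{lemBcInductive} but additionally replacing $\tilde K_{\dis}(\V{\gamma})$ by $\min\{\tilde K_{\dis}(\V{\gamma}),e^{-c\beta\norm{\V{\gamma}}}\}$ at each level; since these weights satisfy \eqref{eqPolymerKP}, $f_{\dis}$ equals $e_{\dis}$ minus an absolutely convergent sum over clusters containing a fixed reference vertex, and truncating that sum at cluster size $m$ costs at most $e^{-3m}$ by the cluster-expansion tail bound (cf.\ \eqref{eq:tailbound} and the proof of Lemma~\ref{KPthm}). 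Taking $m$ large as a function of $\eta,q,\beta,d$ then produces $\hat f_{\dis}$.

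Granting this, the algorithm is as follows: for $j=1,2,\dots$ compute $\hat f_{\ord},\hat f_{\dis}$ to precision $\eta=2^{-j}$ as above, and halt at the first $j$ for which $\hat f_{\dis}-\hat f_{\ord}>4\cdot 2^{-j}$, returning $a^{\star}_{\dis}\bydef\hat f_{\dis}-\hat f_{\ord}-3\cdot 2^{-j}$. A one-line computation shows that at the halting step $a^{\star}_{\dis}\geq 2^{-j}>0$ while $a_{\dis}=f_{\dis}-f_{\ord}\geq(\hat f_{\dis}-\hat f_{\ord})-2\cdot 2^{-j}=a^{\star}_{\dis}+2^{-j}>a^{\star}_{\dis}$, so the output is valid; and as soon as $2^{-j}<a_{\dis}/6$ the halting condition holds, so the loop terminates after $O(\log(1/a_{\dis}))$ iterations. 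Since $a_{\dis}$ depends only on $q,\beta,d$, so does the total running time, which is therefore $O(1)$ with the implied constant depending on $q,\beta,d$ as required.

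The step I expect to be the main obstacle is the second paragraph: making the computation of the truncated disordered free energy $f_{\dis}$ \emph{fully effective}, with a rigorous and explicit error bound. This amounts to spelling out the inductive definition of the truncated disordered contour weights and verifying that every ingredient---the truncation test at each level, the interior partition functions feeding into it, and the passage from these weights to the free energy density via the cluster expansion---can be executed to controlled precision. All the quantitative inputs needed (convergence rates, cluster-expansion tails, and the bound $\abs{\Int\V{\gamma}}\leq\norm{\V{\gamma}}^{2}$ from Lemma~\ref{lem:iso}) are already available from Lemma~\ref{KPthm} and Section~\ref{secEstimates}; the work lies in assembling them carefully.
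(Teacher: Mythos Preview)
Your overall strategy---approximate $f_{\ord}$ and $f_{\dis}$ with an explicit rate, then refine until the gap is certified---is the same as the paper's. The paper's proof simply invokes the finite-volume quantities $f_\ell^{(n)}$ from \cite[Appendix~A.1]{borgs2012tight}, which satisfy $\abs{f_\ell - f_\ell^{(n)}}\leq \eps_n = 2e^{-c\beta n}$, and iterates $n$ until $\abs{f_{\ord}^{(n)}-f_{\dis}^{(n)}}\geq 3\eps_n$; then $a_{\dis}^\star=\eps_{n_0}$ works by the triangle inequality. The paper never needs an FPTAS here: since the output is $O(1)$ with constants depending on $q,\beta,d$, the $f_\ell^{(n)}$ can be computed by brute force (the contour weights they involve are explicit ratios of finite partition functions).

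Your version has a genuine circularity that the paper avoids. To compute $\hat f_{\ord}$ you invoke the FPTAS of Lemma~\ref{lemOrdDiscompute}(2), but for $\beta>\beta_c$ that FPTAS is precisely Lemma~\ref{lemSuperCritApprox}, whose inductive definition of $\tilde K_{\ord}$ uses $\Xi^M_{\dis}$ with $M = \frac{2}{a_{\dis}}(\log(32q/\eps') + (\kappa+3)m)$. The parameter $M$ therefore requires as input the very constant $a_{\dis}$ (or a positive lower bound on it) that you are trying to produce. Lemma~\ref{lem:adis} exists exactly to supply that input to the algorithm of Section~\ref{sec:lbgc}, so you cannot assume access to Lemma~\ref{lemOrdDiscompute}(2) inside its proof. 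The fix is straightforward once you see the issue: replace the FPTAS call by a brute-force evaluation of the relevant finite partition functions (which is fine since the region sizes depend only on $q,\beta,d$), or equivalently compute the $f_\ell^{(n)}$ of \cite{borgs2012tight} directly; this is what the paper does. Your concern about making $\hat f_{\dis}$ effective is real but secondary---note also that Lemma~\ref{lemBcInductive} is stated for $\beta=\beta_c$, so ``as in Lemma~\ref{lemBcInductive}'' does not apply verbatim when $\beta>\beta_c$ and the untruncated disordered expansion fails to converge.
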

\begin{proof}
  We follow the notation from~\cite[Appendix~A.1]{borgs2012tight}. As discussed below~\cite[(A.7)]{borgs2012tight}, we have $\abs{f_{\ell}-f^{(n)}_{\ell}}\leq\eps_{n}$ for $\ell\in \{\ord,\dis\}$, where $\eps_{n}=2e^{-c\beta n}$, where $n$ is the side-length of the torus $\tor$, and $f_{\ell}=\lim_{n\to\infty}f_{\ell}^{(n)}$. 

 Compute $f^{(n)}_{\ell}$ for $\ell\in\{\ord,\dis\}$ until $\abs{f^{(n)}_{\ord}-f^{(n)}_{\dis}}$ is at least $3\eps_{n}$. Let $n_{0}$ be the first such $n$ that is found. Then by the triangle inequality, $a_{\dis}$ is at least $a^{\star}_{\dis}=\eps_{n_{0}}$.

 Note that $n_{0}$ can be bounded above in terms of the value of $a_{\dis}=a_{\dis}(\beta,d,q)$ and $\eps_{n}$, so the above procedure terminates in a finite time (depending on $\beta,d,q$).
\end{proof}

\subsection{Proof of Theorem~\ref{PottsTorusCrit}}
\label{secZTogether}

To prove Theorem~\ref{PottsTorusCrit} we will need the following result from~\cite{borgs2012tight} about the mixing time of the Glauber dynamics. 
\begin{theorem}[{\cite[Theorem~1.1]{borgs2012tight}}]
\label{thmBCTglauber}
The mixing time of the Glauber dynamics for the $q$-state ferromagnetic Potts model satisfies
\begin{equation}
\tau_{q,\beta}(\tor) = e^{O( n^{d-1})} , 
\end{equation}
where the $O(\cdot)$ in the exponent hides constants that depend on $q, \beta$. 
\end{theorem}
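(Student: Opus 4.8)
The statement is quoted from \cite[Theorem~1.1]{borgs2012tight}, so in the present paper it is invoked as a black box; what follows is a sketch of the argument one would give to prove the upper bound $\tau_{q,\beta}(\tor)=e^{O(n^{d-1})}$. The plan is to bound the conductance (Cheeger constant) $\Phi$ of the reversible Glauber chain from below by $e^{-O(n^{d-1})}$ and then to invoke the standard inequality $\tau_{\mathrm{mix}}\le O\!\big(\Phi^{-2}\log(1/\pi_{\min})\big)$; since $\log(1/\pi_{\min})\le(\beta d+\log q)n^{d}=O(n^{d})$ is only polynomial on the exponential scale, this already yields $\tau_{\mathrm{mix}}=e^{O(n^{d-1})}$. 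The lower bound on $\Phi$ has two ingredients, which together are most cleanly packaged by a state-space decomposition theorem for reversible chains.

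First I would partition the configuration space into the $q+1$ ``phase'' classes $\Omega_{1},\dots,\Omega_{q},\Omega_{\dis}$ using the contour representation of Section~\ref{secFKcontour} (transported to the Potts side via Edwards--Sokal): a configuration lies in $\Omega_{r}$ (resp.\ $\Omega_{\dis}$) according to the label of its outermost contour, with the negligible ``tunnel'' set $\Omega_{\tunnel}$ absorbed into any adjacent class. Two facts then drive the argument. (a) \emph{Within each class the chain mixes in polynomial time.} In a single pure phase the contour weights obey a Peierls bound with a unique dominant ground state (cf.\ Lemmas~\ref{lemKestimates}--\ref{lemSuperEstimates}), so the conditional Gibbs measure on $\Omega_{r}$ admits a convergent cluster expansion and hence exponential decay of correlations; by a Dobrushin- or Cesi--Martinelli-type comparison this gives $\tau_{\mathrm{mix}}$ restricted to $\Omega_{r}$ of order $n^{d}\log n$. (b) \emph{The equilibrium flow between two classes is at least $e^{-C\beta n^{d-1}}$.} Here one exhibits a canonical family of paths between $\Omega_{r}$ and $\Omega_{r'}$ by growing a droplet of phase $r'$ one site (or one edge) at a time until it engulfs the torus; every intermediate configuration on such a path carries an interface of $(d-1)$-dimensional area $O(n^{d-1})$, so its Gibbs weight is at least $e^{-C\beta n^{d-1}}$ times the typical weight, while the number of canonical paths passing through any fixed transition is at most $e^{O(n^{d-1})}$, so a congestion bound gives $Q(\Omega_{r},\Omega_{r'})\ge e^{-O(n^{d-1})}$.

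Given (a) and (b) I would finish with a decomposition theorem of Madras--Randall / Martin--Randall type: the mixing time of the full chain is bounded by a constant times $\max_{r}\tau_{\mathrm{mix}}(\Omega_{r})$ times the mixing time of the ``projected'' chain on the $O(q)$-point state space $\{1,\dots,q,\dis\}$ whose transition probabilities are the normalized flows $Q(\Omega_{r},\Omega_{r'})/\pi(\Omega_{r})\ge e^{-O(n^{d-1})}$; such a chain on finitely many states mixes in time $e^{O(n^{d-1})}$, and multiplying by the polynomial bound from (a) leaves $e^{O(n^{d-1})}$. The main obstacle is step (a): proving \emph{polynomial} mixing of the Glauber dynamics inside a single low-temperature phase. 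The model is globally slow-mixing precisely because several ground states compete, and one must verify that conditioning on a phase --- equivalently, imposing the corresponding contour boundary condition --- removes the obstruction and makes a complete-analyticity / strong-spatial-mixing input available; supplying that input is where the bulk of the work in \cite{borgs2012tight} lies, and it is exactly what the decomposition theorem consumes as a hypothesis.
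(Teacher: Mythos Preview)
You correctly identify at the outset that the present paper does not prove Theorem~\ref{thmBCTglauber}: it is quoted verbatim from~\cite[Theorem~1.1]{borgs2012tight} and used as a black box in Section~\ref{secZTogether}. There is therefore no proof in this paper to compare your sketch against, and your opening sentence is exactly the right assessment.

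As for the sketch you offer of how the cited result is proved, the high-level strategy---a Madras--Randall/Martin--Randall decomposition into pure-phase blocks, polynomial mixing inside each block via a Peierls/cluster-expansion input, and an $e^{-O(n^{d-1})}$ bottleneck estimate between blocks---is indeed the architecture used in~\cite{borgs2012tight}. Two small remarks. First, the decomposition in~\cite{borgs2012tight} is carried out on the random-cluster side (into $\Omega_{\ord}$, $\Omega_{\dis}$, $\Omega_{\tunnel}$), not directly into the $q$ Potts colour classes; the Potts mixing-time bound is then obtained by comparison with Swendsen--Wang. Second, you are right that step~(a) is where the real work lies, but the mechanism is not quite ``conditioning on a phase removes the obstruction'': rather, one compares the restricted Glauber/SW dynamics to an auxiliary polymer dynamics whose rapid mixing follows from the convergent cluster expansion (Lemmas~\ref{lemKestimates}--\ref{lemSuperEstimates} here correspond to the relevant Peierls input). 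These are refinements rather than gaps; your outline is sound.
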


We will use this result to give an approximation algorithm when the approximation parameter $\eps$ is extremely small.  The reason we are able to combine the Glauber dynamics with our contour-based algorithm to give an FPRAS is that~\cite{borgs2012tight} proves \emph{optimal} slow mixing results for the Glauber and Swendsen--Wang dynamics.  That is, up to a constant in the exponent, the upper bound of the mixing time of the Glauber dynamics (or Swendsen--Wang dynamics) is the inverse of the bound on $Z_\tunnel/Z$ from Lemma~\ref{lem:Z-split}.  Thus when $\eps$ is too small for the contour algorithms to work, the Glauber dynamics can take over.

\begin{proof}[Proof of Theorem~\ref{PottsTorusCrit}]
Let $N = n^d$ be the number of vertices of $\tor$. 
We will use a simple fact several times below:  if $\eps \in (0,1)$, $Z, Z^* > 0$, and $Z^*/Z < \eps/2$, then  $(Z-Z^*)$ is an $\eps$-relative approximation to $Z$.  

We first consider the case $\beta = \beta_c$.  To give an FPRAS for $Z=Z_{\tor}$ we consider two subcases. Let $c$ be the constant from Lemma~\ref{lem:Z-split}.

Suppose $\eps < 4e^{-c \beta n^{d-1}}$. Since $e^{O( n^{d-1})}$ is polynomial in $N$ and $1/\eps$, we can use Glauber dynamics to obtain an $\eps$-approximate sample in polynomial time.  By using simulated annealing (e.g.~\cite{vstefankovivc2009adaptive}) we can also approximate the partition function in time polynomial in $N$ and $1/\eps$. 

If $\eps \ge   4e^{-c \beta n^{d-1}}$, then by Lemma~\ref{lem:Z-split}, $Z_\rest = Z_{\dis} + Z_{\ord}$ is an $\eps/2$-relative approximation to $Z$, so it suffices to find an $\eps/4$-relative approximation to both $Z_{\dis} $ and $Z_{\ord}$.  This can be done in time polynomial in $N$ and $1/\eps$ by Lemma~\ref{lemOrdDiscompute}.

Next we consider the case $\beta > \beta_c$. Again there are two subcases. Let $c$ be the constant from Lemma~\ref{lem:Z-split} as before, and let $b_{\dis}$ be the constant from Lemma~\ref{lemSuperEstimates2a}.  If $\eps < 4e^{-c \beta n^{d-1}} + 4 e^{-b_{\dis} n^{d-1}}$, then again $e^{O( n^{d-1})}$ is polynomial in $N$ and $1/\eps$ and we can approximately count and sample by using the Glauber dynamics.

If $\eps \ge   4e^{-c \beta n^{d-1}} + 4 e^{-b_{\dis} n^{d-1}}$, then by Lemma~\ref{lem:Z-split} and Lemma~\ref{lemSuperEstimates2a}, $Z_{\ord}$ is an $\eps/2$-relative approximation to $Z$ and so it suffices to give an $\eps/2$-relative approximation to $Z_{\ord}$.   This can be done in time polynomial in $N$ and $1/\eps$ by Lemma~\ref{lemOrdDiscompute}. 

Lastly, consider $\beta<\beta_{c}$. The case $\beta\leq \beta_{h}$ was completed in Section~\ref{secPolymer}. The case $\beta_{h}<\beta<\beta_{c}$ is done exactly as the case $\beta>\beta_{c}$ with the roles of $\ord$ and $\dis$ reversed; see Appendix~\ref{sec:HT} for details.
\end{proof}

\begin{proof}[Proof of Theorem~\ref{PottsZd} for counting]
Let $\Lam \subset \Z^d$ be such that the induced subgraph  $G_{\Lam}$ is finite and  simply connected. By Proposition~\ref{prop:scregion}, we can construct an ordered contour $\V{\gamma}_{\ord}$ and a disordered contour $\V{\gamma}_{\dis}$ so that 
\begin{align*}
Z_{\dis} (\Int \V{\gamma}_{\ord}) = (1-p)^{-\frac{1}{2}\norm{\V{\gamma}_{\ord}}}Z^f_{\Lam}, 
  \qquad 
Z_\ord(\Int \V{\gamma}_{\dis}) =  q^{-1}p^{d\abs{\Int \V{\gamma}_{\ord}}-|E(\Lam)|}Z^w_{\Lam}  \,.
\end{align*}
The FPTAS for $Z^w_{\Lam}$ for $\beta \ge \beta_c$ then follows from Lemma~\ref{lemOrdDiscompute}, as does the FPTAS for $Z^f_{\Lam}$ for $\beta_h < \beta \le \beta_c$.  The case $\beta\leq \beta_{h}$ was covered in Section~\ref{secPolymer}. 
\end{proof}

\section{Sampling}
\label{sec:sample}

In this section we present efficient approximate sampling algorithms for the random cluster and Potts models when $\beta>\beta_{h}$. By the Edwards--Sokal coupling, see Appendix~\ref{sec:ES}, it suffices to obtain algorithms for the random cluster model. Describing the strategy, which is based on that of~\cite[Sections~5 and~6]{helmuth2018contours}, requires a few definitions.

Recall the definition~\eqref{eqRCdef} of the random cluster measure $\mu^{\text{RC}}$ on $\tor$. Thus $\mu^{\text{RC}}$ is a measure on subsets of edges $A\in\Omega$. Recalling the definitions~\eqref{eq:Z-split-ord} and~\eqref{eq:Z-split-dis} of the sets $\Omega_{\ord}$ and $\Omega_{\dis}$ of ordered and disordered edge configurations, we analogously define 
\begin{equation*}
  \mu_{\ell}(A) \bydef \frac{w(A)}{Z_{\ell}}, \quad A\in \Omega_{\ell}\, \text{ with } \ell\in \{\ord,\dis\}.
\end{equation*}

For a region $\V{\Lam}$, define measures $\nu_{\ell}^{\V{\Lam}}$ on the sets of external contours $\cG_{\ell}^{\ext}(\V{\Lam})$ as follows. 
\begin{align}
  \label{eq:nudis}
\nu_\dis^{\V{\Lam}}(\Gamma) &\bydef \frac{  e^{ - e_{\dis} |\Lam \cap \Ext \Gamma| }  \prod_{\V{\gamma} \in \Gamma} e^{-\kappa \| \V{\gamma} \|} qZ_{\ord}(\Int \V{\gamma})   }{  Z_{\dis} (\V{\Lam})  }  ,  \qquad  \Gamma \in \cG_\dis^\ext(\V{\Lam}) , \\
\nu_\ord^{\V{\Lam}}(\Gamma) &\bydef \frac{  e^{ - e_{\ord} |\Lam \cap \Ext \Gamma| }  \prod_{\V{\gamma} \in \Gamma} e^{-\kappa \| \V{\gamma} \|} Z_{\dis}(\Int \V{\gamma})   }{  Z_{\ord} (\V{\Lam})  }  ,  \qquad  \Gamma \in \cG_\ord^\ext(\V{\Lam})  , \label{eq:nuord}
\end{align}
where $|\Lam\cap \Ext\Gamma|$ is the number of vertices contained in the continuum set $\V{\Lam} \cap \Ext\Gamma$.

We now outline our strategy for approximately sampling from $\mu_{\ord}$ and $\mu_{\dis}$; a small modification will also apply to sampling from $\mu^{\text{RC}}$ on the torus. The key idea is that the inductive representations of the partition functions in~\eqref{eq:matchextord} and~\eqref{eq:matchextdis} yield a procedure to sample from $\mu_\dis$ and $\mu_\ord$ if we can sample from the measures $\nu_{\ell}^{\V{\Lam}}$ for $\ell\in\{\ord,\dis\}$ and for all regions $\V{\Lam}$. The procedure, which we call the \emph{inductive contour sampling algorithm}, is as follows. Consider $\mu_{\ord}$. To sample a set of compatible, matching contours with ordered external contours, we first sample $\Gamma$ from $\nu_{\ord}^{\ctor}$, then for each $\V{\gamma} \in \Gamma$ we sample from $\nu_\dis^{\Int \V{\gamma}}$ and repeat inductively until there are no interiors left to sample from.  The union of all contours sampled is a set of matching and compatible contours, and these contours are distributed as the restriction of~\eqref{eq:Z-mci} to contour configurations that arise from ordered edge configurations. This set of contours can then be mapped to an edge set via the bijection of Lemma~\ref{lem:rep}, and the distribution of this edge set is $\mu_\ord$. The procedure for sampling from $\mu_{\dis}$ is analogous. For a more detailed discussion of the validity of this algorithm, see~\cite[Section~5]{helmuth2018contours}.

By using the same procedure it is possible to efficiently approximately sample from $\mu_\ord$ and $\mu_\dis$ provided one can efficiently approximately sample from the external contour measures $\nu_\dis^{\V{\Lam}}$ and $\nu_\ord^{\V{\Lam}}$. Again, we refer to~\cite[Section~5]{helmuth2018contours} for further details. 

The next lemma is an essential input for developing efficient approximate samplers for $\nu_{\ell}^{\V{\Lam}}$ as it tells us we need only consider `small' contours. For $\ell\in \{\ord,\dis\}$ let $\nu_{\ell}^{\V{\Lam},m}$ be the probability measure defined as in~\eqref{eq:nudis}--~\eqref{eq:nuord}, but restricted to $\Gamma$ with $\norm{\Gamma}<m$. The normalization factor for $\nu_{\ell}^{\V{\Lam},m}$ is thus the contour partition function restricted to $\Gamma$ with $\norm{\Gamma}<m$.

\begin{lemma}
  \label{lem:sample-trunc}
  Suppose $d\geq 2$, $q\geq q_{0}$, and $\eps>0$. Then, letting
  $N=\abs{\V{\Lam}}_{\htors}$, for $m{\geq C'} 
  \log (N /\eps)$ {with $C'$ a large enough absolute constant,}
  \begin{enumerate}
  \item  If $\beta\geq \beta_{c}$, then $\|\nu_{\ord}^{\V{\Lam},m} -  \nu_{\ord}^{\V{\Lam}}  \|_{TV} < \eps$. 
  \item If $\beta_{h}<\beta \le \beta_{c}$, then $\| \nu_{\dis}^{\V{\Lam},m} -\nu_{\dis}^{\V{\Lam}}  \|_{TV} <\eps$. 
  \end{enumerate}
  for all regions 
  $\V{\Lam}$.\footnote{The constant $C'$ 
    depends only on the constants $c$ in the bounds on $K_{\ell}(\V{\gamma}) \leq \exp(-c\beta \norm{\V{\gamma}})$. These bounds are given by Lemma~\ref{lemKestimates} and Lemma~\ref{lemSuperEstimates} for $\beta\geq \beta_{c}$, and in Appendix~\ref{sec:HT} for $\beta_{h}<\beta<\beta_{c}$.}
\end{lemma}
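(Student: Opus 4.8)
The plan is to bound the total variation distance between $\nu_{\ell}^{\V\Lam}$ and its truncation $\nu_{\ell}^{\V\Lam,m}$ by the relative weight that $\nu_\ell^{\V\Lam}$ assigns to collections $\Gamma$ of large total size, and then to control this relative weight using the convergence of the cluster expansion (Lemma~\ref{KPthm}) together with the Koteck\'y--Preiss bounds $K_\ell(\V\gamma)\le e^{-c\beta\norm{\V\gamma}}$ available for the \emph{stable} label: $\ord$ when $\beta\ge\beta_c$ (Lemmas~\ref{lemKestimates},~\ref{lemSuperEstimates}), and $\dis$ when $\beta_h<\beta<\beta_c$ (Appendix~\ref{sec:HT}). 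By symmetry I will only write the argument for the $\ord$ case with $\beta\ge\beta_c$; the other case is identical after swapping labels.

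First I would recall that by~\eqref{eqZordPoly} the normalizing constant of $\nu_\ord^{\V\Lam}$ is $e^{e_\ord|\Lam|}Z_\ord(\V\Lam) = \sum_{\Gamma\in\pc_\ord(\V\Lam)}\prod_{\V\gamma\in\Gamma}K_\ord(\V\gamma)$, and that the numerator of $\nu_\ord^{\V\Lam}(\Gamma)$ for an \emph{external} configuration $\Gamma\in\cG_\ord^\ext(\V\Lam)$ equals, after multiplying and dividing by $Z_\ord(\Int\V\gamma)$, exactly $e^{-e_\ord|\Lam|}\,Z_\ord(\V\Lam)$ times a sum over all compatible collections in $\pc_\ord(\V\Lam)$ whose external contours are precisely $\Gamma$. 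Consequently
\[
  \|\nu_\ord^{\V\Lam,m}-\nu_\ord^{\V\Lam}\|_{TV}
  \;\le\; 2\,\nu_\ord^{\V\Lam}\bigl(\{\Gamma:\norm{\Gamma}\ge m\}\bigr)
  \;\le\; \frac{2\sum_{\Gamma\in\pc_\ord(\V\Lam),\,\norm{\Gamma}\ge m}\prod_{\V\gamma\in\Gamma}K_\ord(\V\gamma)}{\sum_{\Gamma\in\pc_\ord(\V\Lam)}\prod_{\V\gamma\in\Gamma}K_\ord(\V\gamma)},
\]
where the comparison between the external measure and the full polymer measure on $\pc_\ord(\V\Lam)$ uses that $\norm{\Gamma}$ only decreases when we restrict to external contours, so a large external configuration forces a large full configuration. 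The denominator is at least $1$ (the empty configuration). For the numerator, since $K_\ord(\V\gamma)\le e^{-c\beta\norm{\V\gamma}}$ and, by~\eqref{eq:betac}, $c\beta$ exceeds the Koteck\'y--Preiss threshold $\frac{3+\log\Delta}{b}+3$ (here $b=1$ and $\Delta$ is the degree bound for $\htors$) once $q\ge q_0$, Lemma~\ref{KPthm}'s conclusion~\eqref{eqTruncBound}, or more directly the elementary polymer tail bound $\sum_{\Gamma\in\pc_\ord(\V\Lam),\,\norm{\Gamma}\ge m}\prod_{\V\gamma\in\Gamma}|K_\ord(\V\gamma)|\le N e^{-3m}$ obtained exactly as~\eqref{eq:bd}--\eqref{eq:tailbound} but summing over collections rather than clusters, gives the numerator is at most $Ne^{-\Omega(m)}$. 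Hence $\|\nu_\ord^{\V\Lam,m}-\nu_\ord^{\V\Lam}\|_{TV}\le 2Ne^{-\Omega(m)}$, which is below $\eps$ provided $m\ge O(\log(N/\eps))$ with the implicit constant depending only on $c$ (equivalently on the gap between $c\beta$ and the convergence threshold), as claimed in the footnote.

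The main obstacle is the bookkeeping in the first displayed inequality: one must justify carefully that summing the numerator of $\nu_\ord^{\V\Lam}$ over external configurations with $\norm{\Gamma}\ge m$ is dominated by the full polymer sum over $\pc_\ord(\V\Lam)$ with $\norm{\Gamma}\ge m$, using the nesting structure of contours and the fact that resumming interiors via~\eqref{eqZordPoly} turns $Z_\dis(\Int\V\gamma)$ back into $e^{-e_\dis|\Int\V\gamma|}$ times a polymer sum, with $e^{-e_\dis|\Int\V\gamma|}/e^{-e_\ord|\Int\V\gamma|}$ absorbed into $K_\ord(\V\gamma)$; this is precisely the standard Pirogov--Sinai resummation recalled around~\eqref{eq:Kweight}, so it is routine but must be stated. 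A minor point is that I should record that the degree bound $\Delta$ for the contour polymer graph is a constant depending only on $d$ (this is implicit in Section~\ref{sec:extern-cont-repr-1} and Lemma~\ref{lem:polygraphsize}), so that the threshold being beaten by $c\beta\sim c\log q$ is genuinely a statement about large $q$.
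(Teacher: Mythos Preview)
There is a genuine gap at the heart of your argument. The inequality you claim for the numerator,
\[
\sum_{\Gamma\in\pc_\ord(\V\Lam),\ \|\Gamma\|\ge m}\ \prod_{\V\gamma\in\Gamma}K_\ord(\V\gamma)\ \le\ N e^{-3m},
\]
is false, and the proposed derivation ``exactly as~\eqref{eq:bd}--\eqref{eq:tailbound} but summing over collections rather than clusters'' does not transfer. The tail bound~\eqref{eq:tailbound} works because clusters are \emph{connected}: each cluster is anchored at a vertex $v$, the Koteck\'y--Preiss theorem controls the pinned sum $\sum_{\Gamma\ni v}|\phi(H_\Gamma)\prod w_\gamma|\,e^{3\|\Gamma\|}\le 1$, and summing over the $N$ possible anchors produces the factor $N$. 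Compatible collections in $\pc_\ord(\V\Lam)$ have no such anchoring. Concretely, if $\V\gamma_0$ is a smallest ordered contour there are order $N$ pairwise compatible translates of it, each carrying a fixed positive weight $K_\ord(\V\gamma_0)$; collections of $k$ translates already contribute $\binom{\Theta(N)}{k}K_\ord(\V\gamma_0)^k$, so the unrestricted collection sum is $e^{\Theta(N)}$. Peeling off $e^{-3\|\Gamma\|}$ thus only yields $e^{\Theta(N)-3m}$, which forces $m$ linear in $N$, not logarithmic. (Keeping the denominator does not help: the exponential Markov bound gives $e^{-3m}Z(\cC_\ord,K_\ord e^{3\|\cdot\|})/Z(\cC_\ord,K_\ord)$, and the log of that ratio is $\Theta(N)$ for the same reason.)

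This is not merely technical: under the total-size reading, $\|\Gamma\|$ has mean of order $N$ under $\nu_\ord^{\V\Lam}$, so the statement itself would fail. The truncation that actually yields $m=O(\log(N/\eps))$, and the one used downstream in Lemmas~\ref{lemOuterSample} and~\ref{lemOuterSampleU}, is on the size of each \emph{individual} external contour. With that reading the proof is a union bound: your identification of $\nu_\ord^{\V\Lam}$ with the external-contour marginal of the $K_\ord$-polymer model is correct, and in that model the probability a fixed $\V\gamma$ appears is at most $K_\ord(\V\gamma)$ (since the weights are nonnegative). Hence
\[
\nu_\ord^{\V\Lam}\bigl(\exists\,\V\gamma\in\Gamma:\ \|\V\gamma\|\ge m\bigr)
\ \le\ \sum_{\V\gamma:\,\|\V\gamma\|\ge m}K_\ord(\V\gamma)
\ \le\ N\sum_{k\ge m}(e\Delta)^{k}\,e^{-c\beta k}
\ \le\ Ne^{-\Omega(m)},
\]
using the bound on the number of contours through a vertex together with $K_\ord(\V\gamma)\le e^{-c\beta\|\V\gamma\|}$; the conclusion follows with constants depending only on $c$, as in the footnote. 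This is the argument the paper's one-line proof (via convergence of the cluster expansion and the reference to~\cite[Lemma~5.4]{helmuth2018contours}) is pointing to.
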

\begin{proof}
  This follows from the convergence of the cluster expansion for $Z_{\ell}(\V{\Lam})$ for the specified choices of $\ell$ and $\beta$.  For details see, e.g.,~\cite[Proof of Lemma~13]{helmuth2018contours}. 
\end{proof}

\begin{lemma}
\label{lemOuterSample}
Suppose $d \ge 2$ and $q \ge q_0$.  Then
\begin{enumerate}
\item  For $\beta = \beta_c$, there are efficient sampling schemes for $\nu_\ord^{\V{\Lam}}$ and $\nu_\dis^{\V{\Lam}}$.

\item For $\beta > \beta_c$ there is an efficient sampling scheme for $\nu_\ord^{\V{\Lam}}$. 

\item For $\beta_h < \beta < \beta_c$ there is an efficient sampling scheme for $\nu_\dis^{\V{\Lam}}$.   
\end{enumerate} 
In each case these algorithms apply for all regions 
$\V{\Lam}$.  
\end{lemma}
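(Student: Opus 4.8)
The plan is to reduce the problem of sampling from $\nu_\ell^{\V\Lam}$ to the task of sampling from a polymer model whose partition function has a convergent cluster expansion, and then to invoke the general polymer sampling machinery of~\cite[Section~5]{helmuth2018contours} together with the approximate weights $\tilde K_\ell(\V\gamma)$ constructed in Section~\ref{sec:count}. Concretely, for $\beta=\beta_c$ (case (1)) observe from~\eqref{eqZordPoly}--\eqref{eqZdisPoly} that $\nu_\ord^{\V\Lam}$ and $\nu_\dis^{\V\Lam}$, after conditioning on the set of external contours, are exactly the external-contour marginals of the polymer measures associated with $Z_{\ord}(\V\Lam)$ and $Z_{\dis}(\V\Lam)$. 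By Lemma~\ref{lemKestimates} these polymer models satisfy the Koteck\'y--Preiss condition~\eqref{eqPolymerKP} once $q\geq q_0$, so Lemma~\ref{KPthm} applies. The first step is therefore to truncate: by Lemma~\ref{lem:sample-trunc}, for $m = O(\log(N/\eps))$ the truncated measure $\nu_\ell^{\V\Lam,m}$ is within $\eps/2$ in total variation of $\nu_\ell^{\V\Lam}$, so it suffices to sample from $\nu_\ell^{\V\Lam,m}$.

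The second step is to sample an external-contour configuration $\Gamma$ with $\norm{\Gamma}<m$ from the (truncated) polymer marginal, using approximate weights. We enumerate all contours in $\cC_\ord(\V\Lam)\cup\cC_\dis(\V\Lam)$ of size at most $m$ in time $O(N\exp(O(m)))$ by Proposition~\ref{prop:enum}, and for each such contour we compute the approximate weight $\tilde K_\ell(\V\gamma)$ inductively by level, exactly as in the proof of Lemma~\ref{lemOrdDiscompute} (for $\beta=\beta_c$ this is Lemma~\ref{lemBcInductive}; for $\beta>\beta_c$ it is Lemma~\ref{lemSuperCritApprox}, where $\tilde K_\ord$ uses the flip-truncated quantity $\Xi^M_{\dis}$). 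These weights satisfy $\tilde K_\ell(\V\gamma)$ is an $\eps'|\Int\V\gamma|$-relative approximation to $K_\ell(\V\gamma)$ with $\eps' = \eps/(cN)$ for a suitable constant, and they still satisfy~\eqref{eqPolymerKP} (with a slightly worse constant, absorbed into $q_0$). We then apply the polymer-model sampling algorithm of~\cite[Theorem~5.1]{helmuth2018contours} (or the self-reducibility/rejection scheme described in~\cite[Section~5]{helmuth2018contours}) to the polymer model on contours of size $\leq m$ with weights $\tilde K_\ell$; this outputs, in time polynomial in $N$ and $1/\eps$, a sample from a measure within $\eps/2$ of $\nu_\ell^{\V\Lam,m}$, using the fact that approximating the weights to relative error $O(\eps/N)$ perturbs the cluster-expansion-based sampler by at most $\eps/2$ in total variation (same estimate as in Lemma~\ref{lemPolymerApprox}). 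Combining with Step~1 gives an $\eps$-approximate sample.

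For case (2), $\beta>\beta_c$: only $\nu_\ord^{\V\Lam}$ is claimed, and this is handled exactly as above using Lemma~\ref{lemSuperEstimates} in place of Lemma~\ref{lemKestimates} to verify~\eqref{eqPolymerKP} for ordered contours, and using the weights $\tilde K_\ord$ from Lemma~\ref{lemSuperCritApprox}. The one extra wrinkle is that computing $\tilde K_\ord(\V\gamma)$ for non-thin $\V\gamma$ requires evaluating $\Xi^M_\dis(\Int\V\gamma,\tilde K)$, i.e.\ summing over $\Gamma\in\extbs^{\text{flip}}_\dis(\Int\V\gamma,M)$ with $M=O(\log(N/\eps))$; by Proposition~\ref{prop:vacant} this set is enumerable in time $\norm{\V\gamma}e^{O(\norm{\V\gamma}+M)}$, which is polynomial in $N$ and $1/\eps$ since $\norm{\V\gamma}\leq m$. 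Case (3), $\beta_h<\beta<\beta_c$, is identical to case (2) with the roles of $\ord$ and $\dis$ interchanged, using the estimates of Appendix~\ref{sec:HT}; I will simply remark on this and refer there.

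The main obstacle, and the part deserving care, is the interaction between the two sources of error: the truncation to contours of size $<m$ (controlled by Lemma~\ref{lem:sample-trunc}) and the replacement of the true weights $K_\ell$ by the approximations $\tilde K_\ell$ (which are themselves defined via truncated cluster expansions and, for $\beta>\beta_c$, via the flip-truncation $\Xi^M_\dis$). One must check that feeding approximate weights into the polymer sampler of~\cite{helmuth2018contours} degrades the output distribution by only $O(\eps)$ in total variation, uniformly over the recursion depth of the inductive contour sampling algorithm — the recursion is over nested interiors, whose total size is bounded by $|\V\Lam|$, so the per-level errors sum in a controlled way exactly as the relative errors summed in the proof of Lemma~\ref{lemOrdDiscompute}. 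Making this bookkeeping precise, with the right choice of $\eps'$ as a function of $\eps$ and $N$, is the crux; everything else is an assembly of Proposition~\ref{prop:enum}, Proposition~\ref{prop:vacant}, Lemmas~\ref{lemBcInductive} and~\ref{lemSuperCritApprox}, Lemma~\ref{lem:sample-trunc}, and the sampling theorem of~\cite{helmuth2018contours}.
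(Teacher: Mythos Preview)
Your proposal is correct and follows essentially the same approach as the paper: truncate via Lemma~\ref{lem:sample-trunc}, invoke the polymer-model/contour sampling machinery of~\cite[Section~5]{helmuth2018contours} (the paper cites Theorems~5.5 and~6.2 there rather than Theorem~5.1, as these are the contour-model versions), and feed in the FPTAS of Lemma~\ref{lemOrdDiscompute} to compute the needed conditional probabilities. One small point of over-engineering: the ``recursion depth of the inductive contour sampling algorithm'' and the associated error bookkeeping you worry about are not part of \emph{this} lemma---$\nu_\ell^{\V\Lam}$ is sampled for a single fixed region, and the recursive accumulation of errors only enters later, in the proofs of Theorems~\ref{PottsTorusCrit} and~\ref{PottsZd} (where it is handled via~\cite[Lemma~5.3]{helmuth2018contours}).
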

\begin{proof}
First we consider $\beta=\beta_{c}$.  By Lemma~\ref{lemOrdDiscompute} there are efficient algorithms to approximate $Z_{\dis}(\V{\Lam})$ and $Z_{\ord}(\V{\Lam})$ for all regions  $\V{\Lam}$.  With this, we can apply the approximate sampling algorithms given in~\cite[Theorem 11 and Theorem 13]{helmuth2018contours}.  We summarize the algorithm here, assuming that we want to sample a collection of ordered contours (the disordered case is identical).

By Lemma~\ref{lem:sample-trunc} it is enough to obtain an $\eps$-approximate sample from $\nu_{\ell}^{\V{\Lam},m}$ with $m = O(\log (N / \eps))$. 
List all contours of size at most $m$ in $\cC_{\ord}(\V{\Lam})$, and call this collection $\cC$.   
Order the vertices of $\V{\Lam}$ arbitrarily as $v_1, \dots, v_N$.   We will form a random collection $\Gamma=\Gamma_N$ of mutually external ordered contours step by step.  Begin with $\Gamma_0 = \emptyset$.  At step $i$, let $\cC_i$  be the subset of contours $\V{\gamma}$ in $\cC$ such that (i) $v_i \in \Int \V{\gamma}$ (ii) $\V{\gamma}$ is external to $\Gamma_{i-1}$ and (iii) $\Int \V{\gamma} \cap \{v_{1},\dots, v_{i-1}\}=\emptyset$. 
We can efficiently approximate the conditional probability of each contour in $\cC_{i}$, or of adding no contour at step $i$, by using Lemma~\ref{lemOrdDiscompute} to approximate the relevant polymer partition functions. The result of this procedure is the desired approximate sampling algorithm.

Sampling from $\nu_\ord^{\V{\Lam}}$ for $\beta > \beta_c$ also follows from the algorithm described above since we have an FPTAS for computing $Z_{\ord}(\V{\Lam})$, and similarly for $\nu_{\dis}^{\V{\Lam}}$ when $\beta_{h}<\beta<\beta_{c}$.
\end{proof}

Our strategy for efficiently approximately sampling from $\mu_{\ord}$ and $\mu_{\dis}$ requires that we can also efficiently approximately sample from $\nu_{\dis}^{\V{\Lam}}$ for small regions $\V{\Lam}$ when $\beta>\beta_{c}$ (and likewise from $\nu_{\ord}^{\V{\Lam}}$ when $\beta<\beta_c$). We cannot use the cluster expansion for this task since the disordered (resp. ordered) ground state is unstable, and so instead our approach is based on the intuition from Lemma~\ref{lemSuperEstimates2} that a disordered region will quickly `flip' to being ordered when $\beta>\beta_{c}$.

\begin{lemma}
\label{lemOuterSampleU}
Suppose $d \ge 2$ and $q \ge q_0$.  Then
\begin{enumerate}
\item For $\beta > \beta_c$ there is an $\eps$-approximate sampling algorithm for $\nu_\dis^{\V{\Lam}}$ that runs in time polynomial in $1/\eps$ and exponential in $\| \partial \V{\Lam} \|$.

\item For $\beta_h < \beta < \beta_c$ there is an $\eps$-approximate sampling algorithm for $\nu_\ord^{\V{\Lam}}$ that runs in time polynomial in $1/\eps$ and exponential in $\| \partial \V{\Lam} \|$.  
\end{enumerate} 
In each case these algorithms apply for all regions $\V{\Lam}$.  
\end{lemma}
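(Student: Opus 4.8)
The plan is to reduce the sampling problem for $\nu_{\dis}^{\V{\Lam}}$ (when $\beta>\beta_c$; the $\beta_h<\beta<\beta_c$ case is symmetric with $\ord$ and $\dis$ swapped) to two ingredients that are already available: first, Lemma~\ref{lemSuperEstimates2}, which says that $Z_{\dis}^{\text{flip}}(\V\Lam,M)$ is an $\eps$-relative approximation to $Z_{\dis}(\V\Lam)$ once $M \geq \frac{2}{a_{\dis}}\log\frac{8q}{\eps} + \frac{2}{a_{\dis}}(\kappa+3)\norm{\partial\V\Lam}$; and second, the FPTAS for $Z_{\ord}(\Int\V{\gamma})$ from Lemma~\ref{lemOrdDiscompute}(2), valid since $\beta>\beta_c$. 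The point is that $\nu_{\dis}^{\V\Lam}$ is, up to the $\eps$-small tail controlled by Lemma~\ref{lemSuperEstimates2}, supported on $\cH^{\text{flip}}_{\dis}(\V\Lam, M)$, and by Proposition~\ref{prop:vacant} this set has size $\norm{\partial\V\Lam}\, e^{O(\norm{\partial\V\Lam}+M)}$ and can be enumerated in that time. Since $M = O(\log(1/\eps)) + O(\norm{\partial\V\Lam})$ (using the lower bound for $a_{\dis}$ from Lemma~\ref{lem:adis} so that $a_{\dis}$ need not be known exactly, or simply treating $a_{\dis}$ as a fixed constant), the enumeration runs in time polynomial in $1/\eps$ and exponential in $\norm{\partial\V\Lam}$, as required.

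The steps, in order, are as follows. (i) Fix $\eps>0$ and set $M = \frac{2}{a^{\star}_{\dis}}\log\frac{16q}{\eps} + \frac{2}{a^{\star}_{\dis}}(\kappa+3)\norm{\partial\V\Lam}$ with $a^{\star}_{\dis}$ the constant from Lemma~\ref{lem:adis}; by Lemma~\ref{lemSuperEstimates2}, $Z_{\dis}^{\text{flip}}(\V\Lam,M)$ is an $(\eps/2)$-relative approximation to $Z_{\dis}(\V\Lam)$, and consequently the restricted measure $\nu_{\dis}^{\V\Lam}$ conditioned on $\Gamma\in\cH^{\text{flip}}_{\dis}(\V\Lam,M)$ is within total variation distance $\eps/2$ of $\nu_{\dis}^{\V\Lam}$ (the discarded mass is $Z_{\dis}^{\text{err}}(\V\Lam)/Z_{\dis}(\V\Lam)\leq\eps/2$). (ii) Enumerate $\cH^{\text{flip}}_{\dis}(\V\Lam,M)$ using Proposition~\ref{prop:vacant}. (iii) For each $\Gamma$ in this list, compute the weight $e^{-e_{\dis}|\Ext\Gamma\cap\V\Lam|}\prod_{\V{\gamma}\in\Gamma}e^{-\kappa\norm{\V{\gamma}}} q Z_{\ord}(\Int\V{\gamma})$ up to a small relative error, using the FPTAS of Lemma~\ref{lemOrdDiscompute}(2) applied to each $\Int\V{\gamma}$ with error parameter, say, $\eps/(4N)$ where $N$ bounds the number of terms; since the contours in $\Gamma$ are mutually external, $\sum_{\V{\gamma}\in\Gamma}|\Int\V{\gamma}|\leq|\V\Lam|$, so the product of relative errors is small. (iv) Sample $\Gamma$ from the resulting approximate distribution on the enumerated list; the accumulated error is at most $\eps$ in total variation.

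The running time bookkeeping: the enumeration in (ii) costs $\norm{\partial\V\Lam}\,e^{O(\norm{\partial\V\Lam}+M)}$, and since $M = O(\log(1/\eps) + \norm{\partial\V\Lam})$ this is $\mathrm{poly}(1/\eps)\cdot e^{O(\norm{\partial\V\Lam})}$; the list has at most this many elements, and for each the FPTAS calls in (iii) cost time polynomial in $|\V\Lam|$ and $1/\eps$ — and $|\V\Lam|\leq\norm{\partial\V\Lam}^2$ by Lemma~\ref{lem:iso} when $\V\Lam=\Int\V{\gamma}$ is the interior of a contour (which is the only case in which the lemma will be invoked, cf. the inductive contour sampling algorithm), so this too is absorbed into $\mathrm{poly}(1/\eps)\cdot e^{O(\norm{\partial\V\Lam})}$. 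For general regions one notes $|\V\Lam|$ is polynomial in $\norm{\partial\V\Lam}$ as well, or simply that $\norm{\partial\V\Lam}$ already bounds the relevant quantities. The main obstacle is the error analysis in step (iii)–(iv): one must check that approximating each $Z_{\ord}(\Int\V{\gamma})$ and then forming an approximate sampling distribution over an exponentially long list still yields an $\eps$-approximate sample overall — this is a standard but slightly delicate propagation-of-errors argument, handled exactly as in the analogous sampling lemmas of~\cite[Sections~5 and~6]{helmuth2018contours}, to which we refer for the details common to both papers.
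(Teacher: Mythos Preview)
Your proposal is correct and follows essentially the same approach as the paper: set $M$ via Lemma~\ref{lemSuperEstimates2}, enumerate $\cH^{\text{flip}}_{\dis}(\V\Lam,M)$ via Proposition~\ref{prop:vacant}, approximate the summand weights using the FPTAS for $Z_{\ord}$ from Lemma~\ref{lemOrdDiscompute}, and sample from the resulting approximate distribution. The paper's proof is terser and simply says ``as in the proof of Lemma~\ref{lemOrdDiscompute}, compute accurate approximations to the weight of each summand,'' whereas you spell out the error propagation and the use of $a^{\star}_{\dis}$ from Lemma~\ref{lem:adis}; but the argument is the same. One small note: your aside that ``for general regions $|\V\Lam|$ is polynomial in $\norm{\partial\V\Lam}$'' is not true as stated (e.g.\ $\V\Lam=\ctor$), but as you correctly observe, the lemma is only ever invoked for $\V\Lam=\Int\V{\gamma}$, where Lemma~\ref{lem:iso} gives the needed bound---and the paper's own proof also restricts to this case.
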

In our sampling algorithms we can allow exponential dependence on $\| \partial \V{\Lam} \|$ since by Lemma~\ref{lem:sample-trunc} we need only consider contours $\gamma$ with $\| \gamma \| = O( \log (N/\eps) )$.  
\begin{proof}[Proof of Lemma~\ref{lemOuterSampleU}]
  Consider the case $\beta>\beta_{c}$ and suppose $\V{\Lam} = \Int \V{\gamma}$. The lemma follows from Proposition~\ref{prop:vacant} and Lemma~\ref{lemSuperEstimates2}. More precisely, set $M$ according to Lemma~\ref{lemSuperEstimates2}, and then compute $\cH^{\text{flip}}_{\dis}(\Int \V{\gamma},M)$ by Proposition~\ref{prop:vacant}. As in the proof of Lemma~\ref{lemOrdDiscompute}, compute accurate approximations to the weight of each summand in $Z^{\text{flip}}_{\dis}(\Int\V{\gamma},M)$. These approximations determine the probabilities according to which we sample $\Gamma\in \cH^{\text{flip}}_{\dis}(\Int \V{\gamma},M)$. By Lemma~\ref{lemSuperEstimates2} the result is an $\eps$-approximation to $\nu^{\Int\V{\gamma}}_{\ord}$.

For $\beta_{h}<\beta<\beta_{c}$ the proof is essentially the same given the inputs discussed in Appendix~\ref{sec:HT}.
\end{proof}

\begin{proof}[Proof of Theorems~\ref{PottsTorusCrit} and~\ref{PottsZd}, sampling]
We first consider the sampling part of Theorem~\ref{PottsZd}, which follows similarly to the proof of the approximate counting algorithm given in the previous section. Given (i) $\Lambda\subset\Z^{d}$ such that $G_{\Lambda}$ is simply connected and (ii) a choice of wired or free boundary conditions, Proposition~\ref{prop:scregion} gives a contour $\V{\gamma}$ such that the partition function associated to $\Int \V{\gamma}$ is $Z_{\Lambda}^{w}$ or $Z_{\Lam}^{f}$ {up to an efficiently computable prefactor}. Thus if $\beta=\beta_{c}$ we can use Lemma~\ref{lemOuterSample} to implement the inductive contour algorithm, but using $\eps'$-approximations to $\nu^{\V{\Lam}}_{\ord}$ and $\nu^{\V{\Lam}}_{\dis}$ in place of the true measures. If $\eps'=\eps^{2}/(9N^{2})$ where $N=\abs{\V{\Lambda}}_{\htors}$, the result is an $\eps$-approximate sample by~\cite[Lemma~12]{helmuth2018contours}. Here we are using $N$ as a crude bound for the depth of the inductive contour algorithm.

If $\beta>\beta_{c}$, then Lemma~\ref{lem:sample-trunc} tells us that it suffices to sample from $\nu^{\V{\Lam},m}_{\ord}$ with $m = O(\log (N/\eps))$.  The consequence of this fact is that we can use the algorithm described above for $\beta=\beta_{c}$, as each call for an $\eps$-approximate sample of $\nu^{\V{\Lam}}_{\dis}$ takes time $\exp (O (\log N/\eps))$ by Lemma~\ref{lemOuterSampleU} since each contour is of size at most $O(\log(N/\eps))$. 
For $\beta_{h}<\beta<\beta_{c}$ an analogous argument applies with the roles of $\ord$ and $\dis$ reversed.

For Theorem~\ref{PottsTorusCrit} the situation is similar to what we
have just discussed, except for the fact that $\mu^{\text{RC}}$ is not
an ordered or a disordered measure: it includes configurations with
ordered and disordered external contours and includes the
configurations with interfaces.  If $\beta>\beta_{c}$, however, we
have {(see Lemmas~\ref{lem:Z-split} and~\ref{lemSuperEstimates2a})} $\| \mu^{\text{RC}} - \mu_{\ord}  \|_{TV} = \exp ( - \Omega(n^{d-1}))$, and hence if $\eps$ is not too small, we can sample from $\mu_{\ord}$ as above. \emph{Mutatis mutandis} the same argument applies for $\mu_{\dis}$ if $\beta_{h}<\beta<\beta_{c}$.  On the other hand if $\eps =  \exp ( - \Omega(n^{d-1}))$, then we can use the Glauber dynamics to sample efficiently by Theorem~\ref{thmBCTglauber}.

For $\beta=\beta_{c}$ the situation is slightly different as the probability of both the ordered and disordered configurations are both of constant order, while the probability of configurations with interfaces is still $\exp ( - \Omega(n^{d-1}))$. The solution is to use the approximate counting algorithm of Lemma~\ref{lemOrdDiscompute} to approximate the relative probabilities of $\Omega_\ord$ and $\Omega_{\dis}$ under $\mu^{\text{RC}}$ and then to sample from each using the procedure above. 
Again if $\eps =  \exp ( - \Omega(n^{d-1}))$ we can use the Glauber dynamics.

Note that our sampling algorithm will not return any configurations with interfaces if $\eps \ge  4e^{-c \beta n^{d-1}}$, but such configurations have probability smaller than $\eps$.  On the other hand, if $\eps<  4e^{-c \beta n^{d-1}}$, then running Glauber dynamics may indeed return a configuration with interfaces.
\end{proof}

\section{Conclusions}
\label{sec:Conc}

In this paper we have given efficient approximate counting and
sampling algorithms for the random cluster and $q$-state Potts models
on $\Z^d$ at all inverse temperatures $\beta\geq 0$, provided
$q\geq q_{0}(d)$ and $d\geq 2$. We believe the ideas of this paper
will, however, allow for approximate counting and sampling algorithms to be
developed for a much broader class of statistical mechanics
models. The necessary conditions for the development of algorithms for
a given model is that there are only finitely many ground states, and
that there is `sufficient $\tau$-functionality'. These are the
necessary ingredients for the implementation of Pirogov--Sinai theory,
see~\cite{borgs1989unified}. Our methods allow for the presence of
unstable ground states, a significant improvement compared to the
algorithms in~\cite{helmuth2018contours}.

Our results suggest that the algorithmic tasks of counting and
sampling may be performed efficiently for a fairly broad class of
statistical mechanics models with first-order phase transitions, but
we leave a fuller investigation of this for future work. A related
interesting questions is the existence of efficient algorithms for all
$\beta\geq \beta_{c}$ in the presence of a second-order phase transition; we
are not aware of any results in this direction with the exception of
the Ising model, i.e., the $q=2$ state Potts
model~\cite{jerrum1993polynomial,guo2018}. To conclude we list some
further open questions related to this paper.

\begin{enumerate}
\item Our algorithms are restricted to $q\geq q_{0}(d)$ with
  $q_{0}(d)$ {more than exponentially large in $d$.}
  Do efficient algorithms exist that avoid this constraint? Since the
  physical phenomena behind our results are believed to hold for
  $q\geq 3$ when $d\geq 3$, there is likely room for improvement.
  
\item On the torus, we obtained an FPRAS (as opposed to an FPTAS) for
  the partition function because of the estimate on $Z_{\tunnel}$ from
  Lemma~\ref{lem:Z-split}: the contribution of $Z_{\tunnel}$ cannot be
  ignored when $\eps\leq \exp(-\Omega(n^{d-1}))$.  Fortunately, it is exactly when $\eps$ is this small that the Glauber dynamics mix in time polynomial in $1/\eps$, but of course Markov Chain Monte Carlo is a randomized algorithm. 
   A method for
  systematically accounting for the interfaces that contribute to
  $Z_{\tunnel}$ would likely enable the development of an FPTAS. We
  leave this as an open problem.
\item Our algorithms have at least two other features that could be
  improved. The first is the running time: while our algorithms are
  polynomial time, the degree of the polynomial is not small. The
  second is that our algorithms rely on \emph{a priori} knowledge of
  whether or not $\beta=\beta_{c}$.

  Both of these deficiencies have the potential to be addressed by
  Glauber-type dynamics as described in~\cite{PolymerMarkov}; see
  also~\cite[Section~7.2]{helmuth2018contours}.  Proving the
  efficiency of these proposed algorithms would be very interesting.
\item Our deterministic algorithms for $\beta>\beta_{c}$ (and
  $\beta<\beta_{c}$) have diverging running times as
  $\beta\downarrow\beta_{c}$ ($\beta\uparrow\beta_{c}$).  Are there
  \emph{deterministic} algorithms that do not suffer from this
  dependence?
 \item The algorithmic adaptation of other sophisticated contour-based
  methods, e.g.,~\cite{peled2018rigidity}, would be also be quite
  interesting, particularly for applications to problems such as counting the number of 
  proper $q$-colorings of a graph. For recent progress on approximation
  algorithms for $q$-colorings,
  see~\cite{Liu2Delta2019,bencs2018zero,JenssenAlgorithmsJ,liao2019counting}. 
\end{enumerate}

\section*{Acknowledgements}
Part of this work was done while WP and PT were visiting Microsoft
Research New England.  Part of this work was done while TH and WP were
visiting the Simons Institute for the Theory of Computing.  TH was supported by
EPSRC grant EP/P003656/1.  WP is supported in part by NSF grants DMS-1847451 and CCF-1934915.  PT is supported in part by the NSF grant DMS-1811935.  We thank Guus Regts and Ewan Davies for helpful comments on a draft of this paper.

\appendix

\section{Coupling the Potts and random cluster models}
\label{sec:ES}

Here we review the standard Edwards--Sokal coupling between the Potts
and random cluster models and indicate how one can obtain counting and
sampling algorithms for the Potts model from counting and sampling
algorithms for the random cluster model.  For more details on the
couplings between the Potts model and random-cluster measures,
see~\cite[Section~1.2.2]{DuminilCopinLectures}.

Let $G= (V,E(G))$ be a finite graph.  Then the standard Edwards--Sokal
coupling put the $q$-color Potts model at inverse temperature $\beta$
on the same probability space as the random cluster model with
parameters $q$ and $p = 1- e^{-\beta}$.  To obtain a Potts
configuration we sample a random cluster configuration $A$, then
assign one of the $q$ colors uniformly at random to each of the
connected components of the graph $G_A = (V,A)$; note that isolated vertices are
connected components.  Each vertex is then assigned the color of its
connected component.  This gives an efficient algorithm to sample from
the Potts model given a sample from the random cluster model.
Moreover, 
\begin{equation}
  Z^{\text{Potts}}_{G}(\beta) = e^{\beta |E(G)|} Z^{\text{RC}}_G(1-e^{-\beta},q)\, ,
\end{equation}
which gives us an FPTAS (FPRAS) for $Z^{\text{Potts}}$ given an
FPTAS (FPRAS) for $Z^{\text{RC}}$.

We can also couple the Potts model with monochromatic boundary
conditions to the random cluster model with wired boundary conditions.
For this, let us specialize to finite induced subgraphs
$(\Lam, E(\Lam))$ of $\Z^d$.  Define the boundary of $\Lam$ to be
$\partial \Lam \bydef \{ i \in \Lam: \exists j \in \Lam^c, (i,j) \in
E(\Z^d) \}$. Recall the definition of the random cluster model
$\mu^{f}_{\Lam}$ with wired boundary conditions from Section~\ref{sec:random-cluster-model}. Given
a color $r \in [q]$, the allowed colorings for the Potts model with
$r$-monochromatic boundary conditions on $\Lam$ are
\begin{equation}
\Omega_r(\Lam) = \left \{ \sigma \in [q]^\Lam : \sigma_v = r \, \forall \, v \in \partial \Lam   \right \}  \,.
\end{equation}
The corresponding Gibbs measure and partition function are:
\begin{align*}
\mu_\Lam^{\text{Potts},r}( \sigma)  &= \frac{ \prod_{(i,j)\in E(\Lam)} e^{-\beta \mathbf 1_{\sigma_i \ne \sigma_j}}   }{Z_\Lam^{\text{Potts},r}(\beta)   }  \, ,  \quad \quad  \sigma \in \Omega_r(\Lam)  \\
Z_\Lam^{\text{Potts},r}(\beta) &=  \sum_{  \sigma \in \Omega_r(\Lam)}  e^{-\beta \mathbf 1_{\sigma_i \ne \sigma_j}}   \, .
\end{align*}

A simple extension of the Edwards-Sokal coupling then gives the
following facts.  Given a sample $A$ from $\mu^{w}_\Lam$ one can
obtain a sample from $\mu_\Lam^{\text{Potts},r}$ by coloring all
vertices in $\partial \Lam$ or connected to $\partial \Lam$ by the
edges in $A$ with color $r$, and assigning one of the $q$ colors
uniformly at random to the remaining connected components of the graph
$(\Lam, A)$.  Moreover, we have the relation
\begin{equation}
  q Z_\Lam^{\text{Potts},r} (\beta) =   e^{-\beta | E(\Lam)|} Z^w_\Lam( (1- e^{-\beta},q)  \,.
\end{equation}
Again this shows that efficient counting and sampling algorithms for
the Potts model with monochromatic boundary conditions follow from
efficient counting and sampling algorithms for the random cluster
model with wired boundary conditions.

\section{Proofs for $\beta_{h}<\beta<\beta_{c}$}
\label{sec:HT}

\subsection{Lemma~\ref{lemOrdDiscompute} (iii)}
\label{sec:lemma-refl-iii}

The proof of Lemma~\ref{lemOrdDiscompute} in the case
$\beta_h < \beta <\beta_{c}$ is the same, \emph{mutatis mutandis}, as
for $\beta>\beta_{c}$. The necessary changes are that (i) the roles of
the ordered and disordered contours are exchanged, and (ii) some of
the ingredients from Sections~\ref{sec:count} and~\ref{sec:sample}
were stated only for $\beta>\beta_{c}$, and hence versions for
$\beta_{h}<\beta<\beta_{c}$ are necessary. We outline how to obtain
these versions here.

As explained in~\cite[Appendix~A]{borgs2012tight}, \cite[Lemma~6.3 (i)
and (ii)]{borgs2012tight} applies when~\cite[(A.1)]{borgs2012tight}
holds. In fact, the arguments apply if
\begin{equation}
  \label{eq:A1p}
  \beta \geq \max \left \{ C_{1}\log (dC), \frac{3\log q}{4d} \right \}
\end{equation}
where $C$ is the constant from~\cite[Lemma~5.8]{borgs2012tight} and
$C_{1}$ is a sufficiently large constant depending only on $d$. To
verify this it is enough to check that~\cite[(A.2)]{borgs2012tight}
holds (up to a change in the constant $8$).\footnote{Our choice of
  $3/4$ in~\eqref{eq:A1p} is somewhat arbitrary; the same conclusion
  would hold for any number strictly larger than $2/3$.} Thus for
$q_{0}$ sufficiently large \cite[Lemma~6.3 (i) and
(ii)]{borgs2012tight} apply when $\beta_{h}<\beta<\beta_{c}$. In
particular, by following the proofs from $\beta>\beta_{c}$ we obtain
that when $\beta_{h}<\beta<\beta_{c}$
\begin{enumerate}
\item the conclusions of Lemma~\ref{lemBCTsup} hold with the roles of
  $\ord$ and $\dis$ reversed. The fact that
  $a_{\ord}>0$ is contained in~\cite[Lemma~A.3]{borgs2012tight}.
\item the conclusion of Lemma~\ref{lemSuperEstimates} holds with
  $\ord$ replaced by $\dis$.
\item the conclusion of Lemma~\ref{lemSuperEstimates2} holds with
  the roles of $\ord$ and $\dis$ reversed and
  \begin{equation*}
    M\geq \frac{2}{a_{\ord}}\log \frac{8q}{\eps} +
    \frac{2}{a_{\ord}}(\kappa+4)\norm{\V\gamma}. 
  \end{equation*}
  The factor four (as opposed to three) in $M$ arises in the
  computation of the lower bound on $Z^{\text{flip}}_{\ord}(\V\Lam,M)$,
  as (in the notation of the proof of Lemma~\ref{lemSuperEstimates2})
  $\Ext \Contour$ may be of size $\norm{\V{\gamma}}$.
\end{enumerate}

Lastly, the conclusion of Proposition~\ref{prop:vacant} holds with
$\dis$ changed to $\ord$. The proof is very similar to the proof of
Proposition~\ref{prop:vacant}, but using Lemmas~\ref{lem:edge-dis}
and~\ref{lem:dis-edge-con} in place of Lemmas~\ref{lem:edge-ord}
and~\ref{lem:ord-edge-con}.

\begin{proof}[Proof of Lemma~\ref{lemOrdDiscompute} (iii)]
  Using the ingredients above, this follows exactly as in the proof of
  Lemma~\ref{lemOrdDiscompute} (ii), i.e., for $\beta>\beta_{c}$.
\end{proof}

\subsection{Theorems~\ref{PottsTorusCrit} and~\ref{PottsZd}}

These proofs are exactly as for $\beta>\beta_{c}$ provided the
conclusions of Lemma~\ref{lemSuperEstimates2a} hold with $\dis$ replaced by
$\ord$. This is straightforward to obtain by imitating the proof of
Lemma~\ref{lemSuperEstimates2a}, using (as discussed in the previous
section) that the conclusion of Lemma~\ref{lemBCTsup} hold with the
roles of $\ord$ and $\dis$ reversed.

\section{Contour computations using subgraphs of $\htors$}
\label{app:subcomp}
The next lemma shows that computations relating to contours
$\V{\gamma}$ can be implemented using only $\gamma$, the connected
subgraph of $\htors$ that corresponds to $\V{\gamma}$ by the
construction in Section~\ref{sec:cont}. 

\begin{lemma}
  \label{lem:subgcomp}
  Let $\V{\gamma}$ and $\V{\gamma}'$ be contours, and let $\gamma$ and
  $\gamma'$ be the corresponding subgraphs of $\htors$. Then given
  $\gamma$, $\gamma'$,
  \begin{enumerate}
  \item $d_{\infty}(\V{\gamma},\V{\gamma}')$ can be computed in time
    $O(\abs{V(\gamma)}\abs{V(\gamma')})$, 
  \item The set $\Int \V{\gamma} \cap \tor$ can be computed in time
    $O(\abs{V(\gamma)}^{3})$,
  \item $\norm{\V{\gamma}}$ can be computed in time $O(\abs{V(\gamma)})$.
  \end{enumerate}
\end{lemma}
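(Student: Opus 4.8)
The plan is to reduce all three computations to local, per-face operations, using the identification of a contour $\V{\gamma}$ with the collection $V(\gamma)$ of $(d-1)$-dimensional faces that comprise it. The basic observation is that each such face $\V{f}$ is an axis-aligned box: it is perpendicular to some coordinate direction $i$, its $i$-th coordinate equals a fixed value in $\frac{1}{4}+\frac{1}{2}\Z$, and in each of the remaining $d-1$ coordinates it spans an interval of length $\frac{1}{2}$ centered at a point of $\frac{1}{2}\Z$. All of this data is read off from the corresponding vertex of $\htors$ (equivalently, from the edge of $\htor$ it is dual to) in time $O(d)=O(1)$.

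Items (i) and (iii) are then immediate. For (iii), note that $\V{f}$ meets $\bigcup_{e\in E}\V{e}$ precisely when all $d-1$ of the transverse coordinates of its center are integers, in which case the intersection is the single point equal to that center, the face crossing transversally the unique edge of $\tor$ in direction $i$ through it. Distinct faces have distinct centers, so the points of $\V{\gamma}\cap\bigcup_{e\in E}\V{e}$ are in bijection with the faces of $\V{\gamma}$ satisfying this condition; hence $\norm{\V{\gamma}}$ is computed by a single pass over $V(\gamma)$, applying an $O(1)$ test to each face, in total time $O(\abs{V(\gamma)})$. For (i), since $\V{\gamma}=\bigcup_{\V{f}\in V(\gamma)}\V{f}$ and $\V{\gamma}'=\bigcup_{\V{f}'\in V(\gamma')}\V{f}'$, we have $d_{\infty}(\V{\gamma},\V{\gamma}')=\min_{\V{f},\V{f}'}d_{\infty}(\V{f},\V{f}')$ over all pairs of faces (both infima attained, the faces being compact). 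The $\ell_{\infty}$-distance between two axis-aligned boxes factorizes as the maximum, over the $d$ coordinates, of the distance between the corresponding one-dimensional coordinate projections, so each $d_{\infty}(\V{f},\V{f}')$ costs $O(d)=O(1)$, and iterating over the $\abs{V(\gamma)}\cdot\abs{V(\gamma')}$ pairs of faces gives the bound.

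Item (ii) carries the real content, but it is essentially a restatement of Lemma~\ref{lem:findext}: that lemma takes as input exactly the list of $(d-1)$-faces of $\V{\gamma}$, namely $V(\gamma)$, and returns $\Int\V{\gamma}\cap\tor$ in time $O(\norm{\V{\gamma}}^{3})$. The feature that keeps this running time independent of $n$ is that the greedy exploration of $\tor\setminus\V{\gamma}$ is cut off after $\norm{\V{\gamma}}^{2}+1$ explored vertices, by the \emph{a priori} bound $\abs{\Int\V{\gamma}}\le\norm{\V{\gamma}}^{2}$ of Lemma~\ref{lem:iso}. Since each $(d-1)$-face of $\V{\gamma}$ meets $\bigcup_{e\in E}\V{e}$ in at most one point, $\norm{\V{\gamma}}\le\abs{V(\gamma)}$, so this running time is $O(\abs{V(\gamma)}^{3})$. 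The one remaining point---routine given the per-face description above---is to verify that every step in the proof of Lemma~\ref{lem:findext}, namely deciding which edges of $\tor$ a given face crosses and running the truncated greedy search, can be carried out from the subgraph data $\gamma$ alone.
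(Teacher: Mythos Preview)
Your proof is correct and follows essentially the same approach as the paper's: items (i) and (iii) reduce to $O(1)$ per-face computations (pairwise face distances and edge-intersection tests, respectively), and item (ii) is handled by invoking Lemma~\ref{lem:findext} together with the observation $\norm{\V{\gamma}}\le\abs{V(\gamma)}$. Your write-up is more explicit about why each per-face step costs $O(1)$, but the strategy is identical.
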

\begin{proof}
  Each vertex in $\htors$ corresponds to a $(d-1)$-dimensional
  hypercube in $\ctor$. For each pair of such hypercubes we can
  compute the distance between them in constant time, which implies
  the first claim. The third claim follows similarly, since the set of
  edges passing through a given $(d-1)$-dimensional hypercube can be
  determined in constant time.

  For the second claim, we first determine the set of edges
  intersecting the $(d-1)$-dimensional hypercubes corresponding to
  $\V{\gamma}$. We can then determine $\Int \V{\gamma}\cap \tor$ in time
  $O(\norm{\V{\gamma}}^{3})$ by Lemma~\ref{lem:findext}\,.
\end{proof}


\begin{thebibliography}{10}

\bibitem{alexander2004mixing}
K.~S. Alexander.
\newblock Mixing properties and exponential decay for lattice systems in finite
  volumes.
\newblock {\em The Annals of Probability}, 32(1A):441--487, 2004.

\bibitem{barvinok2017combinatorics}
A.~Barvinok.
\newblock Combinatorics and complexity of partition functions.
\newblock {\em Algorithms and Combinatorics}, 30, 2017.

\bibitem{barvinok2017weighted}
A.~Barvinok and G.~Regts.
\newblock Weighted counting of solutions to sparse systems of equations.
\newblock {\em Combinatorics, Probability and Computing}, 28(5):696--719, 2019.

\bibitem{bencs2018zero}
F.~Bencs, E.~Davies, V.~Patel, and G.~Regts.
\newblock On zero-free regions for the anti-ferromagnetic {P}otts model on
  bounded-degree graphs.
\newblock {\em Annales de l'Institut Henri Poincare (D) Combinatorics, Physics
  and their Interactions}, 2019.

\bibitem{blanca2017random}
A.~Blanca and A.~Sinclair.
\newblock Random-cluster dynamics in $\mathbb{Z}^2$.
\newblock {\em Probability Theory and Related Fields}, 168(3-4):821--847, 2017.

\bibitem{BorgsStoc2020}
C.~Borgs, J.~Chayes, T.~Helmuth, W.~Perkins, and P.~Tetali.
\newblock Efficient sampling and counting algorithms for the {P}otts model on
  $\mathbb{Z}^d$ at all temperatures (extended abstract).
\newblock In {\em Proceedings of the 52nd Annual ACM SIGACT Symposium on Theory
  of Computing}, STOC 2020, page 738–751, New York, NY, USA, 2020.
  Association for Computing Machinery.

\bibitem{BorgsChayesKahnLovasz}
C.~Borgs, J.~Chayes, J.~Kahn, and L.~Lov{\'a}sz.
\newblock Left and right convergence of graphs with bounded degree.
\newblock {\em Random Structures \& Algorithms}, 42(1):1--28, 2013.

\bibitem{borgs2012tight}
C.~Borgs, J.~T. Chayes, and P.~Tetali.
\newblock Tight bounds for mixing of the {S}wendsen--{W}ang algorithm at the
  {P}otts transition point.
\newblock {\em Probability Theory and Related Fields}, 152(3-4):509--557, 2012.

\bibitem{borgs1989unified}
C.~Borgs and J.~Z. Imbrie.
\newblock A unified approach to phase diagrams in field theory and statistical
  mechanics.
\newblock {\em Communications in Mathematical Physics}, 123(2):305--328, 1989.

\bibitem{borgs1991finite}
C.~Borgs, R.~Koteck{\'y}, and S.~Miracle-Sol{\'e}.
\newblock Finite-size scaling for {P}otts models.
\newblock {\em Journal of Statistical Physics}, 62(3-4):529--551, 1991.

\bibitem{cannon2019counting}
S.~Cannon and W.~Perkins.
\newblock Counting independent sets in unbalanced bipartite graphs.
\newblock In {\em Proceedings of the Fourteenth Annual ACM-SIAM Symposium on
  Discrete Algorithms (SODA)}, pages 1456--1466. SIAM, 2020.

\bibitem{casel2019zeros}
K.~Casel, P.~Fischbeck, T.~Friedrich, A.~G{\"o}bel, and J.~Lagodzinski.
\newblock Zeros and approximations of {H}olant polynomials on the complex
  plane.
\newblock {\em arXiv preprint arXiv:1905.03194}, 2019.

\bibitem{PolymerMarkov}
Z.~Chen, A.~Galanis, L.~A. Goldberg, W.~Perkins, J.~Stewart, and E.~Vigoda.
\newblock Fast algorithms at low temperatures via {M}arkov chains.
\newblock {\em Random Structures \& Algorithms}, 58(2):294--321, 2021.

\bibitem{DuminilCopinLectures}
H.~Duminil-Copin.
\newblock Lectures on the {I}sing and {P}otts models on the hypercubic lattice.
\newblock In {\em PIMS-CRM Summer School in Probability}, pages 35--161.
  Springer, 2017.

\bibitem{DuminilCopinRaoufiTassion}
H.~Duminil-Copin, A.~Raoufi, and V.~Tassion.
\newblock Sharp phase transition for the random-cluster and {P}otts models via
  decision trees.
\newblock {\em Annals of Mathematics}, 189(1):75--99, 2019.

\bibitem{dyer2004relative}
M.~Dyer, L.~A. Goldberg, C.~Greenhill, and M.~Jerrum.
\newblock The relative complexity of approximate counting problems.
\newblock {\em Algorithmica}, 38(3):471--500, 2004.

\bibitem{friedli2017statistical}
S.~Friedli and Y.~Velenik.
\newblock {\em Statistical Mechanics of Lattice Systems: a Concrete
  Mathematical Introduction}.
\newblock Cambridge University Press, 2017.

\bibitem{galanis2016inapproximability}
A.~Galanis, D.~{\v{S}}tefankovi{\v{c}}, and E.~Vigoda.
\newblock Inapproximability of the partition function for the antiferromagnetic
  {I}sing and hard-core models.
\newblock {\em Combinatorics, Probability and Computing}, 25(4):500--559, 2016.

\bibitem{galanis2016ferromagnetic}
A.~Galanis, D.~Stefankovic, E.~Vigoda, and L.~Yang.
\newblock Ferromagnetic {P}otts model: Refined \#-{BIS}-hardness and related
  results.
\newblock {\em SIAM Journal on Computing}, 45(6):2004--2065, 2016.

\bibitem{gheissari2018mixing}
R.~Gheissari and E.~Lubetzky.
\newblock Mixing times of critical two-dimensional {P}otts models.
\newblock {\em Communications on Pure and Applied Mathematics},
  71(5):994--1046, 2018.

\bibitem{gheissari2016quasi}
R.~Gheissari and E.~Lubetzky.
\newblock Quasi-polynomial mixing of critical two-dimensional random cluster
  models.
\newblock {\em Random Structures \& Algorithms}, 56(2):517--556, 2020.

\bibitem{gruber1971general}
C.~Gruber and H.~Kunz.
\newblock General properties of polymer systems.
\newblock {\em Communications in Mathematical Physics}, 22(2):133--161, 1971.

\bibitem{guo2018}
H.~Guo and M.~Jerrum.
\newblock Random cluster dynamics for the {I}sing model is rapidly mixing.
\newblock {\em Ann. Appl. Probab.}, 28(2):1292--1313, 04 2018.

\bibitem{HJP}
T.~Helmuth, M.~Jenssen, and W.~Perkins.
\newblock Finite-size scaling, phase coexistence, and algorithms for the random
  cluster model on random graphs.
\newblock {\em arXiv preprint arXiv:2006.11580}, 2020.

\bibitem{helmuth2018contours}
T.~Helmuth, W.~Perkins, and G.~Regts.
\newblock Algorithmic {P}irogov-{S}inai theory.
\newblock {\em Probability Theory and Related Fields}, 176:851--895, 2020.

\bibitem{JenssenAlgorithmsJ}
M.~Jenssen, P.~Keevash, and W.~Perkins.
\newblock Algorithms for \#{BIS}-hard problems on expander graphs.
\newblock {\em SIAM Journal on Computing}, 49(4):681--710, 2020.

\bibitem{jerrum1989approximating}
M.~Jerrum and A.~Sinclair.
\newblock Approximating the permanent.
\newblock {\em SIAM Journal on Computing}, 18(6):1149--1178, 1989.

\bibitem{jerrum1993polynomial}
M.~Jerrum and A.~Sinclair.
\newblock Polynomial-time approximation algorithms for the {I}sing model.
\newblock {\em SIAM Journal on Computing}, 22(5):1087--1116, 1993.

\bibitem{Kotecky}
R.~Koteck{\`y}.
\newblock Pirogov-sinai theory.
\newblock {\em Encyclopedia of Mathematical Physics}, 4:60--65, 2006.

\bibitem{kotecky1986cluster}
R.~Koteck\'{y} and D.~Preiss.
\newblock Cluster expansion for abstract polymer models.
\newblock {\em Communications in Mathematical Physics}, 103(3):491--498, 1986.

\bibitem{kotecky1982first}
R.~Koteck{\`y} and S.~Shlosman.
\newblock First-order phase transitions in large entropy lattice models.
\newblock {\em Communications in Mathematical Physics}, 83(4):493--515, 1982.

\bibitem{laanait1991interfaces}
L.~Laanait, A.~Messager, S.~Miracle-Sol{\'e}, J.~Ruiz, and S.~Shlosman.
\newblock Interfaces in the {P}otts model {I}: {P}irogov-{S}inai theory of the
  {F}ortuin-{K}asteleyn representation.
\newblock {\em Communications in Mathematical Physics}, 140(1):81--91, 1991.

\bibitem{liao2019counting}
C.~Liao, J.~Lin, P.~Lu, and Z.~Mao.
\newblock Counting independent sets and colorings on random regular bipartite
  graphs.
\newblock In {\em Approximation, Randomization, and Combinatorial Optimization.
  Algorithms and Techniques (APPROX/RANDOM 2019)}. Schloss
  Dagstuhl-Leibniz-Zentrum fuer Informatik, 2019.

\bibitem{Liu2Delta2019}
J.~Liu, A.~Sinclair, and P.~Srivastava.
\newblock A deterministic algorithm for counting colorings with 2-{D}elta
  colors.
\newblock In {\em 2019 IEEE 60th Annual Symposium on Foundations of Computer
  Science (FOCS)}, pages 1380--1404. IEEE, 2019.

\bibitem{martinelli19942}
F.~Martinelli, E.~Olivieri, and R.~H. Schonmann.
\newblock For 2-d lattice spin systems weak mixing implies strong mixing.
\newblock {\em Communications in Mathematical Physics}, 165(1):33--47, 1994.

\bibitem{peled2018rigidity}
R.~Peled and Y.~Spinka.
\newblock Rigidity of proper colorings of $\mathbb{Z}^d$.
\newblock {\em arXiv preprint arXiv:1808.03597}, 2018.

\bibitem{pirogov1975phase}
S.~A. Pirogov and Y.~G. Sinai.
\newblock Phase diagrams of classical lattice systems.
\newblock {\em Theoretical and Mathematical Physics}, 25(3):1185--1192, 1975.

\bibitem{sly2010computational}
A.~Sly.
\newblock Computational transition at the uniqueness threshold.
\newblock In {\em Proceedings of the Fifty-first Annual IEEE Symposium on
  Foundations of Computer Science}, FOCS 2010, pages 287--296. IEEE, 2010.

\bibitem{sly2014counting}
A.~Sly and N.~Sun.
\newblock Counting in two-spin models on d-regular graphs.
\newblock {\em The Annals of Probability}, 42(6):2383--2416, 2014.

\bibitem{vstefankovivc2009adaptive}
D.~{\v{S}}tefankovi{\v{c}}, S.~Vempala, and E.~Vigoda.
\newblock Adaptive simulated annealing: A near-optimal connection between
  sampling and counting.
\newblock {\em Journal of the ACM (JACM)}, 56(3):18, 2009.

\bibitem{ullrich2013comparison}
M.~Ullrich.
\newblock Comparison of {S}wendsen-{W}ang and heat-bath dynamics.
\newblock {\em Random Structures \& Algorithms}, 42(4):520--535, 2013.

\bibitem{weitz2006counting}
D.~Weitz.
\newblock Counting independent sets up to the tree threshold.
\newblock In {\em Proceedings of the Thirty-Eighth Annual ACM {S}ymposium on
  Theory of {C}omputing}, STOC 2006, pages 140--149. ACM, 2006.

\end{thebibliography}
\end{document}